\tikzset{
>=stealth',
help lines/.style={dashed, thick},
axis/.style={<->},
important line/.style={thick},
connection/.style={thick, dotted},
}
\long\def\pzjrem#1{}
\long\def\yaping#1{}
\renewcommand\ss{\scriptstyle}
\newcommand{\bra}[1]{\left\langle #1\right|}
\newcommand{\ket}[1]{\left|#1\right\rangle}
\newcommand{\nc}{\newcommand}
\nc{\rnc}{\renewcommand}
\nc{\bb}[1]{{\mathbb #1}}
\nc{\bbA}{\bb{A}}\nc{\bbB}{\bb{B}}\nc{\bbC}{\bb{C}}\nc{\bbD}{\bb{D}}
\nc{\bbE}{\bb{E}}\nc{\bbF}{\bb{F}}\nc{\bbG}{\bb{G}}\nc{\bbH}{\bb{H}}
\nc{\bbI}{\bb{I}}\nc{\bbJ}{\bb{J}}\nc{\bbK}{\bb{K}}\nc{\bbL}{\bb{L}}
\nc{\bbM}{\bb{M}}\nc{\bbN}{\bb{N}}\nc{\bbO}{\bb{O}}\nc{\bbP}{\bb{P}}
\nc{\bbQ}{\bb{Q}}\nc{\bbR}{\bb{R}}\nc{\bbS}{\bb{S}}\nc{\bbT}{\bb{T}}
\nc{\bbU}{\bb{U}}\nc{\bbV}{\bb{V}}\nc{\bbW}{\bb{W}}\nc{\bbX}{\bb{X}}
\nc{\bbY}{\bb{Y}}\nc{\bbZ}{\bb{Z}}
\nc{\mbf}[1]{{\mathbf #1}}
\nc{\bfA}{\mbf{A}}\nc{\bfB}{\mbf{B}}\nc{\bfC}{\mbf{C}}\nc{\bfD}{\mbf{D}}
\nc{\bfE}{\mbf{E}}\nc{\bfF}{\mbf{F}}\nc{\bfG}{\mbf{G}}\nc{\bfH}{\mbf{H}}
\nc{\bfI}{\mbf{I}}\nc{\bfJ}{\mbf{J}}\nc{\bfK}{\mbf{K}}\nc{\bfL}{\mbf{L}}
\nc{\bfM}{\mbf{M}}\nc{\bfN}{\mbf{N}}\nc{\bfO}{\mbf{O}}\nc{\bfP}{\mbf{P}}
\nc{\bfQ}{\mbf{Q}}\nc{\bfR}{\mbf{R}}\nc{\bfS}{\mbf{S}}\nc{\bfT}{\mbf{T}}
\nc{\bfU}{\mbf{U}}\nc{\bfV}{\mbf{V}}\nc{\bfW}{\mbf{W}}\nc{\bfX}{\mbf{X}}
\nc{\bfY}{\mbf{Y}}\nc{\bfZ}{\mbf{Z}}
\nc{\bfa}{\mbf{a}}\nc{\bfb}{\mbf{b}}\nc{\bfc}{\mbf{c}}\nc{\bfd}{\mbf{d}}
\nc{\bfe}{\mbf{e}}\nc{\bff}{\mbf{f}}\nc{\bfg}{\mbf{g}}\nc{\bfh}{\mbf{h}}
\nc{\bfi}{\mbf{i}}\nc{\bfj}{\mbf{j}}\nc{\bfk}{\mbf{k}}\nc{\bfl}{\mbf{l}}
\nc{\bfm}{\mbf{m}}\nc{\bfn}{\mbf{n}}\nc{\bfo}{\mbf{o}}\nc{\bfp}{\mbf{p}}
\nc{\bfq}{\mbf{q}}\nc{\bfr}{\mbf{r}}\nc{\bfs}{\mbf{s}}\nc{\bft}{\mbf{t}}
\nc{\bfu}{\mbf{u}}\nc{\bfv}{\mbf{v}}\nc{\bfw}{\mbf{w}}\nc{\bfx}{\mbf{x}}
\nc{\bfy}{\mbf{y}}\nc{\bfz}{\mbf{z}}
\nc{\mcal}[1]{{\mathcal #1}}
\nc{\calA}{\mcal{A}}\nc{\calB}{\mcal{B}}\nc{\calC}{\mcal{C}}\nc{\calD}{\mcal{D}}
\nc{\calE}{\mcal{E}} \nc{\calF}{\mcal{F}}\nc{\calG}{\mcal{G}}\nc{\calH}{\mcal{H}}
\nc{\calI}{\mcal{I}}\nc{\calJ}{\mcal{J}}\nc{\calK}{\mcal{K}}\nc{\calL}{\mcal{L}}
\nc{\calM}{\mcal{M}}\nc{\calN}{\mcal{N}}\nc{\calO}{\mcal{O}}\nc{\calP}{\mcal{P}}
\nc{\calQ}{\mcal{Q}}\nc{\calR}{\mcal{R}}\nc{\calS}{\mcal{S}}\nc{\calT}{\mcal{T}}
\nc{\calU}{\mcal{U}}\nc{\calV}{\mcal{V}}\nc{\calW}{\mcal{W}}\nc{\calX}{\mcal{X}}
\nc{\calY}{\mcal{Y}}\nc{\calZ}{\mcal{Z}}
\nc{\fA}{\frak{A}}\nc{\fB}{\frak{B}}\nc{\fC}{\frak{C}} \nc{\fD}{\frak{D}}
\nc{\fE}{\frak{E}}\nc{\fF}{\frak{F}}\nc{\fG}{\frak{G}}\nc{\fH}{\frak{H}}
\nc{\fI}{\frak{I}}\nc{\fJ}{\frak{J}}\nc{\fK}{\frak{K}}\nc{\fL}{\frak{L}}
\nc{\fM}{\frak{M}}\nc{\fN}{\frak{N}}\nc{\fO}{\frak{O}}\nc{\fP}{\frak{P}}
\nc{\fQ}{\frak{Q}}\nc{\fR}{\frak{R}}\nc{\fS}{\frak{S}}\nc{\fT}{\frak{T}}
\nc{\fU}{\frak{U}}\nc{\fV}{\frak{V}}\nc{\fW}{\frak{W}}\nc{\fX}{\frak{X}}
\nc{\fY}{\frak{Y}}\nc{\fZ}{\frak{Z}}
\nc{\fa}{\frak{a}}\nc{\fb}{\frak{b}}\nc{\fc}{\frak{c}} \nc{\fd}{\frak{d}}
\nc{\fe}{\frak{e}}\nc{\fFf}{\frak{f}}\nc{\fg}{\frak{g}}\nc{\fh}{\frak{h}}
\nc{\fri}{\frak{i}}\nc{\fj}{\frak{j}}\nc{\fk}{\frak{k}}\nc{\fl}{\frak{l}}
\nc{\fm}{\frak{m}}\nc{\fn}{\frak{n}}\nc{\fo}{\frak{o}}\nc{\fp}{\frak{p}}
\nc{\fq}{\frak{q}}\nc{\fr}{\frak{r}}\nc{\fs}{\frak{s}}\nc{\ft}{\frak{t}}
\nc{\fu}{\frak{u}}\nc{\fv}{\frak{v}}\nc{\fw}{\frak{w}}\nc{\fx}{\frak{x}}
\nc{\fy}{\frak{y}}\nc{\fz}{\frak{z}}
\newtheorem{theorem}{Theorem}[section]
\newtheorem{lemma}[theorem]{Lemma}
\newtheorem{corollary}[theorem]{Corollary}
\newtheorem{prop}[theorem]{Proposition}
\theoremstyle{definition}
\newtheorem{example}[theorem]{Example}
\newtheorem{remark}[theorem]{Remark}
\newtheorem{thm}{Theorem}
\DeclareMathOperator{\Id}{Id}
\DeclareMathOperator{\im}{im} 
 \DeclareMathOperator{\id}{id}
\DeclareMathOperator{\Image}{Im}
 \DeclareMathOperator{\GL}{GL}
\DeclareMathOperator{\Hom}{{Hom}}
\DeclareMathOperator{\sHom}{{\mathscr{H}\!\mathit{om}}}
\DeclareMathOperator{\fac}{{fac}}
 \DeclareMathOperator{\tr}{tr}
 \DeclareMathOperator{\End}{End}
\DeclareMathOperator{\SH}{SH}
\DeclareMathOperator{\ev}{ev}
\DeclareMathOperator{\Gr}{Gr}
\newcommand{\Omit}[1]{}
\newcommand{\CC}{\mathbb{C}}
\DeclareMathOperator{\Res}{Res}
\newcommand{\surj}{\twoheadrightarrow}
\newcommand{\inj}{\hookrightarrow}
\newcommand{\pt}{\text{pt}}
\newcommand{\Z}{\bbZ}
\newcommand{\C}{\bbC}
\newcommand{\N}{\bbN}
\DeclareMathOperator{\Sh}{Sh}
\nc{\tQ}{\tilde{Q}}
\nc{\ep}{\epsilon}
\nc{\tPi}{\tilde{\Pi}}
\nc{\gmod}{\text{-$\mathrm{gmod}$}}
\nc{\tX}{\tilde{X}}
\nc{\tfM}{\tilde{\fM}}
\nc{\tfL}{\tilde{\fL}}
\nc{\tLambda}{\tilde{\Lambda}}
\nc{\hGr}{\widehat{\Gr}{}}
 \gdef\Young(#1){\hbox{$\vcenter
 {\mathcode`,="8000\mathcode`|="8000
  \def,{\global\advance\cols by 1 &}%
  \def|{\cr
        \multispan{\the\cols}\hrulefill\cr
        &\global\cols=2 }%
  \offinterlineskip\everycr{}\tabskip=0pt
  \dimen0=\ht\strutbox \advance\dimen0 by \dp\strutbox
  \halign
   {\vrule height \ht\strutbox depth \dp\strutbox##
    &&\hbox to \dimen0{\hss$##$\hss}\vrule\cr
    \noalign{\hrule}&\global\cols=2 #1\crcr
    \multispan{\the\cols}\hrulefill\cr%
   }
 }$}}
\author[Y.~Yang]{Yaping~Yang}
\address{The University of Melbourne,
	School of Mathematics and Statistics,
	813 Swanston Street, Parkville VIC 3010,
	Australia}
\email{yaping.yang1@unimelb.edu.au}
\author[P.~Zinn-Justin]{Paul~Zinn-Justin}
\address{The University of Melbourne,
	School of Mathematics and Statistics,
	813 Swanston Street, Parkville VIC 3010,
	Australia}
\email{pzinn@unimelb.edu.au}
\title[Higher spin representations of the Yangian of $\mathfrak{sl}_2$ and R-matrices]{Higher spin representations of the Yangian of $\mathfrak{sl}_2$ and R-matrices}
\subjclass[2010]{Primary 17B37;  	
Secondary    
16G20.}
\keywords{Yangian, quiver with potential, quiver with symmetrizer, R-matrix, stable envelope, lattice model, weight function. }
\date{\today}
\begin{document}

\begin{abstract}
We study higher spin (pure and mixed spin) representations of the Yangian of $\mathfrak{sl}_2$. We provide a geometric realization in terms of the critical cohomology of representations of the quiver with potential of Bykov and Zinn-Justin \cite{BZJ20}. When the framing dimension is 1, it recovers the evaluation pullback of the $\ell+1$-dimensional irreducible representation of $\mathfrak{sl}_2$. 
We introduce the lattice model and prove that its partition function coincides with the weight function constructed using the framed shuffle formula. The latter follows the approach of Rim\'anyi, Tarasov and Varchenko \cite{RTV15}. 
\end{abstract}
\maketitle
\tableofcontents
\section{Introduction}
In the paper, we consider the following triple quiver $\tilde{Q}$: with a gauge vertex $\bullet$ and with $w$ framing vertices $\square_1, \ldots, \square_w$, where $w\in \mathbb{N}$. The set of arrows of $\tilde{Q}$ is $\{\epsilon, a_1, \ldots, a_{w}, a_1^*, \ldots, a_{w}^*\}$, where $\epsilon$ is a loop on $\bullet$, $a_i$ is an arrow from $\bullet$ to $\square_i$, and $a^*_i$ is the reversed arrow from $\square_i$ to $\bullet$, for $i=1, \ldots, w$.
\begin{equation}\label{twQ}
\begin{tikzpicture}[scale=1.2]
         \node at (-2.2, 0) {$\bullet$};   
\node at (-2.2, -2) {$\square$}; 

\node at (-2.2-0.5, -2) {$\cdots$}; 
\node at (-2.2+0.5, -2) {$\cdots$}; 
   \draw[->, thick] (-2.2+0.1, -1.8) -- (-2.2+0.1, -0.2) ;
  \draw[<-, thick] (-2.2-0.1, -1.8) -- (-2.2-0.1, -0.2) ;
\node at (-1.2, -2) {$\square$}; 
     \draw[->, thick] (-1.2+0.1, -1.8) -- (-2.2+0.1+0.3, -0.2) ;
  \draw[<-, thick] (-1.2-0.1, -1.8) -- (-2.2-0.1+0.3, -0.2) ;
  
\node at (-3.2, -2) {$\square$};
     \draw[->, thick] (-3.2+0.1, -1.8) -- (-2.2+0.1-0.3, -0.2) ;
  \draw[<-, thick] (-3.2-0.1, -1.8) -- (-2.2-0.1-0.3, -0.2) ;
 \node at (-3.2+0.2+0.4, -1) {$a^*_1$}; 
  \node at (-3.2+0.1, -1) {$a_1$};
   \node at (-1.2, -1) {$a^*_w$}; 
  \node at (-1.2-0.6, -1) {$a_w$};
    \draw[->, thick, shorten <=7pt, shorten >=7pt] ($(-2.2,0)$)
    .. controls +(90+40:1.5) and +(90-40:1.5) .. ($(-2.2,0)$);
 \node at (-2.2, 1.3) {$\epsilon$}; 
    \end{tikzpicture}
\end{equation}

For $w$-tuple of positive integers $(\ell_1, \ell_2, \cdots, \ell_w)\in \N^w$
, let 
\begin{equation}\label{eq:potW}
\bfw=\epsilon^{\ell_1} a_1^*a_1+\epsilon^{\ell_2} a_2^*a_2+\cdots +\epsilon^{\ell_w}a_w^*a_w
\end{equation}
be the potential of the quiver $\tilde{Q}$. 
We are interested in this particular quiver with potential for a few reasons. 
\begin{enumerate}
\item The critical cohomology, also known as the compact supported cohomology with valued in a perverse sheaf, of the stable representation variety of $\tilde{Q}$ is a generalization of the cohomology of the cotangent bundle of Grassmannian. The latter corresponds to the case when $\ell_1=\cdots=\ell_{w}=1$.  
\item 
This quiver with potential gives the higher spin representations of the Yangian of $\mathfrak{sl}_2$. When $w=1$, one obtains the evaluation pullback of the $\ell+1$-dimensional irreducible representation of $\mathfrak{sl}_2$ or the Kirillov-Reshetikhin module of the Yangian of $\mathfrak{sl}_2$. 
\item For quantum loop algebras of non-simply laced type, their representations and Kirillov--Reshetikhin modules have been constructed geometrically by Varagnolo and Vasserot \cite{VV23}. Our quiver $\tilde{Q}$ and potential $\bfw$ are related to the rank $1$ reduction of the quiver with potential from \cite[Section 4.2]{VV23}. See also \cite{YZ22} for the geometric realization of the Yangian of non-simply laced type in terms of quiver with potentials.
\item The stable envelope constructed by Maulik and Okounkov in \cite{MO} requires a symplectic variety together with a torus action preserving
the symplectic form. However, the stable representation variety for $(\tilde{Q}, \bold{w})$ is not symplectic. See \cite{RSYZ1, RSYZ2} for the 3-dimensional Calabi-Yau perspective. In this paper, we introduce the lattice model and its partition function. From the diagrammatic calculus point of view, the R-matrix can be straightforwardly obtained from the partition functions. We show that the partition functions coincide with the weight functions defined using the framed shuffle formula \cite{B23, YZ17}. The approach of weight functions has been exploited in the symplectic case: see \cite{RTV15} for the cotangent bundle of partial flag varieties, and \cite{B23} for Nakajima quiver varieties. 
\item The setup in \cite{BZJ20} concerns the pure spin representations, while the present paper deals with mixed spins as well. 
For a general $w$-tuple of positive integers $(\ell_1, \ldots, \ell_w)\in \N^w$, we refer to the setup of the quiver $\tilde{Q}$ \eqref{twQ} with potential $\bold{w}$ \eqref{eq:potW} as mixed spin. In this case, we take the framing vector to be $\vec{1}_w=(1, 1, \ldots, 1)\in \N^w$. When $\ell_1=\ell_2=\cdots=\ell_w=\ell\in \N$, the potential becomes $\bold{w}=w\epsilon^{\ell}(\sum_{i=1}^w a_i^*a_i)$ and we obtain the pure spin case. Alternatively, for the pure spin, we could consider the following quiver with potential, where the framing dimension is $w\in \N$. 
\begin{equation*}
\begin{tikzpicture}[scale=0.8]
         \node at (-2.2, 0) {$\bullet$};   	 	\node at (-2.2+0.5, 0) {$v$};  
\node at (-2.2, -2) {$\square$}; 
\node at (-2.2+0.5, -2) {$w$}; 
   \draw[->, thick] (-2.2+0.1, -1.8) -- (-2.2+0.1, -0.2) ;
  \draw[<-, thick] (-2.2-0.1, -1.8) -- (-2.2-0.1, -0.2) ;
 \node at (-2.2+0.1+0.4, -1) {$a^*$}; 
  \node at (-2.2+0.1-0.4, -1) {$a$};
    \draw[->, thick, shorten <=7pt, shorten >=7pt] ($(-2.2,0)$)
    .. controls +(90+40:1.5) and +(90-40:1.5) .. ($(-2.2,0)$);
 \node at (-2.2, 1.3) {$\epsilon$}; 
 \node at (2, -1){Potential is $\epsilon^{\ell} a^*a$};
    \end{tikzpicture}
\end{equation*}
The latter is the setup in \cite{BZJ20}.  
\end{enumerate}

\subsection{}
In the paper, we use the following grading on the arrows of the quiver $\tilde{Q}$ \eqref{twQ}: 
\begin{equation}\label{grading}
\deg(\ep)=-2, \deg(a_i)=\deg(a^*_i)=\ell_i, i=1, \ldots, w.  
\end{equation}
It is clear that the potential $\bfw$ has grading zero. This grading is compatible with the grading used in \cite{VV23} after rank $1$-reduction. 

Let $V$ be a vector space with dimension $v\in \N$. Let $W_i$ be the 1-dimensional vector space at vertex $\square_i$, $i=1, \ldots, w$.  
Denote by $(v, \vec{1}_w)\in \N\times \N^w$ the corresponding dimension vector of $\tilde{Q}$. Let $\tilde{X}(v, \vec{1}_w)$ be the representations of $\tilde{Q}$ with dimension vector $(v, \vec{1}_w)$. That is,  
\[
\tilde{X}(v, \vec{1}_w)=\{(\epsilon, a_1, \ldots, a_{w}, a^*_1, \ldots, a_{w}^*)\mid \epsilon\in \Hom(V, V), a_i\in \Hom(V, W_i), a^*\in \Hom(W_i, V), i=1, \ldots, w\}. 
\]
We impose the cyclic stability condition that: $(\epsilon, a_1, \ldots, a_{w}, a^*_1, \ldots, a_{w}^*)\in \tilde{X}(v, \vec{1}_w)$ is stable if $V$ is spanned by the image $\Image(a^*_i)$ of $a^*_i$, $i\in [1, w]$, together with the operator $\epsilon$. Denote the stable locus of $\tilde{X}(v, \vec{1}_w)$ by $\tilde{X}^{st}(v, \vec{1}_w)$.

The group $G_v:=\GL(V)$ acts on $\tilde{X}(v, \vec{1}_w)$ by changing of basis of $V$, and the grading \eqref{grading} gives a commuting $\C^*$-action on $\tilde{X}(v, \vec{1}_w)$ with $\C^*$-weights given by the degree. Denote by $T_w$ the framing torus $(\C^*)^w$, which also acts on $\tilde{X}(v, \vec{1}_w)$. 

The trace of the potential $\bold{w}$ is a $G_v\times T_w\times \C^*$-equivariant function on $\tilde{X}(v, \vec{1}_w)$. Denote by $\varphi_{\bf{w}}:=\varphi_{\bf{w}}(\C)$ the sheaf of vanishing cycles of this function, which is a perverse constructible sheaf on $\tilde{X}(v, \vec{1}_w)$. 

Let $Y$ be a complex variety. Let $D^b_{c}(Y)$ be the bounded derived category of $\C$-vector spaces on $Y$ with constructible cohomology. For any morphism $f: Y\to X$, denote by $f_{!}: D^b_{c}(Y)\to D^b_{c}(X)$ the direct image with compact support. For a smooth variety $Y$ together with a function $\bold{w}: Y\to \C$  on it, denote by $\varphi_{\bold{w}}\in D^b_{c}(Y)$ the vanishing cycle complex of $\bold{w}$. 
Let $\pi: Y\to \pt$ be the structure map. 
We consider the vector space dual of the compactly supported cohomology with coefficient in $\varphi_{\bold{w}}$, that is, 
\[
H^*_{c}(Y, \varphi_{\bold{w}})^{\vee}=\pi_{*} \varphi_{\bold{w}} D_Y(\C)=D_{\pt}\pi_{!} \varphi_{\bold{w}}(\C),
\]
where $D_Y$ is the Verdier dualizing sheaf on $Y$.  See \cite{KS, D, RSYZ2} for details.


Let $Y_{\hbar}(\mathfrak{sl}_2)$ be the Yangian associated to the Lie algebra $\mathfrak{sl}_2$. 
Let $R:=H_{\C^*\times T_w}(\pt)=\C[\hbar, z_{1}, \cdots, z_{w}]$ be the base ring and let  $K=\C(\hbar, z_{1}, \cdots, z_{w})$ be its field of fractions. 

\begin{thm}\label{thm:action}
For any $w\in \N$ and any $(\ell_1, \cdots, \ell_w)\in \N^w$, the Yangian $Y_{\hbar}(\mathfrak{sl}_2)$ acts on the cohomology 
\[
\bigoplus_{v\in \N} H^*_{c, \C^*\times T_w}(\tilde{X}(v, \vec{1}_w)^{st}/G_v, \varphi_{\bf{w}})^{\vee}\otimes_R K. 
\]
\end{thm}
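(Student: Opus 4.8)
The plan is to build the action in Drinfeld's current presentation, producing three families of operators on
$M\seq\bigoplus_{v\in\N} H^*_{c,\C^*\times T_w}(\tX(v,\vec{1}_w)^{st}/G_v, \varphi_{\bfw})^{\vee}\otimes_R K$ indexed by a formal spectral parameter $u$: the Cartan currents $\psi^{\pm}(u)$, the raising currents $e(u)$, and the lowering currents $f(u)$, and then to verify the defining relations of $Y_\hbar(\mathfrak{sl}_2)$. The Cartan currents I would let act by multiplication by a generating series built from the Chern roots of the tautological bundle $V$ together with the parameters $\hbar,z_1,\dots,z_w$; these are easiest to define and make $[\psi^{\pm}(u),\psi^{\pm}(u')]=0$ automatic. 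The content lies in $e(u)$ and $f(u)$, which I would construct from a Hecke correspondence relating dimension vectors $v$ and $v+1$.

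Because the potential $\bfw=\sum_{i}\epsilon^{\ell_i}a_i^* a_i$ is linear in each $a_i$, the first step is to apply dimensional reduction (in the sense of Davison; see \cite{RSYZ2}) with respect to the variables $a_1,\dots,a_w$. This identifies $H^*_c(\tX(v,\vec{1}_w)^{st}/G_v,\varphi_{\bfw})^{\vee}$, up to a cohomological shift and Tate twist, with the equivariant Borel--Moore homology of the closed subvariety $R(v)\subset\{(\epsilon,a_1^*,\dots,a_w^*)\}$ cut out by the equations $\epsilon^{\ell_i}a_i^*=0$ for $i=1,\dots,w$, intersected with the stable locus. This is compatible with the chosen stability condition, which refers only to $\epsilon$ and the maps $a_i^*$, and it converts all remaining constructions into ordinary equivariant cohomological correspondences, where smooth pullback and proper pushforward are available without tracking vanishing-cycle functoriality by hand.

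On this reduced model I would introduce the Hecke correspondence $Z(v,v+1)$ parametrizing pairs of stable points together with an inclusion $V\hookrightarrow V'$ of $G$-modules compatible with $\epsilon$, the framing maps $a_i^*$, and the reduced defining equations; it carries two projections to $\tX(v,\vec{1}_w)^{st}/G_v$ and $\tX(v+1,\vec{1}_w)^{st}/G_{v+1}$. The current $e(u)$ is the generating series, in the tautological Chern class of the added line $V'/V$, of the pull--push along these projections, and $f(u)$ is defined by the transposed correspondence. Here the spectral parameter $u$ encodes the equivariant Chern class of $V'/V$, while $\hbar$ serves as the Yangian deformation parameter; expanding in $u^{-k-1}$ recovers the modes $e_k$ and $f_k$.

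It then remains to verify the relations. The exchange of the Cartan currents with $e(u),f(u)$ and the quadratic relations $(u-u'\mp\hbar)\,e(u)e(u')=(u-u'\pm\hbar)\,e(u')e(u)$, together with their $f$-analogues, follow from the associativity of the cohomological Hall product attached to $(\tQ,\bfw)$: the rational kernel of the framed shuffle formula (following \cite{B23, YZ17}) is exactly the $\mathfrak{sl}_2$-Yangian kernel, the exponents $\ell_i$ and framing parameters $z_i$ entering only through the module, not through the kernel among the $e$'s. I expect the genuine obstacle to be the mixed relation $[e(u),f(u')]=\pm\hbar\,\frac{\psi^{+}(u)-\psi^{-}(u')}{u-u'}$: geometrically it compares the two orders of composing a raising and a lowering Hecke correspondence, whose fiber products differ by an excess-intersection (Euler class) term supported on the diagonal copy of $\tX(v,\vec{1}_w)^{st}/G_v$. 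Computing this class explicitly --- by equivariant localization on $R(v)$, carefully bookkeeping the weights of the grading $\C^*$ that generate the spectral parameter and the contribution of the equations $\epsilon^{\ell_i}a_i^*=0$ --- and matching it with the prescribed right-hand side is where the main work lies; this is also the step at which one confirms that the algebra acting is precisely $Y_\hbar(\mathfrak{sl}_2)$ rather than a larger current algebra.
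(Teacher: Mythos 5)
Your overall strategy coincides with the paper's: dimensional reduction along the cut $\{a_1,\dots,a_w\}$ (Proposition~\ref{prop:dimred}) to trade vanishing-cycle cohomology for ordinary equivariant cohomology of $\mathfrak{M}^+(v,\vec{1}_w)$; Cartan currents acting by the Chern polynomial of a tautological K-theory class (Proposition~\ref{prop:psi(z)}); $e$ and $f$ from a Hecke correspondence between dimensions $v$ and $v+1$ (the paper's operators $\tilde e(g),\tilde f(g)$ of \eqref{eqn:EFdef}); and equivariant localization to the fixed-point basis of Lemma~\ref{lem:fixed points}. The mechanical difference is that the paper does not invoke COHA associativity for the quadratic relations: it writes explicit matrix coefficients \eqref{eaction}, \eqref{faction} in the fixed-point basis and verifies \eqref{Y1}--\eqref{Y5} by direct rational-function manipulations, the key inputs being the simple-pole lemmas (Lemma~\ref{lem:e}, Lemma~\ref{lem:psi}, poles exactly at addible/removable boxes) and a residue-at-infinity argument that yields $[e_r,f_s]=2\hbar\psi_{r+s}$. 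Your identification of the $[e,f]$ relation as the crux, to be settled by localization, is exactly where the paper's computation lives.

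Two points in your write-up need repair. First, the orientation of your Hecke correspondence is wrong as stated: if $V\hookrightarrow V'$ is $\epsilon$-stable and compatible with all the framing maps $a_i^*$ (i.e.\ $\im(a_i^{*})\subset V$ for all $i$), then $\C[\epsilon]\big(\Image(a_1^*,\dots,a_w^*)\big)\subset V\subsetneq V'$, contradicting the cyclic stability of $V'$; your correspondence is empty on the stable locus. One must instead take a line $\xi\subset V'$ with $\epsilon(\xi)\subset\xi$ (and $a_i(\xi)=0$ before reduction) and project to the \emph{quotient} $V'/\xi$, which is what the paper's $\tilde{X}^{st}(v,v+1,\vec{1}_w)$ does. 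Second, your claim that the $e$--$e$ and $\psi$--$e$ relations ``follow from associativity of the cohomological Hall product'' presupposes that the Hecke operators assemble into a module over the shuffle algebra $\SH$; the paper's Proposition~\ref{prop:geom} only identifies the Hall product on the ambient affine spaces $\tilde{X}(v,\vec n)$, and no $\SH$-module structure on the critical cohomology of the stable locus is established there --- this compatibility is a genuine statement requiring proof, which the paper sidesteps by brute-force verification. Relatedly, note a normalization subtlety you will hit in the excess-intersection computation: the raw pull--push gives \eqref{for:e} and \eqref{for:tf}, which differ from \eqref{eaction} and \eqref{faction} by the factor $\prod_{i=1}^w(x_{\blacksquare}-z_i-\ell_i\hbar)$; without redistributing this factor between $e$ and $f$ the individual currents change (though $[e,f]$ does not), so your bookkeeping must account for it before matching against the Drinfeld relations.
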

The proof of Theorem  \ref{thm:action} is in Section \ref{subsec:proof of thm}. The main ingredients of the proof are the dimension reduction of the critical cohomology (recalled in Proposition \ref{prop:dimred}) and the computation of the Kac-Moody generators of $Y_{\hbar}(\mathfrak{sl}_2)$ on the torus fixed points basis. This approach has been used in \cite{RSYZ2} in the setup of the generalized Yangian action on the cohomology of the moduli
spaces of certain perverse coherent systems on a toric Calabi Yau 3-fold.

Denote by $\ev_{z}: Y_{\hbar}(\mathfrak{sl}_2)\to U(\mathfrak{sl}_2)$ the evaluation homomorphism (see \cite[Proposition 12.1.15]{CP}) of $Y_{\hbar}(\mathfrak{sl}_2)$.
Recall that $\ev_z$ is the unique algebra homomophism from $Y_{\hbar}(\mathfrak{sl}_2)$ to $U(\mathfrak{sl}_2)$ such that 
\[
\ev_z(x)=x, \,\ 
\ev_z(J(x))=z x,  \text{for all $x\in \mathfrak{sl}_2$}. 
\]
Here $x, J(x)$ are the Drinfeld generators of $Y_{\hbar}(\mathfrak{sl}_2)$. See \cite[Theorem 12.1.1]{CP} for the relations among $x$ and $J(x)$, $x\in \mathfrak{sl}_2$ and see \cite[Theorem 12.1.3]{CP} for the isomorphism between the Drinfeld presentation and the Kac-Moody presentation (see \S \ref{subsec:proof of thm}) of the Yangian. 

\begin{prop}
\label{prop:w=1}
When $w=1$, we have the following isomorphism of representations of $Y_{\hbar}(\mathfrak{sl}_2)$: 
\[
\bigoplus_{v\in \N} H^*_{c, \C^*\times \C^*_{fr}}(\tilde{X}(v, 1)^{st}/G_V, \varphi_{\bf{w}})^{\vee}\cong \ev^*_z(L(\ell)), 
\]  where
$L(\ell)\cong \C^{\ell+1}$ is the $(\ell+1)$-dimensional irreducible representation of $\mathfrak{sl}_2$ and the variable $z$ comes from the framing torus $\C^*_{fr}=T_{1}$. 
\end{prop}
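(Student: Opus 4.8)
The plan is to combine the dimension reduction for critical cohomology with an explicit analysis of the resulting geometry, which turns out to be as simple as possible, and then to read off the Yangian module structure from the fixed-point computation underlying Theorem~\ref{thm:action}.

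First I would apply the dimension reduction of Proposition~\ref{prop:dimred}. For $w=1$ the trace of the potential is $\Tr(\bfw)=a(\ep^{\ell}a^{*})$, which is \emph{linear} in the arrow $a\in\Hom(V,W)\cong V^{*}$ (with coefficients the components of the vector $\ep^{\ell}a^{*}\in V$). Integrating out $a$ identifies, up to a cohomological shift, the critical cohomology $H^{*}_{c,\C^{*}\times\C^{*}_{fr}}(\tX(v,1)^{st}/\GL(V),\varphi_{\bfw})^{\vee}$ with the equivariant cohomology of the reduced locus
\[
Z_{v}=\{(\ep,a^{*})\in\End(V)\times V \mid \ep^{\ell}a^{*}=0,\ V=\C[\ep]\,a^{*}\}/\GL(V),
\]
where the second condition is the stability condition that $a^{*}$ be a cyclic vector for $\ep$.

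Next I would show that $Z_{v}$ is a single reduced point for $0\le v\le\ell$ and empty otherwise. Cyclicity gives an isomorphism $V\cong\C[t]/(m(t))$ carrying $\ep$ to multiplication by $t$ and $a^{*}$ to $1$, where $m$ is the monic minimal polynomial of $\ep$, of degree $v$. The equation $\ep^{\ell}a^{*}=0$ then reads $m(t)\mid t^{\ell}$, which for a monic polynomial of degree $v$ forces $m(t)=t^{v}$ and $v\le\ell$. Hence $\ep$ is a regular nilpotent operator with cyclic vector $a^{*}$, unique up to $\GL(V)$, so $Z_{v}=\pt$ for $0\le v\le\ell$ and $Z_{v}=\varnothing$ for $v>\ell$. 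Consequently the total space carries, over $R=\C[\hbar,z]$, a free rank-one summand for each $v\in\{0,1,\ldots,\ell\}$, so its total rank is $\ell+1=\dim L(\ell)$, with the grading by $v$ matching the weight decomposition of $L(\ell)$ ($v=0$ being the highest weight).

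It remains to identify the $Y_{\hbar}(\mathfrak{sl}_2)$-module structure. The torus fixed-point basis is exactly the collection of point classes $[Z_{v}]$, $0\le v\le\ell$, and I would feed these into the formulas for the Kac--Moody generators computed in the proof of Theorem~\ref{thm:action}. The lowest-degree generators $e,f,h$ act by the Hecke correspondences changing $v$ by $\pm1$ together with a diagonal Cartan; since the weight spaces form a single string of length $\ell+1$ with nonvanishing off-diagonal matrix elements, the underlying $\mathfrak{sl}_2$-module is irreducible, hence isomorphic to $L(\ell)$. To pin down that the full Yangian action is the evaluation pullback, I would compute the first-order Drinfeld generators $J(x)$ on the same basis and verify that $J(x)=z\,x$, i.e.\ that the framing equivariant parameter $z$ is exactly the evaluation parameter; equivalently, one computes the Cartan current $\psi(u)$ on the highest-weight class $[Z_{0}]$ and matches the resulting highest weight (Drinfeld polynomial) with that of $\ev_{z}^{*}(L(\ell))$. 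By the characterization of $\ev_{z}$ recalled before the statement, this forces the action to factor through $\ev_{z}$. The main obstacle is precisely this last identification: while the dimension count and the $\mathfrak{sl}_2$-irreducibility are essentially formal once $Z_{v}=\pt$, showing $J(x)=z\,x$ on the nose requires the explicit localization formulas for the Yangian generators and careful bookkeeping of the $\C^{*}\times\C^{*}_{fr}$-weights, in particular checking that any $\hbar$-corrections cancel on the one-dimensional weight spaces and that the fixed-point classes are normalized so that the framing weight $z$ appears linearly.
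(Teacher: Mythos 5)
Your proposal is correct and takes essentially the same route as the paper: dimension reduction via Proposition~\ref{prop:dimred} reduces everything to $\mathfrak{M}^+(v,1)$, which is a point for $0\le v\le\ell$ and empty otherwise (the paper's Lemma~\ref{lem:w=1}, which you reprove by the equivalent cyclic-module identification $V\cong\C[t]/(t^v)$ instead of the paper's explicit Jordan-form transitivity-and-stabilizer argument), after which the explicit fixed-point formulas \eqref{eaction} and \eqref{faction} produce an irreducible $(\ell+1)$-dimensional weight string. Your closing step — matching the Cartan current on the highest-weight class with the Drinfeld polynomial of $\ev_z^*(L(\ell))$ — is precisely what the paper carries out (computing $\psi(z)|_{b_\ell}=P(z-2\hbar)/P(z)$ with $P(z)=\prod_{s=0}^{\ell-1}(z-z_1+(\ell-2s)\hbar)$), so no genuinely different method or gap is involved.
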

The proof of Proposition \ref{prop:w=1} is in Section \ref{sub:proof of prop}.

As a consequence, the representation in Theorem \ref{thm:action} has the following decomposition. We have the isomorphism as representations of $Y_{\hbar}(\mathfrak{sl}_2)$ (Corollary \ref{cor:tensor product})
\[
\bigoplus_{v\in \N} H^*_{c, \C^*\times T_w}(\tilde{X}(v, \vec{1}_w)^{st}/G_v, \varphi_{\bf{w}})^{\vee}\otimes_R K
\cong \ev^*_{z_1}(L(\ell_1))\otimes \cdots \otimes \ev^*_{z_w}(L(\ell_w)), 
\]
where the isomorphism is given by identifying the torus fixed point basis on both sides. The tensor product is the fusion tensor product which comes from the Drinfeld coproduct of $Y_{\hbar}(\mathfrak{sl}_2)$. In the case of quantum affine algebras, the action of $U_{q}(\widehat{\mathfrak{sl}}_2)$ on the fusion tensor of Kirillov--Reshetikhin modules can be found in \cite[\S3.4]{H07}.

\subsection{}
In \S\ref{sub:shuffle algebra}, we introduce the framed shuffle algebra associated to the quiver $\tilde{Q}$. 
Denote by $\SH_{fr}$ the framed shuffle algebra. It is double graded by the dimension vectors $(v, \vec{n})\in \N\times \N^w$, where $\vec{n}=(n_1, n_2, \cdots, n_w)\in \N^w$ is $w$-tuple of integers. Explicitly, as a $\mathbb{C}[\hbar]$-module, we have 
\[\SH_{fr}=\bigoplus_{{v}\in\bbN, \vec{n}\in\bbN^w}\SH_{{v}, \vec{n}}, \,\ \text{where}\,\ \SH_{{v}, \vec{n}}:=\mathbb{C}[\hbar]\otimes \mathbb{C}
[ y_1, \cdots, y_{v} ]^{\mathcal{S}_{{v}}}\otimes \mathbb{C}
[ z_j^i ]_{j=1, \ldots, w, i=1, \ldots, n_j} ,\]
here $\mathcal{S}_{{v}}$ is the symmetric group of $v$ letters and permutes the $v$ variables $y_1, \cdots,  y_{v}$. There is an algebra structure $\star$ on $\SH_{fr}$ compactible with the two gradings. Componentwise, the multiplication
\[
\star: \SH_{v_1, \vec{n}_1}\otimes_{\C[\hbar]}  \SH_{v_2, \vec{n}_2}\to \SH_{v_1+v_2, \vec{n}_1+\vec{n}_2} 
\]
 is defined by \eqref{eq:shuffle}. The shuffle algebra structure $\star$ has a geometric interpretation. 
For each component, we identify $\SH_{v, \vec{n}}$ with 
the cohomology group $H^*_{c,G_v\times T_w\times \C^*}(\tilde{X}(v, \vec{n}))^{\vee}$. Then it follows from Proposition \ref{prop:geom} that the shuffle multiplication $\star$ on $\SH_{fr}$ 
can be identified with the Hall multiplication of 
$\oplus_{(v, \vec{n})\in \N\times \N^w}H^*_{c,G_v\times T_n\times \C^*}(\tilde{X}(v, \vec{n}))^{\vee}$ defined geometrically via pullback and pushforward along correspondences. 

Note that when $\vec{n}=0$, denote the unframed shuffle algebra by \[\SH=\bigoplus_{{v}\in\bbN}\SH_{{v}}=\bigoplus_{{v}\in\bbN}\mathbb{C}[\hbar; 
 y_1, \cdots, y_{v} ]^{\mathcal{S}_{{v}}}. 
\]
Then, there is an algebra isomorphism between $\SH$ and the positive part $Y^+_{\hbar}(\mathfrak{sl}_2)$ (see for example \cite[Section 7]{YZ1}).  

Fix the $w$-tuple $(\ell_1, \ldots, \ell_w)\in \N^w$ and 
fix the framing dimension to be $\vec{1}_w=(1, \ldots, 1)\in \N^w$. By the equivariant localization theorem, the module $$\bigoplus_{v\in \N} H^*_{c, \C^*\times T_w}(\tilde{X}(v, \vec{1}_w)^{st}/G_v, \varphi_{\bf{w}})^{\vee}\otimes_R K$$ is a free module over $H_{\C^*\times T_w}(\pt)\otimes_R K$. Each graded piece $H^*_{c, \C^*\times T_w}(\tilde{X}(v, \vec{1}_w)^{st}/G_v, \varphi_{\bf{w}})^{\vee}\otimes_R K$ has fixed point basis labelled by elements in the set (Lemma \ref{lem:fixed points})
\begin{equation*}
S(\ell_1, \ldots, \ell_w)=\{(v_1, v_2, \cdots, v_w)\in \N^w \mid 0\leq v_i\leq \ell_i, \forall i\in [1, w], \text{and} \sum_{i=1}^wv_i=v\}.
\end{equation*}

For any fixed point $\lambda=(v_1, v_2, \cdots, v_w)\in S(\ell_1, \ldots, \ell_w)$, we define the following element, referred to as weight function (see \eqref{star1} and \eqref{eq:Wsigma}) 
\[
W^{\id}_{\lambda}(\hbar; y_1, \cdots, y_{v}; z_1, \cdots, z_w):=1\star_{\lambda} 1\star_{\lambda} \cdots \star_{\lambda} 1 
\]
in $\SH_{v, \vec{1}_w}=\mathbb{C}[\hbar]\otimes \mathbb{C}
[ y_1, \cdots, y_{v} ]^{\mathcal{S}_{{v}}}\otimes \mathbb{C}
[ z_1, \cdots, z_w ]$. 
Here $1\star_{\lambda} 1\star_{\lambda} \cdots \star_{\lambda} 1$ is the image of $1\otimes 1\otimes \cdots \otimes 1$ of the following multiplication
\[
\star_{\lambda}: \SH_{v_1, e_1}\otimes \SH_{v_2, e_2}\otimes \cdots \otimes \SH_{v_w, e_w}
\to \SH_{v, \vec{1}_w}
\]
depending on $\lambda=(v_1, v_2, \ldots, v_w)$, where $e_i=(0, \ldots, 0, 1, 0, \ldots, 0)$. 
The explicit formula of the weight function is given by \eqref{Wformula}.

For any element $\sigma$ in the symmetric group $\mathcal{S}_w$, we define the weight function associated to $\sigma$ to be
\[
W^{\sigma}_{\lambda}(\hbar; y_1, \cdots, y_{v}; z_1, \cdots, z_w):=W^{\id}_{\sigma^{-1}\lambda}(\hbar; y_1, \cdots, y_{v}; z_{\sigma^{-1}(1)}, \cdots, z_{\sigma^{-1}(w)}). 
\]

Let $\lambda, \mu$ be two fixed points in $S(\ell_{\sigma(1)}, \ldots, \ell_{\sigma(w)})$. Denote by $W^{\sigma}_{\lambda}|_{\mu}$ the restriction of $W^{\sigma}_{\lambda}$ to the fixed point $\mu$. By definition, $W^{\sigma}_{\lambda}|_{\mu}$ is obtained from $W^{\sigma}_{\lambda}(\hbar; y_1, \cdots, y_{v}; z_1, \cdots, z_w)$ by substituting the variables $y_1, \cdots, y_v$ according to the equivariant Chern roots of $V$ at the fixed point $\mu$ \eqref{eq:fix point wts}. In particular, $W^{\sigma}_{\lambda}|_{\mu}\in \C[\hbar, z_1, z_2, \cdots, z_w]$.  

For a general element $\sigma\in \mathcal{S}_w$, we define the total order on the set $S(\ell_{\sigma(1)}, \ldots, \ell_{\sigma(w)})$ depending on $\sigma$ by
\[
\lambda\leq _{\sigma} \mu \iff \sigma^{-1}(\lambda)\leq \sigma^{-1}(\mu), 
\]
where $\leq$ is the  lexicographical order on $\N^{w}$.

 The weight functions have the following properties, which are proved in Lemma \ref{lem:property} and Lemma \ref{lem:prop2}.

\begin{enumerate}
\item \textbf{Triangularity: } 
If $\lambda< _{\sigma}\mu$, we have
$W^{\sigma}_{\lambda}|_{\mu}=0$.
\item \textbf{The degree condition:} For any $\lambda\geq_{\sigma} \mu$, then,
\[
\deg(W^{\sigma}_{\lambda}|_{\lambda})\geq \deg (W^{\sigma}_{\lambda}|_{\mu}).
\]
\end{enumerate}
The normalization of $W^{\sigma}_{\lambda}|_{\lambda}$ can be found in  Lemma \ref{lem:property} (2).

\subsection{}
Let $\mathfrak{t}_{w}:=\text{Lie} (T_w)$ be the Lie algebra of $T_w$ and let $(\mathfrak{t}_{w})_{\mathbb{R}}=\text{Cochar}(T_w)\otimes_{\Z}\mathbb{R}$ be the real vector space. Denote by $z_1, \ldots, z_w$ the standard basis of the dual space $\mathfrak{t}_{w}^{\vee}$. 
We have the root hyperplanes $z_i-z_j=0$ of $(\mathfrak{t}_{w})_{\mathbb{R}}$, for any $1\leq i\neq j \leq w$. This gives the open chambers $\mathfrak{C}_{\sigma}=\{x\in (\mathfrak{t}_{w})_{\mathbb{R}}\mid z_{\sigma(1)}(x)> \cdots > z_{\sigma(w)}(x)\}$, such that
\[
(\mathfrak{t}_{w})_{\mathbb{R}}\setminus \bigcup_{1\leq i\neq j \leq w}\{z_i-z_j=0\}=\bigsqcup_{\sigma\in \mathcal{S}_w} \mathfrak{C}_{\sigma}. 
\]
The chambers $\{\mathfrak{C}_{\sigma}\}$ are the usual Weyl chambers, which are labeled by elements in $\mathcal{S}_w$. 
Denote by $[W^{\sigma}_{\lambda}|_{\mu}]$ the matrix whose $(\lambda, \mu)$-entry is the restriction of $W^{\sigma}_{\lambda}$ to the fixed point $\mu$. Note that, after inverting $\{z_i-z_j-a\hbar\mid 1\leq i, j\leq w, a\in \Z\}$, $[W^{\sigma}_{\lambda}|_{\mu}]$ is an invertible matrix. Let $\mathbf{R}:=H_{\C^*\times T_w}(\pt)$ be the base ring, and let $\mathbf{K}$ be the fraction field of $\mathbf{R}$. We define the $R$-matrix, as an operator in $$\bigoplus_{v\in \N} \End(H^*_{c, \C^*\times T_w}(\tilde{X}(v, \vec{1}_w)^{st}/G_v, \varphi_{\bf{w}})^{\vee}\otimes_{\bold R} {\bold K})$$ 
to be
\begin{equation}
\label{eq:defRgeneral}
R_{\sigma', \sigma}:=[W^{\sigma'}_{\lambda}|_{\mu}]^{-1} [W^{\sigma}_{\lambda}|_{\mu}]. 
\end{equation}

For $w=2$, there are only two chambers $\mathfrak{C}_{\id}$, and $\mathfrak{C}_{(21)}$, 
where $(21)\in \mathcal{S}_2$ is the non-trivial element. Set $z:=z_1-z_2$. In this case, we denote the $R$-matrix $R_{\id, (21)}$ by
\begin{equation}\label{eq:defR}
R(z):=[W^{\id}_{\lambda}|_{\mu}]^{-1}  [W^{(21)}_{\lambda}|_{\mu}] \in \End(H^*_{c,\C^*\times T_w}(\tilde{X}(v, \vec{1}_w)^{st}/G_v, \varphi_{\bf{w}})^{\vee}\otimes_{\bold R} {\bold K}). 
\end{equation}
The dimension of $H^*_{c,\C^*\times T_w}(\tilde{X}(v, \vec{1}_w)^{st}/G_v, \varphi_{\bf{w}})^{\vee}\otimes_{\bold R} {\bold K}$ over $\bold{K}$ is the cardinality $|S(\ell_1, \ldots, \ell_w; v)|$. In particular, $R(z)$ is a matrix of size $(\sum_v |S(\ell_1, \ldots, \ell_w; v)|)^2$, with entries in $\C[\hbar, z][\frac{1}{z-a\hbar }\mid a\in \Z]$. 

The explicit computations of $R$-matrices are in Section \ref{ComputeR}.

\subsection{Lattice model formulation}
It is well-known that weight functions are closely related to off-shell Bethe Ansatz \cite{AO17}, and we therefore expect a {\em lattice model}\/ formula for
our weight functions. In \S\ref{sec:lm}, we introduce such a model: it consists of lattice paths on a $v\times w$ rectangular grid where the occupancy of vertical line $j$ is bounded by $\ell_j$, and the occupancy of horizontal lines is at most $1$, with some specific boundary conditions depending of $(v_1,\ldots,v_w)$. We assign to each lattice path configuration $c$ in the set $\mathcal C_{(v_1,\ldots,v_w)}$ of allowed configurations, a weight
$F(c)=\prod_{i=1}^v \prod_{j=1}^w(\text{weight of $c(i, j)$})$,
where the weight of $c(i, j)$ is a degree $1$ polynomial in $y_i$ and $z_j$ depending on the configuration around vertex $(i,j)$.

\begin{thm}(Theorem \ref{thm:tildeW})
\label{thmB}
We have the following equality
\[
\tilde W_{(v_1,\ldots,v_w)} :=
\sum_{c\in C_{(v_1,\ldots,v_w)}} F(c) =
\prod_{j=1}^w (-1)^{v_j(v+w-j-\sum_{s=1}^j v_s)}  (2\hbar)^{v_j} \frac{\ell_j!}{(\ell_j-v_j)!}\ W^{\id}_{(v_1,\ldots,v_w)}. 
\]
\end{thm}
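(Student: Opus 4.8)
The plan is to expand both sides over a common combinatorial index set and to match them, organizing the computation by induction on the number of sites $w$. The starting point is the explicit shuffle formula \eqref{Wformula} for $W^{\id}_{(v_1,\dots,v_w)}=1\star_\lambda\cdots\star_\lambda 1$. Unwinding the iterated product $\star_\lambda$ (each $\star$ being, by \eqref{eq:shuffle}, a symmetrization over the cosets of $\mathcal S_{v_1}\times\cdots\times\mathcal S_{v_w}$ in $\mathcal S_v$) presents $W^{\id}_{(v_1,\dots,v_w)}$ as a sum indexed by maps $\phi\colon\{1,\dots,v\}\to\{1,\dots,w\}$ with $|\phi^{-1}(j)|=v_j$, the map recording to which framing block each gauge variable $y_i$ belongs. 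This index set has size exactly $\binom{v}{v_1,\dots,v_w}$, and I would match it bijectively with the configuration set $C_{(v_1,\dots,v_w)}$: send $\phi$ to the configuration in which the single particle on horizontal line $i$ travels along its row and turns down vertical line $j=\phi(i)$. Because each horizontal line is singly occupied and $\sum_j v_j=v$, and because $v_j\le\ell_j$ for $\lambda\in S(\ell_1,\dots,\ell_w)$, this is a bijection with $C_{(v_1,\dots,v_w)}$.

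Next I would identify the local structure. For a given $\phi$ the path of particle $i$ meets column $j$ either to the left of its turn ($j<\phi(i)$, a horizontal-traversal vertex), at its turn ($j=\phi(i)$), or to the right ($j>\phi(i)$, an empty vertex), while the occupancy of a vertical line $j$ seen at row $i$ records how many earlier particles have already turned into it. I expect the gauge--framing factors of the shuffle summand to localize precisely onto the traversal vertices as the factors of $F(c)$ that are linear in $y_i$ and $z_j$, while the loop $\epsilon$ (of weight $2\hbar$, in accordance with $\deg\epsilon=-2$) contributes only through the occupancy-dependent turn weights; this is what should produce, column by column, the factor $(2\hbar)^{v_j}\,\ell_j!/(\ell_j-v_j)!$, the falling factorial counting the occupancies $0,1,\dots,v_j-1$ seen by the successive particles entering line $j$. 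The base case $w=1$ is already instructive: there is a single configuration, every vertex is a turn, and $F(c)=\prod_{m=0}^{v-1}2\hbar(\ell_1-m)=(2\hbar)^{v}\,\ell_1!/(\ell_1-v)!$, matching the right-hand side since $W^{\id}_{(v)}=1$ and the sign exponent vanishes.

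For the inductive step I would pass from $w-1$ to $w$ using the coproduct/associativity presentation $W^{\id}_{(v_1,\dots,v_w)}=W^{\id}_{(v_1,\dots,v_{w-1})}\star 1_{(v_w,e_w)}$ together with the enlargement of the framing, matching it on the lattice side with the adjoining of the $w$-th column and the $v_w$ new rows (the fusion of a spin-$\ell_w$ site). The gauge--framing factors created by the last shuffle --- the new variables against the old framings and the old variables against $z_w$ --- should reproduce the weights of the newly created vertices of the enlarged grid, and the sign $(-1)^{v_j(v+w-j-\sum_{s\le j}v_s)}$ should emerge from moving the block-$j$ paths past all later lines and later particles relative to the lexicographic order underlying $\star_\lambda$.

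The main obstacle is the exact localization of the second paragraph: the shuffle summand is built from global products of kernel factors whose denominators cancel only after symmetrization, whereas $F(c)$ is a manifestly polynomial product over the $v\times w$ grid in which distinct gauge variables never occur together. Showing that the loop interactions collapse into pure occupancy $\hbar$-shifts (so that the matched summand is genuinely decoupled in the $y_i$, as $F(c)$ is) and that the gauge--framing factors distribute one-to-one onto the traversal vertices is the crux, and carrying the signs and the higher-spin falling factorials (the case $\ell_j>1$, invisible in the six-vertex situation) correctly through this step is where the bookkeeping is most delicate. A complementary sanity check is that the resulting $\tilde W$ then satisfies the triangularity and degree properties (1)--(2) and the diagonal normalization of Lemma \ref{lem:property}.
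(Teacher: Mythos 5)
Your strategy rests on two claims, and both fail. First, the claimed bijection between lattice states and maps $\phi\colon\{1,\dots,v\}\to\{1,\dots,w\}$ with $|\phi^{-1}(j)|=v_j$ is false: the vertex rules \eqref{eq:Lmat} contain a vertex (the third one, with weight $2m\hbar$) in which a path leaves a vertical line and continues to the right, so paths can zigzag through several columns. Concretely, take $v=2$, $w=3$, $(v_1,v_2,v_3)=(1,0,1)$: besides the two ``naive'' states (top particle into column $1$ and bottom into column $3$, or the reverse), there is a third valid state in which the top particle turns up at column $1$ while the bottom particle turns up at column $2$, exits that column to the right at row $1$ (vertical label $1\to 0$, weight $2\hbar$), and then turns up at column $3$; its Boltzmann weight is $16\,\ell_1\ell_2\ell_3\hbar^4\,(y_2-z_1+\ell_1\hbar)(y_2-z_3-\ell_3\hbar)\neq 0$. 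Hence $|\mathcal C_{(1,0,1)}|\geq 3>\binom{2}{1,0,1}=2$ and there is nothing for the extra state to match. Second, even for states of the naive form, termwise matching against the shuffle summands $W^{\id}_{\lambda,I}$ of \eqref{summand} is impossible on elementary grounds: each $F(c)$ is a polynomial in which every $y_i$ pairs only with the $z_j$'s along its own row (no $y_i-y_j$ factors occur at all), whereas each summand $W^{\id}_{\lambda,I}$ carries the rational factor $\prod\frac{y_{I_s}-y_{I_t}-2\hbar}{y_{I_t}-y_{I_s}}$ with genuine poles. The paper's own $v=w=2$ example displays this: the two Boltzmann weights are decoupled linear products, the two shuffle summands have poles at $y_1=y_2$, and only the \emph{sums} agree. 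You correctly identify this decoupling as ``the crux,'' but the proposal offers no mechanism for it, and the counterexample above shows that no configuration-by-configuration localization can exist.

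What the paper does instead is give up on matching configurations and work at the level of column transfer operators: $\tilde W_{(v_1,\ldots,v_w)}=\bra{1,\dots,1}T^{(\ell_1)}_{v_1,0}(z_1)\cdots T^{(\ell_w)}_{v_w,0}(z_w)\ket{0,\dots,0}$, and the key input is Lemma~\ref{lem:lm}: there is a special basis (the $F$-basis, constructed in Appendix~\ref{app:lmlemma} from products of six-vertex $R$-matrices via the Yang--Baxter, unitarity and $RLL$ relations together with a Bruhat-order argument) in which every entry of $T^{(l)}_{m,0}(x)$ with $b_i<a_i$ for some $i$ vanishes --- this is exactly what annihilates the zigzag contributions such as the state above --- and the surviving entries factor into linear terms times exactly the factors $\frac{y_i-y_j-2\hbar}{y_i-y_j}$ that your matching was missing. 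Inserting resolutions of the identity in this basis between consecutive columns converts the lattice sum into a sum over disjoint subsets $I_1,\dots,I_w\subseteq\{1,\dots,v\}$ that matches \eqref{Wformula} term by term, with the prefactors $(2\hbar)^{v_j}\,\ell_j!/(\ell_j-v_j)!$ and the signs emerging from the factorized entries. In short, the rational $y_i-y_j$ structure is produced by a change of basis (a resummation over intermediate states), not by individual configurations; without an analogue of Lemma~\ref{lem:lm}, your induction on $w$ cannot close, since appending the operator $T^{(\ell_w)}_{v_w,0}(z_w)$ in the standard basis reintroduces precisely the zigzag terms your bijection ignores. Your base case $w=1$ is correct but is the only case where the two index sets coincide.
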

One important step in the proof of the Theorem \ref{thmB} is the $F$-basis in Lemma~\ref{lem:lm}, which is constructed in Appendix~\ref{app:lmlemma}. The proof of Lemma \ref{lem:lm} uses the 6-vertex R-matrix and diagrammatic calculus.

\subsection*{Acknowledgments}
We thank Gufang Zhao and Ryo Fujita for very helpful discussions. Many computations in the paper are under the help of Gufang Zhao. Y.Y. was supported by the Australian Research Council (ARC) via the award DE190101231 and DP210103081. We are very grateful to the referee for the careful reading of the paper and for the detailed comments and suggestions which helped us to improve the manuscript.

\section{Dimension reductions and torus fixed points}

\subsection{The dimension reduction}
The ``dimensional reduction'' for critical cohomology of representations of quiver is proposed in \cite[Section 4.8]{KS} and detailed discussed in \cite{D}. The dimensional reduction allows one to replace the critical cohomology associated  with a $3$-dimensional Calabi-Yau 
category  by an ordinary cohomology associated with a $2$-dimensional Calabi-Yau category.

Consider the setup of $\tilde{Q}$ with the potential given by $\bfw=\sum_{i=1}^w\epsilon^{\ell_i} a^*_ia_i$. 
Fix a vector space $V$ with dimension $v$. Let $W_i$ be the one dimensional vector space at the framing vertex $i\in [1, w]$. 
The dimension reduction along the arrows $\{a_1, \cdots, a_w\}$ amounts to replacing the critical cohomology of $\tilde{X}(v, \vec{1}_w)$ by the ordinary cohomology of the following subvariety of $X^+(v, \vec{1}_w)$: 
\begin{align*}
&\{(\epsilon, a^*_1, \ldots, a_{w}^*)\in X^+(v, \vec{1}_w)\mid \tr(\epsilon^{\ell_i} a^*_ia_i)=0, \forall a_i\in \Hom(V, W_i), i\in [1, w]\}\\
=&\{(\epsilon, a^*_1, \ldots, a_{w}^*)\in X^+(v, \vec{1}_w)\mid \epsilon^{\ell_i} a^*_i=0, i\in [1, w]\}. 
\end{align*}
Clearly, the above subvariety contains the critical locus $\frac{\partial \bfw}{\partial a_i}=0, i\in [1, w]$.

More explicitly, let $C$ be the set consisting the arrow $\{a_1, a_2, \ldots, a_w\}$ of $\tilde{Q}$. We call $C$ the cut of $\tilde{Q}$. 
We have the following derivatives of the potential ${\bfw}$ with respect to arrows in $C$. 
\begin{align*}
\frac{\partial{\bfw}}{\partial a_1}=\epsilon^{\ell_1} a_1^*,\,\  \frac{\partial{\bfw}}{\partial a_2}=\epsilon^{\ell_2} a_2^*, \,\ \ldots, \frac{\partial{\bfw}}{\partial a_w}=\epsilon^{\ell_w} a_w^*.  
\end{align*}
Denote by $Q^+$ the following quiver, which is obtained from the quiver $\tilde{Q}$ \eqref{twQ} by removing the arrows in the cut $C$. That is, 
$Q^+$ has vertices $\bullet$ and  $\square_1, \cdots, \square_w$ and the set of arrows is $\{\epsilon, a^*_1, \ldots, a_w^*\}$, where $\epsilon$ is a loop on $\bullet$, $a^*_i$ is an arrow from $\square_i$ to $\bullet$, $i=1, \cdots, w$. 
\begin{equation}
\begin{tikzpicture}[scale=1.2]
         \node at (-2.2, 0) {$\bullet$};   
          \node at (-2.2+0.2, 0) {$v$};  
\node at (-2.2, -2) {$\square$}; 
\node at (-2.2, -2-0.3) {$1$}; 

\node at (-2.2-0.5, -2) {$\cdots$}; 
\node at (-2.2+0.5, -2) {$\cdots$}; 
   \draw[->, thick] (-2.2, -1.8) -- (-2.2, -0.2) ;
\node at (-1.2, -2) {$\square$}; 
\node at (-1.2, -2-0.3) {$1$}; 
     \draw[->, thick] (-1.2, -1.8) -- (-2.2+0.3, -0.2) ;
  
\node at (-3.2, -2) {$\square$};
\node at (-3.2, -2-0.3) {$1$}; 
     \draw[->, thick] (-3.2+0.1, -1.8) -- (-2.2+0.1-0.3, -0.2) ;
 \node at (-3.2+0.2, -1) {$a^*_1$}; 
   \node at (-1.2, -1) {$a^*_n$}; 
    \draw[->, thick, shorten <=7pt, shorten >=7pt] ($(-2.2,0)$)
    .. controls +(90+40:1.5) and +(90-40:1.5) .. ($(-2.2,0)$);
 \node at (-2.2, 1.3) {$\epsilon$}; 
    \end{tikzpicture}
\end{equation}
Set $W=\prod_{i=1}^{w} W_i$. Let $\vec{1}_w=(1, 1, \cdots, 1)$ be the framing vector with $w$-components. 
Denote by $X^+(v, \vec{1}_w)$ the representations of $Q^+$:  \[
X^+(v, \vec{1}_w)=\{(\epsilon, a^*_1, \ldots, a^*_w)\mid \epsilon\in \Hom(V, V), a^*_i\in \Hom(W_i, V), i=1, \ldots, w\}. 
\]
We impose the cyclic stability condition that: $(\epsilon,  a^*_1, \ldots, a_w^*)\in X^+(v, \vec{1}_w)$ is stable if $V$ is spanned by the image $\Image(a^*_1, \ldots, a^*_w)$ of $a^*_1, \ldots, a_w^*$, together with the operator $\epsilon$. Denote the stable locus of $X^+(v, \vec{1}_w)$ by $X^{+, st}(v, \vec{1}_w)$. 
The group $G_v\times \C^*\times T_w$ acts on $X^{+}(v, \vec{1}_w)$, where the action of $\C^*$ is given by the grading 
\[
\deg(\ep)=-2, \deg(a^*_i)=\ell_i, \,\ i=1, \ldots, w.
\]
Consider the following $G_v$-equivariant map
\begin{equation*}
\mu_{v, w}: X^+(v, \vec{1}_w)=\Hom(V, V)\times \Hom(W, V)\to \Hom(W, V), (\epsilon, a^*_1, \ldots, a_{w}^*)\mapsto (\epsilon^{\ell_1} a^*_1, \epsilon^{\ell_2} a^*_2,\ldots, \epsilon^{\ell_w} a^*_w). 
\end{equation*}
Denote by $\mu^{-1}_{v, w}(0)$ the kernel of $\mu$. The action of $G_{v}$ on the stable locus $\mu^{-1}_{v, w}(0)^{st}$ is free. 
Consider the variety
\begin{align*}
\mathfrak{M}^+(v, \vec{1}_w):&=\mu^{-1}_{v, w}(0)^{st}/G_v\\
&=\{(\epsilon, a^*_1, \ldots, a_w^*)\mid \epsilon^{\ell_i} a^*_i=0, i\in [1, w]\mid 
\C[\epsilon](\Image(a^*_1, \cdots, a_w^*))=V\}/G_v \subset X^{+, st}(v, \vec{1}_w)/G_v. 
\end{align*}

The general framework from  \cite[Theorem A.1 and Corollary A.9]{D}
gives the following. 
\begin{prop}\label{prop:dimred}
Dimension reduction along the cut $C$ gives the isomorphism of abelian groups.
\[
H^*_{c,\C^*\times T_w}(\tilde{X}(v, \vec{1}_w)^{st}/G_V, \varphi_{\bf{w}})
\cong H^{*}_{c,\C^*\times T_w}(\mu_{v, w}^{-1}(0)^{st}\times_{G_v} \C^{wv}, \C)
\cong H^{*-wv}_{c,\C^*\times T_w}(\mathfrak{M}^+(v, \vec{1}_w), \C), 
\]
where the affine space $\C^{wv}$ is $\Hom(V, W)$, and the right hand side is the compactly supported cohomology of $\mathfrak{M}^+(v, \vec{1}_w)$. 
\end{prop}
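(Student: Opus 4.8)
The plan is to apply the dimensional reduction framework of \cite{D} directly; the substance lies entirely in checking that the present situation satisfies its hypotheses. The key observation is that the cut $C=\{a_1,\dots,a_w\}$ exhibits the potential $\bfw=\sum_{i=1}^w\tr(\epsilon^{\ell_i}a_i^*a_i)$ as a function that is \emph{linear} in the cut coordinates $(a_1,\dots,a_w)\in\Hom(V,W)$: every monomial of $\bfw$ contains exactly one arrow of $C$, occurring to the first power. Writing $\tilde X(v,\vec 1_w)=X^+(v,\vec 1_w)\times\Hom(V,W)$, the function $\bfw$ is then the fibrewise pairing of the coordinate $(a_1,\dots,a_w)$ against the section $\mu_{v,w}$ of the dual bundle (where $\Hom(W,V)=\Hom(V,W)^\vee$ via the trace pairing), whose components $\epsilon^{\ell_i}a_i^*$ are precisely the partial derivatives $\partial\bfw/\partial a_i$. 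This is exactly the data to which \cite[Theorem A.1]{D} applies.

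First I would record the equivariance and stability that allow the theorem to be used on the quotient. By the grading \eqref{grading} the potential $\bfw$ has weight zero, so it is $G_v\times\C^*\times T_w$-invariant, and $\mu_{v,w}$ is equivariant for the induced actions on source and target. Moreover the stability condition on $\tilde X(v,\vec 1_w)$ refers only to $\epsilon$ and the $a_i^*$, hence is pulled back from $X^+(v,\vec 1_w)$ and is unaffected by the reduction; on the stable locus $G_v$ acts freely, so the quotients are honest varieties and the equivariant form of the framework applies. Applying \cite[Theorem A.1]{D} with the cut $C$ then yields the first isomorphism. Concretely, the vanishing along every fibre direction of the linear function $\bfw$ forces the vanishing of its coefficient, which is the already-recorded equivalence between $\tr(\epsilon^{\ell_i}a_i^*a_i)=0$ for all $a_i$ and $\epsilon^{\ell_i}a_i^*=0$; thus the critical cohomology of $\tilde X(v,\vec 1_w)^{st}/G_v$ is identified with the constant-coefficient cohomology of the locus $\mu_{v,w}^{-1}(0)$, with the cut fibre $\C^{wv}=\Hom(V,W)$ retained as a free factor, giving $H^*_{c,\C^*\times T_w}(\mu_{v,w}^{-1}(0)^{st}\times_{G_v}\C^{wv},\C)$ with no degree shift.

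For the second isomorphism I would note that $\mu_{v,w}^{-1}(0)^{st}\times_{G_v}\C^{wv}$ is the total space of the rank-$wv$ bundle over $\mathfrak M^+(v,\vec 1_w)=\mu_{v,w}^{-1}(0)^{st}/G_v$ associated to $\Hom(V,W)$. The Thom isomorphism in compactly supported cohomology (equivalently \cite[Corollary A.9]{D}) then collapses this bundle onto its base, producing the factor $H^{*-wv}_{c,\C^*\times T_w}(\mathfrak M^+(v,\vec 1_w),\C)$ with the stated shift. Both steps are manifestly $\C^*\times T_w$-equivariant.

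The genuinely delicate point is not the geometry but the bookkeeping: one must confirm that \cite[Theorem A.1]{D} may be invoked in the equivariant, GIT-quotiented setting — that its smoothness and linearity hypotheses survive the passage from $\tilde X(v,\vec 1_w)$ to the stable quotient — and, above all, that the normalisation of the vanishing cycle complex $\varphi_{\bfw}$ used here matches that of \cite{D}, so that the absence of a shift in the first isomorphism and the shift $-wv$ in the second come out exactly as stated. Everything else reduces to the elementary computation with the trace pairing already carried out before the proposition.
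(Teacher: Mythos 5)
Your proposal is correct and follows essentially the same route as the paper, which proves the proposition simply by invoking \cite[Theorem A.1 and Corollary A.9]{D} after the same preparatory observations you make (the potential is linear in the cut arrows with $\partial\bfw/\partial a_i=\epsilon^{\ell_i}a_i^*$, the stability condition depends only on $\epsilon$ and the $a_i^*$, and everything is $G_v\times\C^*\times T_w$-equivariant). The normalisation issue you flag at the end --- that the shift $-wv$ rather than $-2wv$ in the last isomorphism reflects the conventions for $\varphi_{\bfw}$ in \cite{D} rather than a bare Thom isomorphism in compactly supported cohomology --- is precisely the bookkeeping the paper delegates to the cited results as well.
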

The right hand side can be defined the using the right derived functor of the compactly supported global section functor $\Gamma_c$, see \cite{I86} for example. 

\begin{remark}
\yaping{I am here...}
When $\ell_1=\ell_2=\cdots=\ell_w=1$, we have $\mathfrak{M}^+(v, \vec{1}_w)\cong \Gr(v, w)$ which is smooth in this case. In general, the variety $\mathfrak{M}^+(v, \vec{1}_w)$ is singular. When $\ell_1=\cdots=\ell_w=\ell$, the geometry and topology of $\mathfrak{M}^+(v, \vec{1}_w)$ is discussed in \cite{BZJ20} (denoted by $\mathfrak{P}(k, n, \ell)$ with $k=v, n=w$). For example, the equation of $\mathfrak{P}(2, 2, 2)$ is given by $x_{21}^2=x_{11} x_{22}$ \cite[\S 4.1]{BZJ20} which is singular. 

Let $z^{\ell}=  \begin{bmatrix}
    z^{\ell_1} & & \\
    & \ddots & \\
    & & z^{\ell_w}
  \end{bmatrix}$ be the diagonal matrix of size $w\times w$. An isomorphism of $\mathfrak{M}^+(v, \vec{1}_w)$ and the following locus
\[
\bbX_{(\ell_1, \ell_2, \cdots, \ell_w)}(v):=\{\CC[z]^w=L_0 \supset L\mid  zL\subset L, \dim(L_0/L)=v, \text{and} \,\ z^\ell L_0\subset L\}, 
\]
of the affine Grassmannian of $\GL_w$ is established in \cite{CYZJ}. 
\end{remark}
 
\subsection{Torus fixed points}
 The $\C^*\times T_w$-action on $X^+(v, \vec{1}_w)$ induces a $\C^*\times T_{w}$-action on $\mathfrak{M}^+(v, \vec{1}_w)$. 
The fixed points of $\mathfrak{M}^+(v, \vec{1}_w)$ under the action of $\C^*\times T_{w}$ are given as follows. 

\begin{lemma}\label{lem:fixed points}
The set of torus fixed points $\mathfrak{M}^+(v, \vec{1}_w)^{\C^*\times T_{w}}$ is given by
\begin{equation}\label{eq:fixed points}
S(\ell_1, \ldots, \ell_w):=\{(v_1, v_2, \cdots, v_w) \mid 0\leq v_i\leq \ell_i, \forall i\in [1, w],\,\ 
\text{and} \sum_{i=1}^wv_i=v\}.  
\end{equation}
\end{lemma}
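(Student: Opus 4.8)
The plan is to read off fixed points from the $\C^* \times T_w$-weight decomposition that any fixed point induces on $V$. Since the $G_v$-action on the stable locus $\mu^{-1}_{v,w}(0)^{st}$ is free, a point of $\mathfrak{M}^+(v, \vec{1}_w)$ fixed by $\C^* \times T_w$ lifts to a representation $(\epsilon, a_1^*, \ldots, a_w^*)$ together with a \emph{unique} homomorphism $\rho: \C^* \times T_w \to G_v$ that intertwines the torus action with change of basis: for $(s,t) \in \C^* \times T_w$ one has $\rho(s,t)\,\epsilon\,\rho(s,t)^{-1} = s^{-2}\epsilon$ and $\rho(s,t)\,a_i^* = t_i s^{\ell_i} a_i^*$. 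Uniqueness (hence the fact that $\rho$ is a genuine homomorphism rather than a $1$-cocycle) is precisely the triviality of stabilizers on the stable locus recorded before Proposition~\ref{prop:dimred}. This endows $V$ with a weight decomposition $V = \bigoplus_{m, \vec{k}} V^{(m, \vec{k})}$.

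First I would record how the structure maps interact with this grading. From $\rho(s,t)\,\epsilon\,\rho(s,t)^{-1} = s^{-2}\epsilon$ one gets that $\epsilon$ lowers the $\C^*$-weight by $2$ and preserves the $T_w$-weight, i.e. $\epsilon \colon V^{(m, \vec{k})} \to V^{(m-2, \vec{k})}$. From $\rho(s,t)\,a_i^* = t_i s^{\ell_i} a_i^*$, together with $W_i$ carrying $T_w$-weight $e_i$ and trivial $\C^*$-weight, the (at most one-dimensional) image $\Image(a_i^*)$ lies in $V^{(\ell_i, e_i)}$.

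Next I would invoke stability. The condition $V = \C[\epsilon]\big(\Image(a_1^*, \ldots, a_w^*)\big)$, combined with the fact that $\epsilon$ preserves $T_w$-weights, forces every $T_w$-weight appearing in $V$ to be one of $e_1, \ldots, e_w$. Writing $V^{[i]}$ for the $T_w$-weight-$e_i$ summand gives $V = \bigoplus_{i=1}^w V^{[i]}$ with $V^{[i]} = \C[\epsilon]\,\Image(a_i^*)$ a \emph{cyclic} $\C[\epsilon]$-module. Since $\epsilon$ is nilpotent on the finite-dimensional $V^{[i]}$ and the module is cyclic, $\epsilon$ acts there as a single nilpotent Jordan block, and the defining relation $\epsilon^{\ell_i} a_i^* = 0$ (which holds because we are inside $\mu^{-1}_{v,w}(0)$) bounds its size: setting $v_i := \dim V^{[i]}$ yields $0 \le v_i \le \ell_i$, with $v_i = 0$ exactly when $a_i^* = 0$. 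Summing dimensions gives $\sum_{i=1}^w v_i = v$, so every fixed point produces a tuple in $S(\ell_1, \ldots, \ell_w)$.

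Finally I would check this assignment is a bijection. For the reverse direction, given $(v_1, \ldots, v_w) \in S(\ell_1, \ldots, \ell_w)$ I would build the explicit representation: take $V^{[i]}$ of dimension $v_i$ with $\epsilon$ a single Jordan block on each summand and $a_i^*$ sending a basis vector of $W_i$ to a cyclic generator; the relation $\epsilon^{\ell_i} a_i^* = 0$ holds because $v_i \le \ell_i$, and stability is immediate, so this is a well-defined $\C^* \times T_w$-fixed point. Injectivity follows since two fixed points with the same tuple have the same $T_w$-graded dimension vector and isomorphic cyclic strings, hence lie in a single $G_v$-orbit (and distinctness across tuples is clear). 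As a sanity check, the case $\ell_1 = \cdots = \ell_w = 1$ recovers the torus fixed points of $\Gr(v, w)$. The main point to handle carefully is the very first step—upgrading ``fixed up to the $G_v$-action'' to an honest cocharacter $\rho$—where freeness of the $G_v$-action on the stable locus is essential; once the weight decomposition is available, the remaining assertion (that $V$ is a direct sum of Jordan strings of lengths $v_i \le \ell_i$ indexed by the framing) is forced and elementary.
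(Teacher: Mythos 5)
Your proof is correct and follows essentially the same route as the paper's: produce the cocharacter $\rho$, decompose $V$ into torus weight spaces so that $\epsilon$ lowers the $\C^*$-weight by $2$ and $\Image(a_i^*)\subset V(\ell_i)$, then use stability together with the relation $\epsilon^{\ell_i}a_i^*=0$ to read off Jordan strings of lengths $v_i\le\ell_i$. The only differences are refinements the paper leaves implicit---your use of freeness of the $G_v$-action on the stable locus to upgrade the fixedness to a genuine homomorphism $\rho$ (the paper merely asserts its existence), the simultaneous $\C^*\times T_w$ bigrading in place of the paper's separate appeal to $T_w$-invariance to choose a basis, and your explicit verification that the assignment is a bijection (surjectivity via the explicit Jordan-string model and injectivity via the graded classification), where the paper spells out only the forward direction.
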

\begin{proof}
For any $(\epsilon, a^*_1, \ldots, a_w^*) \in \mathfrak{M}^+(v, \vec{1}_w)$ and $t\in \C^*$, then 
\[
t\cdot (\epsilon, a^*_1, \ldots, a_w^*)
=(t^{-2} \epsilon, t^{\ell_1}a^*_1, \ldots, t^{\ell_w}a_w^*). 
\]
If $(\epsilon, a^*_1, \ldots, a_w^*)$ is fixed by the $\C^*$-action, then there exists a group homomorphism $\rho: \C^*\to G_v, t\mapsto \rho(t)$, such that 
\[
(\rho(t)\epsilon \rho(t)^{-1}, \rho(t)a_1^*, \rho(t)a_2^*, \cdots, \rho(t)a_{w}^*)=
(t^{-2} \epsilon, t^{\ell_1}a^*_1, \ldots, t^{\ell_w}a_w^*). 
\]
We decompose $V$ into eigenspaces of $\rho(t)$: 
\[
V=\oplus_{n\in \Z} V(n), \text{where $V(n)=\{v\in V\mid \rho(t)\cdot v=t^n v\}$. }
\]
Now for the linear maps $a_i^*: W_i \to V$, we have
\[
\rho(t)(a_i^*(x))=t^{\ell_i} a_i^*(x), x\in \C. 
\]
Therefore, $\im(a_i^*)\subset V({\ell_i})$. 
For any $v\in V(n)$, we have
\[
\rho(t)\epsilon(v)=t^{-2} \epsilon \rho(t)(v)=t^{-2} \epsilon(t^n v)=t^{n-2}  \epsilon(v). 
\]
Therefore, $\epsilon(V(n))\subset V(n-2)$. 
By the stability condition, $V$ is generated by the image of $a_i^*$ and the operator $\epsilon$. Therefore, for each $i=1, \ldots, w$, we have the following chain
\[
\xymatrix{
W_i \ar@{->>}[r]^{a_{i}^*}& 
V({\ell_i}) \ar@{->>}[r]^{\epsilon} & V({\ell_i-2})
\ar@{->>}[r]^{\epsilon} & V({\ell_i-4}) \ar@{->>}[r]^{\epsilon} & \cdots  
}
\]
The above chain terminates at $V({\ell_i-(2v_i-2)})$, for some $v_i$ such that $0\leq v_i\leq \ell_i$, because of the relation $\epsilon^{\ell_i} a_i^*=0$. 

Fix a basis of $W=\oplus_i W_i$. 
The point $(\epsilon, a^*_1, \ldots, a_w^*)$ is invariant under $T_w$, then we can choose a basis of $V$, such that $\im(a_i^*), \im(\epsilon a_i^*), \im(\epsilon^2 a_i^*), \cdots $ are all coordinate quotient spaces of $W$. 

This gives the fixed point $(v_1, v_2, \cdots, v_w)$ with the property that $ \sum_{i=1}^wv_i=v$, and 
$0\leq v_i\leq \ell_i, \forall i\in [1, w]. 
$

\end{proof}
The torus fixed point $\lambda=(v_1, \cdots, v_w)$ can be illustrated by the picture as follows. 
\begin{equation}\label{pic:fixed points}
\begin{tikzpicture}[scale=0.7]
\tikzstyle{every node}=[font=\small]
\node at (4, 3) {$\vdots$};
\node at (2, 4) {$\bullet$};
\node at (4, 4) {$\bullet$};
\draw[->, thick] (4, 3.1) -- (4,3.9) ;
\draw[->, thick] (2, 4.1) -- (2,4.9) ;
\node at (2, 5) {$\bullet$};
\node at (2, 3) {$\vdots$};
\draw[->, thick] (2, 3.1) -- (2,3.9) ;
\node at (1, 3) {$\vdots$};
\draw[->, thick] (1, 3.1) -- (1,3.9) ;
\node at (1, 4) {$\bullet$};
\foreach \x in {3, 7, 10}
{\node at (\x, 0) {$\cdots$};}
\foreach \x in {3, 7}
{\node at (\x, 1) {$\cdots$};}
\foreach \x in {1,2, 4, 5, 6, 8,9 , 11}
{\node at (\x, 0) {$\square$};}	
\foreach \x in {1,2, 4, 5, 8}
{\node at (\x, 1) {$\bullet$};}	
\foreach \x in {1,2, 4}
{\node at (\x, 2) {$\bullet$};}	
\foreach \x in {1,2, 4, 5, 8}
{\draw[->, thick] (\x, 0.1) -- (\x, 0.9) ;}
\foreach \x in {1,2, 4}
{\draw[->, thick] (\x, 1.1) -- (\x, 1.9) ;}
\node at (1, -0.5) {$z_1$};
\node at (2, -0.5) {$z_2$};
\node at (11, -0.5) {$z_w$};

\node at (0.5, 1) {$1$};
\node at (0.5, 2) {$2$};
\node at (0.5, 4) {$v_1$};
\node at (1.5, 5) {$v_2$};
\node at (3.5, 4) {$v_i$};
\node at (11, 0.5) {$v_w$};
\end{tikzpicture}
\end{equation}
where the height of column $i$ is at most $\ell_i$, $i\in [1, w]$. 
The square box $\square$ in column $i$ stands for the basis of the vector space $W_i$ at the framing vertex $\square_i$, and the $\bullet$'s are the eigenbasis of $V$. The arrows from the square box in column $i$ to the $\bullet$' are the maps $a^*_i: W_i \to V$, and other arrows are the maps $\epsilon: V\to V$. For each column $i$, there are $v_i$ number of $\bullet$'s on top of $\square$. Therefore, there are $w$ number of $\square$'s and we have $\sum_i v_i=v$. The height is bounded above by $\ell_i$ because of the relation $\epsilon^{\ell_i} a^*_i=0$, $i=1, \cdots, w$.

Let $z_1, \cdots, z_w$ be the $\C^*\times T_w$-equivariant Chern roots of $W$. Using the grading \eqref{grading}, the $\C^*\times T_w$-equivariant Chern roots of $V$ are given by 
\begin{equation}\label{eq:weights of fix}
z_j-\ell_j\hbar, z_j-(\ell_j-2)\hbar, z_j-(\ell_j-4)\hbar, \,\ 
\cdots, \,\ z_j-(\ell_j-2v_j+2)\hbar. 
\end{equation}
where $l_j \geq v_j\geq 1$. for $j=1, 2, \cdots, w$.  

\subsection{The case when $w=1$}
In this section, we consider the special case when $w=1$. 
\begin{lemma}
\label{lem:w=1}
When $w=1$, we have
$$
\mathfrak{M}^+(v, 1)=
\begin{cases}
\pt, & \text{for $0\leq v\leq \ell$,} \\
\emptyset, & \text{otherwise. }
\end{cases}
$$
\end{lemma}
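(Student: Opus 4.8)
The plan is to unwind the definition of $\mathfrak{M}^+(v,1)$ in the case $w=1$ and reduce the statement to the elementary classification of cyclic nilpotent endomorphisms. Since $W=W_1$ is one-dimensional, the map $a^*_1\colon W_1\to V$ carries the same information as the single vector $u:=a^*_1(1)\in V$, so a point of $\mu^{-1}_{v,1}(0)^{st}$ is a pair $(\epsilon,u)$ with $\epsilon\in\End(V)$ satisfying the relation $\epsilon^{\ell}u=0$, subject to the cyclic stability condition $V=\C[\epsilon]\,u$. In other words, such a point is exactly a cyclic $\C[\epsilon]$-module $V$ with chosen generator $u$ on which $\epsilon^{\ell}$ annihilates $u$.

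From this description the emptiness for $v>\ell$ is immediate: by stability $V=\Span\{\,\epsilon^k u : k\geq 0\,\}$, and the relation $\epsilon^{\ell}u=0$ forces $\epsilon^k u=0$ for all $k\geq\ell$, so already $V=\Span\{u,\epsilon u,\dots,\epsilon^{\ell-1}u\}$ and hence $v=\dim V\leq\ell$. Thus no stable representation exists when $v>\ell$. For $0\leq v\leq\ell$ I would classify the stable pairs up to $G_v$: writing $\ann(u)=(f)$ in $\C[x]$ (with $x$ acting as $\epsilon$), cyclicity gives $V\cong\C[x]/(f)$ via $u\mapsto 1$, while $\epsilon^{\ell}u=0$ says $x^{\ell}\in(f)$, i.e.\ $f\mid x^{\ell}$, so $f=x^{d}$, and comparing dimensions yields $d=v$. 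Hence every stable pair is isomorphic to $(\epsilon_0,u_0)$ with $V=\C[x]/(x^{v})$, $\epsilon_0$ multiplication by $x$ (a single nilpotent Jordan block) and $u_0=1$; any two such pairs are carried to one another by the module isomorphism matching their generators, so the stable locus $\mu^{-1}_{v,1}(0)^{st}$ is a single $G_v$-orbit.

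To conclude $\mathfrak{M}^+(v,1)=\pt$, I would then invoke that the $G_v$-action on the stable locus is free (as already recorded before Proposition \ref{prop:dimred}), so the quotient of a single free orbit is one reduced point. The only step requiring care is the uniqueness of the orbit: one must check that an isomorphism between two cyclic modules is determined by where it sends the generator, so that the conjugation identifying two stable pairs is unique and the orbit is honestly a single point rather than a positive-dimensional family. This is where the cyclicity built into the stability condition does the essential work; the conclusion is also consistent with Lemma \ref{lem:fixed points}, whose fixed-point set consists of the single element $(v)$ when $0\leq v\leq\ell$ and is empty otherwise.
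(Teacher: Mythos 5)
Your proof is correct and follows essentially the same route as the paper: both arguments dispose of $v>\ell$ via the relation $\epsilon^{\ell}a^{*}=0$ together with cyclic stability, and then show the stable locus is a single free $G_v$-orbit by normalizing any stable pair to the standard cyclic pair (the Jordan block $J_v$ with cyclic vector) and checking the stabilizer is trivial. Your module-theoretic packaging --- $V\cong\C[x]/(f)$ with $f\mid x^{\ell}$ forcing $f=x^{v}$, and the observation that an isomorphism of cyclic modules is determined by the image of the generator --- is a coordinate-free rendition of the paper's explicit basis $\{\epsilon^{v-1}\im(a^{*}),\dots,\im(a^{*})\}$, its eigenvalue argument for nilpotency, and its inductive stabilizer computation.
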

\begin{proof}
For any pair $(\epsilon, a^*)\in X^{+, st}(v, 1)$, by stability conditions, we have $\im(a^*)$ is a non-zero vector in $V$, and $\C[\epsilon](\im (a^*))=V$. Furthermore, the relation $\epsilon^{\ell} a^*=0$ implies that the dimension of $\C[\epsilon](\im (a^*))$ is at most $\ell$. Therefore, if $v=\dim(V)>\ell$, we have $\mathfrak{M}^+(v, 1)=\emptyset$.

We now assume the dimension of $V$ is in the range $0\leq v\leq \ell$,  we first show the action of $G_v$ on the set 
\[
\{(\epsilon, a^*)\in X^{+, st}(v, 1)\mid \epsilon^{\ell} a^*=0\}
\]
is transitive.

For any element $(\epsilon, a^*)$ in the above set, by the stability condition $\C[\epsilon](\im (a^*))=V$, there exists a basis of $V$ given by 
\[
\mathcal{B}=\{\epsilon^{v-1}\im (a^*), \epsilon^{v-2}\im (a^*), \cdots, \epsilon \im (a^*), \im (a^*)\}. \]
Consider the $\ell$-th power $\epsilon^{\ell}\in \End(V)$. The condition $\epsilon^{\ell}a^*=0$ implies that the operator $\epsilon^{\ell}$ is the zero matrix under the basis $\mathcal{B}$. 
Let $\lambda$ be an eigenvalue of $\epsilon$, then $\lambda^{\ell}$ is an eigenvalue of $\epsilon^{\ell}$. Therefore, $\lambda=0$. This forces $\epsilon^v=0$. 

Choose $g\in G_v$ to be be the matrix whose column vectors are the above basis elements in $\calB$. Then $g$ is the transition matrix from $\calB$ to the standard basis $\{e_1, e_2, \cdots, e_v\}$. We have
\[
g(e_v)=\im(a^*), g(e_{v-1})=\epsilon \im(a^*), \cdots, g(e_1)=\epsilon^{v-1}\im(a^*). 
\]
This implies that \[
g^{-1}\epsilon g(e_{s})=e_{s-1}, (s=v, v-1, \cdots, 2), \text{and $g^{-1}\epsilon g(e_{1})=0$}. 
\]
Therefore, $g^{-1}\epsilon g$ is the Jordan block of the form
\[
J_v=\begin{bmatrix}
0 & 1 & 0& \cdots & 0\\
0 & 0 &1&  \cdots & 0\\
 &&&\ddots&\\
0 & 0 &0& \cdots & 1\\
0 & 0 & 0&\cdots & 0\\
\end{bmatrix}
\] and $g^{-1}a^*$ is the cyclic vector $e_v$. Therefore, the action of $G_v$ is transitive. 

We then check the  stabilizer of the pair $(J_v, e_v)\in X^{+, st}(v, 1)$ is trivial. 
For $g$ lies in the stabilizer, clearly, $gJ_v=J_v g$ and $ge_v=e_v$. Then, 
\[
ge_{v-1}=gJ_v e_v=J_vg e_v=J_v e_v=e_{v-1}. 
\]
Inductively, this shows that $g(e_k)=e_k$ for all $k=1, \cdots, v$. As $g$ fixes each basis vector, we have $g=\id$. 
Thus, $\mathfrak{M}^+(v, 1)=\pt$, if $0\leq v\leq \ell$.  
\end{proof}

\section{Proof of Theorem \ref{thm:action} and Proposition \ref{prop:w=1}} \label{sec:proof}

\subsection{Proof of  Theorem \ref{thm:action} }
\label{subsec:proof of thm}
In this subsection, we prove Theorem \ref{thm:action}.
\subsubsection{The Yangian of $\mathfrak{sl}_2$}
Recall that $Y_{\hbar}(\mathfrak{sl}_2)$ is an associative algebra over $\C[\hbar]$, generated by the variables
\[
e_{r}, f_{r}, \psi_{r}, (r\in \N),
\]
subject to relations described below. 
\begin{align}
&\text{For any $r, s\in \N$}, \,\ 
[\psi_r, \psi_s]=0;\label{Y1}\tag{Y1} \\
&\text{For any $s\in \N$}, \,\ 
 [\psi_0, e_{s}]= 2 e_{s}, \,\
 [\psi_0, f_{s}]= -2 f_{s}; \label{Y2}\tag{Y2}\\
 & \text{For any $r, s\in \N$}, \,\
 [\psi_{r+1}, e_s]-[\psi_{r}, e_{s+1}]=
2\hbar(\psi_{r}e_s+e_s\psi_{r}), \,\ 
[\psi_{r+1}, f_s]-[\psi_{r}, f_{s+1}]=
-2\hbar(\psi_{r}f_s+f_s\psi_{r});\label{Y3}\tag{Y3}\\
&\text{For $r, s\in \N$}, [e_{r+1}, e_{s}]-[e_{r}, e_{s+1}]=2\hbar(e_{r}e_{s}+e_{s}e_{r}), \,\
[f_{r+1}, f_{s}]-[f_{r}, f_{s+1}]=-2\hbar(f_{r}f_{s}+f_{s}f_{r});\label{Y4}\tag{Y4}\\
&\text{For $r, s\in \N$}, 
[e_r, f_s]=2\hbar\psi_{r+s}.\label{Y5}\tag{Y5}
\end{align}

\subsubsection{The action of the Yangian}

As in the discussion in \cite[\S 3.4]{BZJ20}, it remains unknown whether $H^*_{c,\C^*\times T_w}(\mathfrak{M}^+(v, \vec{1}_w))^{\vee}$ is free over  $H^*_{c,\C^*\times T_w}(\pt)^{\vee}$, or whether $\mathfrak{M}^+(v, \vec{1}_w)$ is equivariantly
formal (See \cite[Lemma 2]{B00} for equivalent characterizations). We plan to investigate this in future work. In the present paper, we use the following equivariant localization (see \cite[Theorem 6.2]{GKM} and also \cite[Lemma 1]{B00}).  
\begin{lemma}
Let $i: \mathfrak{M}^+(v, \vec{1}_w)^{\C^*\times T_w}\inj \mathfrak{M}^+(v, \vec{1}_w)$ be the inclusion of the fixed point set. Then
\[
i^{*}: H^*_{c,\C^*\times T_w}(\mathfrak{M}^+(v, \vec{1}_w))^{\vee}\to H^*_{c,\C^*\times T_w}(\mathfrak{M}^+(v, \vec{1}_w)^{\C^*\times T_w})^{\vee}, \,\ 
i_{*}: H^*_{c,\C^*\times T_w}(\mathfrak{M}^+(v, \vec{1}_w)^{\C^*\times T_w})^{\vee} \to  H^*_{c,\C^*\times T_w}(\mathfrak{M}^+(v, \vec{1}_w))^{\vee}
\]
become isomorphism after inverting finite many characters of $\C^*\times T_w$. 
\end{lemma}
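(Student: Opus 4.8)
The plan is to realize this as an instance of the equivariant localization theorem of \cite[Theorem 6.2]{GKM}, applied to the $T:=\C^*\times T_w$-action on the finite-type variety $X:=\mathfrak{M}^+(v,\vec{1}_w)$. By Verdier duality the dual compactly supported equivariant cohomology $H^*_{c,T}(X)^{\vee}$ is, up to a shift, the equivariant Borel--Moore homology $H^{\mathrm{BM},T}_*(X)$, a module over $H^*_T(\pt)=\Sym((\Lie T)^{\vee})$; under this identification $i_*$ is the proper pushforward along the closed embedding $i$, and $i^*$ the restriction (dually, the Gysin map). By Lemma \ref{lem:fixed points} the fixed locus $X^T=S(\ell_1,\ldots,\ell_w)$ is finite, so $H^{\mathrm{BM},T}_*(X^T)$ is free with one generator per fixed point. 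It therefore suffices to prove that $i_*$ and $i^*$ become isomorphisms after inverting a finite set of characters.

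First I would handle $i_*$. Let $U:=X\setminus X^T$ be the open complement. The localization long exact sequence for the closed pair reads
\[
\cdots\to H^{\mathrm{BM},T}_k(X^T)\xrightarrow{\,i_*\,}H^{\mathrm{BM},T}_k(X)\to H^{\mathrm{BM},T}_k(U)\to\cdots,
\]
so $i_*$ is an isomorphism once $H^{\mathrm{BM},T}_*(U)$ is killed after localization. To see the latter I would stratify $U$ by orbit type. As $U$ contains no $T$-fixed point, every stratum $S$ has identity-component stabilizer a proper subtorus $T'\subsetneq T$; by the general support principle (Quillen--Hsiang) the support of $H^{\mathrm{BM},T}_*(S)$ in $\Spec H^*_T(\pt)=\Lie T$ is contained in $\Lie T'$, hence it is annihilated after inverting any character vanishing on $\Lie T'$. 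Because $X$ is of finite type there are only finitely many such stabilizer subtori, and the relevant characters are precisely the weights of the action along one-dimensional orbits, namely the $z_i-z_j-a\hbar$, whose underlying fixed-point weights are the $z_j-(\ell_j-2k)\hbar$ of \eqref{eq:weights of fix}. Inverting this finite set and running the long exact sequences of the stratification kills $H^{\mathrm{BM},T}_*(U)$, so $i_*$ is an isomorphism.

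For $i^*$ I would run the identical torsion mechanism in compactly supported cohomology: the same stratification produces
\[
\cdots\to H^{k}_{c,T}(U)\to H^{k}_{c,T}(X)\xrightarrow{\,i^*\,}H^{k}_{c,T}(X^T)\to\cdots,
\]
and the outer terms $H^{*}_{c,T}(U)$ are annihilated after inverting the same characters, so the restriction $i^*$ becomes an isomorphism after localization, and hence so does its dual. In the smooth case $\ell_1=\cdots=\ell_w=1$, where $X=\Gr(v,w)$, this is just the classical statement that $i^*i_*$ is cup product with the invertible equivariant Euler class of the normal bundle assembled from the weights \eqref{eq:weights of fix}.

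The main obstacle, and the reason for invoking \cite{GKM} rather than the Atiyah--Bott theorem, is that $X=\mathfrak{M}^+(v,\vec{1}_w)$ is singular for general $(\ell_1,\ldots,\ell_w)$ (it is smooth, equal to $\Gr(v,w)$, only when all $\ell_i=1$), so the localization must be carried out in the Borel--Moore and compactly supported setting valid for arbitrary finite-type varieties with torus action. The only remaining delicate point is bookkeeping: verifying that one fixed finite collection of characters, the $z_i-z_j-a\hbar$, annihilates the complement uniformly over all strata and all $v$, which is exactly what the explicit fixed-point weight computation \eqref{eq:weights of fix} supplies.
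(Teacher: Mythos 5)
Your proposal is correct in substance, but note first that the paper does not actually prove this lemma: it is quoted as a known localization theorem, with references to \cite[Theorem 6.2]{GKM} and \cite[Lemma 1]{B00}. What you have written is essentially the standard argument \emph{behind} those citations -- identify $H^*_{c,T}(\cdot)^{\vee}$ with equivariant Borel--Moore homology, use the long exact sequence of the closed pair $(X^T, U)$, and kill $H^{\mathrm{BM},T}_*(U)$ by the Quillen--Hsiang support principle together with the finiteness of stabilizer subtori on a finite-type variety. This is exactly Brion's mechanism, and your emphasis on why Atiyah--Bott is insufficient (the $\mathfrak{M}^+(v,\vec{1}_w)$ are singular in general, as the paper notes for $\mathfrak{P}(2,2,2)$) is the right reason the paper reaches for \cite{GKM, B00}. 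One bookkeeping remark: the lemma only asserts that \emph{finitely many} characters suffice, which follows from the finiteness of stabilizer subtori alone; your identification of the characters as the $z_i - z_j - a\hbar$ is plausible given \eqref{eq:weights of fix} but is not needed, and the phrase ``whose underlying fixed-point weights are the $z_j-(\ell_j-2k)\hbar$'' conflates tangent weights with Chern roots.

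There is, however, one genuine wrinkle in your treatment of $i^*$. The dual of the restriction $H^*_{c,T}(X)\to H^*_{c,T}(X^T)$ is a map $H^*_{c,T}(X^T)^{\vee}\to H^*_{c,T}(X)^{\vee}$, which under the Borel--Moore identification is precisely the proper pushforward $i_*$ you already handled; so your two arguments are dual to each other and both establish the $i_*$ statement. The map the lemma calls $i^*$, going $H^*_{c,T}(X)^{\vee}\to H^*_{c,T}(X^T)^{\vee}$, is a wrong-way map in Borel--Moore homology: for a closed embedding into a \emph{singular} ambient variety there is no canonical Gysin restriction at the integral level ($i^!D_X = D_{X^T}$ but $i^*D_X \neq D_{X^T}$ in general), so $i^*$ must either be constructed with more care or simply defined after localization as the inverse of $i_*$ (up to Euler-class normalization), which is how it is used in the paper to produce the fixed-point basis. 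Your compactly supported long exact sequence is valid but proves the predual statement, not this one; filling this in would require either invoking the precise formulation in \cite[Lemma 1]{B00} or making the post-localization definition of $i^*$ explicit.
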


By the equivariant localization and Lemma \ref{lem:fixed points}, a basis of 
$
H^*_{c,\C^*\times T_w}(\mathfrak{M}^+(v, \vec{1}_w))^{\vee}\otimes_R K$ is given by $\{\lambda \mid \lambda \in S(\ell_1, \ldots, \ell_w)\}$.

Define the generating series 
\begin{equation}
\psi(z):= 1+2\hbar\sum_{i\geq 0} \psi_i z^{-i-1}. 
 \end{equation}

For a $G$-equivariant vector bundle $E$ on $X$, denote by $\lambda_{t}(E)$ the equivariant Chern polynomial of $E$: $\lambda_{t} (E)\in H_{G}(X)[t]$. 
The coefficient of $t$ in $\lambda_{t}(E)$ is the equivariant first Chern class $c_{1}(E)\in H_{G}(X)$. When $E$ is a line bundle, we have 
\[
\lambda_{t}(E)=1+c_{1}(E) t. 
\] Moreover, for any $E$
and $F$,  we have $\lambda_{t}(E \oplus F) = \lambda_t(E)  \lambda_t(F)$.
Let $
t:=-\frac{1}{z}$. 
Then, for any line bundle $E$, we have $\lambda_{-\frac{1}{z}}(E)=1-c_{1}(E)\frac{1}{z}$. For any $G$-equivariant K-theory class $[E]-[q E]$, we have 
\[
\lambda_{-\frac{1}{z}}([E]-[q E])
=\frac{1-c_{1}(E)\frac{1}{z}}{1-(c_{1}(E)+\hbar)\frac{1}{z}}=\frac{z-c_{1}(E)}{z-(c_{1}(E)+\hbar)}. 
\]

We define the action of $\psi(z)$ on $\bigoplus_{v\in \N} H^*_{c,\C^*\times T_w}(\mathfrak{M}^+(v, \vec{1}_w))^{\vee}$
via the Chern polynomial $\lambda_{-\frac{1}{z}}$ of  the following class in the Grothendieck group \[(q^{-2})\calV-(q^{2})\calV+\sum_{i=1}^w(q_i^{\ell_i})\calW_i -\sum_{i=1}^w(q^{-\ell_i})\calW_i.\] Here $q$ is the coordinate line considered as a $\C^*$-representation. It is straightforward to show the following.  See  \cite[\S~6.2]{RSYZ1} for example. 
\begin{prop}\label{prop:psi(z)}
Let $\lambda$ be a $\C^*$-fixed point. The eigenvalue of $\psi(z)$ on $[\lambda]$ equals
\begin{equation}
\label{eq:psi}
\tilde{\psi}(z):=
\prod_{\square\in \lambda} \frac{z-x_{\square}+2\hbar }{z-x_{\square}-2\hbar} \prod_{j=1}^w\frac{z-z_j-\ell_j\hbar}{z-z_j+\ell_j\hbar}. 
\end{equation}
\end{prop}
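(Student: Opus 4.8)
The plan is to reduce the statement to a direct equivariant localization computation. By construction the operator $\psi(z)$ acts on $\bigoplus_{v} H^*_{c,\C^*\times T_w}(\mathfrak{M}^+(v,\vec{1}_w))^{\vee}$ as multiplication by the Chern polynomial $\lambda_{-\frac1z}$ of the virtual class
\[
(q^{-2})\calV-(q^{2})\calV+\sum_{i=1}^w(q^{\ell_i})\calW_i-\sum_{i=1}^w(q^{-\ell_i})\calW_i .
\]
Since $[\lambda]$ is a torus fixed-point basis vector, multiplication by any such equivariant characteristic class is diagonal in the fixed-point basis, with eigenvalue equal to the restriction of $\lambda_{-\frac1z}(\cdots)$ to the fixed point $\lambda$. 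So it suffices to evaluate this restriction using the Chern roots of $\calV$ and of the $\calW_i$ at $\lambda$, which are already recorded above.

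First I would collect the relevant Chern roots. By Lemma \ref{lem:fixed points} the fixed point is $\lambda=(v_1,\dots,v_w)$, and by \eqref{eq:weights of fix} the restriction of $\calV$ to $\lambda$ splits as a direct sum of one-dimensional pieces whose equivariant Chern roots are precisely the numbers $x_\square$, $\square\in\lambda$; concretely each box $\square$ sitting in column $j$ contributes the root $z_j-(\ell_j-2s+2)\hbar$ for $s=1,\dots,v_j$. The framing line $\calW_i$ restricts to the one-dimensional $T_w$-representation of weight $z_i$.

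Next I would use multiplicativity $\lambda_t(E\oplus F)=\lambda_t(E)\lambda_t(F)$ together with the rule, recorded before the proposition, that tensoring a class by the coordinate line $q^a$ shifts each Chern root by $a\hbar$, so that $\lambda_{-\frac1z}$ of a line bundle with Chern root $x$ equals $1-\tfrac{x}{z}=\tfrac{z-x}{z}$. Assembling the four summands, the $(q^{-2})\calV$ and $-(q^{2})\calV$ terms combine to $\prod_{\square\in\lambda}\frac{z-x_\square+2\hbar}{z-x_\square-2\hbar}$, while the $\calW$-terms combine to $\prod_{j=1}^w\frac{z-z_j-\ell_j\hbar}{z-z_j+\ell_j\hbar}$, the spurious factors of $z$ cancelling in numerator and denominator of each ratio. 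Multiplying the two contributions yields exactly $\tilde\psi(z)$ as in \eqref{eq:psi}.

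I expect no serious obstacle: the content is entirely the localization isomorphism of the preceding lemma together with the elementary behaviour of $\lambda_t$ under direct sums and twists by $q$. The only point demanding care is the bookkeeping of the $q^{\pm}$ twists, i.e.\ verifying that the $\hbar$-shifts $x_\square\mapsto x_\square\mp2\hbar$ and $z_i\mapsto z_i\pm\ell_i\hbar$ land in the intended numerator or denominator factors, so that the virtual (difference) class produces the stated ratio and not its reciprocal.
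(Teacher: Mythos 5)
Your proposal is correct and follows essentially the same route as the paper: the paper's proof likewise evaluates $\lambda_{-\frac{1}{z}}$ of the virtual class at the fixed point, attributing the factor $\prod_{\square\in\lambda}\frac{z-x_{\square}+2\hbar}{z-x_{\square}-2\hbar}$ to $(q^{-2})\calV-(q^{2})\calV$ and $\prod_{j=1}^w\frac{z-z_j-\ell_j\hbar}{z-z_j+\ell_j\hbar}$ to $\sum_i\big(q^{\ell_i}\calW_i-q^{-\ell_i}\calW_i\big)$. Your sign bookkeeping for the $q^{\pm}$ twists (shifting a Chern root $x$ by $a\hbar$ under $q^a$, so that $\lambda_{-\frac{1}{z}}$ of a twisted difference class gives the stated ratio rather than its reciprocal) matches the formula $\lambda_{-\frac{1}{z}}([E]-[qE])=\frac{z-c_1(E)}{z-c_1(E)-\hbar}$ recorded in the paper just before the proposition.
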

\begin{proof}
The factor $\frac{z-z_j-\ell_j\hbar}{z-z_j+\ell_j\hbar}$ comes from $\lambda_{-\frac{1}{z}} \Big(q_i^{\ell_i}\calW_i -q^{-\ell_i}\calW_i\Big)$, and the factor  $\prod_{\square\in \lambda} \frac{z-x_{\square}+2\hbar }{z-x_{\square}-2\hbar}$ comes from $\lambda_{-\frac{1}{z}} \Big((q^{-2})\calV-(q^{2})\calV\Big)$. 
\end{proof}

Let $\lambda = (v_1, v_2, \dots, v_w)$ be a torus fixed point. We denote by $\lambda + \blacksquare = (v_1', v_2', \dots, v_w')$ another torus fixed point such that $v_i \leq v_i'$ for all $1 \leq i \leq w$, and $\sum_i v_i' = \sum_i v_i + 1$. We say that $\lambda + \blacksquare$ is obtained by adding a “box” $\blacksquare$ to $\lambda$.

We use the notation $\langle\lambda|e_{r}| \lambda+\blacksquare\rangle$ for the coefficient of $\lambda+\blacksquare$ in the expansion of $e_r(\lambda)$. Similar convention is used for $\langle \lambda+\blacksquare|f_{m}| \lambda\rangle$.

For a box $\square$ or $\blacksquare$, denote by $x_\square$ or $x_\blacksquare$ the $\C^*\times T_w$-weight of the corresponding box, which is an element in $H_{\C^*\times T_w}(\pt)$.
Recall that the $T_w\times \C^*$-equivariant Chern roots of $\calW$ are $z_1, z_2, \cdots, z_w$. Therefore, $x_\blacksquare$ is determined by the formula \eqref{eq:weights of fix}. 

Define the matrix coefficients of the operators $e_{r}, f_{m}$ in the  basis of fixed points are as follows:
	\begin{align}
	\langle\lambda|e_r| \lambda+\blacksquare\rangle=
	&  \Res_{z=x_{\blacksquare}} \Big(z^r 
\prod_{\square\in \lambda} \frac{z-x_{\square} }{z-x_{\square}-2\hbar} \prod_{j=1}^w\frac{z-z_j-\ell_j\hbar}{z-z_j+\ell_j\hbar}\Big), 
	\label{eaction}\\
	\langle \lambda+\blacksquare|f_{m}| \lambda\rangle=&
\Big(z^m
\prod_{ \square\in \lambda}\frac{z-x_{\square}+2\hbar}{ z-x_{\square}}\Big)|_{z=x_{\blacksquare}}. 	\label{faction}\end{align}

\begin{prop}\label{prop:ef formula}
The formulas \eqref{eq:psi} \eqref{eaction} \eqref{faction} satisfy the relations of $Y_{\hbar}(\mathfrak{sl}_2)$. 
In particular, $Y_{\hbar}(\mathfrak{sl}_2)$ acts on the cohomology $
H^*_{c,\C^*\times T_w}(\mathfrak{M}^+(v, \vec{1}_w))^{\vee}\otimes_R K$. 
\end{prop}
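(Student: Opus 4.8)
The plan is to verify each defining relation \eqref{Y1}--\eqref{Y5} directly at the level of matrix coefficients in the fixed-point basis $\{\lambda\}_{\lambda\in S(\ell_1,\dots,\ell_w)}$, exploiting that $\psi(z)$ acts diagonally with eigenvalue $\tilde\psi_\lambda(z)$ from \eqref{eq:psi}, while $e_r$ and $f_m$ are ``one-box'' operators. It is convenient to write
\[
E_\lambda(z)=\prod_{\square\in\lambda}\frac{z-x_\square}{z-x_\square-2\hbar}\prod_{j=1}^w\frac{z-z_j-\ell_j\hbar}{z-z_j+\ell_j\hbar},\qquad F_\lambda(w)=\prod_{\square\in\lambda}\frac{w-x_\square+2\hbar}{w-x_\square},
\]
so that \eqref{eaction}, \eqref{faction} read $\langle\lambda|e_r|\lambda+\blacksquare\rangle=\operatorname{Res}_{z=x_\blacksquare}\big(z^rE_\lambda(z)\big)$ and $\langle\lambda+\blacksquare|f_m|\lambda\rangle=x_\blacksquare^m F_\lambda(x_\blacksquare)$. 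Since $E_\lambda$ has a simple pole at each addable box weight, this gives the factorized form $\langle\lambda|e_r|\lambda+\blacksquare\rangle=c_\blacksquare\,x_\blacksquare^r$ with $c_\blacksquare=\operatorname{Res}_{z=x_\blacksquare}E_\lambda(z)$, so the dependence on the mode index is purely through powers of the box weight; in particular $\langle\lambda|e_{s+1}|\lambda+\blacksquare\rangle=x_\blacksquare\langle\lambda|e_s|\lambda+\blacksquare\rangle$, and the same holds for $f$. I record two structural identities used throughout: the factorization $\tilde\psi_\lambda(z)=E_\lambda(z)F_\lambda(z)$, and the one-box ratio $\tilde\psi_{\lambda+\blacksquare}(z)=\frac{z-x_\blacksquare+2\hbar}{z-x_\blacksquare-2\hbar}\,\tilde\psi_\lambda(z)$, both immediate from \eqref{eq:psi} and \eqref{eq:weights of fix}.

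Relation \eqref{Y1} is automatic because $\psi(z)$ is diagonal. For \eqref{Y2} I would extract from \eqref{eq:psi} the eigenvalue $\tilde\psi_0(\lambda)=2\sum_iv_i-\sum_j\ell_j$ (the $z^{-1}$-coefficient of $\tilde\psi_\lambda(z)-1$, up to the factor $2\hbar$); since adding a box raises $\sum_iv_i$ by one, it raises this eigenvalue by $2$, which is exactly \eqref{Y2} once read against the matrix-coefficient definition. The relation \eqref{Y3} I would verify one pair $(\lambda,\lambda+\blacksquare)$ at a time: using $\langle\lambda|e_{s+1}|\lambda+\blacksquare\rangle=x_\blacksquare\langle\lambda|e_s|\lambda+\blacksquare\rangle$, the identity \eqref{Y3} collapses to a relation among the eigenvalues $\tilde\psi_r(\lambda)$ and $\tilde\psi_r(\lambda+\blacksquare)$, which after packaging into generating series becomes an elementary consequence of the one-box ratio together with the $\tilde\psi_0$-shift of \eqref{Y2}; the $f$-version is analogous, with the sign of $2\hbar$ reversed as in \eqref{Y3}.

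Relation \eqref{Y4} I would check on the two-box matrix coefficient $\langle\lambda|e_re_s|\nu\rangle$ with $\nu$ obtained from $\lambda$ by adding two boxes $A,B$, computed in both orders of insertion. The transformation rule $E_{\lambda+A}(z)=\frac{z-x_A}{z-x_A-2\hbar}E_\lambda(z)$ controls how the residue at $x_B$ changes after first adding $A$, so that the two orderings differ by a factor of the form $\frac{x_B-x_A}{x_B-x_A-2\hbar}$; the sum of the two contributions is then a routine rational identity in the two weights $x_A,x_B$ reproducing the quadratic relation \eqref{Y4}, and the $f$-relation follows identically with $E$ replaced by $F$. The genuine obstacle is \eqref{Y5}. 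Its diagonal part at $\lambda$ is $\langle\lambda|e_rf_s|\lambda\rangle-\langle\lambda|f_se_r|\lambda\rangle$, which equals a sum over the removable boxes of $\lambda$ (from $e_rf_s$) minus a sum over the addable boxes (from $f_se_r$), each summand a power $x_\blacksquare^{r+s}$ times a product of a residue of $E$ and a value of $F$. Packaging these into a generating series in $z$, I would use that the poles of $\tilde\psi_\lambda(z)=E_\lambda(z)F_\lambda(z)$ sit precisely at the addable and removable box weights, and that a short residue computation identifies the partial-fraction expansion of $\tilde\psi_\lambda(z)-1$ with the two box-sums, which is the generating-series form of \eqref{Y5} (fixing the normalization $2\hbar$). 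Finally I must verify that the off-diagonal coefficients of $[e_r,f_s]$, between $\lambda$ and $\lambda-R+A$ with $R\ne A$ in distinct columns, cancel between the two orderings; this follows because adding $A$ and removing $R$ affect disjoint factors of $E_\lambda$ and $F_\lambda$, so the two cross-terms coincide. This residue bookkeeping in \eqref{Y5}, rather than any conceptual point, is where the real work lies.
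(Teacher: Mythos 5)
Your proposal follows essentially the same route as the paper's own proof: direct verification of \eqref{Y1}--\eqref{Y5} on matrix coefficients in the fixed-point basis, resting on the same three ingredients --- the simple-pole structure of $E_\lambda(z)$ at the addible boxes (Lemma \ref{lem:e}), the factorization $\tilde\psi_\lambda=E_\lambda F_\lambda$ with simple poles exactly at the addible and removable boxes (Lemma \ref{lem:psi}), and the residue theorem at $z=\infty$ for \eqref{Y5}. Your generating-series packaging of \eqref{Y3} is equivalent to the paper's mode-by-mode common-factor computation, and your explicit check that the off-diagonal coefficients of $[e_r,f_s]$ cancel is a point the paper leaves implicit, so that is a genuine plus; note, however, that the correct mechanism there is not ``disjointness'' of factors but an exact reciprocal cancellation: the extra factor $\frac{z-x_R}{z-x_R-2\hbar}$ carried by $E_{\lambda}$ relative to $E_{\lambda-R}$, evaluated at the simple pole $z=x_A$, is precisely the inverse of the extra factor $\frac{x_R-x_A+2\hbar}{x_R-x_A}$ carried by $F_{\lambda+A-R}$ relative to $F_{\lambda-R}$.

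The one step that would fail as literally written is your treatment of \eqref{Y4} when the two added boxes lie in the same column, so that $x_B=x_A+2\hbar$ (paper's same-string case with boxes $\blacksquare_i,\tilde\blacksquare_i$). There your assertion that ``the two orderings differ by a factor of the form $\frac{x_B-x_A}{x_B-x_A-2\hbar}$'' degenerates: that factor has a pole at $x_B-x_A=2\hbar$, and in fact only one insertion order exists, because $E_\lambda(z)$ has no pole at $x_B$ (the upper box is not addible to $\lambda$), so the coefficient of the other path vanishes identically rather than being a rescaling of the first. Consequently the generic two-term rational identity cannot simply be specialized to this point; one must verify \eqref{Y4} directly for the single surviving path through $\lambda+A$, where both $e_re_s$ and $e_se_r$ traverse the same intermediate state with exponents exchanged and the relation follows from the numerical identity $x_A-x_B=-2\hbar$. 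This is exactly the separate ``same string'' computation (reduced to $w=1$) in the paper's proof of \eqref{Y4}, and the same caveat applies to your $f$-version. With this degenerate case added, your argument is complete and coincides with the paper's.
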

\subsubsection{Proof of Proposition \ref{prop:ef formula}}
In this section, we prove Proposition \ref{prop:ef formula} by verifying the relations of  $Y_{\hbar}(\mathfrak{sl}_2)$.

We say $\blacksquare$ is an \textit{addible box}, if $\lambda$ is a fixed point, and $\lambda+\blacksquare$ is still a fixed point. We say $\blacksquare$ is a \textit{removable box}, if $\lambda$ is a fixed point, and $\lambda-\blacksquare$ is still a fixed point. 
\begin{lemma}\label{lem:e}
Let $z_1, z_2, \cdots, z_w$ be generic parameters. 
All the poles of the rational function of $z$
\[
E(z):=z^k 
\prod_{\square\in \lambda} \frac{z-x_{\square} }{z-x_{\square}-2\hbar} \prod_{j=1}^w\frac{z-z_j-\ell_j\hbar}{z-z_j+\ell_j\hbar}
\] are simple poles at the addible boxes. 
\end{lemma}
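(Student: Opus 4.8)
The plan is to rewrite $E(z)$ in a form in which every cancellation is manifest, by grouping the factors indexed by boxes $\square\in\lambda$ according to the column $j$ in which they sit. Recall from \eqref{eq:weights of fix} that the $v_j$ boxes of column $j$ carry the weights $x_s=z_j-(\ell_j-2s+2)\hbar$ for $s=1,\ldots,v_j$, an arithmetic progression with common difference $2\hbar$ starting at $z_j-\ell_j\hbar$. The first thing I would record is the identity $x_s+2\hbar=x_{s+1}$, which makes the column-$j$ contribution to $\prod_{\square\in\lambda}\frac{z-x_\square}{z-x_\square-2\hbar}$ telescope:
\[
\prod_{s=1}^{v_j}\frac{z-x_s}{z-x_s-2\hbar}=\prod_{s=1}^{v_j}\frac{z-x_s}{z-x_{s+1}}=\frac{z-z_j+\ell_j\hbar}{z-z_j+(\ell_j-2v_j)\hbar},
\]
where $x_{v_j+1}=z_j-(\ell_j-2v_j)\hbar$ is the weight that the next box in column $j$ would carry.

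Next I would multiply this by the framing factor $\frac{z-z_j-\ell_j\hbar}{z-z_j+\ell_j\hbar}$ attached to column $j$; the factor $z-z_j+\ell_j\hbar$ cancels, and assembling over all columns gives the reduced form
\[
E(z)=z^k\prod_{j=1}^w\frac{z-z_j-\ell_j\hbar}{z-z_j+(\ell_j-2v_j)\hbar}.
\]
Since $z^k$ is a polynomial, the only candidate finite poles sit at $z=z_j-(\ell_j-2v_j)\hbar$ for $j=1,\ldots,w$.

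The remaining point is to identify these poles with the addible boxes and to exclude higher order poles. If column $j$ is full, i.e.\ $v_j=\ell_j$, then the denominator $z-z_j+(\ell_j-2v_j)\hbar$ equals the numerator $z-z_j-\ell_j\hbar$, so the $j$-th factor is identically $1$ and produces no pole, consistently with the absence of an addible box in a full column. If $v_j<\ell_j$, then $z_j-(\ell_j-2v_j)\hbar=x_{v_j+1}$ is exactly the weight $x_\blacksquare$ of the box stacked on top of column $j$, that is, of the addible box of $\lambda+\blacksquare$ in that column. Thus the surviving pole locations are precisely the weights of the addible boxes.

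Finally I would invoke genericity of $z_1,\ldots,z_w$: the pole locations $z_j-(\ell_j-2v_j)\hbar$ coming from distinct columns are pairwise distinct, and none coincides with a numerator zero $z_{j'}+\ell_{j'}\hbar$ (for $j\neq j'$ this is a generic condition, while for $j=j'$ coincidence would force $v_j=\ell_j$, the full case already handled). Hence each surviving factor contributes a single simple pole. The only step needing genuine care is the telescoping bookkeeping of the first paragraph, namely checking that consecutive box weights differ by exactly $2\hbar$ so that numerators and denominators cancel in pairs, together with the boundary check at a full column; the rest is a routine genericity argument.
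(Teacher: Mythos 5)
Your proof is correct and follows essentially the same route as the paper: the paper uses genericity of $z_1,\ldots,z_w$ to reduce to the single-column case $w=1$ and then performs exactly your telescoping cancellation, obtaining $E(z)=z^k\,\frac{z-z_i-\ell_i\hbar}{z-z_i+\ell_i\hbar-2v\hbar}$ with the same case split between a full column ($v=\ell_i$, no pole) and a non-full column (one simple pole at the unique addible box). Your only variation is organizational—you telescope all columns simultaneously into a closed global formula and spell out the genericity checks (distinctness of poles across columns, non-cancellation with other numerator zeros) at the end rather than invoking genericity up front to reduce to one column.
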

\begin{proof}
As $z_1, z_2, \cdots, z_w$ are generic parameters, we have $z_i\neq z_j+a\hbar$, for any $a\in \Z$. it suffices to show the claim for $w=1$. 
In this case, the fixed point $\lambda$ contains only one column illustrated as in the following picture. 
\[
\begin{tikzpicture}[scale=0.7]
\tikzstyle{every node}=[font=\small]
\node at (1, 4) {$\bullet$};	
\node at (1, 3) {$\vdots$};	
\foreach \x in {1}
{\node at (\x, 0) {$\square$};}	
\foreach \x in {1}
{\node at (\x, 1) {$\bullet$};}	
\foreach \x in {1}
{\node at (\x, 2) {$\bullet$};}	
\foreach \x in {1}
{\draw[->, thick] (\x, 0.1) -- (\x, 0.9) ;}
\foreach \x in {1}
{\draw[->, thick] (1, 1.1) -- (1, 1.9) ;}
\draw[->, thick] (1, 3.1) -- (1, 3.9) ;
\end{tikzpicture}
\]
Let $v=\dim(V)$. Denote the equivariant variable by $z_i$. 
Assume $1\leq v\leq \ell_i$, then by \eqref{eq:weights of fix}, the weights of $\lambda$ is given by
\[
z_i-\ell_i\hbar, z_i-\ell_i\hbar+2\hbar, z_i-\ell_i\hbar+4\hbar, 
\cdots, z_i-\ell_i\hbar+2v\hbar-2\hbar. 
\]
Then, the function $E(z)$ is given as
\begin{align}
&
z^k\frac{ z-z_i+\ell_i\hbar}{z-z_i+\ell_i\hbar-2\hbar}
\frac{ z-z_i+\ell_i\hbar-2\hbar}{z-z_i+\ell_i\hbar-4\hbar} 
\frac{ z-z_i+\ell_i\hbar-4\hbar}{z-z_i+\ell_i\hbar-6\hbar} \cdots
\frac{ z-z_i+\ell_i\hbar-2v\hbar+2\hbar}{z-z_i+\ell_i\hbar-2v\hbar}  \frac{z-z_i-\ell_i\hbar}{z-z_i+\ell_i\hbar} \notag\\
&= z^k\frac{z-z_i-\ell_i\hbar}{z-z_i+\ell_i\hbar-2v\hbar} \label{fund:E(z)}
\end{align}

If $v=\ell_i$, then there is no addible boxes as the height of the fixed points is at most $\ell_i$. The function $E(z)=z^k$ is a regular function, and it has no poles. 

If $0\leq v <\ell_i$, then $E(z)$ has one simple pole at $z=z_i-\ell_i\hbar+2v\hbar$, which is the unique addible box. 

This shows the assertion. 
\end{proof}

\begin{lemma}\label{lem:psi}
The poles of the function $\tilde{\psi}(z)$ \eqref{eq:psi}
 \begin{align*}
\tilde{\psi}(z)=\prod_{\square\in \lambda} \frac{z-x_{\square}+2\hbar }{z-x_{\square}-2\hbar} \prod_{j=1}^w\frac{z-z_j-\ell_j\hbar}{z-z_j+\ell_j\hbar} 
\end{align*}
are at the addible and removable boxes of $\lambda$. Furthermore, all the poles are simple poles. 
\end{lemma}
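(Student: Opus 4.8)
The plan is to analyze $\tilde\psi(z)$ by reducing to the single-column case, exactly as in the proof of Lemma~\ref{lem:e}, and then to exploit the telescoping cancellation visible in the product. Since the parameters $z_1,\ldots,z_w$ are generic, the factors coming from distinct columns $j\neq j'$ share no common poles or zeros (as $z_i\neq z_j+a\hbar$ for $a\in\Z$), so the poles of $\tilde\psi(z)$ are the union over $j$ of the poles coming from each individual column. Hence it suffices to treat the case $w=1$, where $\lambda$ is a single column of height $v$ with $1\le v\le\ell$.

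First I would write out $\tilde\psi(z)$ explicitly in the single-column case, using the weights \eqref{eq:weights of fix}, namely $x_\square \in \{z_i-\ell_i\hbar,\; z_i-\ell_i\hbar+2\hbar,\;\ldots,\;z_i-\ell_i\hbar+2v\hbar-2\hbar\}$. The key observation is that the numerator factor $z-x_\square+2\hbar$ at box $x_\square$ cancels against the denominator factor $z-x_{\square'}-2\hbar$ at the adjacent box $x_{\square'}=x_\square+2\hbar$. Just as in \eqref{fund:E(z)}, this telescoping collapses the long product $\prod_{\square\in\lambda}\frac{z-x_\square+2\hbar}{z-x_\square-2\hbar}$ down to a single surviving ratio, leaving numerator $z-(z_i-\ell_i\hbar)+2\hbar$ (top box, from adding a box above) over denominator $z-(z_i-\ell_i\hbar+2v\hbar-2\hbar)-2\hbar = z-(z_i-\ell_i\hbar+2v\hbar)$ (bottom box, from removing a box). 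Multiplying by the framing factor $\frac{z-z_i-\ell_i\hbar}{z-z_i+\ell_i\hbar}$ and simplifying, one finds $\tilde\psi(z)$ is a ratio of products with at most two simple poles surviving after cancellation.

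I would then identify these surviving poles with the addible and removable boxes. The pole at $z=z_i-\ell_i\hbar+2v\hbar$ is precisely the weight of the addible box $\lambda+\blacksquare$ (the empty slot just above the top of the column), matching what appeared in Lemma~\ref{lem:e}. The pole at $z = z_i + \ell_i\hbar$ from the framing denominator generically cancels against the numerator factor $z-z_i-\ell_i\hbar$ only when $v$ is such that no removable box exists; more carefully, the pole corresponding to the \emph{removable} box is the one at the weight of the top box $x_{\blacksquare}=z_i-\ell_i\hbar+2v\hbar-2\hbar$ of $\lambda-\blacksquare$, which is the value of $z$ at which removing the top box changes the configuration. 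I would check the boundary cases $v=0$ (no removable box, so that putative pole is absent) and $v=\ell$ (no addible box) to confirm that the pole structure degenerates exactly as the combinatorics of addible/removable boxes predicts. Each surviving pole is simple since every linear factor in $z$ appears to the first power after cancellation.

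The main obstacle is bookkeeping the cancellation cleanly enough to be sure that \emph{no} extra poles survive and that the two endpoint factors genuinely produce poles at the addible and removable box weights rather than cancelling entirely. The subtle point is verifying the boundary behavior at $v=0$ and $v=\ell_i$, and checking that the framing factor $\frac{z-z_j-\ell_j\hbar}{z-z_j+\ell_j\hbar}$ interacts correctly with the telescoped interior—in particular confirming that the denominator zero at $z=z_j-\ell_j\hbar$ (the weight of the lowest box) is indeed cancelled by a numerator factor so that it does not contribute a spurious pole. Once the telescoping identity analogous to \eqref{fund:E(z)} is established for $\tilde\psi(z)$, the simplicity of the remaining poles and their identification with addible/removable boxes is immediate.
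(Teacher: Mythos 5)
Your overall strategy---reduce to a single column using genericity of $z_1,\ldots,z_w$, telescope the product over $\lambda$, and case-check $v=0$, $1\le v<\ell_i$, $v=\ell_i$---is exactly the route the paper takes in its proof of Lemma~\ref{lem:psi}. But your central telescoping claim is incorrect, and the error is not cosmetic. In Lemma~\ref{lem:e} the numerator $z-x_{\square}$ and denominator $z-x_{\square}-2\hbar$ are offset by $2\hbar$, equal to the spacing of consecutive weights, so a single factor survives at each end, as in \eqref{fund:E(z)}. For $\tilde{\psi}(z)$ the offset is $4\hbar$ (numerator $z-x_{\square}+2\hbar$ against denominator $z-x_{\square}-2\hbar$) relative to the $2\hbar$ spacing, so \emph{two} factors survive at each end: with $x_{\square}=z_i-\ell_i\hbar+2k\hbar$, $k=0,\ldots,v-1$, one gets
\[
\prod_{\square\in\lambda}\frac{z-x_{\square}+2\hbar}{z-x_{\square}-2\hbar}
=\frac{(z-z_i+\ell_i\hbar+2\hbar)\,(z-z_i+\ell_i\hbar)}{(z-z_i+\ell_i\hbar-2(v-1)\hbar)\,(z-z_i+\ell_i\hbar-2v\hbar)}.
\]
Your ``single surviving ratio'' $\frac{z-z_i+\ell_i\hbar+2\hbar}{z-z_i+\ell_i\hbar-2v\hbar}$ omits both the numerator factor $z-z_i+\ell_i\hbar$ and the denominator factor $z-z_i+\ell_i\hbar-2(v-1)\hbar$, and these are exactly the two factors the lemma lives on: the omitted numerator factor cancels the framing denominator $z-z_i+\ell_i\hbar$, killing the would-be pole at the lowest box $z=z_i-\ell_i\hbar$ (neither addible nor removable when $v\ge 1$; this is the ``subtle point'' you flagged but never resolved), while the omitted denominator factor is precisely the removable-box pole at $z=z_i-\ell_i\hbar+2(v-1)\hbar$. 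With your formula as written, $\tilde{\psi}(z)$ would have a spurious pole at the lowest box and \emph{no} removable-box pole, contradicting the statement; your subsequent paragraph asserting the removable pole is thus inconsistent with your own computation rather than derived from it.

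A second, smaller confusion: you speak of a ``pole at $z=z_i+\ell_i\hbar$ from the framing denominator,'' but the framing denominator $z-z_i+\ell_i\hbar$ vanishes at $z=z_i-\ell_i\hbar$; it is the framing \emph{numerator} $z-z_i-\ell_i\hbar$ that vanishes at $z=z_i+\ell_i\hbar$, and it cancels the addible-box denominator factor exactly when $v=\ell_i$ (since then $z_i-\ell_i\hbar+2v\hbar=z_i+\ell_i\hbar$), which is why no addible pole survives at maximal height. Once the two-factor telescoping above is in place, the rest of your plan goes through verbatim and reproduces the paper's formula
\[
\tilde{\psi}(z)=\frac{(z-z_i+\ell_i\hbar+2\hbar)\,(z-z_i-\ell_i\hbar)}{(z-z_i+\ell_i\hbar-2v\hbar+2\hbar)\,(z-z_i+\ell_i\hbar-2v\hbar)}
\qquad (1\le v\le \ell_i),
\]
whose two simple poles sit at the removable and addible boxes, with the correct degenerations at $v=0$ (only the addible pole, at $z=z_i-\ell_i\hbar$) and at $v=\ell_i$ (only the removable pole, at $z=z_i+\ell_i\hbar-2\hbar$).
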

\begin{proof}
Similar as in the proof of Lemma \ref{lem:e}, it suffices to show the claim for $w=1$. Denote the dimension of $V$ by $v$. 

Denote the equivariant variable by $z_i$. Assume $1\leq v\leq \ell_i$,  the weights of $\lambda$ are given by
\[
x_{\square}: z_i-\ell_i\hbar, z_i-\ell_i\hbar+2\hbar, z_i-\ell_i\hbar+4\hbar, 
\cdots, z_i-\ell_i\hbar+2v\hbar-2\hbar. 
\]

We have the following cases. 

\textbf{Case 1}: If $v=0$ with $w=1$, then, 
\begin{align*}
&\tilde{\psi}(z)=\frac{z-z_i-\ell_i\hbar}{z-z_i+\ell_i\hbar}
\end{align*}
Clearly, $\tilde{\psi}(z)$ in this case has only one simple pole at $z=z_i-\ell_i\hbar$, which is the unique addible box. There is no removable box in this case.

\textbf{Case 2: } If $1\leq v \leq \ell_i$ with $w=1$, we have the following detailed computation
\begin{align*}
\tilde{\psi}(z)
=&\frac{z-z_i+\ell_i\hbar+2\hbar }{z-z_i+\ell_i\hbar-2\hbar}
\frac{z-z_i+\ell_i\hbar }{z-z_i+\ell_i\hbar-4\hbar}
\frac{z-z_i+\ell_i\hbar-2\hbar }{z-z_i+\ell_i\hbar-6\hbar}
\cdots
\frac{z-z_i+\ell_i\hbar-2v\hbar+4\hbar }{z-z_i+\ell_i\hbar-2v\hbar}\frac{z-z_i-\ell_i\hbar}{z-z_i+\ell_i\hbar}\\
=& \frac{(z-z_i+\ell_i\hbar+2\hbar )(z-z_i+\ell_i\hbar) }{(z-z_i+\ell_i\hbar-2v\hbar+2\hbar)(z-z_i+\ell_i\hbar-2v\hbar)}\frac{z-z_i-\ell_i\hbar}{z-z_i+\ell_i\hbar}\\
=& \frac{(z-z_i+\ell_i\hbar+2\hbar )(z-z_i-\ell_i\hbar) }{(z-z_i+\ell_i\hbar-2v\hbar+2\hbar)(z-z_i+\ell_i\hbar-2v\hbar)}. 
\end{align*}

\textbf{Case 2 (a): }
When $1\leq v <\ell_i$, the function $\tilde{\psi}(z)$ has two simple poles. One pole is at $z= z_i-\ell_i\hbar+2v\hbar-2\hbar$, which is the unique removable box, and the other pole is at $z=z_i-\ell_i\hbar+2v\hbar$ which is the unique addible box. 

\textbf{Case 2 (b): } When $v=\ell_i$, we have
\[
\tilde{\psi}(z)=
\frac{z-z_i+\ell_i\hbar+2\hbar  }{z-z_i-\ell_i\hbar+2\hbar}. 
\]
Then $\tilde{\psi}(z)$ in this case has one simple pole at $z=z_i+\ell_i\hbar-2\hbar$, which is the unique removable box.  There is no addible box in this case. 

This completes the proof. 
\end{proof}

\begin{lemma}
The formulas \eqref{eq:psi} \eqref{eaction} \eqref{faction}  satisfy the relation \eqref{Y2}. 
\end{lemma}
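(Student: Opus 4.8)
The plan is to identify the operator $\psi_0$ explicitly in the fixed point basis, and then observe that relation \eqref{Y2} reduces to a bookkeeping of the total number of boxes. First I would extract $\psi_0$ from the generating series. Since $\psi(z) = 1 + 2\hbar \sum_{i \geq 0} \psi_i z^{-i-1} = 1 + 2\hbar\, \psi_0\, z^{-1} + O(z^{-2})$, the eigenvalue of $\psi_0$ on a fixed point $[\lambda]$ is $\frac{1}{2\hbar}$ times the coefficient of $z^{-1}$ in the large-$z$ expansion of $\tilde\psi(z)$ from \eqref{eq:psi}. Expanding each factor,
\[
\frac{z-x_\square+2\hbar}{z-x_\square-2\hbar}=1+\frac{4\hbar}{z}+O(z^{-2}),\qquad
\frac{z-z_j-\ell_j\hbar}{z-z_j+\ell_j\hbar}=1-\frac{2\ell_j\hbar}{z}+O(z^{-2}),
\]
so that
\[
\tilde\psi(z)=1+\frac{2\hbar}{z}\Big(2|\lambda|-\sum_{j=1}^w\ell_j\Big)+O(z^{-2}),
\]
where $|\lambda|=\sum_{i=1}^w v_i=v$ is the number of boxes of $\lambda$. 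Hence $\psi_0$ is diagonal in the fixed point basis, acting on $[\lambda]$ by the scalar $\psi_0(\lambda):=2v-\sum_j\ell_j$, which depends only on the total box number $v$.

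Next I would use that $e_r$ and $f_m$ change the box count by exactly one. From \eqref{eaction}, $e_r[\lambda]$ is a combination of the fixed points $[\lambda+\blacksquare]$ with $|\lambda+\blacksquare|=v+1$, while from \eqref{faction}, $f_m[\lambda]$ is supported on the $[\lambda-\blacksquare]$ with $|\lambda-\blacksquare|=v-1$. Writing $\psi_0(\mu)=2|\mu|-\sum_j\ell_j$ for the eigenvalue of $\psi_0$ on $[\mu]$, the commutator is then computed directly in the fixed point basis:
\[
[\psi_0,e_r][\lambda]=\sum_\blacksquare \langle\lambda|e_r|\lambda+\blacksquare\rangle\,\big(\psi_0(\lambda+\blacksquare)-\psi_0(\lambda)\big)\,[\lambda+\blacksquare]=2\,e_r[\lambda],
\]
since $\psi_0(\lambda+\blacksquare)-\psi_0(\lambda)=2$. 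The same computation with $\psi_0(\lambda-\blacksquare)-\psi_0(\lambda)=-2$ gives $[\psi_0,f_m][\lambda]=-2\,f_m[\lambda]$. As $\lambda$ was an arbitrary fixed point basis vector, this establishes \eqref{Y2}.

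There is essentially no obstacle here: once $\psi_0$ is recognized as the box-counting grading operator (shifted by the constant $-\sum_j\ell_j$), the relation is immediate. The only points requiring care are reading the eigenvalue correctly from the normalization $\psi(z)=1+2\hbar\sum_i\psi_i z^{-i-1}$, and confirming that the $z^{-1}$-coefficient of $\tilde\psi(z)$ isolates precisely $2\hbar\,\psi_0$, so that the higher modes $\psi_i$ play no role in \eqref{Y2}.
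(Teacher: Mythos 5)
Your proof is correct and follows essentially the same route as the paper: you read off the eigenvalue of $\psi_0$ on a fixed point from the $z^{-1}$-coefficient of $\tilde\psi(z)$ in \eqref{eq:psi}, obtaining the diagonal action $\psi_0[\lambda]=\bigl(2|\lambda|-\sum_{j=1}^w\ell_j\bigr)[\lambda]$, and then deduce $[\psi_0,e_r]=2e_r$ and $[\psi_0,f_m]=-2f_m$ from the fact that $e_r$ and $f_m$ shift the box count by $\pm 1$, which is exactly the paper's matrix-coefficient computation written in summed form. No gaps; the normalization $\psi(z)=1+2\hbar\sum_{i\ge 0}\psi_i z^{-i-1}$ is handled correctly.
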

\begin{proof}
For the rational function $f(z) = \frac{z - a}{z - b}$, the expansion at $z=\infty$ is given by 
\[
\begin{aligned}
f(z) &= \frac{z - a}{z - b}
= \frac{1 - \frac{a}{z}}{1 - \frac{b}{z}} = \left(1 - \frac{a}{z}\right)
\left(1 + \frac{b}{z} + \cdots \right) 
= 1 + \frac{b - a}{z}
   + \cdots 
\end{aligned}
\]
Therefore, the action of $\psi_{0}$ on $\lambda$ is given by
\begin{align*}
\psi_{0}(\lambda)=
\frac{4\hbar |\lambda|- \sum_{j=1}^{w} (2\ell_j \hbar)}{2\hbar} (\lambda)=\Big(
2 |\lambda|- \sum_{j=1}^{w} \ell_j 
\Big)(\lambda). 
\end{align*}
Therefore, the matrix coefficient of $\psi_{0}\circ e_{r}-e_r\circ \psi_0$ is given by
\begin{align*}
\langle\lambda|e_r| \lambda+\blacksquare\rangle
\langle
\lambda+\blacksquare|
\psi_0 | \lambda+\blacksquare\rangle-\langle
\lambda|
\psi_0 | \lambda\rangle\langle\lambda|e_r|\lambda+\blacksquare\rangle
=2\langle\lambda|e_r|\lambda+\blacksquare\rangle. 
\end{align*}

Similarly, we have
\begin{align*}
\langle\lambda+\blacksquare|f_r| \lambda\rangle
\langle
\lambda|
\psi_0 | \lambda\rangle
-\langle
\lambda+\blacksquare|
\psi_0 | \lambda+\blacksquare\rangle\langle\lambda+\blacksquare|f_r|\lambda\rangle
=-2\langle\lambda|f_r|\lambda+\blacksquare\rangle. 
\end{align*}
\end{proof}

\begin{lemma}
The formula \eqref{eaction}  respects the relation
\[
[e_{r+1}, e_{s}]-[e_{r}, e_{s+1}]=2\hbar(e_{r}e_{s}+e_{s}e_{r})  
\] in \eqref{Y4}. 
\end{lemma}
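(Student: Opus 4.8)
The plan is to check the relation at the level of matrix coefficients in the fixed-point basis. After localization the classes $\{[\lambda]\mid\lambda\in S(\ell_1,\dots,\ell_w)\}$ form a basis, and each $e_r$ raises $|\lambda|$ by one, so both sides of the identity raise $|\lambda|$ by two; hence it is enough to compare the coefficient $\langle\lambda|\,\cdot\,|\mu\rangle$ for an arbitrary pair of fixed points with $\mu=\lambda+\blacksquare_1+\blacksquare_2$. Set
\[
\Phi_\lambda(z):=\prod_{\square\in\lambda}\frac{z-x_\square}{z-x_\square-2\hbar}\ \prod_{j=1}^w\frac{z-z_j-\ell_j\hbar}{z-z_j+\ell_j\hbar},
\]
so that $\langle\lambda|e_r|\lambda+\blacksquare\rangle=\Res_{z=x_\blacksquare}\!\big(z^r\Phi_\lambda(z)\big)$. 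The only structural inputs I will use are the multiplicativity $\Phi_{\lambda+\blacksquare}(z)=\Phi_\lambda(z)\,\frac{z-x_\blacksquare}{z-x_\blacksquare-2\hbar}$, recording the addition of one box, together with Lemma~\ref{lem:e}, which shows that each column of $\lambda$ carries at most one addible box. I then distinguish two cases according to whether $\blacksquare_1,\blacksquare_2$ lie in the same column or in two different columns; degenerate situations (a full column, so that a box is not addible) force the relevant residues to vanish on both sides and require no separate treatment.

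In the same-column case only one order of addition is legitimate: if $a:=x_{\blacksquare_1}$ denotes the lower box then the upper one has weight $a+2\hbar$, and $\lambda+\blacksquare_2$ is not itself a fixed point. Thus for all $p,q$ one has $\langle\lambda|e_pe_q|\mu\rangle=\Res_{z=a}\!\big(z^q\Phi_\lambda(z)\big)\cdot\Res_{z=a+2\hbar}\!\big(z^p\Phi_\lambda(z)\tfrac{z-a}{z-a-2\hbar}\big)$. The first residue is $a^q c$ with $c:=\Res_{z=a}\Phi_\lambda$; since $\Phi_\lambda$ is regular at $a+2\hbar$, the second equals $2\hbar\,(a+2\hbar)^p\,\Phi_\lambda(a+2\hbar)$. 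Writing $D:=2\hbar\,c\,\Phi_\lambda(a+2\hbar)$ gives the clean formula $\langle\lambda|e_pe_q|\mu\rangle=D\,a^q(a+2\hbar)^p$, and a direct factorization shows
\[
a^s(a+2\hbar)^{r+1}-a^{r+1}(a+2\hbar)^{s}-a^{s+1}(a+2\hbar)^{r}+a^{r}(a+2\hbar)^{s+1}=2\hbar\big(a^s(a+2\hbar)^{r}+a^{r}(a+2\hbar)^{s}\big),
\]
which is precisely the assertion that the coefficient of $[e_{r+1},e_s]-[e_r,e_{s+1}]$ matches that of $2\hbar(e_re_s+e_se_r)$.

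The different-column case carries the real content. Here both orders of addition contribute, and by the genericity of $z_1,\dots,z_w$ the factor produced by adding the first box is regular at the second pole. Writing $a,b$ for the two box weights and $c_a,c_b$ for the corresponding residues of $\Phi_\lambda$, I obtain
\[
\langle\lambda|e_pe_q|\mu\rangle=c_ac_b\big(\alpha\,a^qb^p+\beta\,a^pb^q\big),\qquad \alpha=\tfrac{b-a}{b-a-2\hbar},\quad \beta=\tfrac{b-a}{b-a+2\hbar}.
\]
Substituting into the two sides, the coefficient of $[e_{r+1},e_s]-[e_r,e_{s+1}]$ reduces to $c_ac_b\,(\alpha-\beta)(b-a)(a^sb^r+a^rb^s)$, while that of $2\hbar(e_re_s+e_se_r)$ reduces to $2\hbar\,c_ac_b\,(\alpha+\beta)(a^sb^r+a^rb^s)$. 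Since $\alpha-\beta=\frac{4\hbar(b-a)}{(b-a)^2-4\hbar^2}$ and $\alpha+\beta=\frac{2(b-a)^2}{(b-a)^2-4\hbar^2}$, the elementary identity $(\alpha-\beta)(b-a)=2\hbar(\alpha+\beta)$ finishes the verification.

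I expect the main obstacle to be the bookkeeping rather than any single hard estimate: one must track which intermediate fixed points actually occur, evaluate $\frac{z-x_\blacksquare}{z-x_\blacksquare-2\hbar}$ at the correct second residue in each ordering, and confirm that the genuinely two-variable computation in the different-column case collapses to the stated rational-function identity. Once the multiplicativity of $\Phi_\lambda$ and the genericity of the $z_j$ are exploited, both cases become routine.
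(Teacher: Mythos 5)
Your proof is correct and follows essentially the same route as the paper's: matrix coefficients in the localized fixed-point basis, a case split according to whether the two added boxes lie in the same or different columns, residue evaluations justified by the simple-pole statement of Lemma~\ref{lem:e}, and reduction to the rational identity $(\alpha-\beta)(b-a)=2\hbar(\alpha+\beta)$, which is exactly the identity the paper verifies after factoring out the common factor $c_a c_b$. The only (harmless) cosmetic differences are that you treat the same-column case directly via the multiplicativity $\Phi_{\lambda+\blacksquare}(z)=\Phi_\lambda(z)\frac{z-x_\blacksquare}{z-x_\blacksquare-2\hbar}$ instead of reducing to $w=1$ as the paper does, and that degenerate (full-column) configurations are absorbed into the vanishing of $D$ rather than discussed separately.
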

\begin{proof}
Rewrite the relation as
\[
e_{r+1}\circ e_{s}-e_{s}\circ e_{r+1}
-e_{r}\circ e_{s+1}
+e_{s+1}\circ e_{r}
=2\hbar(e_{r}\circ e_{s}+e_{s}\circ e_{r})
\]
Fix partitions $\lambda$ and $\lambda+\blacksquare_i+\blacksquare_j$, where $\blacksquare_i, \blacksquare_j$ are two addible boxes. 

We first assume that $\blacksquare_i, \blacksquare_j$ are on the $i$-th and $j$-th string with $i\neq j$.  
Assume that $i\neq j$, we now compute the matrix coefficient of $e_s\circ e_r$ at the entry $(\lambda, \lambda+\blacksquare_i+\blacksquare_j)$. 

The matrix coefficient of $e_s\circ e_r$ is given by
\[
\langle\lambda|e_r e_s | \lambda+\blacksquare_i +\blacksquare_j\rangle
=\langle\lambda|e_r| \lambda+\blacksquare_i\rangle
\langle
\lambda+\blacksquare_i|
e_s | \lambda+\blacksquare_i +\blacksquare_j\rangle
+\langle\lambda|e_r| \lambda+\blacksquare_j\rangle
\langle
\lambda+\blacksquare_j|
e_s | \lambda+\blacksquare_i +\blacksquare_j\rangle. 
\]

We have 
\begin{align*}
&\langle\lambda|e_r| \lambda+\blacksquare_i\rangle
\langle
\lambda+\blacksquare_i|
e_s | \lambda+\blacksquare_i +\blacksquare_j\rangle
\\
&=
\Res_{z=x_{\blacksquare_i}} \Big(z^r 
\prod_{\square\in \lambda} \frac{z-x_{\square} }{z-x_{\square}-2\hbar} \prod_{a=1}^w\frac{z-z_a-\ell_a\hbar}{z-z_a+\ell_a\hbar}\Big)\cdot
\Res_{z=x_{\blacksquare_j}} \Big(z^s 
\prod_{\square\in \lambda} \frac{z-x_{\square} }{z-x_{\square}-2\hbar} 
\frac{z-x_{\blacksquare_i} }{z-x_{\blacksquare_i}-2\hbar}
\prod_{a=1}^w\frac{z-z_a-\ell_a\hbar}{z-z_a+\ell_a\hbar}\Big)
\end{align*}
By Lemma \ref{lem:e}, the above functions have only one simple pole at the point where to take the residue, thus, we have
\begin{align*}
&\langle\lambda|e_r| \lambda+\blacksquare_i\rangle
\langle
\lambda+\blacksquare_i|
e_s | \lambda+\blacksquare_i +\blacksquare_j\rangle
+\langle\lambda|e_r| \lambda+\blacksquare_j\rangle
\langle
\lambda+\blacksquare_j|
e_s | \lambda+\blacksquare_i +\blacksquare_j\rangle\\
=&
 \Big(x_{\blacksquare_i})^r 
\prod_{\square\in \lambda} \frac{x_{\blacksquare_i}-x_{\square} }{\widehat{x_{\blacksquare_i}-x_{\square}-2\hbar}} \prod_{a=1}^w\frac{
{x_{\blacksquare_i}-z_a-\ell_a\hbar}}{\widehat{x_{\blacksquare_i}-z_a+\ell_a\hbar}}\Big)\cdot
\Big((x_{\blacksquare_j})^s 
\prod_{\square\in \lambda} \frac{x_{\blacksquare_j}-x_{\square} }{\widehat{x_{\blacksquare_j}-x_{\square}-2\hbar}} 
\frac{x_{\blacksquare_j}-x_{\blacksquare_i} }{{x_{\blacksquare_j}-x_{\blacksquare_i}-2\hbar}}
\prod_{a=1}^w\frac{x_{\blacksquare_j}-z_a-\ell_a\hbar}{\widehat{x_{\blacksquare_j}-z_a+\ell_a\hbar}}\Big)\\
&+
 \Big(x_{\blacksquare_j})^r 
\prod_{\square\in \lambda} \frac{x_{\blacksquare_j}-x_{\square} }{\widehat{x_{\blacksquare_j}-x_{\square}-2\hbar}} \prod_{a=1}^w\frac{
{x_{\blacksquare_j}-z_a-\ell_a\hbar}}{\widehat{x_{\blacksquare_j}-z_a+\ell_a\hbar}}\Big)\cdot
\Big((x_{\blacksquare_i})^s 
\prod_{\square\in \lambda} \frac{x_{\blacksquare_i}-x_{\square} }{\widehat{x_{\blacksquare_i}-x_{\square}-2\hbar}} 
\frac{x_{\blacksquare_i}-x_{\blacksquare_j} }{{x_{\blacksquare_i}-x_{\blacksquare_j}-2\hbar}}
\prod_{a=1}^w\frac{x_{\blacksquare_i}-z_a-\ell_a\hbar}{\widehat{x_{\blacksquare_i}-z_a+\ell_a\hbar}}\Big)\\
=& (\text{common factor})
\Big(
(x_{\blacksquare_i})^r(x_{\blacksquare_j})^s\frac{x_{\blacksquare_j}-x_{\blacksquare_i} }{{x_{\blacksquare_j}-x_{\blacksquare_i}-2\hbar}}
+ 
(x_{\blacksquare_j})^r(x_{\blacksquare_i})^s\frac{x_{\blacksquare_i}-x_{\blacksquare_j} }{{x_{\blacksquare_i}-x_{\blacksquare_j}-2\hbar}}
\Big)
\end{align*}
where the notation $\widehat{}$ in the denominator means removing the only factor that is zero, and the common factor is given by
\[
\text{common factor}=
 \Big( 
\prod_{\square\in \lambda} \frac{x_{\blacksquare_i}-x_{\square} }{\widehat{x_{\blacksquare_i}-x_{\square}-2\hbar}} \prod_{a=1}^w\frac{
{x_{\blacksquare_i}-z_a-\ell_a\hbar}}{\widehat{x_{\blacksquare_i}-z_a+\ell_a\hbar}}\Big)\cdot
\Big( 
\prod_{\square\in \lambda} \frac{x_{\blacksquare_j}-x_{\square} }{\widehat{x_{\blacksquare_j}-x_{\square}-2\hbar}} 
\prod_{a=1}^w\frac{x_{\blacksquare_j}-z_a-\ell_a\hbar}{\widehat{x_{\blacksquare_j}-z_a+\ell_a\hbar}}\Big)
\]

Factoring out the above common factor, for $e_{s}\circ e_{r+1}-e_{r+1}\circ e_{s}
-e_{s+1}\circ e_{r}
+e_{r}\circ e_{s+1}$, we are left with: 
\begin{align*}
&\Big((x_{\blacksquare_i})^{r+1}(x_{\blacksquare_j})^s\frac{x_{\blacksquare_j}-x_{\blacksquare_i} }{{x_{\blacksquare_j}-x_{\blacksquare_i}-2\hbar}}
+ 
(x_{\blacksquare_j})^{r+1}(x_{\blacksquare_i})^s\frac{x_{\blacksquare_i}-x_{\blacksquare_j} }{{x_{\blacksquare_i}-x_{\blacksquare_j}-2\hbar}}
\Big)\\
&-\Big(
(x_{\blacksquare_i})^s(x_{\blacksquare_j})^{r+1}\frac{x_{\blacksquare_j}-x_{\blacksquare_i} }{{x_{\blacksquare_j}-x_{\blacksquare_i}-2\hbar}}
+ 
(x_{\blacksquare_j})^s(x_{\blacksquare_i})^{r+1}\frac{x_{\blacksquare_i}-x_{\blacksquare_j} }{{x_{\blacksquare_i}-x_{\blacksquare_j}-2\hbar}}
\Big)\\
&-\Big(
(x_{\blacksquare_i})^r(x_{\blacksquare_j})^{s+1}\frac{x_{\blacksquare_j}-x_{\blacksquare_i} }{{x_{\blacksquare_j}-x_{\blacksquare_i}-2\hbar}}
+ 
(x_{\blacksquare_j})^r(x_{\blacksquare_i})^{s+1}\frac{x_{\blacksquare_i}-x_{\blacksquare_j} }{{x_{\blacksquare_i}-x_{\blacksquare_j}-2\hbar}}
\Big)\\
&+\Big((x_{\blacksquare_i})^{s+1}(x_{\blacksquare_j})^r\frac{x_{\blacksquare_j}-x_{\blacksquare_i} }{{x_{\blacksquare_j}-x_{\blacksquare_i}-2\hbar}}
+ 
(x_{\blacksquare_j})^{s+1}(x_{\blacksquare_i})^r\frac{x_{\blacksquare_i}-x_{\blacksquare_j} }{{x_{\blacksquare_i}-x_{\blacksquare_j}-2\hbar}}
\Big)\\
=&
\Big((x_{\blacksquare_i})^{r}(x_{\blacksquare_j})^s (x_{\blacksquare_j}-x_{\blacksquare_i})
\Big(
x_{\blacksquare_i}
\frac{1}{{x_{\blacksquare_j}-x_{\blacksquare_i}-2\hbar}}
-x_{\blacksquare_i}\frac{-1}{{x_{\blacksquare_i}-x_{\blacksquare_j}-2\hbar}}
-x_{\blacksquare_j}\frac{
1 }{{x_{\blacksquare_j}-x_{\blacksquare_i}-2\hbar}}
+x_{\blacksquare_j}\frac{-1}{{x_{\blacksquare_i}-x_{\blacksquare_j}-2\hbar}}\Big)
\\
+&(x_{\blacksquare_j})^{r}(x_{\blacksquare_i})^s (x_{\blacksquare_i}-x_{\blacksquare_j})
\Big(x_{\blacksquare_j}
\frac{1}{{x_{\blacksquare_i}-x_{\blacksquare_j}-2\hbar}}
-x_{\blacksquare_j}\frac{-1}{{x_{\blacksquare_j}-x_{\blacksquare_i}-2\hbar}}
-x_{\blacksquare_i}\frac{1 }{{x_{\blacksquare_i}-x_{\blacksquare_j}-2\hbar}}
+x_{\blacksquare_i}\frac{-1}{{x_{\blacksquare_j}-x_{\blacksquare_i}-2\hbar}}\Big)
\end{align*}

For $-2\hbar(e_{r}\circ e_{s}+e_{s}\circ e_{r})$, by factoring out the same common factor, we are left with 
\begin{align*}
&(x_{\blacksquare_i})^r(x_{\blacksquare_j})^s
\frac{x_{\blacksquare_j}-x_{\blacksquare_i} }{{x_{\blacksquare_j}-x_{\blacksquare_i}-2\hbar}}
+ 
(x_{\blacksquare_j})^r(x_{\blacksquare_i})^s\frac{x_{\blacksquare_i}-x_{\blacksquare_j} }{{x_{\blacksquare_i}-x_{\blacksquare_j}-2\hbar}}\\
+&
(x_{\blacksquare_i})^s(x_{\blacksquare_j})^r\frac{x_{\blacksquare_j}-x_{\blacksquare_i} }{{x_{\blacksquare_j}-x_{\blacksquare_i}-2\hbar}}
+ 
(x_{\blacksquare_j})^s(x_{\blacksquare_i})^r\frac{x_{\blacksquare_i}-x_{\blacksquare_j} }{{x_{\blacksquare_i}-x_{\blacksquare_j}-2\hbar}}\\
=&(x_{\blacksquare_i})^r(x_{\blacksquare_j})^s(x_{\blacksquare_j}-x_{\blacksquare_i}) \Big(\frac{1 }{{x_{\blacksquare_j}-x_{\blacksquare_i}-2\hbar}}+\frac{-1}{{x_{\blacksquare_i}-x_{\blacksquare_j}-2\hbar}}\Big)
\\&
+(x_{\blacksquare_j})^r(x_{\blacksquare_i})^s(x_{\blacksquare_i}-x_{\blacksquare_j})\Big(
\frac{1}{{x_{\blacksquare_i}-x_{\blacksquare_j}-2\hbar}}+\frac{-1}{{x_{\blacksquare_j}-x_{\blacksquare_i}-2\hbar}}
\Big)
\end{align*}
It is straightforward to verify that
\begin{align*}
&x_{\blacksquare_i}
\frac{1}{{x_{\blacksquare_j}-x_{\blacksquare_i}-2\hbar}}
-x_{\blacksquare_i}\frac{-1}{{x_{\blacksquare_i}-x_{\blacksquare_j}-2\hbar}}
-x_{\blacksquare_j}\frac{
1 }{{x_{\blacksquare_j}-x_{\blacksquare_i}-2\hbar}}
+x_{\blacksquare_j}\frac{-1}{{x_{\blacksquare_i}-x_{\blacksquare_j}-2\hbar}}\\
=&-2\hbar\Big(\frac{1 }{{x_{\blacksquare_j}-x_{\blacksquare_i}-2\hbar}}+\frac{-1}{{x_{\blacksquare_i}-x_{\blacksquare_j}-2\hbar}}\Big). 
\end{align*}
This shows that 
$e_{s}\circ e_{r+1}-e_{r+1}\circ e_{s}
-e_{s+1}\circ e_{r}
+e_{r}\circ e_{s+1}=-2\hbar(e_{r}e_{s}+e_{s}e_{r})$. 

When the two addible boxes are on the same $i$-th string, we denote them by $\blacksquare_i, \tilde{\blacksquare}_i$ with weights $x_{\blacksquare_i}-x_{\tilde\blacksquare_i}=-2\hbar$. To verify the relation, it suffices to check it when $w=1$ and there is only one string. In this case, we have: 
\begin{align*}
\langle\lambda|e_r| \lambda+\blacksquare_i\rangle
=&(x_{\blacksquare_i})^r \big(2(v-\ell)\hbar\big). 
\end{align*}
Therefore, the operator $e_{s}\circ e_{r}$ is given by
\begin{align*}
\langle\lambda|e_r| \lambda+\blacksquare_i 
\rangle \langle \lambda+\blacksquare_i\lambda|e_s| \lambda+\blacksquare_i +\tilde\blacksquare_i
\rangle =4\hbar^2 (x_{\blacksquare_i})^r 
(x_{\tilde{\blacksquare}_i})^s (v-\ell)(v-\ell+1)
\end{align*}

Factoring out the common factor $4\hbar^2(v-\ell)(v-\ell+1)$, the operator
$e_{s}\circ e_{r+1}-e_{r+1}\circ e_{s}
-e_{s+1}\circ e_{r}
+e_{r}\circ e_{s+1}$ corresponds to
\begin{align*}
&(x_{\blacksquare_i})^{r+1}
(x_{\tilde{\blacksquare}_i})^s-(x_{\blacksquare_i})^s 
(x_{\tilde{\blacksquare}_i})^{r+1}-(x_{\blacksquare_i})^r 
(x_{\tilde{\blacksquare}_i})^{s+1}+(x_{\blacksquare_i})^{s+1} 
(x_{\tilde{\blacksquare}_i})^r
=(x_{\blacksquare_i})^r 
(x_{\tilde{\blacksquare}_i})^s(x_{\blacksquare_i}-x_{\tilde\blacksquare_i} )
-(x_{\blacksquare_i})^s 
(x_{\tilde{\blacksquare}_i})^{r}(
x_{\tilde\blacksquare_i}-
x_{\blacksquare_i}
)\\
=&-2\hbar(x_{\blacksquare_i})^r 
(x_{\tilde{\blacksquare}_i})^s
-2\hbar(x_{\blacksquare_i})^s 
(x_{\tilde{\blacksquare}_i})^{r}
)=-2\hbar\Big(x_{\blacksquare_i})^r 
(x_{\tilde{\blacksquare}_i})^s+ (x_{\blacksquare_i})^s 
(x_{\tilde{\blacksquare}_i})^{r}\Big), 
\end{align*}
which corresponds to the operator $-2\hbar(e_{r}\circ e_{s}+e_{s}\circ e_{r})$. 
\end{proof}

\begin{lemma}
The formula \eqref{faction}  respects the relation
\[
[f_{r+1}, f_{s}]-[f_{r}, f_{s+1}]=-2\hbar(f_{r}f_{s}+f_{s}f_{r})  
\] in \eqref{Y4}. 
\end{lemma}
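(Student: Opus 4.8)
The plan is to mirror, essentially verbatim, the argument just given for the $e$-relation, replacing the residue formula \eqref{eaction} by the evaluation formula \eqref{faction} and keeping track of the change of sign on the right-hand side. Since $f_m$ lowers the dimension (it removes a box), I would fix a smaller fixed point $\lambda$ together with a larger one $\lambda+\blacksquare_i+\blacksquare_j$ obtained by adding two boxes, and compute the matrix coefficient of $f_r f_s$ between them by inserting a complete set of intermediate fixed points. Exactly as in the $e$-case there are two situations to treat: the two boxes lie on different strings $i\neq j$, or on the same string.

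In the different-strings case the composition expands as a sum over the two intermediate states $\lambda+\blacksquare_i$ and $\lambda+\blacksquare_j$. The key simplification here is that \eqref{faction} is a plain evaluation at $z=x_{\blacksquare}$ rather than a residue, so no analogue of the pole analysis of Lemma~\ref{lem:e} is required. The product $\prod_{\square\in\lambda}\frac{z-x_{\square}+2\hbar}{z-x_{\square}}$, evaluated at the two box weights $x_{\blacksquare_i}$ and $x_{\blacksquare_j}$, yields a factor that is symmetric in the two boxes (the ``common factor''), while the box created at the first removal step contributes an extra cross factor of the form $\frac{x-x'+2\hbar}{x-x'}$. Writing this cross factor as $1+\frac{2\hbar}{x-x'}$ and dividing out the common factor, I would reduce the relation \eqref{Y4} to an elementary rational identity. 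A short computation then shows that the symmetric parts of the alternating combination cancel, and the remaining terms collapse to $-4\hbar\big(x_{\blacksquare_i}^{r}x_{\blacksquare_j}^{s}+x_{\blacksquare_j}^{r}x_{\blacksquare_i}^{s}\big)$; the same quantity is produced by the right-hand side $-2\hbar(f_rf_s+f_sf_r)$, establishing the relation.

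In the same-string case there is a single intermediate state, since only the topmost box of a string is removable at each step. Specializing to $w=1$ and using the weights \eqref{eq:weights of fix}, the product in \eqref{faction} telescopes: removing the top box so as to leave a string of length $v$ contributes the scalar $v+1$ (the factors of $\hbar$ cancel). Hence the two-step composition produces a common numerical factor times $x_{\blacksquare_i}^{r}x_{\tilde\blacksquare_i}^{s}$, where the two boxes satisfy $x_{\blacksquare_i}-x_{\tilde\blacksquare_i}=-2\hbar$. After dividing out the common factor, the relation becomes
\[
(x_{\blacksquare_i}-x_{\tilde\blacksquare_i})\big(x_{\blacksquare_i}^{r}x_{\tilde\blacksquare_i}^{s}+x_{\blacksquare_i}^{s}x_{\tilde\blacksquare_i}^{r}\big)=-2\hbar\big(x_{\blacksquare_i}^{r}x_{\tilde\blacksquare_i}^{s}+x_{\blacksquare_i}^{s}x_{\tilde\blacksquare_i}^{r}\big),
\]
which is immediate from $x_{\blacksquare_i}-x_{\tilde\blacksquare_i}=-2\hbar$.

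I expect the main point requiring care to be bookkeeping rather than a conceptual obstacle: correctly identifying which box is removed at each step, hence which weight enters the cross factor $\frac{\cdot+2\hbar}{\cdot}$, and tracking the overall sign so that the right-hand side emerges as $-2\hbar$ (rather than the $+2\hbar$ of the $e$-relation). All remaining pairs of fixed points---those differing by a box configuration not realizable as two successive removals---give vanishing coefficients on both sides, so no further cases arise.
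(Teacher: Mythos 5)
Your proposal is correct and follows essentially the same route as the paper's proof: inserting the intermediate fixed points $\lambda+\blacksquare_i$ and $\lambda+\blacksquare_j$, noting that \eqref{faction} is a plain evaluation so no pole analysis is needed, dividing out the symmetric common factor so only the cross factor $\frac{x_{\blacksquare_j}-x_{\blacksquare_i}+2\hbar}{x_{\blacksquare_j}-x_{\blacksquare_i}}$ survives, collapsing the alternating combination to $\mp 4\hbar\bigl(x_{\blacksquare_i}^{r}x_{\blacksquare_j}^{s}+x_{\blacksquare_j}^{r}x_{\blacksquare_i}^{s}\bigr)$ on both sides, and treating the same-string case separately via $x_{\blacksquare_i}-x_{\tilde\blacksquare_i}=-2\hbar$ after reduction to $w=1$. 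One cosmetic point in your favor: your telescoped scalar $v+1$ for removing the top box is the correct value (the paper writes $\tfrac{v+1}{2v+1}$ at the corresponding step, an immaterial slip since this scalar is divided out as part of the common factor).
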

\begin{proof}
If the two removable boxes 
$\blacksquare_{i}$ and $\blacksquare_{j}$ are on two different strings, the matrix coefficient of $f_{n}\circ f_{m}$ is given by
\begin{align*}
& \langle\lambda+\blacksquare_{i}+\blacksquare_{j}|f_m f_n| \lambda\rangle\\
=&
\langle\lambda+\blacksquare_{i}+\blacksquare_{j}|f_m
| \lambda+\blacksquare_i\rangle \langle \lambda+\blacksquare_i|
f_n| \lambda\rangle
+\langle\lambda+\blacksquare_{i}+\blacksquare_{j}|f_m
| \lambda+\blacksquare_j\rangle \langle \lambda+\blacksquare_j|
f_n| \lambda\rangle\\
=& \frac{(x_{\blacksquare_j})^m (x_{\blacksquare_i})^n}{x_{\blacksquare_j}-x_{\blacksquare_i}}(x_{\blacksquare_j}-x_{\blacksquare_i}+2\hbar)
+ \frac{(x_{\blacksquare_i})^m (x_{\blacksquare_j})^n}{x_{\blacksquare_i}-x_{\blacksquare_j}}(x_{\blacksquare_i}-x_{\blacksquare_j}+2\hbar)
\end{align*}
The operator $f_{s}\circ f_{r+1}  -f_{r+1}\circ f_{s}-f_{s+1}\circ f_{r} +f_{r}\circ f_{s+1} $ action is given by 
\begin{align*}
& \frac{x_{\blacksquare_j}^r x_{\blacksquare_i}^s}{x_{\blacksquare_j}-x_{\blacksquare_i}}\Big( 
x_{\blacksquare_j}(x_{\blacksquare_j}-x_{\blacksquare_i}+2\hbar)
+x_{\blacksquare_j}((x_{\blacksquare_i}-x_{\blacksquare_j}+2\hbar))
-x_{\blacksquare_i}((x_{\blacksquare_j}-x_{\blacksquare_i}+2\hbar))
-x_{\blacksquare_i}((x_{\blacksquare_i}-x_{\blacksquare_j}+2\hbar))
\Big)\\&
+\frac{x_{\blacksquare_i}^r x_{\blacksquare_j}^s}{x_{\blacksquare_i}-x_{\blacksquare_j}}\Big( 
x_{\blacksquare_i}(x_{\blacksquare_i}-x_{\blacksquare_j}+2\hbar)
+x_{\blacksquare_i}((x_{\blacksquare_j}-x_{\blacksquare_i}+2\hbar))
-x_{\blacksquare_j}((x_{\blacksquare_i}-x_{\blacksquare_j}+2\hbar))
-x_{\blacksquare_j}((x_{\blacksquare_j}-x_{\blacksquare_i}+2\hbar))
\Big)\\
=&4\hbar(
x_{\blacksquare_j}^r x_{\blacksquare_i}^s
+x_{\blacksquare_i}^r x_{\blacksquare_j}^s
). 
\end{align*}

The operator $2\hbar(f_{r}\circ f_{s}+f_{s} \circ f_{r})$ acts via the following. 
\begin{align*}
2\hbar(\frac{x_{\blacksquare_j}^r x_{\blacksquare_i}^s}{x_{\blacksquare_j}-x_{\blacksquare_i}}(
2 x_{\blacksquare_j}-2 x_{\blacksquare_i}
)+ 
\frac{x_{\blacksquare_i}^r x_{\blacksquare_j}^s}{x_{\blacksquare_i}-x_{\blacksquare_j}}(
2 x_{\blacksquare_i}-2 x_{\blacksquare_j}
))
=4\hbar(
x_{\blacksquare_j}^r x_{\blacksquare_i}^s
+x_{\blacksquare_i}^r x_{\blacksquare_j}^s). 
\end{align*}
Therefore, we have
\[
f_{s}\circ f_{r+1}  -f_{r+1}\circ f_{s}-f_{s+1}\circ f_{r} +f_{r}\circ f_{s+1} 
=2\hbar(f_{r}\circ f_{s}+f_{s} \circ f_{r}). 
\]
When the two removable boxes are on the same $i$-th string, we denote them by $\blacksquare_i, \tilde{\blacksquare}_i$ with weights $x_{\blacksquare_i}-x_{\tilde\blacksquare_i}=-2\hbar$. 
The verification in this case can be reduced to $w=1$ and there is only one string. 

In this case when there is only one string, the action of $f_m$ is given by 
\begin{align*}
\langle \lambda+\blacksquare|f_m| \lambda\rangle
=&(x_{\blacksquare})^m 
\prod_{ \square\in \lambda}\frac{x_{\blacksquare}-x_{\square}+2\hbar}{ x_{\blacksquare}-x_{\square}}=(x_{\blacksquare})^m \frac{v+1}{2v+1}. 
\end{align*}
Therefore, the operator $f_{n}\circ f_{m}$ is given by
\begin{align*}
& \langle\lambda+\blacksquare_{i}+\tilde\blacksquare_{i}|f_m f_n| \lambda\rangle
=(x_{\tilde\blacksquare_{i}})^m
(x_{\blacksquare_{i}})^n
\frac{v+2}{2v+3} \frac{v+1}{2v+1}
\end{align*}
Factoring out the common factor $\frac{v+2}{2v+3} \frac{v+1}{2v+1}$, the actions of both $-[f_{r+1}, f_{s}]+[f_{r}, f_{s+1}]$ and $2\hbar(f_{r}f_{s}+f_{s}f_{r})$ are given by
\[
2\hbar\big(
(x_{\tilde\blacksquare_{i}})^r(x_{\blacksquare_{i}})^s
+(x_{\tilde\blacksquare_{i}})^s(x_{\blacksquare_{i}})^r
\big). 
\]
\end{proof}

\begin{lemma}
The formulas \eqref{eq:psi} \eqref{eaction} satisfy the relation
\[
[\psi_{r+1}, e_s]-[\psi_{r}, e_{s+1}]=
2\hbar(\psi_{r}e_s+e_s\psi_{r})
\] in \eqref{Y3}. 
\end{lemma}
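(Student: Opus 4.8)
The plan is to verify the relation entrywise in the fixed-point basis $\{\lambda\mid \lambda\in S(\ell_1,\dots,\ell_w)\}$. Since $\psi(z)$ acts diagonally with eigenvalue $\tilde\psi_\lambda(z)$ given by \eqref{eq:psi} (Proposition \ref{prop:psi(z)}), while $e_s$ sends a fixed point $\lambda$ to a combination of fixed points of the form $\lambda+\blacksquare$, the operator $\psi_r e_s-e_s\psi_r$ has nonzero matrix coefficient only at entries $(\lambda,\lambda+\blacksquare)$, where it equals $(\psi_r^{\lambda+\blacksquare}-\psi_r^{\lambda})\,\langle\lambda|e_s|\lambda+\blacksquare\rangle$, writing $\psi_r^{\mu}$ for the eigenvalue of $\psi_r$ on $\mu$. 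Thus it suffices to fix $\lambda$ and an addible box $\blacksquare$ and check the scalar identity obtained by reading off the coefficient of $\lambda+\blacksquare$ in \eqref{Y3}.

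The first simplification comes from Lemma \ref{lem:e}: the rational function $E_\lambda(u)=\prod_{\square\in\lambda}\frac{u-x_\square}{u-x_\square-2\hbar}\prod_{j}\frac{u-z_j-\ell_j\hbar}{u-z_j+\ell_j\hbar}$ has a \emph{simple} pole at the addible box $u=x_{\blacksquare}$. Hence by \eqref{eaction}
\[
\langle\lambda|e_s|\lambda+\blacksquare\rangle=\Res_{u=x_{\blacksquare}}\big(u^s E_\lambda(u)\big)=x_{\blacksquare}^{\,s}\,c_{\blacksquare},\qquad c_{\blacksquare}:=\Res_{u=x_{\blacksquare}}E_\lambda(u),
\]
so the $e_s$-coefficient is a monomial in $x_{\blacksquare}$ with $s$-independent prefactor. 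In particular $\langle\lambda|e_{s+1}|\lambda+\blacksquare\rangle=x_{\blacksquare}\langle\lambda|e_s|\lambda+\blacksquare\rangle$, and after dividing the $(\lambda,\lambda+\blacksquare)$-entry of \eqref{Y3} by $x_{\blacksquare}^{\,s}c_{\blacksquare}$ the dependence on $s$ disappears completely, leaving the single family
\[
(\psi_{r+1}^{\lambda+\blacksquare}-\psi_{r+1}^{\lambda})-x_{\blacksquare}(\psi_{r}^{\lambda+\blacksquare}-\psi_{r}^{\lambda})=2\hbar(\psi_{r}^{\lambda+\blacksquare}+\psi_{r}^{\lambda}),\qquad r\ge 0.
\]

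Next I would package these over $r$ into one generating-series identity. Multiplying by $z^{-r-1}$, summing, and using $\tilde\psi_\mu(z)-1=2\hbar\sum_{r\ge 0}\psi_r^{\mu}z^{-r-1}$ together with the constant-term value $\psi_0^{\lambda+\blacksquare}-\psi_0^{\lambda}=2$ (from $\psi_0(\lambda)=2|\lambda|-\sum_j\ell_j$, computed for relation \eqref{Y2}), the three sums become $\frac{z}{2\hbar}(\tilde\psi_{\lambda+\blacksquare}(z)-\tilde\psi_\lambda(z))-2$, then $\frac{x_{\blacksquare}}{2\hbar}(\tilde\psi_{\lambda+\blacksquare}(z)-\tilde\psi_\lambda(z))$, and $\tilde\psi_{\lambda+\blacksquare}(z)+\tilde\psi_\lambda(z)-2$. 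The two constants $-2$ cancel, and the whole family collapses to the single functional equation
\[
\frac{z-x_{\blacksquare}}{2\hbar}\big(\tilde\psi_{\lambda+\blacksquare}(z)-\tilde\psi_\lambda(z)\big)=\tilde\psi_{\lambda+\blacksquare}(z)+\tilde\psi_\lambda(z),
\]
an identity of rational functions whose expansion at $z=\infty$ reproduces every relation indexed by $r$.

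It then remains to verify this functional equation, which follows at once from Proposition \ref{prop:psi(z)}: adding the box $\blacksquare$ multiplies the eigenvalue by the single factor $\tilde\psi_{\lambda+\blacksquare}(z)=R\,\tilde\psi_\lambda(z)$ with $R=\frac{z-x_{\blacksquare}+2\hbar}{z-x_{\blacksquare}-2\hbar}$. Since $R-1=\frac{4\hbar}{z-x_{\blacksquare}-2\hbar}$ and $R+1=\frac{2(z-x_{\blacksquare})}{z-x_{\blacksquare}-2\hbar}$, both sides equal $\frac{2(z-x_{\blacksquare})}{z-x_{\blacksquare}-2\hbar}\,\tilde\psi_\lambda(z)$, and the equation holds. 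The only genuinely delicate points are bookkeeping rather than conceptual: correctly tracking the index shifts $r\mapsto r+1$ and $s\mapsto s+1$ (which is precisely what produces the constant $\psi_0^{\lambda+\blacksquare}-\psi_0^{\lambda}=2$ and its cancellation), and invoking the simple-pole statement of Lemma \ref{lem:e} to guarantee that $\langle\lambda|e_s|\lambda+\blacksquare\rangle$ is truly the monomial $x_{\blacksquare}^{\,s}c_{\blacksquare}$; beyond these, no analytic or structural obstacle arises.
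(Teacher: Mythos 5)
Your proof is correct, but it is organized quite differently from the paper's. Both arguments reduce \eqref{Y3} to the $(\lambda,\lambda+\blacksquare)$ matrix entry (all other entries vanish identically since $\psi_r$ is diagonal and $e_s$ adds a box), and both rely on the simple-pole statement of Lemma~\ref{lem:e} to write $\langle\lambda|e_s|\lambda+\blacksquare\rangle=x_{\blacksquare}^{s}c_{\blacksquare}$ with $c_{\blacksquare}$ independent of $s$; from there the routes diverge. The paper keeps the index $r$, writes the eigenvalue of $\psi_r$ as a residue at $z=\infty$ and hence, by the residue theorem and Lemma~\ref{lem:psi}, as minus a sum of residues at the simple poles $x_{\tilde\blacksquare}$ over addible and removable boxes; it then fixes each pair $(\blacksquare,\tilde\blacksquare)$, factors out a common product of linear forms, and verifies a two-variable rational identity in $x_{\blacksquare},x_{\tilde\blacksquare}$ termwise. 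You never open up $\tilde\psi_{\lambda}$ into residues: you resum over $r$ and collapse the whole family into the single functional equation $\frac{z-x_{\blacksquare}}{2\hbar}\bigl(\tilde\psi_{\lambda+\blacksquare}(z)-\tilde\psi_{\lambda}(z)\bigr)=\tilde\psi_{\lambda+\blacksquare}(z)+\tilde\psi_{\lambda}(z)$, which follows in one line from the multiplicativity $\tilde\psi_{\lambda+\blacksquare}(z)=\frac{z-x_{\blacksquare}+2\hbar}{z-x_{\blacksquare}-2\hbar}\,\tilde\psi_{\lambda}(z)$ visible in \eqref{eq:psi}. Your bookkeeping checks out: the shift $r\mapsto r+1$ produces the constant $\psi_0^{\lambda+\blacksquare}-\psi_0^{\lambda}=2$ (consistent with the computation done for \eqref{Y2}), this cancels against the $-2$ coming from $\tilde\psi_{\lambda+\blacksquare}+\tilde\psi_{\lambda}-2$, and since expansion at $z=\infty$ is injective on rational functions, the functional equation is genuinely equivalent to the family of relations over all $r$. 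What your route buys is economy and transparency: one identity handles all $r$ and all poles at once, and it sidesteps a subtlety the paper absorbs implicitly, namely that the pole sets of $\tilde\psi_{\lambda}$ and $\tilde\psi_{\lambda+\blacksquare}$ differ (the factor $\frac{z-x_{\blacksquare}+2\hbar}{z-x_{\blacksquare}-2\hbar}$ contributes a new pole), which is what forces the paper's careful pairing of terms $(\blacksquare,\tilde\blacksquare)$ and its convention of deleting vanishing factors from denominators. What the paper's termwise residue computation buys is uniformity: the same mechanics are reused nearly verbatim for \eqref{Y4} and especially \eqref{Y5}, where the sum over addible and removable boxes is genuinely needed. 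One cosmetic improvement: rather than dividing by $c_{\blacksquare}$ (which could a priori vanish), observe that both sides of the entry share the factor $x_{\blacksquare}^{s}c_{\blacksquare}$, so it suffices to verify the cofactor identity; this costs nothing and removes the nonvanishing caveat.
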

\begin{proof}
We first write down the action of $e_s\circ \psi_{r}$. 
We have 
\begin{align*}
&\langle
\lambda|
\psi_r | \lambda\rangle\langle\lambda|e_s|\lambda+\blacksquare\rangle
\\
=&
\Res_{z=\infty}
\Big(z^{r}\prod_{\square\in \lambda} \frac{z-x_{\square}+2\hbar }{z-x_{\square}-2\hbar} \prod_{a=1}^w\frac{z-z_a-\ell_a\hbar}{z-z_a+\ell_a\hbar}\Big)
\Res_{z=x_{\blacksquare}} \Big(z^s 
\prod_{\square\in \lambda} \frac{z-x_{\square} }{z-x_{\square}-2\hbar} \prod_{a=1}^w\frac{z-z_a-\ell_a\hbar}{z-z_a+\ell_a\hbar}\Big)\\
=& -\sum_{\tilde{\blacksquare}}
\Big((x_{\tilde{\blacksquare}})^{r}\prod_{\square\in \lambda} \frac{x_{\tilde{\blacksquare}}-x_{\square}+2\hbar }{\widehat{x_{\tilde{\blacksquare}}-x_{\square}-2\hbar}} \prod_{a=1}^w\frac{x_{\tilde{\blacksquare}}-z_a-\ell_a\hbar}{\widehat{x_{\tilde{\blacksquare}}-z_a+\ell_a\hbar}}\Big)
 \Big((x_{\blacksquare})^s 
\prod_{\square\in \lambda} \frac{x_{\blacksquare}-x_{\square} }{\widehat{x_{\blacksquare}-x_{\square}-2\hbar}} \prod_{a=1}^w\frac{x_{\blacksquare}-z_a-\ell_a\hbar}{\widehat{x_{\blacksquare}-z_a+\ell_a\hbar}}\Big)
\end{align*}

The action of $\psi_{r}\circ e_s$ is given by 
\begin{align*}
&\langle\lambda|e_s|\lambda+\blacksquare\rangle \langle
\lambda+\blacksquare|
\psi_r | \lambda+\blacksquare\rangle
\\
=&
\Res_{z=x_{\blacksquare}} \Big(z^s 
\prod_{\square\in \lambda} \frac{z-x_{\square} }{z-x_{\square}-2\hbar} \prod_{a=1}^w\frac{z-z_a-\ell_a\hbar}{z-z_a+\ell_a\hbar}\Big)
\Res_{z=\infty}
\Big(z^{r}\prod_{\square\in \lambda} \frac{z-x_{\square}+2\hbar }{z-x_{\square}-2\hbar} 
\frac{z-x_{\blacksquare}+2\hbar }{z-x_{\blacksquare}-2\hbar} \prod_{a=1}^w\frac{z-z_a-\ell_a\hbar}{z-z_a+\ell_a\hbar}\Big)
\\
=-& \sum_{\tilde{\blacksquare}}\Big((x_{\blacksquare})^s 
\prod_{\square\in \lambda} \frac{x_{\blacksquare}-x_{\square} }{\widehat{x_{\blacksquare}-x_{\square}-2\hbar}} \prod_{a=1}^w\frac{x_{\blacksquare}-z_a-\ell_a\hbar}{\widehat{x_{\blacksquare}-z_a+\ell_a\hbar}}\Big)
\Big((x_{\tilde{\blacksquare}})^{r}\prod_{\square\in \lambda} \frac{x_{\tilde{\blacksquare}}-x_{\square}+2\hbar }{\widehat{x_{\tilde{\blacksquare}}-x_{\square}-2\hbar}}
\frac{x_{\tilde{\blacksquare}}-x_{\blacksquare}+2\hbar }{x_{\tilde{\blacksquare}}-x_{\blacksquare}-2\hbar} 
\prod_{a=1}^w\frac{x_{\tilde{\blacksquare}}-z_a-\ell_a\hbar}{\widehat{x_{\tilde{\blacksquare}}-z_a+\ell_a\hbar}}\Big)
\end{align*}

Therefore, fix a pair $(\blacksquare,  \tilde{\blacksquare})$, we have the common factor 
$$-
\Big(\prod_{\square\in \lambda} \frac{x_{\tilde{\blacksquare}}-x_{\square}+2\hbar }{\widehat{x_{\tilde{\blacksquare}}-x_{\square}-2\hbar}} \prod_{a=1}^w\frac{x_{\tilde{\blacksquare}}-z_a-\ell_a\hbar}{\widehat{x_{\tilde{\blacksquare}}-z_a+\ell_a\hbar}}\Big)
 \Big(
\prod_{\square\in \lambda} \frac{x_{\blacksquare}-x_{\square} }{\widehat{x_{\blacksquare}-x_{\square}-2\hbar}} \prod_{a=1}^w\frac{x_{\blacksquare}-z_a-\ell_a\hbar}{\widehat{x_{\blacksquare}-z_a+\ell_a\hbar}}\Big).$$
Factoring out this common factor, the operator $e_s\circ \psi_{r}+\psi_r \circ e_{s}$ corresponds to 
\[
(x_{\tilde{\blacksquare}})^{r}(x_{\blacksquare})^s (\frac{x_{\tilde{\blacksquare}}-x_{\blacksquare}+2\hbar }{x_{\tilde{\blacksquare}}-x_{\blacksquare}-2\hbar} +1 )=(x_{\tilde{\blacksquare}})^{r}(x_{\blacksquare})^s (\frac{2(x_{\tilde{\blacksquare}}-x_{\blacksquare})}{x_{\tilde{\blacksquare}}-x_{\blacksquare}-2\hbar} )
\]
Similarly, factoring out the same common factor, the operator $e_s\circ \psi_{r+1}-\psi_{r+1}\circ e_s-e_{s+1}\circ \psi_{r}+\psi_{r}\circ e_{s+1}$ corresponds to the following. 
\begin{align*}
&(x_{\tilde{\blacksquare}})
^{r+1}(x_{\blacksquare})^s-(x_{\tilde{\blacksquare}})
^{r+1}(x_{\blacksquare})^s \frac{x_{\tilde{\blacksquare}}
-x_{\blacksquare}+2\hbar }{x_{\tilde{\blacksquare}}-x_{\blacksquare}-2\hbar}
-(x_{\tilde{\blacksquare}})
^{r}(x_{\blacksquare})^{s+1}
+
(x_{\tilde{\blacksquare}})
^{r}(x_{\blacksquare})^{s+1}
\frac{x_{\tilde{\blacksquare}}
-x_{\blacksquare}+2\hbar }{x_{\tilde{\blacksquare}}-x_{\blacksquare}-2\hbar}\\
=& (x_{\tilde{\blacksquare}})
^{r}(x_{\blacksquare})^s \Big(
x_{\tilde{\blacksquare}}(1-\frac{x_{\tilde{\blacksquare}}-x_{\blacksquare}+2\hbar }{x_{\tilde{\blacksquare}}-x_{\blacksquare}-2\hbar})
-x_{\blacksquare} (1-\frac{x_{\tilde{\blacksquare}}-x_{\blacksquare}+2\hbar }{x_{\tilde{\blacksquare}}-x_{\blacksquare}-2\hbar})\Big)
\\
=&(x_{\tilde{\blacksquare}})
^{r}(x_{\blacksquare})^s (x_{\tilde{\blacksquare}}-x_{{\blacksquare}} )\frac{-4\hbar}{x_{\tilde{\blacksquare}}-x_{\blacksquare}-2\hbar}. 
\end{align*}
This shows the relation
\[
e_s\circ \psi_{r+1}-\psi_{r+1}\circ e_s-e_{s+1}\circ \psi_{r}+\psi_{r}\circ e_{s+1}
=-2\hbar(e_s\circ \psi_{r}+\psi_r \circ e_{s}). 
\]
\end{proof}
\begin{lemma}
The formulas \eqref{eq:psi} \eqref{faction} satisfy the relation: 
\[
[\psi_{r+1}, f_s]-[\psi_{r}, f_{s+1}]=
-2\hbar(\psi_{r}f_s+f_s\psi_{r})
\]
\end{lemma}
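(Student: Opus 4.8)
The plan is to carry out the computation in complete parallel with the $e$--$\psi$ relation just verified, using that $\psi_r$ acts diagonally in the fixed-point basis while $f_s$ lowers $v$ by removing a box. Since every term on both sides contains exactly one $\psi$ and one $f$, it suffices to compare matrix coefficients with input a fixed point $\lambda+\blacksquare$ (where $\blacksquare$ is a removable box and $\lambda$ is the result of removing it) and output $\lambda$; all other coefficients of both sides vanish. Write $\tilde\psi_\mu(z)$ for the eigenvalue \eqref{eq:psi} of $\psi(z)$ on a fixed point $\mu$, so that the eigenvalue of $\psi_a$ on $\mu$ is $\tfrac1{2\hbar}$ times the coefficient of $z^{-a-1}$ in $\tilde\psi_\mu(z)$, the factor $\tfrac1{2\hbar}$ being common to all terms and hence droppable. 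The first step is to record the two composition orders: in $\psi_a\circ f_b$ the factor $f_b$ acts first, so $\psi_a$ is evaluated on the output $\lambda$ and is governed by $\tilde\psi_\lambda(z)$, whereas in $f_b\circ\psi_a$ the factor $\psi_a$ acts first on the input $\lambda+\blacksquare$ and is governed by $\tilde\psi_{\lambda+\blacksquare}(z)$. Both orders carry the common factor $\langle\lambda+\blacksquare|f_b|\lambda\rangle=(x_\blacksquare)^b\prod_{\square\in\lambda}\frac{x_\blacksquare-x_\square+2\hbar}{x_\blacksquare-x_\square}$ from \eqref{faction}. Observe that the assignment of $\lambda$ and $\lambda+\blacksquare$ to the two orders is opposite to the $e$ case, which is precisely what produces the sign $-2\hbar$ on the right-hand side.

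The key algebraic input, read off from \eqref{eq:psi}, is that adding the box $\blacksquare$ to $\lambda$ inserts a single factor into the product $\prod_{\square}\frac{z-x_\square+2\hbar}{z-x_\square-2\hbar}$, so that
\[
\tilde\psi_{\lambda+\blacksquare}(z)=\tilde\psi_\lambda(z)\,g(z),\qquad g(z):=\frac{z-x_\blacksquare+2\hbar}{z-x_\blacksquare-2\hbar}.
\]
I would then factor $(x_\blacksquare)^s\prod_{\square\in\lambda}\frac{x_\blacksquare-x_\square+2\hbar}{x_\blacksquare-x_\square}$ out of every term. On the left-hand side $[\psi_{r+1},f_s]-[\psi_r,f_{s+1}]$ the two commutators collapse to coefficient extractions of $\tilde\psi_\lambda(z)\big(1-g(z)\big)$, and the indices $s$ and $s+1$ combine, via the elementary Laurent-series shift $[z^{-r-2}]h-x_\blacksquare[z^{-r-1}]h=[z^{-r-1}]\big((z-x_\blacksquare)h\big)$ applied to $h=\tilde\psi_\lambda(1-g)$, into the single expression $[z^{-r-1}]\big((z-x_\blacksquare)\tilde\psi_\lambda(z)(1-g(z))\big)$. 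On the right-hand side $-2\hbar(\psi_r f_s+f_s\psi_r)$ becomes $-2\hbar\,[z^{-r-1}]\big(\tilde\psi_\lambda(z)(1+g(z))\big)$.

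Finally I would simplify the two rational factors by
\[
1-g(z)=\frac{-4\hbar}{z-x_\blacksquare-2\hbar},\qquad 1+g(z)=\frac{2(z-x_\blacksquare)}{z-x_\blacksquare-2\hbar},
\]
which give at once the identity $(z-x_\blacksquare)\big(1-g(z)\big)=-2\hbar\big(1+g(z)\big)$; hence the two coefficient extractions agree term by term and the relation follows. I do not anticipate a genuine obstacle, since the argument is dual to the $e$--$\psi$ computation. The only points needing care are the bookkeeping of which fixed point carries the $\psi$-eigenvalue in each composition order (the source of the correct overall sign) and the degenerate configurations in which $\lambda$ admits no addible or removable box, where the relevant matrix coefficients vanish and the identity holds trivially. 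That all poles of $\tilde\psi_\lambda$ are simple and located at the addible and removable boxes, established in Lemma \ref{lem:psi}, guarantees that the manipulations above are valid as genuine Laurent expansions at $z=\infty$.
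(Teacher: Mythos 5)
Your proof is correct, and while it verifies the same fixed-point matrix-coefficient identity as the paper, it organizes the computation differently enough to be worth comparing. The paper expands each eigenvalue $\langle\mu|\psi_r|\mu\rangle$ as a residue at $z=\infty$ and then, invoking Lemma \ref{lem:psi}, rewrites it as a (negative) sum of residues at the simple poles $x_{\tilde\blacksquare}$; it then fixes a pair $(\blacksquare,\tilde\blacksquare)$, factors out a common factor, and checks the resulting rational identity pointwise: its final step, comparing $(x_{\tilde\blacksquare})^{r}(x_\blacksquare)^{s}(x_{\tilde\blacksquare}-x_{\blacksquare})\frac{4\hbar}{x_{\tilde\blacksquare}-x_\blacksquare-2\hbar}$ with $2\hbar\,(x_{\tilde\blacksquare})^{r}(x_\blacksquare)^{s}\frac{2(x_{\tilde\blacksquare}-x_\blacksquare)}{x_{\tilde\blacksquare}-x_\blacksquare-2\hbar}$, is exactly your identity $(z-x_\blacksquare)\bigl(1-g(z)\bigr)=-2\hbar\bigl(1+g(z)\bigr)$ evaluated at the poles $z=x_{\tilde\blacksquare}$. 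Your route instead stays at the level of a single Laurent-coefficient extraction at $z=\infty$, using the multiplicative relation $\tilde\psi_{\lambda+\blacksquare}=\tilde\psi_\lambda\,g$ together with the shift $[z^{-r-2}]h-x_\blacksquare[z^{-r-1}]h=[z^{-r-1}]\bigl((z-x_\blacksquare)h\bigr)$, which combines the $s$ and $s{+}1$ terms in one stroke. This buys two things: all four terms of the left-hand side are treated simultaneously rather than pole by pole, and Lemma \ref{lem:psi} becomes logically unnecessary for this relation --- your closing appeal to it is actually stronger than needed, since $\psi_r|_\mu$ is \emph{defined} by the expansion of $\tilde\psi_\mu$ at $z=\infty$, and any rational function with $\tilde\psi_\mu(z)=1+O(z^{-1})$ admits such an expansion regardless of its pole structure (the only point requiring care is that $(z-x_\blacksquare)h$ has a constant term at infinity, which does not affect $[z^{-r-1}]$ for $r\ge 0$). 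You also correctly isolate the one structural difference from the $e$--$\psi$ case, namely that the roles of $\lambda$ and $\lambda+\blacksquare$ in the two composition orders are exchanged, which flips $g-1$ into $1-g$ and produces the sign $-2\hbar$; this matches the paper's conventions, where $f_s\circ\psi_r$ carries $\langle\lambda+\blacksquare|\psi_r|\lambda+\blacksquare\rangle$ and $\psi_r\circ f_s$ carries $\langle\lambda|\psi_r|\lambda\rangle$.
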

\begin{proof}
For the operator $f_{s}\circ \psi_{r}$, 
we have 
\begin{align*}
&\langle
\lambda+\blacksquare|
\psi_r | \lambda+\blacksquare\rangle
\langle \lambda+\blacksquare|f_s| \lambda\rangle
\\
=-& \sum_{\tilde{\blacksquare}}
\Big((x_{\tilde{\blacksquare}})^{r}\prod_{\square\in \lambda} \frac{x_{\tilde{\blacksquare}}-x_{\square}+2\hbar }{\widehat{x_{\tilde{\blacksquare}}-x_{\square}-2\hbar}}
\frac{x_{\tilde{\blacksquare}}-x_{\blacksquare}+2\hbar }{x_{\tilde{\blacksquare}}-x_{\blacksquare}-2\hbar} 
\prod_{a=1}^w\frac{x_{\tilde{\blacksquare}}-z_a-\ell_a\hbar}{\widehat{x_{\tilde{\blacksquare}}-z_a+\ell_a\hbar}}\Big)
(x_{\blacksquare})^s 
\prod_{ \square\in \lambda}\frac{x_{\blacksquare}-x_{\square}+2\hbar}{ x_{\blacksquare}-x_{\square}}. 
\end{align*}
and for the operator $\psi_{r}\circ f_{s}$ we have
\begin{align*}
&
\langle \lambda+\blacksquare|f_s| \lambda\rangle\langle
\lambda|
\psi_r | \lambda\rangle
\\
=-& \sum_{\tilde{\blacksquare}}
\Big((x_{\tilde{\blacksquare}})^{r}\prod_{\square\in \lambda} \frac{x_{\tilde{\blacksquare}}-x_{\square}+2\hbar }{\widehat{x_{\tilde{\blacksquare}}-x_{\square}-2\hbar}}
\prod_{a=1}^w\frac{x_{\tilde{\blacksquare}}-z_a-\ell_a\hbar}{\widehat{x_{\tilde{\blacksquare}}-z_a+\ell_a\hbar}}\Big)
(x_{\blacksquare})^s 
\prod_{ \square\in \lambda}\frac{x_{\blacksquare}-x_{\square}+2\hbar}{ x_{\blacksquare}-x_{\square}}. 
\end{align*}
Fix a pair $(\blacksquare,  \tilde{\blacksquare})$, we have the common factor 
\[
\Big(\prod_{\square\in \lambda} \frac{x_{\tilde{\blacksquare}}-x_{\square}+2\hbar }{\widehat{x_{\tilde{\blacksquare}}-x_{\square}-2\hbar}}
\prod_{a=1}^w\frac{x_{\tilde{\blacksquare}}-z_a-\ell_a\hbar}{\widehat{x_{\tilde{\blacksquare}}-z_a+\ell_a\hbar}}\Big)
\prod_{ \square\in \lambda}\frac{x_{\blacksquare}-x_{\square}+2\hbar}{ x_{\blacksquare}-x_{\square}}. 
\]
The operator $\psi_{r}f_s+f_s\psi_{r}$ corresponds to the following 
\begin{align*}
(x_{\tilde{\blacksquare}})^{r}(x_{\blacksquare})^s (\frac{x_{\tilde{\blacksquare}}-x_{\blacksquare}+2\hbar }{x_{\tilde{\blacksquare}}-x_{\blacksquare}-2\hbar} +1 )=(x_{\tilde{\blacksquare}})^{r}(x_{\blacksquare})^s (\frac{2(x_{\tilde{\blacksquare}}-x_{\blacksquare})}{x_{\tilde{\blacksquare}}-x_{\blacksquare}-2\hbar} )
\end{align*}

Similarly, factoring out the same common factor, the operator $f_s\circ \psi_{r+1}-\psi_{r+1}\circ f_s-f_{s+1}\circ \psi_{r}+\psi_{r}\circ f_{s+1}$ corresponds to the following. 
\begin{align*}
&(x_{\tilde{\blacksquare}})
^{r+1}(x_{\blacksquare})^s \frac{x_{\tilde{\blacksquare}}-x_{\blacksquare}+2\hbar }{x_{\tilde{\blacksquare}}-x_{\blacksquare}-2\hbar}
-(x_{\tilde{\blacksquare}})
^{r+1}(x_{\blacksquare})^s-(x_{\tilde{\blacksquare}})
^{r}(x_{\blacksquare})^{s+1}
\frac{x_{\tilde{\blacksquare}}-x_{\blacksquare}+2\hbar }{x_{\tilde{\blacksquare}}-x_{\blacksquare}-2\hbar}+(x_{\tilde{\blacksquare}})
^{r}(x_{\blacksquare})^{s+1}\\
=& (x_{\tilde{\blacksquare}})
^{r}(x_{\blacksquare})^s \Big(
x_{\tilde{\blacksquare}}(\frac{x_{\tilde{\blacksquare}}-x_{\blacksquare}+2\hbar }{x_{\tilde{\blacksquare}}-x_{\blacksquare}-2\hbar}-1)
-x_{\blacksquare} (\frac{x_{\tilde{\blacksquare}}-x_{\blacksquare}+2\hbar }{x_{\tilde{\blacksquare}}-x_{\blacksquare}-2\hbar}-1)\Big)
\\
=&(x_{\tilde{\blacksquare}})
^{r}(x_{\blacksquare})^s (x_{\tilde{\blacksquare}}-x_{{\blacksquare}} )\frac{4\hbar}{x_{\tilde{\blacksquare}}-x_{\blacksquare}-2\hbar}. 
\end{align*} 
This shows the relation
\[
f_s\circ \psi_{r+1}-\psi_{r+1}\circ f_s-f_{s+1}\circ \psi_{r}+\psi_{r}\circ f_{s+1}
=2\hbar(\psi_{r}f_s+f_s\psi_{r}). 
\]
\end{proof}

\begin{lemma}
The formulas \eqref{eq:psi} \eqref{eaction}\eqref{faction} satisfy the relation \eqref{Y5}. 
\end{lemma}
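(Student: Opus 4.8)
The plan is to verify \eqref{Y5} at the level of matrix coefficients in the fixed-point basis $\{\lambda\}$, $\lambda\in S(\ell_1,\dots,\ell_w)$, of the localized module. Since $e_r$ adds a box and $f_s$ removes one, $[e_r,f_s]$ preserves the total dimension $v=\sum_i v_i$, so it suffices to compute $\langle\lambda|[e_r,f_s]|\mu\rangle$ for $|\mu|=|\lambda|$; such a $\mu$ is either $\lambda$ itself, or is obtained from $\lambda$ by removing a box in one column and adding a box in another. It is convenient to package \eqref{eaction} and \eqref{faction} into the currents $e(u)=\sum_{r\ge0}e_ru^{-r-1}$ and $f(v)=\sum_{s\ge0}f_sv^{-s-1}$. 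Writing $\phi_\lambda(z)=\prod_{\square\in\lambda}\frac{z-x_\square}{z-x_\square-2\hbar}\prod_{j=1}^w\frac{z-z_j-\ell_j\hbar}{z-z_j+\ell_j\hbar}$ and $\psi^f_\lambda(z)=\prod_{\square\in\lambda}\frac{z-x_\square+2\hbar}{z-x_\square}$, so that $\tilde\psi(z)=\phi_\lambda(z)\,\psi^f_\lambda(z)$ by \eqref{eq:psi}, one gets from $\sum_r u^{-r-1}z^r=\tfrac{1}{u-z}$ that $\langle\lambda|e(u)|\lambda+\blacksquare\rangle=\tfrac{1}{u-x_\blacksquare}\Res_{z=x_\blacksquare}\phi_\lambda(z)$ and $\langle\lambda+\blacksquare|f(v)|\lambda\rangle=\tfrac{1}{v-x_\blacksquare}\psi^f_\lambda(x_\blacksquare)$, each a single simple pole in the spectral variable.

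For the off-diagonal entries I would fix $\mu=\lambda-\blacksquare_i+\blacksquare_j$ with $i\ne j$ and compare the route $\lambda\to\lambda-\blacksquare_i\to\mu$ occurring in $e_rf_s$ with the route $\lambda\to\lambda+\blacksquare_j\to\mu$ occurring in $f_se_r$. Because the two boxes lie in different columns and the $z_a$ are generic, the two $e$-residues differ only by the factor $\frac{x_{\blacksquare_j}-x_{\blacksquare_i}}{x_{\blacksquare_j}-x_{\blacksquare_i}-2\hbar}$ (the box $\blacksquare_i$ present in $\phi_\lambda$ but absent in $\phi_{\lambda-\blacksquare_i}$, evaluated at $x_{\blacksquare_j}$), and the two $f$-values differ by $\frac{x_{\blacksquare_i}-x_{\blacksquare_j}+2\hbar}{x_{\blacksquare_i}-x_{\blacksquare_j}}$; with $d=x_{\blacksquare_j}-x_{\blacksquare_i}$ these multiply to $\frac{d}{d-2\hbar}\cdot\frac{2\hbar-d}{-d}=1$, so the two contributions are equal and cancel in the commutator. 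Within a single column the only removable box is the top one and the only box that can then be re-added sits in the same spot, so no genuine off-diagonal transition occurs there. Hence $[e_r,f_s]$ is diagonal.

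It remains to match the diagonal entry with $2\hbar\psi_{r+s}$. Collecting the two pieces gives $\langle\lambda|e_rf_s|\lambda\rangle=\sum_{\blacksquare\text{ rem}}x_\blacksquare^s\,\psi^f_{\lambda-\blacksquare}(x_\blacksquare)\Res_{z=x_\blacksquare}\!\big(z^r\phi_{\lambda-\blacksquare}(z)\big)$ and $\langle\lambda|f_se_r|\lambda\rangle=\sum_{\blacksquare\text{ add}}x_\blacksquare^s\,\psi^f_\lambda(x_\blacksquare)\Res_{z=x_\blacksquare}\!\big(z^r\phi_\lambda(z)\big)$. Since $\psi^f$ is regular at the pole while $\phi$ has a simple pole there (Lemma \ref{lem:e}), each summand absorbs into a single residue: every addable-box term equals $\Res_{z=x_\blacksquare}\big(z^{r+s}\tilde\psi(z)\big)$, and every removable-box term equals $\Res_{z=x_\blacksquare}\big(z^{r+s}\tilde\psi_{\lambda-\blacksquare}(z)\big)$, which by $\tilde\psi(z)=\tilde\psi_{\lambda-\blacksquare}(z)\frac{z-x_\blacksquare+2\hbar}{z-x_\blacksquare-2\hbar}$ and the value $-1$ of the extra factor at $z=x_\blacksquare$ equals $-\Res_{z=x_\blacksquare}\big(z^{r+s}\tilde\psi(z)\big)$. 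Thus $\langle\lambda|[e_r,f_s]|\lambda\rangle$ is, up to sign, the sum over all addable and removable boxes of the residues of $z^{r+s}\tilde\psi(z)$. By Lemma \ref{lem:psi} these are exactly the finite poles of $z^{r+s}\tilde\psi(z)$; since $\tilde\psi(z)\to1$ as $z\to\infty$, the residue theorem on $\mathbb{P}^1$ converts the box-sum into the residue at infinity, which by the defining expansion $\psi(z)=1+2\hbar\sum_{i\ge0}\psi_iz^{-i-1}$ is precisely the eigenvalue of $2\hbar\psi_{r+s}$ on $\lambda$, giving \eqref{Y5}.

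The main obstacle is the residue bookkeeping in the diagonal step: one must invoke Lemma \ref{lem:psi} to ensure $z^{r+s}\tilde\psi(z)$ has no finite poles beyond the addable and removable boxes, and carefully track the sign produced by $\frac{z-x_\blacksquare+2\hbar}{z-x_\blacksquare-2\hbar}$ specializing to $-1$ at a removable box, together with the convention for $\Res_{z=\infty}$, in order to pin down the overall normalization $2\hbar$. The off-diagonal cancellation is routine once the two factors are written out, and the degenerate case of two addable or removable boxes in the same column differing by $2\hbar$ should be checked separately by the reduction to $w=1$ already used in the preceding lemmas.
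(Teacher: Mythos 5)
Your proposal is correct and follows essentially the same route as the paper's proof: in both, the product of the $f$- and $e$-matrix entries along each diagonal path telescopes into a single residue of $z^{r+s}\tilde{\psi}(z)$ at an addible box, the removable-box terms acquire the sign $-1$ from the factor $\frac{z-x_\blacksquare+2\hbar}{z-x_\blacksquare-2\hbar}$ evaluated at $z=x_\blacksquare$, and Lemma \ref{lem:psi} together with the residue theorem converts the sum over all addible and removable boxes into the residue at infinity, whose expansion via $\psi(z)=1+2\hbar\sum_{i\geq 0}\psi_i z^{-i-1}$ yields $2\hbar\psi_{r+s}$. The one addition on your side is the explicit verification that the off-diagonal entries $\lambda\to\lambda-\blacksquare_i+\blacksquare_j$ cancel between $e_rf_s$ and $f_se_r$ via the ratio $\frac{d}{d-2\hbar}\cdot\frac{2\hbar-d}{-d}=1$ with $d=x_{\blacksquare_j}-x_{\blacksquare_i}$, a step the paper leaves implicit by computing only the diagonal matrix elements; your cancellation computation is correct and makes the argument slightly more complete than the printed one.
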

\begin{proof}

Notations as before. Let $\lambda$ be a $\C^*$-fixed point. By \eqref{eaction} and \eqref{faction}, for the composition $e_i\circ f_j$, we have
\begin{align*}
&\langle\lambda+\blacksquare|(f_j)| \lambda\rangle \langle\lambda|(e_i)| \lambda+\blacksquare\rangle \\
=&
(x_{\blacksquare})^j 
\prod_{ \square\in \lambda}\frac{x_{\blacksquare}-x_{\square}+2\hbar}{ x_{\blacksquare}-x_{\square}}
\Res_{z=x_{\blacksquare}} \big(z^i 
\prod_{\square\in \lambda} \frac{z-x_{\square} }{z-x_{\square}-2\hbar} \prod_{j=1}^w\frac{z-z_j-\ell_j\hbar}{z-z_j+\ell_j\hbar}\Big)\\
=&
\Res_{z=x_{\blacksquare}} \big(z^{i+j} 
\prod_{\square\in \lambda} \frac{(z-x_{\square}+2\hbar)(z-x_{\square}) }{(z-x_{\square})(z-x_{\square}-2\hbar)} \prod_{j=1}^w\frac{z-z_j-\ell_j\hbar}{z-z_j+\ell_j\hbar}\Big)\\
=& \Res_{z=x_{\blacksquare}} \big(z^{i+j} \prod_{\square\in \lambda} \frac{z-x_{\square}+2\hbar }{z-x_{\square}-2\hbar} 
 \prod_{j=1}^w 
\frac{z-z_j-\ell_j\hbar}{z-z_j+\ell_j\hbar}\Big). 
\end{align*}
On the other hand, for the composition $f_j\circ e_i$, we have
\begin{align*}
&\langle\lambda-\blacksquare|(e_i)| \lambda\rangle
\langle\lambda|(f_j)| \lambda-\blacksquare\rangle \\
=&\Res_{z=x_{\blacksquare}} \big(z^i 
\prod_{\square\in \lambda-\blacksquare} \frac{z-x_{\square} }{z-x_{\square}-2\hbar} \prod_{j=1}^w\frac{z-z_j-\ell_j\hbar}{z-z_j+\ell_j\hbar}\Big)
(x_{\blacksquare})^j 
\prod_{ \square\in \lambda}\frac{x_{\blacksquare}-x_{\square}+2\hbar}{ x_{\blacksquare}-x_{\square}}
\\
=&\Res_{z=x_{\blacksquare}} \big(z^{i+j}
\prod_{\square\in \lambda-\blacksquare} \frac{(z-x_{\square})(z-x_{\square}+2\hbar) }{(z-x_{\square}-2\hbar)(z-x_{\square})} \prod_{j=1}^w\frac{z-z_j-\ell_j\hbar}{z-z_j+\ell_j\hbar}\Big)\\
=&\Res_{z=x_{\blacksquare}} \big(z^{i+j}
\prod_{\square\in \lambda-\blacksquare} \frac{z-x_{\square}+2\hbar }{z-x_{\square}-2\hbar} \prod_{j=1}^w\frac{z-z_j-\ell_j\hbar}{z-z_j+\ell_j\hbar}\Big)\\
=&-\Res_{z=x_{\blacksquare}} \big(z^{i+j}
\prod_{\square\in \lambda} \frac{z-x_{\square}+2\hbar }{z-x_{\square}-2\hbar} \prod_{j=1}^w\frac{z-z_j-\ell_j\hbar}{z-z_j+\ell_j\hbar}\Big). 
\end{align*}
This implies that, for the commutator $-[e_i, f_j]$, we have
\begin{align}
-[e_i, f_j]_{\lambda}
=&(f_j\circ e_i)|_{\lambda}
-( e_i\circ f_j)|_{\lambda}\notag\\
=&-\sum_{\text{removable boxes}} 
  \Res_{z=x_{\blacksquare}} \big(z^{i+j}
\prod_{\square\in \lambda} \frac{z-x_{\square}+2\hbar }{z-x_{\square}-2\hbar} \prod_{j=1}^w\frac{z-z_j-\ell_j\hbar}{z-z_j+\ell_j\hbar}\Big)\notag\\&
-\sum_{\text{addible boxes}}
\Res_{z=x_{\blacksquare}} \Big(z^{i+j}
\prod_{\square\in \lambda} \frac{z-x_{\square}+2\hbar }{z-x_{\square}-2\hbar} \prod_{j=1}^w\frac{z-z_j-\ell_j\hbar}{z-z_j+\ell_j\hbar}\Big)\notag\\
=& -\sum_{\text{addible boxes $\cup$ removable boxes}} 
\Res_{z=x_{\blacksquare}} \Big(z^{i+j}
\prod_{\square\in \lambda} \frac{z-x_{\square}+2\hbar }{z-x_{\square}-2\hbar} \prod_{j=1}^w\frac{z-z_j-\ell_j\hbar}{z-z_j+\ell_j\hbar}\Big)\notag\\
=&  \Res_{z=\infty}\Big(z^{i+j}
\prod_{\square\in \lambda} \frac{z-x_{\square}+2\hbar }{z-x_{\square}-2\hbar} \prod_{j=1}^w\frac{z-z_j-\ell_j\hbar}{z-z_j+\ell_j\hbar}\Big)\label{eq:residue}\\
=& \Res_{z=\infty}z^{i+j}\tilde{\psi}_(z)|_{\lambda}. \notag
\end{align}
where the equality \eqref{eq:residue} follows from the residue theorem and Lemma \ref{lem:psi}.

As the expansion of $\psi(z)$ is given by $1+2\hbar\sum_{i\geq 0} \psi_i z^{-i-1}$. We have
\[
-\frac{1}{2\hbar}\Res_{z=\infty}z^{i+j}\psi_(z)|_{\lambda}=\ \psi_{i+j}|_{\lambda}. 
\]
Therefore, 
$
[e_i, f_j]=2\hbar\psi_{i+j}. 
$ 
This implies the relation \eqref{Y5}.
\end{proof}

\subsection{The proof of Proposition \ref{prop:w=1}}
\label{sub:proof of prop}
When $w=1$, denote the maximal torus $T_w$ at the framing vertex by $\C^*_{fr}$. By the Proposition \ref{prop:dimred} and Lemma \ref{lem:w=1}, we have the isomorphims as graded vector spaces
\begin{align*}
\bigoplus_{v=0}^{\infty} H^*_{c,\C^*\times \C^*_{fr}}(\tilde{X}(v, 1)^{st}/G_V, \varphi_{\bf{w}})^{\vee}
&\cong \bigoplus_{v=0}^{\infty} H^*_{\C^*\times \C^*_{fr}}(\mathfrak{M}^+(v, 1), \C)&&\text{by Proposition \ref{prop:dimred}}\\
&
=\bigoplus_{v=0}^{\ell} H^*_{c,\C^*\times \C^*_{fr}}(\pt, \C)^{\vee} &&\text{by Lemma \ref{lem:w=1}}\\
&=\C[\hbar, z]\otimes{\C^{\ell+1}}
\end{align*}

We now analyze the action of $Y_\hbar(\mathfrak{sl}_2)$ on $\C[\hbar, z]\otimes{\C^{\ell+1}}$. 

By formula \eqref{eaction}, we have
\begin{align*}
\langle\lambda|e_k| \lambda+\blacksquare\rangle=
&\Res_{u=x_{\blacksquare}}  u^k\frac{u-z_1-\ell\hbar}{u-z_1+\ell\hbar-2v\hbar}  \\
=&(z_1-\ell\hbar+2v\hbar )^k \big((z_1-\ell\hbar+2v\hbar)-z_1-\ell\hbar\big)\\
=&(z_1-\ell\hbar+2v\hbar )^k \big(2(v-\ell)\hbar\big). 
\end{align*}
Choose a basis $\{b_{v}\mid v=0, 1, 2, \cdots, \ell\}$ of $\C^{\ell+1}$ such that 
$b_v$ comes from the component $\mathfrak{M}^+(v, 1)$. In other words, $b_v$ is represented by the following picture with $v$ many $\bullet$'s. 
\[
\begin{tikzpicture}[scale=0.7]
\tikzstyle{every node}=[font=\small]
\node at (1, 4) {$\bullet$};	
\node at (1, 3) {$\vdots$};	
\foreach \x in {1}
{\node at (\x, 0) {$\square$};}	
\foreach \x in {1}
{\node at (\x, 1) {$\bullet$};}	
\foreach \x in {1}
{\node at (\x, 2) {$\bullet$};}	
\foreach \x in {1}
{\draw[->, thick] (\x, 0.1) -- (\x, 0.9) ;}
\foreach \x in {1}
{\draw[->, thick] (1, 1.1) -- (1, 1.9) ;}
\draw[->, thick] (1, 3.1) -- (1, 3.9) ;
\end{tikzpicture}
\]
Then, for $\lambda=b_v$, we have $\lambda+\blacksquare=b_{v+1}$. 
Therefore, we have for $0\leq v \leq \ell$, 
\begin{align*}
&e_k(b_{0})=-2\ell\hbar (z_1-\ell \hbar)^k b_1, \cdots \\
& e_k(b_{v})=(z_1-\ell\hbar+2v\hbar )^k \big(2(v-\ell)\hbar b_{v+1}, \cdots, \\
& e_k(b_{\ell})=0. 
\end{align*}
By formula \eqref{faction}, the action of $f_m$ is given by 
\begin{align*}
\langle \lambda+\blacksquare|f_m| \lambda\rangle
=&(x_{\blacksquare})^m 
\prod_{ \square\in \lambda}\frac{x_{\blacksquare}-x_{\square}+2\hbar}{ x_{\blacksquare}-x_{\square}}. 
\end{align*}
Similar as before, for $\lambda=b_v$, we have $\lambda+\blacksquare=b_{v+1}$.
Therefore, we have
\begin{align*}
&f_m(b_0)=0, \\
&f_m(b_1)=(z_1-\ell\hbar)^m b_0,  \cdots, \\
&f_m(b_{v+1})=(z_1-\ell\hbar+2v\hbar)^m (v+1)b_v, v\leq \ell. 
\end{align*}
Clearly, the module $\C[\hbar, z]\otimes{\C^{\ell+1}}$ of $Y_{\hbar}(\mathfrak{sl}_2)$ is an irreducible $\ell+1$-dimensional representation. Therefore, it is isomorphic to $ev^*(L(\ell))$. 

Comparing with the convention in \cite[Chapter 12.1, section E]{CP}, we have
 $\hbar=-\frac{1}{2}$, and the basis $\{v_0, v_1, \cdots, v_{\ell}\}$ in \cite[Chapter 12.1, section E]{CP} is given by
 \[
 v_s:={\ell\choose s} b_{\ell-s}, s=0, 1, 2, \cdots, \ell. 
 \]
The element $X_{1}^{-}$ is $f_{0}$ and $X_{1}^{+}$ is $e_{0}$. Therefore, setting $\hbar=-\frac{1}{2}$, as a representation of $\mathfrak{sl}_2$, we have
\[
e_0(v_s)=(\ell-s+1) v_{s-1},\,\  f_0(v_s)=(s+1)v_{s+1}, s=0, 1, 2, \cdots, \ell. 
\]
Furthermore, the equality 
\begin{align*} 
&e_k(b_{\ell-s})=(z_1+\ell\hbar-2s\hbar )^k \big(2(-s)\hbar b_{\ell-s+1}=(z_1-\frac{\ell}{2}+s)^k s  b_{\ell-s+1}
\end{align*}
implies that 
\begin{align} 
&e_k(v_s)=(z_1-\frac{\ell}{2}+s)^k (\ell-s+1)  v_{s-1}. \label{e_k}
\end{align}
The Drinfeld polynomial in the current paper is given by (see Lemma \ref{lem:psi})
\begin{align*}
P(z)=&\prod_{s=0}^{\ell-1}(z-z_1+(\ell-2s)\hbar). 
\end{align*}
so that, by the computation in Lemma \ref{lem:psi}, we have $${\psi}(z)|_{b_{\ell}}=\frac{P(z-2\hbar)}{P(z)}b_{\ell}.$$ 
The convention of the Drinfeld polynomial in \cite[Chapter 12.1]{CP} is given by $P_1(u)=\prod_{s=0}^{\ell-1}(u-a-\frac{1}{2}l+\frac{1}{2}+s)\hbar).$ Therefore, the formula \eqref{e_k} and the formula \cite[Chapter 12.1 (27)]{CP} are slightly different. 

\begin{corollary}\label{cor:tensor product}
We have the following isomorphism as representations of $Y_{\hbar}(\mathfrak{sl}_2)$
\[
\bigoplus_{v\in \N} H^*_{c, \C^*\times T_w}(\tilde{X}(v, \vec{1}_w)^{st}/G_v, \varphi_{\bf{w}})^{\vee}\otimes_R K
\cong \ev^*_{z_1}(L(\ell_1))\otimes \cdots \otimes \ev^*_{z_w}(L(\ell_w)), 
\]
where the tensor product is the fusion tensor product which comes from the Drinfeld coproduct of $Y_{\hbar}(\mathfrak{sl}_2)$. 
\end{corollary}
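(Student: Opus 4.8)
The plan is to build the isomorphism through the classification of finite-dimensional irreducible $Y_{\hbar}(\mathfrak{sl}_2)$-modules by their Drinfeld polynomials, after first matching the underlying spaces and the commutative Cartan currents. By equivariant localization together with Lemma~\ref{lem:fixed points}, the left-hand side has a $K$-basis $\{[\lambda]\mid \lambda\in S(\ell_1,\ldots,\ell_w)\}$; since $S(\ell_1,\ldots,\ell_w)=\prod_{k=1}^w\{0,1,\ldots,\ell_k\}$, this index set is exactly the one labelling the pure tensors $B_\lambda:=b_{v_1}\otimes\cdots\otimes b_{v_w}$, where $\{b_0,\ldots,b_{\ell_k}\}$ is the fixed-point basis of $\ev^*_{z_k}(L(\ell_k))$ from Proposition~\ref{prop:w=1}. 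So the two spaces are identified index-wise, and the task is to produce a $Y_{\hbar}(\mathfrak{sl}_2)$-module isomorphism carrying $[\lambda]$ to a scalar multiple of $B_\lambda$.

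First I would match the Cartan currents (writing $u$ for the spectral variable). By Proposition~\ref{prop:psi(z)} the eigenvalue $\tilde\psi(u)|_\lambda$ factorizes over the $w$ columns,
\[
\tilde\psi(u)|_\lambda=\prod_{k=1}^w\Big(\prod_{\square\in\text{column }k}\frac{u-x_\square+2\hbar}{u-x_\square-2\hbar}\Big)\frac{u-z_k-\ell_k\hbar}{u-z_k+\ell_k\hbar},
\]
and the $k$-th factor is precisely the eigenvalue of $\psi(u)$ on $b_{v_k}\in\ev^*_{z_k}(L(\ell_k))$ computed in the $w=1$ case (Lemma~\ref{lem:psi}). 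Since the Drinfeld coproduct satisfies $\Delta\psi(u)=\psi(u)\otimes\psi(u)$, the eigenvalue of $\psi(u)$ on $B_\lambda$ in the fusion product is the same product. Hence both modules are $\psi(u)$-semisimple with identical eigenvalues; moreover over $K$ the parameters $z_1,\ldots,z_w$ are generic, the poles coming from distinct columns never collide, so distinct $\lambda$ yield distinct eigenvalue functions and the $\psi(u)$-spectrum is simple on both sides.

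Then I would invoke the classification. The extremal fixed point $(\ell_1,\ldots,\ell_w)$ has no addible box, so by \eqref{eaction} (and Lemma~\ref{lem:e}) it is annihilated by every $e_r$ and is a highest-weight vector; its Drinfeld polynomial is the product over $k$ of those of the $\ev^*_{z_k}(L(\ell_k))$, which by $\Delta\psi(u)=\psi(u)\otimes\psi(u)$ is exactly the Drinfeld polynomial of the fusion product. Applying the lowering operators $f_m$ via \eqref{faction} moves $(\ell_1,\ldots,\ell_w)$ down through every fixed point with generically non-zero coefficients, so the left-hand side is a cyclic highest-weight module; as its $\psi(u)$-spectrum is simple, every submodule is a span of fixed-point classes, and cyclicity then forces irreducibility. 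Over $K$ the fusion product is likewise irreducible of the same dimension $\prod_k(\ell_k+1)$, a standard fact for tensor products of $\mathfrak{sl}_2$-evaluation modules at generic spectral parameters. Two irreducibles with the same Drinfeld polynomial are isomorphic, giving the desired $\Phi$.

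Finally, any such $\Phi$ intertwines $\psi(u)$ and so carries the eigenline $K[\lambda]$ onto the eigenline $KB_\lambda$ with the same eigenvalue; thus $\Phi([\lambda])$ is a scalar multiple of $B_\lambda$ and the isomorphism identifies the torus fixed-point basis with the tensor-product basis, as claimed. The main obstacle is the two irreducibility inputs—that the fusion product is irreducible over $K$, and that the left-hand side is a cyclic highest-weight module of full dimension $\prod_k(\ell_k+1)$—both of which reduce to the $w=1$ computation of Proposition~\ref{prop:w=1} together with the multiplicativity $\Delta\psi(u)=\psi(u)\otimes\psi(u)$. A more hands-on alternative is to compare the matrix coefficients \eqref{eaction}, \eqref{faction} directly with the iterated Drinfeld coproduct, exploiting the factorization $E(u)=\prod_k E_k(u)$ of the residue kernel $E(u)=\prod_{\square\in\lambda}\frac{u-x_\square}{u-x_\square-2\hbar}\prod_a\frac{u-z_a-\ell_a\hbar}{u-z_a+\ell_a\hbar}$ into its column contributions; the difficulty there is that the symmetric residue formula and the ordered coproduct differ by a triangular transformation (encoded by the weight functions), so one must carefully track those normalizations, whereas the classification route sidesteps them.
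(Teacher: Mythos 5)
Your proposal is correct in outline, but it takes a genuinely different route from the paper. The paper's proof is precisely the ``hands-on alternative'' you relegate to your last sentence: it defines the map on fixed-point bases by $\lambda\mapsto\lambda_1\otimes\cdots\otimes\lambda_w$ and then checks directly that the residue kernel in \eqref{eaction}, the product formula \eqref{faction}, and the eigenvalue of Proposition~\ref{prop:psi(z)} factor \emph{exactly} into the column-$j$ local action times cross-column scalar factors, which are then matched against the Drinfeld-coproduct formulas of \cite{H07} and \cite{YZ16}. Contrary to your worry, no triangular correction arises in that comparison: on the fixed-point basis the factorization is exact; the triangularity you mention is a feature of the weight-function (stable-envelope-type) basis, not of the fixed-point basis, so the direct route is shorter than you anticipate and moreover pins the isomorphism down on the nose (the paper asserts the isomorphism identifies the fixed-point bases), whereas your classification route determines $\Phi$ only up to a scalar on each $\psi$-eigenline --- enough for the corollary as stated, but weaker. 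What your route buys is independence from the explicit coproduct formulas, at the cost of two inputs that need shoring up. First, ``cyclicity then forces irreducibility'' is false as stated (a Verma module is cyclic); you need in addition that the raising coefficients are nonzero --- which does hold, since by Lemma~\ref{lem:e} the poles at addible boxes are simple with generically nonzero residues --- so that any nonzero submodule, being a span of eigenlines by simplicity of the $\psi(u)$-spectrum, can be climbed back up to the top vector, which together with cyclicity gives irreducibility. Second, irreducibility of the fusion product over $K$ is not proved in the paper and, for the Drinfeld coproduct, is not literally the classical statement for the ordinary coproduct of evaluation modules; the cleanest repair is to run your eigenline argument on both sides simultaneously (which, as you note, reduces to the $w=1$ computation of Proposition~\ref{prop:w=1} together with $\Delta\psi(u)=\psi(u)\otimes\psi(u)$), after which uniqueness over $K$ of the irreducible highest-weight module with a given highest $\ell$-weight completes your argument.
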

\begin{proof}
Let $\lambda=(v_1, \ldots, v_w)\in S(\ell_1, \ldots, \ell_w)$ be the torus fixed point. Denote by $\lambda_j=(v_j)$  the $j$-th column of the picture \eqref{pic:fixed points}. In particular, $\lambda_j\in S(\ell_j)$ is a torus fixed point of $\mathfrak{M}^+(v_j, 1)$. 

Define the vector space isomorphism 
\begin{align*}
\bigoplus_{v\in \N} H^*_{c, \C^*\times T_w}(\tilde{X}(v, \vec{1}_w)^{st}/G_v, \varphi_{\bf{w}})^{\vee}\otimes_R K
& \to  \ev^*_{z_1}(L(\ell_1))\otimes \cdots \otimes \ev^*_{z_w}(L(\ell_w)), \\ \lambda& \mapsto \lambda_1\otimes \cdots \otimes \lambda_w. 
\end{align*}
We now rewrite the formulas in Proposition \ref{prop:ef formula} and Proposition \ref{prop:psi(z)}.
Let $\blacksquare_j$ be the addible box at column $j$. Note that the weights $x_{\square}$ are given by the formula \eqref{eq:weights of fix}. 
For any regular function $g(z)$, 
we have  
\begin{align*}
&\langle\lambda|(e(g))| \lambda+\blacksquare_j\rangle\\
=
	&  \Res_{z=x_{\blacksquare_j}} \Big(g(z) 
\prod_{\square\in \lambda} \frac{z-x_{\square} }{z-x_{\square}-2\hbar} \prod_{j=1}^w\frac{z-z_j-\ell_j\hbar}{z-z_j+\ell_j\hbar}\Big), 
	\\
 =&\Res_{z=x_{\blacksquare_j}} \Big(g(z) 
\prod_{\square\in \lambda_j} \frac{z-x_{\square} }{z-x_{\square}-2\hbar} \frac{z-z_j-\ell_j\hbar}{z-z_j+\ell_j\hbar}
\prod_{\square\in \lambda\setminus \lambda_j} \frac{z-x_{\square} }{z-x_{\square}-2\hbar}
\prod_{i\in [1, w]\setminus j}\frac{z-z_i-\ell_i\hbar}{z-z_i+\ell_i\hbar}\Big)\\
=&\Big(\prod_{\square\in \lambda\setminus \lambda_j} \frac{x_{\blacksquare_j}-x_{\square} }{x_{\blacksquare_j}-x_{\square}-2\hbar}
\prod_{i\in [1, w]\setminus j}\frac{x_{\blacksquare_j}-z_i-\ell_i\hbar}{x_{\blacksquare_j}-z_i+\ell_i\hbar}\Big) \langle\lambda_j|(e(g))| \lambda_j+\blacksquare_j\rangle
\end{align*}
Therefore, the action of $e(g)$ on the tensor $\lambda_1\otimes \cdots \lambda_w$ is given by
\[
e(g)(\lambda_1\otimes \cdots \lambda_w)=\sum_{j=1}^w \Big(\prod_{\square\in \lambda\setminus \lambda_j} \frac{x_{\blacksquare_j}-x_{\square} }{x_{\blacksquare_j}-x_{\square}-2\hbar}
\prod_{i\in [1, w]\setminus j}\frac{x_{\blacksquare_j}-z_i-\ell_i\hbar}{x_{\blacksquare_j}-z_i+\ell_i\hbar} \big(\lambda_1\otimes \cdots \otimes \lambda_{j-1}\otimes  e(g)(\lambda_j) \otimes \lambda_{j+1}\otimes \cdots \otimes \lambda_w\big)\Big). 
\]
Similarly, we have
	\begin{align*}
	\langle \lambda+\blacksquare_j|(f(g))| \lambda\rangle=&\prod_{ \square\in \lambda\setminus \lambda_j}\frac{x_{\blacksquare_j}-x_{\square}+2\hbar}{x_{\blacksquare_j}-x_{\square}}
\Big(g(z)
\langle \lambda_j+\blacksquare_j|(f(g))| \lambda_j\rangle\Big).
\end{align*}
Thus, the action of $f(g)$ on the tensor $\lambda_1\otimes \cdots \lambda_w$ is given by
\[
f(g)(\lambda_1\otimes \cdots \lambda_w)=\sum_{j=1}^w \prod_{ \square\in \lambda\setminus \lambda_j}\frac{x_{\blacksquare_j}-x_{\square}+2\hbar}{x_{\blacksquare_j}-x_{\square}} \big(\lambda_1\otimes \cdots \otimes \lambda_{j-1}\otimes  f(g)(\lambda_j) \otimes \lambda_{j+1}\otimes \cdots \otimes \lambda_w\big). 
\]
By Proposition \ref{prop:psi(z)}, the action of $\psi(z)$ on the tensor $\lambda_1\otimes \cdots \lambda_w$ is given by
\[
\psi(z)(\lambda_1\otimes \cdots \lambda_w)
=\psi(z)(\lambda_1)\otimes \cdots \otimes \psi(z)(\lambda_w). 
\]
Comparing with the formulas in \cite[\S 3.4, example]{H07} (the case of quantum affine algebra) and \cite{YZ16} (the case of Yangian), the action of Yangian on the tensor product $\ev^*_{z_1}(L(\ell_1))\otimes \cdots \otimes \ev^*_{z_w}(L(\ell_w))$ comes from the Drinfeld coproduct of $Y_{\hbar}(\mathfrak{sl}_2)$. 
\end{proof}

\subsection{Geometric interpretation}
In this subsection, we discuss the geometric interpretation of the formulas \eqref{eaction} and \eqref{faction}. 
\subsubsection{Torus fixed points on $\tilde{X}(v, \vec{1}_w)^{st}/G_v$}
In this subsection, we compute the torus fixed points under $\C^*\times T_{w}$ on the ambient space $\tilde{X}(v, \vec{1}_w)^{st}/G_v$ similar as the proof of Lemma \ref{lem:fixed points}. 

\begin{lemma}\label{fixpointamb}
The set of torus fixed points $(\tilde{X}(v, \vec{1}_w)^{st}/G_v)^{\C^*\times T_{w}}$ is given by 
\begin{equation*}
\coprod_{\{(v_1, v_2, \cdots, v_w)\in \N^w \mid \sum_{i=1}^wv_i=v\}}\big(\prod_{i=1}^w\Hom(V(-\ell_i), W_i)\Big),   
\end{equation*}
where $V(-\ell_i)$ is the 
$-\ell_i$-weight subspace of 
$V$ for the 
$\C^*$-action. In particular, if $v_i\leq \ell_i$, for all $i\in [1, w]$, then,  $\prod_{i=1}^w\Hom(V(-\ell_i), W_i)$ is a point.
\end{lemma}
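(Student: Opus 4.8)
The plan is to run the same fixed-point analysis as in the proof of Lemma~\ref{lem:fixed points}, the two essential differences being that we now work on the whole ambient space $\tilde X(v,\vec 1_w)^{st}/G_v$ rather than on the subvariety $\mathfrak M^+(v,\vec 1_w)$: first, the maps $a_1,\dots,a_w$ are genuinely present and must be accounted for; second, we impose no critical/potential relation $\ep^{\ell_i}a_i^*=0$, so the bound $v_i\le\ell_i$ is dropped. I would begin by recording the standard reformulation: a $G_v$-orbit of a stable point $x=(\ep,a_1,\dots,a_w,a_1^*,\dots,a_w^*)$ is fixed by $\C^*\times T_w$ if and only if there is a cocharacter $\rho\colon\C^*\times T_w\to G_v$ compensating the torus action on $x$. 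Decomposing $V$ into $\rho$-weight spaces, the equivariance of $\ep$ and of the $a_i^*$ is governed by the grading \eqref{grading} exactly as in Lemma~\ref{lem:fixed points}: one finds $\im(a_i^*)$ in $\C^*$-weight $\ell_i$ and $T_w$-weight $z_i$, with $\ep$ lowering the $\C^*$-weight by $2$.

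Next I would use the stability condition, which involves only $\ep$ and the $a_i^*$, together with the fact that the characters $z_1,\dots,z_w$ of $T_w$ are pairwise distinct, to show that $V$ splits as $V=\bigoplus_{i=1}^w V_i$, where $V_i=\C[\ep]\,\im(a_i^*)$ is the cyclic chain sitting in $T_w$-weight $z_i$; its $\C^*$-weights form the progression $\ell_i,\ell_i-2,\dots,\ell_i-2(v_i-1)$, so $\dim V_i=v_i$ and $\sum_i v_i=v$. Because no relation $\ep^{\ell_i}a_i^*=0$ is imposed on the ambient space, $v_i$ is an arbitrary nonnegative integer, which produces exactly the index set $\{(v_1,\dots,v_w)\in\N^w:\sum_i v_i=v\}$ of the coproduct. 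As in the transitivity/trivial-stabilizer argument of Lemma~\ref{lem:w=1}, any $g\in G_v$ fixing all $a_i^*$ and commuting with $\ep$ fixes a basis of each chain $V_i$, hence equals $\id$; so the subtuple $(\ep,a_1^*,\dots,a_w^*)$ forms a single free $G_v$-orbit, contributing one point for each $(v_1,\dots,v_w)$.

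Finally I would analyze the maps $a_i\colon V\to W_i$, which carry all the remaining moduli. They are constrained neither by stability nor by any relation, so the only condition is $\rho$-equivariance. Matching weights, $a_i$ must kill every $\C^*$-weight space of $V$ except the one of weight $-\ell_i$; since its target $W_i$ has $T_w$-weight $z_i$, the matching further singles out the chain $V_i$, and the space of admissible $a_i$ is $\Hom(V(-\ell_i),W_i)$ (the trivial stabilizer above means these are honest free parameters, not to be quotiented). This gives the fibre $\prod_{i=1}^w\Hom(V(-\ell_i),W_i)$ over each $(v_1,\dots,v_w)$, and hence the claimed coproduct of affine spaces. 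For the last assertion, the chain $V_i$ attains $\C^*$-weight $-\ell_i$ precisely when $\ell_i-2(v_i-1)\le-\ell_i$, i.e. $v_i\ge\ell_i+1$; therefore if $v_i\le\ell_i$ for every $i$ then $V(-\ell_i)=0$ and each factor $\Hom(V(-\ell_i),W_i)$ collapses to a single point.

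I expect the main obstacle to be the careful weight bookkeeping for the $a_i$: one must confirm both that these maps are genuinely unconstrained on the ambient space (so they survive as free parameters rather than being forced to vanish, in contrast to the situation on $\mathfrak M^+$) and that equivariance, through the $T_w$-weight $z_i$ of $W_i$, pins their support to the single chain $V_i$. The distinctness of the $z_i$ is what both separates the chains in the decomposition of $V$ and makes the identification of the support and the ``in particular'' clause clean; keeping the $\C^*$- and $T_w$-gradings properly coupled throughout is the one place where a sign or convention slip could enter.
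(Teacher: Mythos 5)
Your proposal is correct and follows essentially the same route as the paper's proof: a compensating cocharacter $\rho$, the weight bookkeeping $\im(a_i^*)\subset V(\ell_i)$ and $\epsilon(V(n))\subset V(n-2)$, the chain decomposition from stability with the lengths $v_i$ now unbounded, the single free $G_v$-orbit for the $(\epsilon,a_1^*,\ldots,a_w^*)$ data, and the maps $a_i$ surviving as free parameters supported in weight $-\ell_i$. If anything, your version is slightly more careful than the paper's at two points: you make explicit that the distinct $T_w$-characters $z_1,\ldots,z_w$ pin each $a_i$ to the single chain $V_i$ (so $\Hom(V(-\ell_i),W_i)$ must be read as the joint $(\C^*,T_w)$-weight space inside chain $i$), and your threshold $v_i\geq \ell_i+1$ for $V(-\ell_i)\neq 0$ corrects the paper's off-by-one phrase ``if $v_i<\ell_i$''.
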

\begin{proof}

For any $(\epsilon, a^*_1, \ldots, a_w^*, a_1, \ldots, a_w) \in (\tilde{X}(v, \vec{1}_w)^{st}/G_v)^{\C^*}$ and $t\in \C^*$, then 
\[
t\cdot (\epsilon, a^*_1, \ldots, a_w^*, a_1, \ldots, a_w)
=(t^{-2} \epsilon, t^{\ell_1}a^*_1, \ldots, t^{\ell_w}a_w^*, t^{\ell_1}a_1, \ldots, t^{\ell_w}a_w). 
\]
Therefore, there exists a group homomorphism $\rho: \C^*\to G_v, t\mapsto \rho(t)$, such that 
\[
(\rho(t)\epsilon \rho(t)^{-1}, \rho(t)a_i^*, a_i\rho(t)^{-1})=
(t^{-2} \epsilon, t^{\ell_i}a^*_i, t^{\ell_i}a_i). 
\]
We decompose $V$ into eigenspaces of $\rho(t)$: 
\[
V=\oplus_{n\in \Z} V(n), \text{where $V(n)=\{v\in V\mid \rho(t)\cdot v=t^n v\}$. }
\]
From the proof of Lemma \ref{lem:fixed points},  
we have  $\im(a_i^*)\subset V({\ell_i})$ and  $\epsilon(V(n))\subset V(n-2)$. 

For the linear maps $a_i: V\to W_i$, we have for any $v\in V(n)$
\[
a_i\rho(t)^{-1}(v)=t^{-n} a_i(v)=t^{\ell_i}a_i. 
\]
Therefore, $a_i: V(-\ell_i)\to W_i$, and $a_i(V(n))=0$, if $n\neq -\ell_i$.

By the stability condition, $V$ is generated by the image of $a_i^*$ and the operator $\epsilon$. Fix a basis of $W=\oplus_i W_i$. 
The point $(\epsilon, a^*_1, \ldots, a_w^*)$ is invariant under $T_w$, then we can choose a basis of $V$, such that $\im(a_i^*), \im(\epsilon a_i^*), \im(\epsilon^2 a_i^*), \cdots $ are all coordinate quotient spaces of $W$. 

For each $i=1, \ldots, w$, we have the following chain
\[
\xymatrix{
W_i \ar@{->>}[r]^{a_{i}^*}& 
V({\ell_i}) \ar@{->>}[r]^{\epsilon} & V({\ell_i-2})
\ar@{->>}[r]^{\epsilon} & V({\ell_i-4}) \ar@{->>}[r]^{\epsilon} & \cdots  
}
\]
The above chain terminates at $V({\ell_i-(2v_i-2)})$, for some $v_i$. Now unlike in Lemma \ref{lem:fixed points}, $v_i$ may be greater than $\ell_i$. 
Therefore, the quotient $(\{\epsilon, a_1^*, \cdots, a_w^*\}/G_v)^{\C^*\times T_w}$ is a point, which is illustrated as in Picture \eqref{pic:fixed points}. 

For any linear maps $a_i: V(-\ell_i)\to W_i$, if $v_i < \ell_i$ for some $i\in [1, w]$, we have $V(-\ell_i)=0$, therefore, $a_i$ is the zero map. If $v_i\geq \ell_i$, $a_i$ is automatically $T_w$-invariant. 
Therefore, the fixed points are labeled by the set
\[
(v_1, v_2, \cdots, v_w) \mid  \sum_{i=1}^wv_i=v\}
\]
For fixed component $(v_1, v_2, \cdots, v_w)$, the fixed point set is isomorphic to  $\prod_{i=1}^w\Hom(V(-\ell_i), W_i)$, which parametrizes linear maps $a_i$. 
\end{proof}
\begin{example}
Let $w=1$, $v=2$ and $\ell=1$. Then, $W=\C$ and $V$ has two eigenspaces $V(1)\cong \C$ and $V(-1)\cong \C$.
Let $(\epsilon, a^*, a)\in \tilde{X}(2, 1)^{st}/\GL_2$ be fixed by $C^*$. Then, we have  $a^*: W\surj V(1)$, $\epsilon: V(1)\surj V(-1)$ and $a: V(-1)\to W$ are three complex numbers with $a^*, \epsilon$ non-zero.
The 3-dimensional torus $T_w\times \GL(V(1))\times \GL(V(-1))$ acts by: 
\[
(t, d_1, d_2)\cdot (a^*, \epsilon, a)=(d_1a^*t^{-1}, d_2\epsilon d_1^{-1},  tad_2^{-1}). 
\]
Up to the action of $\GL(V(1))\times \GL(V(-1))$, $T_w$ acts trivially. Therefore, we have
\[
(\tilde{X}(2, 1)^{st}/\GL_2)^{\C^*}
\cong 
(\tilde{X}(2, 1)^{st}/\GL_2)^{\C^*\times T_w}\cong (\C^*\times \C^*\times \C)/(\GL(V(1))\times \GL(V(-1)))\cong \C. 
\]
\end{example}
\begin{remark}
The fixed point $(\tilde{X}(v, \vec{1}_w)^{st} / G_v)^{\C^* \times T_w}$ is not compact, and the restriction 
$\varphi_{\mathbf{w}}|_{(\tilde{X}(v, \vec{1}_w)^{st} / G_v)^{\C^* \times T_w}}$ does not vanish. 
Therefore, our setup is not the equivariantly compact-type (see \cite[Setting~5.13]{CZ23}). It is the geometric phase as described in \cite[Setting~5.15]{CZ23}. 
\end{remark}

\subsubsection{}  
By abuse of notation we denote by $V$ in this section the vector space of dimension $v+1$. Fix $\xi\subset V$, a one dimensional subspace and let $V_2:=V/\xi$ be the quotient which has dimension $v$. 
Consider the following correspondence 
\[
\xymatrix@R=1em@C=0.5em{
	&\tilde{X}^{st}(v, v+1, \vec{1}_w)\ar[ld]_{p}\ar[rd]^{q}&\\
	\tilde{X}^{st}(v, \vec{1}_w)&&\tilde{X}^{st}(v+1, \vec{1}_w)
}\]
where 
\[
\tilde{X}^{st}(v, v+1, \vec{1}_w)=\{(\epsilon, a^*_1, \ldots, a_w^*, a_1, \ldots, a_w)\in  \tilde{X}(v+1, \vec{1}_w)^{st} \mid \epsilon(\xi)\subset \xi\}, a_i(\xi)=0, \forall i\in [1, w]\}. 
\]
The parabolic subgroup $P:=\{x\in G_{v+1}=\GL(V)\mid x(\xi)\subset \xi\}$ acts on $\tilde{X}^{st}(v, v+1, \vec{1}_w)$. 
The map $p$ is given by $(\epsilon, a^*_1, \ldots, a_{w}^*, a_1, \ldots, a_{w})\mapsto (\epsilon, a^*_1, \ldots, a_{w}^*, a_1, \ldots, a_{w})$ mod $\xi$, where $\epsilon$ mod $\xi$ is the induced map $V/\xi\to V/\xi$, and $a^*_i$ mod $\xi$ is the composition of 
$a^*_i: W_i \to V$ with the projection $V \surj V_2=V/\xi$. The map $q$ is the natural inclusion.

The subspace $\xi\subset V$ gives a tautological line bundle on the correspondence $\tilde{X}^{st}(v, v+1, \vec{1}_w)/P$, which will be denoted by $\calL$. 
Recall that the Levi subgroup of $P$ is $\C^*\times G_v$ where 
$\xi$ above is the standard weight 1 representation of the $\C^*$-factor. Hence, $c_1(\calL)$ coincides with the equivariant variable of the $\C^*$-factor.

For any regular function $g(x)$ in one variable with coefficients in $H_T(\pt)$, we have the class $g(c_1(\calL))$ in $H^*_{c,\C^*\times T_w}(\tilde{X}^{st}(v, v+1, \vec{1}_w)/P, \varphi_{\bold{w}})^{\vee}$. 

Define the actions of the operators 
\begin{align}\label{eqn:EFdef}
&\tilde{e}(g): H^*_{c,\C^*\times T_w}(\tilde{X}^{st}(v, \vec{1}_w)/G_v, \varphi_{\bold{w}})^{\vee} \to H^*_{c,\C^*\times T_w}(\tilde{X}^{st}(v+1, \vec{1}_w)/G_{v+1}, \varphi_{\bold{w}})^{\vee}\\
&\tilde{f}(g):H^*_{c,\C^*\times T_w}(\tilde{X}^{st}(v+1, \vec{1}_w)/G_{v+1}, \varphi_{\bold{w}})^{\vee}  \to H^*_{c,\C^*\times T_w}(\tilde{X}^{st}(v, \vec{1}_w)/G_{v}, \varphi_{\bold{w}})^{\vee}
\end{align}
 by the following convolutions: 
\[
\tilde{e}(g):=q_*(g(c_1(\calL))\cup p^*), \,\  \tilde{f}(g):=p_*(g(c_1(\calL))\cup q^*). 
\]
By Lemma \ref{fixpointamb}, we have
\[
S(\ell_1, \ldots, \ell_w)\subsetneq (\tilde{X}^{st}(v, \vec{1}_w)/G_v)^{\C^*\times T_w}. 
\]
Note that $\varphi_{\bold{w}}$ is supported on the critical locus of the potential $\bold{w}$. Therefore, the pullback $i^*\varphi_{\bold{w}}$ under the embedding $i: (\tilde{X}^{st}(v, \vec{1}_w)/G_v)^{\C^*\times T_w} \inj \tilde{X}^{st}(v, \vec{1}_w)/G_v$ is supported on the fixed points set $S(\ell_1, \ldots, \ell_w)$ \eqref{eq:fixed points}. 
Denote by $p_T, q_T$ the restrictions of $p, q$ on $\C^*\times T_{w}$-fixed points. On the component,  $ (\tilde{X}^{st}(v, \vec{1}_w)/G_v)^{\C^*\times T_w}\setminus S(\ell_1, \ldots, \ell_w)$, we have $q_{T, *}p_{T}^*|_{(\tilde{X}^{st}(v, \vec{1}_w)/G_v)^{\C^*\times T_w}\setminus S(\ell_1, \ldots, \ell_w)}$ is zero.   

Recall the Levi subgroup of $P$ is $\C^*_{L}\times G_v$. At the point $(\lambda, \lambda+\blacksquare) \in (\tilde{X}^{st}(v, v+1, \vec{1}_w)/P)^{\C^*\times T_{w}}\cap \text{Supp}(\varphi_{\bold{w}})$ the tautological bundle $\calL$ has an action of $\C^*_{L}$ as well. So we consider the $\C^*_{L}\times \C^*\times T_w$-equivariance. 
Now 
\cite[Appendix (65)]{RSYZ2} gives
\begin{align*}
&\langle\lambda|(\tilde{e}(g))| \lambda+\blacksquare\rangle
=q_{T, *}p_{T}^*\Res_{z=x_{\blacksquare}}g(z) \frac{e(T_{\lambda} (\tilde{X}^{st}(v, \vec{1}_w)/G_v))} {e(T_{\lambda, \lambda+\blacksquare} (\tilde{X}^{st}(v, v+1,  \vec{1}_w)/P))}
\end{align*}
Here $e$ denotes  the $\C^*_{L}\times \C^*\times T_w$-equivariant Euler class.

Next  we calculate all the tangent spaces at the fixed points. Recall that we have short exact sequence of vector spaces
\[
0\to \xi\to V\to V_2 \to 0, 
\]
where $V$ has dimension $v+1$ and $V_2$ has dimension $v$.

For any $\lambda\in S(\ell_1, \ldots, \ell_w)$, the tangent complex of $\tilde{X}^{st}(v, \vec{1}_w)/G_v$ 
as a class in its Grothendieck group is given by 
\begin{align*}
-\End(\calV_2)+\Big(\calV_2^\vee\otimes \calV_2\otimes \C_{\epsilon}+ \sum_{i=1}^w\calW_i^{\vee}\otimes \calV_2\otimes \C_{a^*_i}
+\sum_{i=1}^w \calV_2^{\vee}\otimes \calW_i\otimes \C_{a_i}
\Big)
\end{align*}
The tangent complex of the correspondence $\tilde{X}^{st}(v, v+1,  \vec{1}_w)/P$ has the following class in the Grothendieck group,
\[
[T(\tilde{X}^{st}(v, v+1,  \vec{1}_w)/P)]=[-P+(P\otimes \C_{\epsilon}+ \sum_{i=1}^w\calW_i^{\vee}\otimes \calV\otimes \C_{a^*_i}+\sum_{i=1}^w\calV_2^{\vee}\otimes \calW_i\otimes \C_{a_i}\Big)],
\]
where $P=\Hom(\xi, \xi)\oplus \Hom(V_2, V_2)\oplus (V_2, \xi)=(\xi^{\vee}\otimes \xi) \oplus (V_2^{\vee}\otimes V_2)\oplus (V_2^{\vee}\otimes \xi)$. 
Now by \cite[Appendix (65)]{RSYZ2}, we have the following
\begin{align*}
&\langle\lambda|\tilde{e}_k| \lambda+\blacksquare\rangle\\
=&q_{T, *}p_{T}^*\Res_{z=x_{\blacksquare}}c_{1}(\calL)^k \frac{e(T_{\lambda} (\tilde{X}^{st}(v, \vec{1}_w)/G_v)} {e(T_{\lambda, \lambda+\blacksquare} ((\tilde{X}^{st}(v, v+1,  \vec{1}_w)/P)}|_{(\lambda, \lambda+\blacksquare)} \\
 =&q_{T, *}p_{T}^*\Res_{z=x_{\blacksquare}}c_{1}(\calL)^k \frac{e(-\calV_2^{\vee}\otimes \calV_2+(\calV_2^{\vee}\otimes \calV_2\otimes \C_{\epsilon}+ \sum_{i=1}^{w}W_i^{\vee}\otimes \calV_2\otimes \C_{a^*_i}+\sum_{i=1}^{w} \calV_2^{\vee}\otimes W_i\otimes \C_{a_i})}
 {e(-P+(P\otimes \C_{\epsilon}+ \sum_{i=1}^{w}W_i^{\vee}\otimes \calV\otimes \C_{a^*_i}+\sum_{i=1}^{w}\calV_2^{\vee}\otimes W_i \otimes \C_{a_i}}|_{(\lambda, \lambda+\blacksquare)}
 \end{align*}
As before let $z$ be the equivariant Chern root of $\xi$ and recall that $V_2=V/\xi$. We take the basis of  $V_2$ to be $\{{\square} \mid \square\in \lambda\}$. Then the equivariant Chern root of the line spanned by $\square$ is the $\C^*_{L}\times \C^*\times T_w$-weight of $\square$, which is  $x_{\square}$ in our notation. Getting rid of the factor $e(\xi^{\vee}\otimes \xi)$, we obtain
 \begin{align}
&q_{T, *}p_{T}^*\Res_{z=x_{\blacksquare}}c_{1}(\calL)^k \frac{e(
 \calV_2^{\vee}\otimes \xi) }{e(\calV_2^{\vee}\otimes \xi\otimes \C_{\epsilon}+ \sum_{i=1}^{w} W_i^{\vee}\otimes \xi \otimes \C_{a_i^*})}|_{(\lambda, \lambda+\blacksquare)} \notag\\
 =
&q_{T, *}p_{T}^*\Res_{z=x_{\blacksquare}} \Big(z^k 
\prod_{\square\in \lambda} \frac{z-x_{\square} }{z-x_{\square}-2\hbar} \prod_{j=1}^w\frac{1}{z-z_j+\ell_j\hbar}\Big).  \label{for:e}
\end{align}

Similarly, \cite[Appendix (62)]{RSYZ2}  gives
\begin{align*}
&\langle \lambda+\blacksquare|(\tilde{f}(g))| \lambda\rangle
=p_{T, *}q_{T}^*g(c_1(\calL))|_{(\lambda, \lambda+\blacksquare)}  \frac{e(T_{\lambda} (\tilde{X}^{st}(v+1, \vec{1}_w)/G_{v+1}))} {e(T_{\lambda, \lambda+\blacksquare} (\tilde{X}^{st}(v, v+1,  \vec{1}_w)/P))}. 
\end{align*}
Here $e$ stands for the $\C^*\times T_w$-equivariant Euler class.

Therefore, we have
\begin{align*}
\langle \lambda+\blacksquare|\tilde{f}_m| \lambda\rangle \notag
=&p_{T, *}q_{T}^*c_{1}(\calL)^m \frac{e(-\calV^{\vee}\otimes \calV+(\calV^{\vee}\otimes \calV\otimes \C_{\epsilon}+ \sum_{i=1}^{w}W_i^{\vee}\otimes \calV\otimes \C_{a^*_i}+\sum_{i=1}^{w} \calV^{\vee}\otimes W_i\otimes \C_{a_i})}
 {e(-P+(P\otimes \C_{\epsilon}+ \sum_{i=1}^{w}W_i^{\vee}\otimes \calV\otimes \C_{a^*_i}+\sum_{i=1}^{w}\calV_2^{\vee}\otimes W_i \otimes \C_{a_i}}|_{(\lambda, \lambda+\blacksquare)}
 \end{align*}
 Getting rid of the factor $e(\xi^{\vee}\otimes\xi)$, we obtain
 \begin{align}
&p_{T, *}q_{T}^*c_1(\calL)^m 
\frac{
e(\xi^\vee\otimes \calV_2\otimes \C_{\epsilon}+ \sum_{i=1}^{w}\xi^{\vee}\otimes W_i \otimes \C_{a_i})} 
{e(
\xi^{\vee}\otimes \calV_2)
} \notag\\
=&p_{T, *}q_{T}^*(x_{\blacksquare})^m 
\prod_{ \square\in \lambda}\frac{-x_{\blacksquare}+x_{\square}-2\hbar}{ -x_{\blacksquare}+x_{\square}}
\prod_{i=1}^w (-x_{\blacksquare}+z_i+\ell_i \hbar) \notag
\\
=&p_{T, *}q_{T}^*(-1)^w(x_{\blacksquare})^m 
\prod_{ \square\in \lambda}\frac{x_{\blacksquare}-x_{\square}+2\hbar}{ x_{\blacksquare}-x_{\square}}
\prod_{i=1}^w (x_{\blacksquare}-z_i-\ell_i \hbar)
. \label{for:tf}
\end{align}

Recall the line bundle $\calL$ is defined using the subspace $\xi\subset V$, with the quotient $V/\xi=V_2$.  Therefore, the $\C^*\times T_w$-weight of $\xi$ at the fixed point ${(\lambda, \lambda+\blacksquare)}$ is given by $x_{\blacksquare}$.

We normalize the formulas \eqref{for:e} and \eqref{for:tf} by moving the factor 
\(\prod_{i=1}^w (x_{\blacksquare}-z_i-\ell_i \hbar)\) appearing in \eqref{for:tf} to \eqref{for:e}. 
This yields the formulas \eqref{eaction} and \eqref{faction}.

\begin{remark}
The torus fixed points $(\mathfrak{M}^+(v, \vec{1}_w))^{\C^*\times T_{w}}$ and 
$(\tilde{X}(v, \vec{1}_w)^{st}/G_v)^{\C^*\times T_{w}}$ are different, as shown in 
Lemma~\ref{fixpointamb}. Hence, the notions of addible and removable boxes differ. 
The rational function \eqref{for:e} also has only simple poles. After normalization, 
\eqref{eaction} has simple poles at the addible boxes in the sense of 
the torus fixed points $(\mathfrak{M}^+(v, \vec{1}_w))^{\C^*\times T_{w}}$ 
(see Lemma~\ref{lem:e}).

\end{remark}

\section{Framed shuffle formula}\label{sec:shuffle}
\subsection{Framed shuffle algebra}\label{sub:shuffle algebra}
Fix $w\in \N$. We write an element in $\bbN^w$ as the vector form $\vec{n}=(n_1, \cdots, n_w)$. 
Let  $\SH_{fr}$ be an $\bbN\times \bbN^w$-graded $\mathbb{C}[\hbar]$-algebra. As a $\mathbb{C}[\hbar]$-module, we have 
\[\SH_{fr}=\bigoplus_{{v}\in\bbN, \vec{n}\in\bbN}\SH_{{v}, \vec{n}}, \,\ \text{where}\,\ \SH_{{v}, \vec{n}}:=\mathbb{C}[\hbar]\otimes \mathbb{C}
[ y_s ]_{s=1,\dots, v}^{\mathcal{S}_{{v}}}\otimes \mathbb{C}
[ z_j^i ]_{j=1,\dots, w, i=1, \ldots, n_j} ,\]
where $\mathcal{S}_{{v}}$ is the symmetric group of $v$ letters, and $\mathcal{S}_{{v}}$ naturally acts on the variables $\{ y_s \}_{s=1,\dots, v}$ by permutation. 
For any $({v_1}, \vec{n_1})$ and $({v_2}, \vec{n_2})\in \bbN\times \bbN^w$, write $v=v_1+v_2$ and $\vec{n}=\vec{n}_1+\vec{n}_2$.
We consider $\SH_{{v_1}, \vec{n_1}}\otimes_{\mathbb{C}[\hbar]} \SH_{{v_2}, \vec{n_2}}$ as a subalgebra of $
\mathbb{C}[\hbar][y_s, z_{j}^i]_{\{s=1,\dots, v,  j=1,\dots, w, i=1, \ldots, n_j\}}$ 
by sending $(y'_s, z_j\rq{}^{i}) \in \SH_{{v_1}, \vec{n_1}} $ to $(y_s, z_j^i)$, and 
$(y''_s, z_j\rq{}\rq{}^{i}) \in \SH_{{v_2}, \vec{v_2}} $ to $(y_{s+v_1}, z_{j}\rq{}\rq{}^{i+n_j})$.

We define the shuffle product $\SH_{{v_1}, {\vec{n}_1}}\otimes_{\mathbb{C}[\hbar]} \SH_{{v_2}, {\vec{n}_2}}\to \SH_{v, \vec{n}}$,
\begin{align}
&f(y_{{v_1}} , z_{{\vec{n}_1}})\otimes g(y_{{v_2}} , z_{{\vec{n}_2}})\mapsto
\sum_{\sigma\in \Sh(v_1, v_2)} \sigma\Big(f(y'_{{v_1}} , z'_{{\vec{n}_1}})\cdot g(y''_{{v_2}} , z''_{{\vec{n}_2}})\cdot \fac_{{v_1}+{v_2},{\vec{n}_1}+{\vec{n}_2}}\Big), \label{eq:shuffle}
\end{align}
where the factor is given by

\begin{equation*}
\fac_{(v_1, \vec{n}_1), (v_2, \vec{n}_2)}:=\prod_{s=1}^{v_2}
\prod_{t=1}^{v_1}\frac{y\rq{}\rq{}_s-y\rq{}_t-2\hbar}{y\rq{}_t-y\rq{}\rq{}_s}
\prod_{j=1}^w \Big(\prod_{a\in [1, v_1], b\in [1, n_{2j}]} (z\rq{}\rq{}^{b}_j-y'_a+\ell_j\hbar)
\prod_{a\in [1, n_{1j}], b\in [1, v_2]} (y''_b-z_j\rq{}^{a}+\ell_j\hbar)\Big), 
\end{equation*}
here $-2\hbar$ comes from the weight of $\epsilon$, and $\ell_j\hbar$ comes from the weight of $a_j$ and $a^*_j$, $j=1, \ldots, w$. 

The graded vector space $\SH_{fr}$ together with the shuffle product gives an algebra structure.

\subsection{The geometric construction of the framed shuffle algebra}
The geometric interpretations of the framed shuffle algebra $\SH_{fr}$ can be obtained from \cite[\S 3.2 and Proposition 3.2]{YZ1}. For the convenience of the readers, we recall the results here.

For any pair of dimension vectors $(v_1, \vec{n}_1),(v_2, \vec{n}_2)\in\bbN\times \bbN^w$, let $v=v_1+v_2$, and $\vec{n}=\vec{n}_1+\vec{n}_2$. 
We consider a map \[m^S_{(v_1, \vec{n}_1),(v_2, \vec{n}_2)}:H^*_{c, G_{v_1}\times T_{{n}_1}\times \C^*}(\tilde{X}(v_1, \vec{n}_1))^{\vee}\otimes_{\C[\hbar]}H^*_{c, G_{v_2}\times T_{{n}_2}\times \C^*}(\tilde{X}(v_2, \vec{n}_2))^{\vee}\to H^*_{c,G_{v}\times T_{n}\times \C^*}(\tilde{X}(v, \vec{n}))^{\vee},\] 
defined as follows. 
For a vector space $V$ of dimension $v$, let $V_1$ be a subspace of $V$ with dimension $v_1$. 
Denote by $N$ a $w$-tuple of vector spaces of dimension vector $\vec{n}$. Let $N_1 \subset N$ be $w$-tuple of subvector spaces of dimension vector $\vec{n}_1$. Define
\[\tilde{X}_{(v_1, \vec{n}_1), (v_2, \vec{n}_2)}:=\{(\epsilon, a_1, a^*_1, \ldots, a_w, a^*_w)\in \tilde{X}(v, \vec{n})\mid \epsilon(V_1)\subset V_1, a_i(V_1)\subset N_1, a^*_i(N_1)\subset V_1, i\in [1, w]\}\subset \tilde{X}(v, \vec{n}).\]
An element $(\epsilon, a_i, a^*_i)\in \tilde{X}_{(v_1, \vec{n}_1), (v_2, \vec{n}_2)}$ induces the following linear maps:
\[
\xymatrix{
V_1\ar[r]^{\epsilon|_{V_1}} \ar@{^{(}->}[d]& V_1 \ar@{^{(}->}[d]\\
V\ar[r]^{\epsilon} \ar@{->>}[d]& V\ar@{->>}[d]\\
V/V_1\ar[r]^{\bar\epsilon} & V/V_1\\
}\,\ 
\xymatrix{
V_1\ar[r]^{(a_1, \ldots, a_n)|_{V_1}} \ar@{^{(}->}[d]& N_1 \ar@{^{(}->}[d]\\
V\ar[r]^{(a_1, \ldots, a_n)} \ar@{->>}[d]& N\ar@{->>}[d]\\
V/V_1\ar[r]^{(\bar a_1, \ldots, \bar a_n)} & N/N_1\\
}\,\ 
\xymatrix{
N_1\ar[r]^{(a_1^*, \ldots, a_n^*)|_{N_1}} \ar@{^{(}->}[d]& V_1 \ar@{^{(}->}[d]\\
N\ar[r]^{(a_1^*, \ldots, a_n^*)} \ar@{->>}[d]& V\ar@{->>}[d]\\
N/N_1\ar[r]^{(\bar a_1^*, \ldots, \bar a_n^*)} & V/V_1\\
}
\]

We write $G:=G_{v}$, and $P \subset G_v$, the parabolic subgroup preserving the subspace $V_1$.
Let $L:=G_{v_1}\times G_{v_2}$ be the Levi subgroup of $P$. We have the following correspondence of $G\times T_w\times \C^*$-varieties:
\begin{equation*}
\xymatrix{
G\times_P (\tilde{X}(v_1, \vec{n}_1)\times \tilde{X}(v_2, \vec{n}_2))&G\times_P \tilde{X}_{(v_1, \vec{n}_1), (v_2, \vec{n}_2)} \ar[l]_(0.4){\phi} \ar[r]^{\psi} & G\times_P \tilde{X}(v, \vec{n})\ar[r]^{\pi}& 
\tilde{X}(v, \vec{n}).
}
\end{equation*}
Here $\phi$ is induced by the map 
\begin{align*}
&\tilde{X}_{(v_1, \vec{n}_1), (v_2, \vec{n}_2)}
\to \tilde{X}(v_1, \vec{n}_1)\times \tilde{X}(v_2, \vec{n}_2), 
 (\epsilon, a_i, a^*_i)
\mapsto \Big(
(\epsilon|_{V_1}, a_i|_{V_1}, a^*_i|_{N_1}), 
(\bar{\epsilon}, \bar{a}_i, \bar{a}^*_i)
\Big)
\end{align*}
the map $\psi$ is induced by the embedding $\tilde{X}_{(v_1, \vec{n}_1), (v_2, \vec{n}_2)}\subset \tilde{X}(v, \vec{n})$ and $\pi$ is the action map 
\[
 G\times_P \tilde{X}(v, \vec{n})\to 
\tilde{X}(v, \vec{n}), \big(g, (\epsilon, a_i, a^*_i)\big)\mapsto g\cdot (\epsilon, a_i, a^*_i). 
\]

We now define the multiplication map $m^S_{(v_1, \vec{n}_1), (v_2, \vec{n}_2)}$. We first have the K\"unneth isomorphism.
\[
\otimes: H^*_{c,G_{v_1}\times T_{{n}_1}\times \C^*}(\tilde{X}(v_1, \vec{n}_1))^{\vee}\otimes_{\C[\hbar]}H^*_{c,G_{v_2}\times T_{{n}_2}\times \C^*}(\tilde{X}(v_2, \vec{n}_2))^{\vee}\cong H^*_{c,L\times T_n\times \C^*}(\tilde{X}(v_1, \vec{n}_1)\times \tilde{X}(v_2, \vec{n}_2))^{\vee}. 
\] 
Consider the following sequence of morphisms:
\begin{enumerate}
\item The isomorphism:
$
H^*_{c,L\times T_n\times \C^*}(\tilde{X}(v_1, \vec{n}_1)\times \tilde{X}(v_2, \vec{n}_2))^{\vee}
\cong H^*_{c,G\times T_n\times \C^*}(G\times_{P} (\tilde{X}(v_1, \vec{n}_1)\times \tilde{X}(v_2, \vec{n}_2)))^{\vee}.
$
\item The pullback map:
$$
\phi^*: H^*_{c,G\times T_n\times \C^*}(G\times_{P} (\tilde{X}(v_1, \vec{n}_1)\times \tilde{X}(v_2, \vec{n}_2)))^{\vee} \to H^*_{c,G\times T_n\times \C^*}(G\times_P \tilde{X}_{(v_1, \vec{n}_1), (v_2, \vec{n}_2)} )^{\vee}.
$$
\item The pushforward $\psi_*$
\[
H^*_{c,G\times T_n\times \C^*}(G\times_P \tilde{X}_{(v_1, \vec{n}_1), (v_2, \vec{n}_2)} )^{\vee}\to 
H^*_{c,G\times T_n\times \C^*}(G\times_P \tilde{X}(v, \vec{1}_w))^{\vee}
\]
\item The pushforward $\pi_*$
\[ 
H^*_{c,G\times T_n\times \C^*}(G\times_P \tilde{X}(v, \vec{1}_w))^{\vee}\to
H^*_{c,G\times T_n\times \C^*}( \tilde{X}(v, \vec{1}_w))^{\vee}
\]
\end{enumerate}
We define map $m^S_{(v_1, \vec{n}_1), (v_2, \vec{n}_2)}$ to be the composition of the 
K\"unneth isomorphism with the above sequence of morphisms.

Now for any $(v, \vec{n})\in\bbN\times \bbN^w$, we identify \[\SH_{v, \vec{n}}=\mathbb{C}[\hbar]\otimes \mathbb{C}
[ y_s ]_{s=1,\dots, v}^{\mathcal{S}_{{v}}}\otimes \mathbb{C}
[ z_j^i ]_{j=1,\dots, w, i=1, \ldots, n_j} \] with the $H_{\C^*}(\pt)$-module $H^*_{c,G_v\times T_n\times \C^*}(\tilde{X}(v, \vec{n}))^{\vee}$, as $\tilde{X}(v, \vec{n})$ is an affine space. 

We identify the algebraically defined formal shuffle algebra multiplication in \eqref{eq:shuffle} with the geometrically defined $m^S_{v_1,v_2}$ as follows. 
\begin{prop} \cite[Proposition 3.2]{YZ1} \label{prop:geom}
 Under the identification 
\[\SH_{v, \vec{n}}=\mathbb{C}[\hbar]\otimes \mathbb{C}
[ y_s ]_{s=1,\dots, v}^{\mathcal{S}_{{v}}}\otimes \mathbb{C}
[ z_j^i ]_{j=1,\dots, w, i=1, \ldots, n_j} \cong H^*_{c,G_v\times T_n\times \C^*}(\tilde{X}(v, \vec{n}))^{\vee},\] 
the map $m^S_{(v_1, \vec{n}_1), (v_2, \vec{n}_2)}$ is equal to the multiplication map  \eqref{eq:shuffle} of the shuffle algebra.
\end{prop}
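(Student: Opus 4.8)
The plan is to evaluate the composite $m^S_{(v_1,\vec n_1),(v_2,\vec n_2)}$ one arrow at a time under the identification $\SH_{v,\vec n}\cong H^*_{c,G_v\times T_n\times\C^*}(\tilde X(v,\vec n))^{\vee}$, and to check that the result is the shuffle formula \eqref{eq:shuffle}. Since $\tilde X(v,\vec n)$ is an affine space it is equivariantly contractible, so its dual compactly supported cohomology is a free module over $H_{G_v\times T_n\times\C^*}(\pt)$ generated by the fundamental class; the identification sends this class to $1$ and records multiplication by Chern roots. Under this dictionary the K\"unneth map together with the induction isomorphism $H_L\cong H_G(G\times_P-)$ simply reads a class symmetric under $\mathcal S_{v_1}\times\mathcal S_{v_2}$ as the product $f\cdot g$ in the variables $y\rq{}_t,y\rq{}\rq{}_s$ and the two blocks of framing variables.

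First I would treat $\phi^*$. The map $\phi$ is the affine fibration remembering the associated graded of the filtration $V_1\subset V$, $N_1\subset N$; its fibres are the forbidden off-diagonal $\Hom$-blocks, hence contractible, so $\phi^*$ is an isomorphism leaving the polynomial $f\cdot g$ unchanged. Next, $\psi$ is the closed embedding $G\times_P\tilde X_{(v_1,\vec n_1),(v_2,\vec n_2)}\hookrightarrow G\times_P\tilde X(v,\vec n)$ of smooth spaces, so by the self-intersection formula $\psi_*$ acts as multiplication by the equivariant Euler class of the normal bundle. I would compute this normal bundle from the $P$-representation $\tilde X(v,\vec n)/\tilde X_{(v_1,\vec n_1),(v_2,\vec n_2)}$, whose three off-diagonal blocks are $\Hom(V_1,V_2)\otimes\C_\epsilon$, $\bigoplus_j\Hom(V_1,(N_2)_j)\otimes\C_{a_j}$ and $\bigoplus_j\Hom((N_1)_j,V_2)\otimes\C_{a_j^*}$. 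Reading off the $\C^*$-weights $-2$ for $\epsilon$ and $\ell_j$ for $a_j,a_j^*$ from the grading \eqref{grading}, the Euler class is
\[
\prod_{s=1}^{v_2}\prod_{t=1}^{v_1}(y\rq{}\rq{}_s-y\rq{}_t-2\hbar)\cdot\prod_{j=1}^w\Big(\prod_{a\in[1,v_1],\,b\in[1,n_{2j}]}(z\rq{}\rq{}^{b}_j-y'_a+\ell_j\hbar)\prod_{a\in[1,n_{1j}],\,b\in[1,v_2]}(y''_b-z_j\rq{}^{a}+\ell_j\hbar)\Big),
\]
which is precisely the numerator of $\fac_{(v_1,\vec n_1),(v_2,\vec n_2)}$ together with all the framing factors.

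The main work is the final pushforward $\pi_*$ along the action map, whose fibre is the partial flag variety $G/P\cong\mathrm{Gr}(v_1,v)$. I would identify $\pi_*$ with the equivariant Gysin map of the fibration $G\times_P\tilde X(v,\vec n)\to\tilde X(v,\vec n)$, computed either by Atiyah--Bott localization on $G/P$ or by the Weyl-type integration formula for flag bundles; on $H_P(\pt)=\C[y\rq{},y\rq{}\rq{}]^{\mathcal S_{v_1}\times\mathcal S_{v_2}}$ it becomes
\[
h(y\rq{},y\rq{}\rq{})\ \longmapsto\ \sum_{\sigma\in\Sh(v_1,v_2)}\sigma\!\left(\frac{h(y\rq{},y\rq{}\rq{})}{\prod_{t=1}^{v_1}\prod_{s=1}^{v_2}(y\rq{}_t-y\rq{}\rq{}_s)}\right),
\]
where $\Sh(v_1,v_2)$ redistributes $y_1,\dots,y_v$ into the two blocks and the denominator is the Euler class of $T_{[P]}(G/P)=\Hom(V_1,V_2)$. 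Dividing the Euler class of the normal bundle by this Weyl denominator yields exactly $\fac_{(v_1,\vec n_1),(v_2,\vec n_2)}$, so the composite sends $f\otimes g$ to $\sum_{\sigma}\sigma(f\cdot g\cdot\fac)$, which is \eqref{eq:shuffle}.

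I expect the delicate points to be the sign and orientation conventions relating the dual of compactly supported cohomology to Borel--Moore homology --- these must be fixed so that $\psi_*$ really is multiplication by the normal Euler class and $\pi_*$ really is the stated symmetrization, with the denominator appearing as $y\rq{}_t-y\rq{}\rq{}_s$ rather than its negative --- and the consistent bookkeeping of the $\C^*$-weights of $\epsilon,a_j,a_j^*$. Once these normalizations are pinned down, the verification against \eqref{eq:shuffle} is a direct comparison of the two explicit polynomial operations, so the substance of the argument reduces to the fibre-integration computation over $G/P$.
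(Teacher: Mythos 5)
Your argument is correct and essentially identical to the paper's own proof: the paper likewise computes $\psi_*$ as multiplication by the equivariant Euler class of the normal bundle $\sHom_{\widetilde{Q}}(\calV_1,\calV_2)\oplus\sHom_{\widetilde{Q}}(\calV_1,\calN_2)\oplus\sHom_{\widetilde{Q}}(\calN_1,\calV_2)$ with the $\C^*$-weights from \eqref{grading} (giving exactly the Euler class you wrote), and evaluates $\pi_*$ as the Gysin pushforward of the Grassmannian bundle via the shuffle-symmetrization formula with denominator $\prod_{t,s}(y'_t-y''_s)$, citing \cite[Proposition 1.2]{YZ1}. The only difference is cosmetic: the paper leaves $\phi^*$ and the sign and orientation normalizations implicit, which you spell out.
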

\begin{proof}

The map $\psi: G\times_P \tilde{X}_{(v_1, \vec{n}_1), (v_2, \vec{n}_2)}\to G\times_P \tilde{X}(v, \vec{n})$ is an embedding of vector bundles on $G/P$.
The pushforward $\psi_{*}$ is the multiplication by the equivariant Euler class $e^{\psi}_{G\times T_n\times \C^*}$ of the normal bundle of the embedding $\psi$. 
The normal bundle to $\psi$ can be identified with 
\begin{align*}
&\sHom_{\widetilde{Q}}(\calV_1, \calV_2) 
\oplus \sHom_{\widetilde{Q}}(\calV_1, \calN_2) 
\oplus \sHom_{\widetilde{Q}}(\calN_1, \calV_2) 
\end{align*}
where $\calV, \calN$ are the tautological bundles on $\tilde{X}(v, \vec{n})$. 

Taking into account of the $\C^*$-weights of $\epsilon, a_i, a^*_i$, we have the equivariant Euler class of the normal bundle to $\psi$ is
\[
e^{\psi}_{G\times T_n\times \C^*}=
\prod_{s=1}^{v_2}
\prod_{t=1}^{v_1}
(y_s''-y_t'-2\hbar)
\prod_{j=1}^w \Big(\prod_{a\in [1, v_1], b\in [1, n_{2j}]} (z^{''b}_j-y'_a+\ell_j\hbar)
\prod_{a\in [1, n_{1j}], b\in [1, v_2]} (y''_b-z_j^{'a}+\ell_j\hbar)\Big).\]

The map $\pi: G\times_P \tilde{X}(v, \vec{n})\to 
\tilde{X}(v, \vec{n})$ is a Grassmannian bundle. So the pushforward $\pi_*$ can be computed, see for example \cite[Proposition 1.2]{YZ1}. 
Putting all the above together, the map $m_{v_1, v_2}^S$ is given by exactly the same formula as  \eqref{eq:shuffle}.
\end{proof}

\section{The weight functions}
In this section, we define the weight functions using the framed shuffle formula \eqref{eq:shuffle} and study their properties. 

\subsection{}
Let $v\in \N$. For any $\vec{n}=(n_1, n_2, \cdots, n_w)\in \N^{w}$ such that $n_i\in \{0, 1\}$, write 
\[
z_{\vec{n}}=\{z_i\mid \text{with index $i$ such that $n_i=1$}\}.
\]
Set \[\SH_{{v}, \vec{n}}:=\mathbb{C}[\hbar]\otimes \mathbb{C}
[ y_1, y_2, \cdots, y_v ]^{\mathcal{S}_{{v}}}\otimes \mathbb{C}
[ z_{\vec{n}}]. \]
The variable $z_i$ is the equivariant parameter coming from the  the framing vertex $i$, such that $n_i=1$. 
In particular, for the vector $\vec{1}_w=(1, 1, \cdots, 1)\in \N^w$. We have
$\SH_{{v}, \vec{1}_w}=\mathbb{C}[\hbar]\otimes \mathbb{C}
[ y_1, y_2, \cdots, y_v ]^{\fS_{{v}}}\otimes \mathbb{C}
[ z_1, z_2, \ldots, z_w]$.


Let $\lambda=(v_1, \cdots, v_w)\in S(\ell_1, \ldots, \ell_w)$ be a torus fixed  point.  
Let $e_i:=(0, \cdots, 0, 1, 0, \cdots, 0)\in \N^w$, where $i$ is at the $i$-th place. 
Consider the multiplication
\begin{align}
\SH_{v_1, e_1}\otimes \SH_{v_1, e_2}\otimes  \cdots \otimes \SH_{v_w, e_{w}}
\to \SH_{v, \vec{1}_w}. \label{star1}
\end{align}
The domain above depends on the fixed point $\lambda$, and we denote the multiplication \eqref{star1} by $\star_{\lambda}$ to emphasize this dependence on $\lambda$. We consider the image of $1\otimes 1\otimes \cdots \otimes 1$ under $\star_{\lambda}$. 

Define the weight function associated to $\id\in \mathcal{S}_w$ at $\lambda$ to be
\begin{equation}
W^{\id}_{\lambda}:=
1\star_{\lambda} 1\star_{\lambda} \cdots \star_{\lambda} 1\in \mathbb{C}[\hbar]\otimes \mathbb{C}
[ y_s ]_{s=1,\dots, v}^{\mathcal{S}_{{v}}}\otimes \mathbb{C}
[ z_t ]_{t=1,\dots, w}
\end{equation}
The permutation group $\mathcal{S}_{w}$ permutes the variables $\{z_1, \cdots, z_w\}$ in $\SH_{v, w}$. 

We introduce the following notations. 
\begin{itemize}
\item
We say $I=(I_1, I_2, \cdots, I_w)$ is a partition of $[1, v]$, denoted by $(I_1, I_2, \cdots, I_w)\vdash [1, v]$, if $I_1, I_2, \cdots, I_w$ are subsets of $[1, v]$ such that $I_1\sqcup I_2\sqcup \cdots \sqcup I_w=[1, v]$ and $|I_1|=v_1, \cdots, |I_w|=v_w$. 
\item For two sets $I, J$, we write $y_{I}-y_J$ means the product $\prod_{i\in I, j\in J} (y_{i}-y_j)$. In particular, we have
\[
y_{I}-y_J-2\hbar=\prod_{i\in I, j\in J}(y_{i}-y_j-2\hbar), \text{and \,\ $z_j-y_{I}+\ell_j\hbar =\prod_{i\in I} (z_j-y_{i}+\ell_j\hbar)$}. 
\]
\end{itemize}
Inductively using the formula \eqref{eq:shuffle}, we then have
\begin{align}
&W^{\id}_{\lambda}
=\sum_{I=(I_1, I_2, \cdots, I_w)\vdash [1, v], |I_j|=v_j} W^{\id}_{\lambda, I} \label{Wformula}\\
=&\sum_{(I_1, I_2, \cdots, I_w)\vdash [1, v], |I_j|=v_j} \Big(\prod_{1\leq t<s\leq w}\frac{y_{I_s}-y_{I_t}-2\hbar}{y_{I_t}-y_{I_s}}\Big)\cdot\Big(
\prod_{1\leq s<a\leq w}(z_a-y_{I_s}+\ell_a\hbar)
\Big)
\Big(\prod_{1\leq a<s\leq w}(y_{I_s}-z_a+\ell_a\hbar)\Big). \notag
\end{align}
Here \begin{align}
W^{\id}_{\lambda, I}:= \Big(\prod_{1\leq t<s\leq w}&\frac{y_{I_s}-y_{I_t}-2\hbar}{y_{I_t}-y_{I_s}}\Big)
\Big(\prod_{1\leq s<a\leq w}(z_a-y_{I_s}+\ell_a\hbar)\Big)
\Big(\prod_{1\leq a<s\leq w}(y_{I_s}-z_a+\ell_a\hbar)\Big) \label{summand}\\
&\in \mathbb{C}[\hbar]\otimes \mathbb{C}
[ y_s ]_{s=1,\dots, v}^{\mathcal{S}_{{v}}}[\prod_{i\neq j}\frac{1}{y_i-y_j}]\otimes \mathbb{C}
[ z_1, 
\ldots, z_w]\notag
\end{align}
Note that $W^{\id}_{\lambda}$ is regular in the variables $y_1, \ldots, y_v$, but $W^{\id}_{\lambda, I}$ could have poles at $y_i-y_j=0$. 
\subsection{Restriction to torus fixed points}

For each fixed point $\mu=(\mu_1, \mu_2, \cdots, \mu_w)\in S(\ell_1, \ldots, \ell_w)$, we restrict the weight functions at fixed point $\mu$, denoted by $W^{\id}_{\lambda}|_{\mu}$. 
That is, we substitute $y_1, \cdots y_v$ in $W^{\id}_{\lambda}$ according to the equivariant Chern roots of $V$ at the fixed point $\mu$: 
\begin{equation}\label{eq:fix point wts}
z_i-\ell_i\hbar, z_i-(\ell_i-2)\hbar, z_i-(\ell_i-4)\hbar, \,\ 
\cdots, \,\ z_i-(\ell_i-2\mu_i+2)\hbar, \,\ i=1, \cdots, w. 
\end{equation}
Thus, $W^{\id}_{\lambda}|_{\mu}\in  \mathbb{C}[\hbar, z_1, \cdots, z_w]$. 
Note that $y_1, \cdots y_v$ is invariant under the permutation group $\mathcal{S}_v$, therefore any ordering of $y_1, \cdots, y_v$ under the substitution gives the same answer.

\subsection{Properties of the weight functions}

\subsubsection{}
Clearly, when $w=0$, there is only one fixed point $0$. We have
$
W^{\id}_{0}=1. 
$
When $v=1$, the variables are $y, z_1, \cdots, z_w$ in $\SH_{1, \vec{1}_w}$.  
There are $w$ number of partitions of $\{1\}$. We have 
\[
(I_1, \cdots, I_s, \cdots, I_{w})
=(\emptyset, \cdots, \{1\}, \cdots \emptyset), 
\]
for only one subset $I_s=\{1\}$, and $I_t=\emptyset$, for $s\neq t$. 

For the tuple $e_s:=(0, 0, \cdots, 1, \cdots, 0)$ with $1$ at the position $s$. We have
\begin{align}
&W^{\id}_{e_s}=
\prod_{\{a|s<a\leq w\}}(z_a-y+\ell_a\hbar)
\prod_{\{a|1\leq a<s\}}(y-z_a+\ell_a\hbar)\in \SH_{1, \vec{1}_w}. \label{W:v=1}
\end{align}

\subsubsection{}
We use the lexicographical order on the set of torus fixed points. 
That is: for $\lambda=(v_1, \cdots, v_w)$ and $\mu=(\mu_1, \cdots, \mu_w)$ in $S(\ell_1, \ldots, \ell_w)$, we define 
\[
\lambda=(v_1, \cdots, v_w)< \mu=(\mu_1, \cdots, \mu_w) \iff v_1=\mu_1, \cdots, v_a=\mu_a, v_{a+1}<\mu_{a+1}, \text{for some $a\in [1, w]$}. 
\]

For two fixed points $\lambda$ and $\mu$, set 
\begin{align*}
&\deg(W^{\id}_{\lambda, I}\mid_\mu)
:=\sum_{1\leq i<j\leq w} (\deg_{z_i-z_j}(W^{\id}_{\lambda, I}\mid_ \mu)), \\
&\deg(W^{\id}_{\lambda}\mid_\mu)
:=\max\{\deg(W^{\id}_{\lambda, I}\mid_\mu)\mid I\vdash [1, v] \}
\end{align*}

Equivalently, $\deg(W^{\id}_{\lambda}\mid_\mu)$ is the degree of the regular function $W^{\id}_{\lambda}\mid_ \mu$ in $z_1, z_2, \cdots, z_w$, 
such that 
\[
\deg(z_i)=1, i\in [1, w], \,\ \deg(\hbar)=0. 
\]
We have the following properties for the weight functions $W^{\id}_{\lambda}$. They are similar to the properties from \cite[Lemma 3.1]{RTV15}. 

\begin{lemma}\label{lem:property}
Let $\lambda, \mu \in S(\ell_1, \ldots, \ell_w)$ be two fixed points of $\mathfrak{M}^+(v, \vec{1}_w)$. 
\begin{enumerate}
\item If $\lambda<\mu$, we have
\[
W^{\id}_{\lambda}|_{\mu}=0. 
\]
\item If $\lambda=\mu$, we have
\[
W^{\id}_{\lambda}|_{\lambda}= \prod_{1\leq s<a\leq w}\Big(
 \prod_{i=v_s-v_a+1}^{v_s}(z_s-z_a+2i\hbar+(\ell_a-\ell_s)\hbar)
\prod_{i=0}^{v_s-1}(z_a-z_s-2i\hbar+(\ell_s+\ell_a)\hbar)\Big). 
\]
In particular, for $1\leq s<a\leq w$, the degree of $z_s-z_a$ in $W^{\id}_{\lambda}|_{\lambda}$ is $v_a+v_s$ and 
therefore, 
\[
\deg(W^{\id}_{\lambda}|_{\lambda})=(w-1) v. 
\]
\item 
For each partition $I=(I_1, I_2, \cdots, I_w)\vdash [1, v]$, with  $|I_j|=v_j$, for any two fixed points $\lambda\geq \mu$, let 
$I_{i}^i:=I_{i}\cap[\sum_{s=1}^{j-1}\mu_s+1,  \sum_{s=1}^{j}\mu_s]$, then we have 
\[
\deg_{z_i-z_j}( W^{\id}_{\lambda, I}|_{\mu})=\mu_j+\mu_i-|I_{i}^j|
\] and 
\[
\deg( W^{\id}_{\lambda, I}|_{\mu})=(w-1)v-\sum_{i<j}|I_{i}^j|
\]
\item
For any $w\in \N$, we have, for $\lambda\geq \mu$, then,
\[
\deg(W^{\id}_{\lambda}|_{\lambda})\geq \deg (W^{\id}_{\lambda}|_{\mu}). 
\]
\end{enumerate}
\end{lemma}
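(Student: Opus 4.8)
The plan is to reduce everything to a single combinatorial analysis of the individual summands $W^{\id}_{\lambda,I}|_\mu$ in the expansion \eqref{Wformula}. First I would record that, since the fixed-point weights \eqref{eq:fix point wts} are pairwise distinct for generic $\hbar,z_1,\dots,z_w$ and the variables indexed by distinct parts $I_s,I_t$ never collide, each $W^{\id}_{\lambda,I}|_\mu$ is a well-defined element of $\C[\hbar,z_1,\dots,z_w]$ (the denominators in \eqref{summand} do not vanish at the substitution point), so that $W^{\id}_\lambda|_\mu=\sum_I W^{\id}_{\lambda,I}|_\mu$. Throughout I fix the ``block'' specialization: write $[1,v]=B_1\sqcup\cdots\sqcup B_w$ with $B_j=[\sum_{s<j}\mu_s+1,\sum_{s\le j}\mu_s]$, and assign to the positions of $B_j$ the $\mu_j$ column-$j$ weights $z_j-(\ell_j-2k)\hbar$, $k=0,\dots,\mu_j-1$, ordered by level $k$; by $\mathcal{S}_v$-symmetry of $W^{\id}_\lambda$ this computes the restriction. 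With this set-up, $I_i^j=I_i\cap B_j$ records which positions of the part $I_i$ receive a column-$j$ weight.

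The heart of the argument, and the step I expect to be the main obstacle, is a clean characterization of which summands survive. For this I would inspect the two ways a factor of \eqref{summand} can vanish after substitution. A factor $y_p-y_q-2\hbar$ with $p\in I_s$, $q\in I_t$, $t<s$, vanishes exactly when $y_p,y_q$ are consecutive levels of the \emph{same} column with the higher level in the higher-indexed part; and a factor $y_p-z_a+\ell_a\hbar$ with $a<s$, $p\in I_s$, vanishes exactly when $y_p$ is the bottom weight ($k=0$) of column $a$. (One checks the remaining factors $z_a-y_{I_s}+\ell_a\hbar$ never vanish in the admissible range $0\le k\le\mu_a-1<\ell_a$.) Running this over a fixed column $c$, non-vanishing forces the levels of column $c$ to occupy parts whose indices are non-increasing in the level and bounded above by $c$; equivalently, \textbf{a summand $W^{\id}_{\lambda,I}|_\mu$ is nonzero only if every column-$c$ weight lies in a part $I_j$ with $j\le c$, i.e.\ $I_j^c=\emptyset$ whenever $j>c$.} This is exactly the input needed below.

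Granting this, the four assertions follow quickly. For (1), non-vanishing requires the $\mu_1+\cdots+\mu_c$ weights of columns $1,\dots,c$ to fit into the parts $I_1,\dots,I_c$ of total size $v_1+\cdots+v_c$, for every $c$; but $\lambda<\mu$ lexicographically gives an index $a$ with $v_1+\cdots+v_a<\mu_1+\cdots+\mu_a$, so no summand survives and $W^{\id}_\lambda|_\mu=0$. For (2), taking $\mu=\lambda$ turns these capacity inequalities into equalities for all $c$, which forces inductively $I_c=B_c$: the block partition is the unique surviving summand. Substituting into \eqref{summand} for $I_c=B_c$, the factors $z_a-y_{I_s}+\ell_a\hbar$ ($s<a$) directly give $\prod_{i=0}^{v_s-1}(z_a-z_s+(\ell_s+\ell_a-2i)\hbar)$, while combining the factors $y_{I_a}-z_s+\ell_s\hbar$ with the ratio $\tfrac{y_{I_a}-y_{I_s}-2\hbar}{y_{I_s}-y_{I_a}}$ telescopes to $\prod_{i=v_s-v_a+1}^{v_s}(z_s-z_a+(2i+\ell_a-\ell_s)\hbar)$; this yields the stated formula for $W^{\id}_\lambda|_\lambda$, hence $\deg_{z_s-z_a}=v_a+v_s$ and $\deg(W^{\id}_\lambda|_\lambda)=(w-1)v$.

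For (3), I would compute $\deg_{z_i-z_j}(W^{\id}_{\lambda,I}|_\mu)$ by bookkeeping: after substitution every factor becomes a linear form $z_p-z_q+(\text{const})\hbar$, the ratio factors contribute net degree $0$, and a factor of type $z_i-z_j$ arises precisely from the pairings counted by $\sum_s|I_s^i|$ and $\sum_s|I_s^j|$ minus the ``same-column'' terms $|I_j^i|,|I_i^j|$; on a nonzero summand the characterization of the previous paragraph kills $|I_j^i|$ since $i<j$, leaving $\deg_{z_i-z_j}=\mu_i+\mu_j-|I_i^j|$ and $\deg(W^{\id}_{\lambda,I}|_\mu)=(w-1)v-\sum_{i<j}|I_i^j|$. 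Finally (4) is immediate: each summand satisfies $\deg(W^{\id}_{\lambda,I}|_\mu)\le(w-1)v$ by (3), so $\deg(W^{\id}_\lambda|_\mu)\le(w-1)v=\deg(W^{\id}_\lambda|_\lambda)$ by (2).
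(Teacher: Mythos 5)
Your proposal is correct and takes essentially the same route as the paper's own proof: the same block specialization \eqref{eq:fix point wts}, the same analysis of which factors of \eqref{summand} vanish (the bottom-weight condition together with the consecutive-level chain condition, forcing $I_j^c=\emptyset$ for $j>c$), the same capacity inequalities for part (1), the unique block partition followed by direct substitution for part (2), and the same degree bookkeeping (ratio factors contributing net degree zero) for parts (3) and (4). No gaps; the telescoping you assert in part (2) is exactly the computation carried out in the paper.
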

\begin{proof}
This follows from a direct computation. For the convenience of the reader, we include a proof here. 

At the fixed point $\mu$, we use the following substitution
\begin{align*}
& y_1=z_1-\ell_1\hbar,  \,\ y_2=z_1-\ell_1\hbar+2\hbar,   \,\ 
\cdots, \,\ y_{\mu_1}=z_1-\ell_1\hbar+2(\mu_1-1)\hbar, \\ 
& y_{\mu_1+1}=z_2-\ell_2\hbar,  y_{\mu_1+2}=z_2-\ell_2\hbar+2\hbar, 
\cdots, \,\ y_{\mu_1+\mu_2}=z_2-\ell_2\hbar+2(\mu_2-1)\hbar\\ 
&\cdots\\
& y_{\sum_{i=1}^{a-1}\mu_i+1}=z_a-\ell_a\hbar, \,\  y_{\sum_{i=1}^{a-1}\mu_i+2}=z_a-\ell_a\hbar+2\hbar,  
\cdots, \,\ y_{\sum_{i=1}^{a}\mu_i}=z_a-\ell_a\hbar+2(\mu_a-1)\hbar. 
\end{align*}
For each partition  $I=(I_1, I_2, \cdots, I_w)\vdash [1, v]$ such that $|I_1|=v_1, \cdots, |I_w|=v_w$, the summand $W^{\id}_{\lambda, I}|_{\mu}$ is given by the formula \eqref{summand}. 
Note that the factor $y_{I_s}-z_a+\ell_a\hbar$ is zero, if and only if there exists an $a\in [1, w]$ and an $s$ with $a<s\leq w$ such that 
\[
\sum_{i=1}^{a-1}\mu_i+1\in I_s, 
\]

Assume $W^{\id}_{\lambda, I}|_{\mu}$ \eqref{summand} is non-zero. Then, $y_{I_s}-z_a+\ell_a\hbar\neq 0$ implies
$\sum_{i=1}^{a-1}\mu_i+1\notin I_s$, 
for any $a\in [1, w]$ and any $s$ such that $a<s\leq w$. This forces that
\begin{align*}
1\in I_1, \,\ \mu_1+1\in I_1\sqcup I_2,\,\  \cdots, 
\sum_{i=1}^{a-1}\mu_i+1\in \sqcup_{s=1}^{a} I_{s}, \cdots
\end{align*}

The factor $y_{I_s}-y_{I_t}-2\hbar\neq 0$, with $s>t$,  implies that
\begin{align*}
&\text{If $1\in I_1$, then we must have $2, 3, \cdots, \mu_1\in I_1$}, \\
&\text{If $\mu_1+1\in I_1\sqcup I_2$, then we must have $\mu_1+2, \mu_1+3, \cdots, \mu_1+\mu_2\in I_1\sqcup I_2$}, \\
&\phantom{12} \text{furthermore, if $\mu_1+k\in I_2$, then we must have $\mu_1+k-1, \mu_1+k-2, \cdots, \mu_1+1\in I_2$}, \\
&\cdots, \\
&\text{If $\sum_{i=1}^{a-1}\mu_i+1\in \sqcup_{s=1}^{a} I_{s}$, then we must have $\sum_{i=1}^{a-1}\mu_i+2, \sum_{i=1}^{a-1}\mu_i+3, \cdots, \sum_{i=1}^{a}\mu_i\in \sqcup_{s=1}^{a} I_{s}$}, \\
&\phantom{12} \text{furthermore, if $\sum_{i=1}^{a-1}\mu_i+k\in I_m$, then we must have $\sum_{i=1}^{a-1}\mu_i+k-1,\cdots, \sum_{i=1}^{a-1}\mu_i+1\in I_m\sqcup I_{m+1}\cdots \sqcup I_a$}. 
\end{align*}
Proof of (1): By assumption, we have the total order 
$(v_1, \cdots, v_w)< (\mu_1, \cdots, \mu_w)$. That is, there exists some number $a\in [1, w]$, such that $v=\mu_1, \cdots, v_a=\mu_a$, and $v_{a+1}<\mu_{a+1}$. Therefore 
\begin{align*}
|I_1|=v_1=\mu_1, \cdots, |I_a|=v_a=\mu_a, 
|I_{a+1}|=v_{a+1}<\mu_{a+1}. 
\end{align*}
The discussion above shows that, if $W^{\id}_{\lambda, I}|_{\mu}$ \eqref{summand} is non-zero, we have the following inequalities:
\[
|I_1|\geq \mu_1, |I_1|+|I_2|\geq \mu_1+\mu_2, \cdots, \sum_{i=1}^{a} |I_{i}|\geq \sum_{i=1}^{a}\mu_i, \cdots 
\]
This is a contradiction. Therefore, $W^{\id}_{\lambda}|_{\mu}=0$.

Proof of (2): 
If $W^{\id}_{\lambda, I}|_{\mu}$ \eqref{summand} is non-zero and $\lambda=\mu$, then, we must have
\begin{align*}
I_1=[1, v_1], I_2=[v_1+1, v_1+v_2], \cdots, 
I_{v}=[\textstyle\sum_{i=1}^{w-1} v_i, v]. 
\end{align*}
Therefore, we have the substitution $y_{I_s}=z_{s}-\ell_s+2j\hbar$, $j=0, 1, \cdots, v_{s-1}$, for any $s=1, \cdots, w$. 
Substituting in the formula of $W_{\lambda}^{\id}$, we have
\begin{align*}
W^{\id}_{\lambda}|_{\lambda}
=& \Big(\prod_{1\leq t<s\leq w}\frac{\prod_{i=0}^{v_s-1}\prod_{j=0}^{v_t-1} ((z_s-\ell_s\hbar+2i\hbar)-(z_t-\ell_t\hbar+2j\hbar)-2\hbar)}{\prod_{i=0}^{v_s-1}\prod_{j=0}^{v_t-1} ((z_t-\ell_t\hbar+2j\hbar)-(z_s-\ell_s\hbar+2i\hbar))}\Big)\\& 
\Big(\prod_{1\leq s<a\leq w}\prod_{i=0}^{v_s-1}(z_a-(z_s-\ell_s\hbar+2i\hbar)+\ell_a\hbar)\Big)
\prod_{1\leq a<s\leq w}\Big(\prod_{i=0}^{v_s-1}((z_s-\ell_s\hbar+2i\hbar)-z_a+\ell_a\hbar)\Big)\\
 =& \prod_{1\leq s<a\leq w}\Big(
 \prod_{i=v_s-v_a+1}^{v_s}(z_s-z_a+2i\hbar+(\ell_a-\ell_s)\hbar)
\prod_{i=0}^{v_s-1}(z_a-z_s-2i\hbar+(\ell_s+\ell_a)\hbar)\Big)
\end{align*}

Clearly, for $1\leq s<a\leq w$, the degree of $z_s-z_a$ is $v_a+v_s$. 

Proof of (3): 
In general, for the fixed point $\mu$, set
\begin{align*}
I_{a}^{1}:=I_a\cap [1, \mu_1], \,\ I_{a}^{2}:=I_a\cap [\mu_1+1, \mu_1+\mu_2],\,\ 
\cdots, \,\ I_{a}^{w}:=I_1\cap [\sum_{i=1}^{w-1}\mu_i+1, v],\,\ \text{for $a=1, 2, \cdots w$}. 
\end{align*}
Then, 
$I_{a}=I_{a}^{1} \sqcup I_{a}^{2}\sqcup \cdots \sqcup I_{a}^{w}$ and $[\sum_{i=1}^{k-1}\mu_i+1, \sum_{i=1}^{k}\mu_i]=I_{1}^{k}\sqcup  \cdots \sqcup I_{w}^{k}$. 
The restriction $W^{\id}_{\lambda, I}|_{\mu}$  is non-zero, then
\begin{enumerate}
\item
$I_{a}^{i}=\emptyset$, if $i<a$; 
\item and for each $k\in [1, w]$, in the decomposition $ [\sum_{i=1}^{k-1}\mu_i+1, \sum_{i=1}^{k}\mu_i]=I_{1}^{k}\sqcup  \cdots \sqcup I_{k}^{k}$, the ordering of the elements is $i_s<i_t$, for any $i_s\in I_{s}^{k}, i_t\in I_{t}^{k}$ with $s>t$. We write $I_{k}^k< I_{k-1}^k <\cdots < I_{1}^k$ for short. 
\end{enumerate}

By setting $\hbar=0$, if $W^{\id}_{\lambda, I}|_{\mu}$  is non-zero, then the degree of $W^{\id}_{\lambda, I}|_{\mu}$ can be computed as follows.  
\begin{align*}
\deg(W^{\id}_{\lambda, I}|_{\mu})=
\deg(\prod_{a\in [1, w]} \prod_{\{1\leq s\leq b\leq w\mid s\neq a, b\neq a\}}(z_a-y_{I_s^b})|_{\mu})
=\deg(\prod_{1\leq a\neq b\leq w} \prod_{\{1\leq s\leq b \mid s\neq a\}}(z_a-z_b)^{|I_{s}^b|}). 
\end{align*}
Therefore, for $i<j$,  if $W^{\id}_{\lambda, I}|_{\mu}$ is non-zero, then the power of $z_i-z_j$ in $W^{\id}_{\lambda, I}|_{\mu}$ is given by 
\[
\sum_{\{1\leq s \leq j\mid s\neq i\}}|I_{s}^j|+\sum_{1\leq s \leq i}|I_{s}^i|=\mu_j+\mu_i-|I_{i}^j|. 
\]


Proof of (4): It follows from (3) that 
$$\deg(W^{\id}_{\lambda}|_{\mu, I})
=(w-1)v-\sum_{i<j}^w|I_{i}^j|
\leq (w-1)v
=\deg(W^{\id}_{\lambda}|_{\lambda}),
$$ for any partition $I$. This shows the assertion. 
\end{proof}

\subsection{The action of $\mathcal{S}_{w}$}
Recall we have the fixed point set from \eqref{eq:fixed points}
\[
S(\ell_1, \ldots, \ell_w)=\{(v_1, v_2, \cdots, v_w) \mid 0\leq v_i\leq \ell_i, \forall i\in [1, w],\,\ 
\text{and} \sum_{i=1}^wv_i=v\}.  
\]
For any element $\sigma$ in the symmetric group $\mathcal{S}_{w}$, $\sigma$ permutes $(\ell_1, \ldots, \ell_w)$ by $\sigma\cdot (\ell_1, \ldots, \ell_w)=(\ell_{\sigma(1)}, \ldots, \ell_{\sigma(w)})$. Therefore, we have the following
\[
\sigma: S(\ell_1, \ldots, \ell_w)\to S(\ell_{\sigma(1)}, \ldots, \ell_{\sigma(w)}),  \,\ (v_1, \ldots, v_w)\mapsto (v_{\sigma(1)}, \ldots, v_{\sigma(w)})
\]

Define the weight function associated to $\sigma \in \mathcal{S}_w$ at $\lambda\in S(\ell_{\sigma(1)}, \ldots, \ell_{\sigma(w)})$ to be
\begin{align}\label{eq:Wsigma}
&W^{\sigma}_{\lambda}(y_{1}, \ldots, y_{v}; z_1, \ldots, z_w; \hbar):=W^{\id}_{\sigma^{-1}\lambda}(y_{1}, \ldots, y_{v} ; z_{\sigma^{-1}(1)}, \cdots, z_{\sigma^{-1}(w)}; \hbar),
\end{align}
where $\sigma^{-1}(\lambda)=\sigma^{-1}(v_1, \cdots, v_w)=(v_{\sigma^{-1}(1)},\cdots,  v_{\sigma^{-1}(w)})\in S(\ell_1, \ldots, \ell_w)$. 

For a general element $\sigma\in \mathcal{S}_w$, we define the total order on the set $S(\ell_{\sigma(1)}, \ldots, \ell_{\sigma(w)})$ depending on $\sigma$ by
\[
\lambda\leq _{\sigma} \mu \,\ \text{in $S(\ell_{\sigma(1)}, \ldots, \ell_{\sigma(w)})$}\iff \sigma^{-1}(\lambda)\leq \sigma^{-1}(\mu) \,\ \text{in $S(\ell_1, \ldots, \ell_w)$}. 
\]
Then, Lemma \ref{lem:property} implies the following. 
\begin{lemma}\label{lem:prop2}
\begin{enumerate}
\item
Let $\sigma\in S_{w}$. Let $\lambda, \mu$ be two fixed points in $S(\ell_{\sigma(1)}, \ldots, \ell_{\sigma(w)})$. 
 If $\lambda<_{\sigma}\mu$, we have
\[
W^{\sigma}_{\lambda}|_{\mu}=0. 
\]
\item For any $w\in \N$, we have, for $\lambda\geq_{\sigma} \mu$, then,
\[
\deg(W^{\sigma}_{\lambda}|_{\lambda})\geq \deg (W^{\sigma}_{\lambda}|_{\mu}).
\]
\end{enumerate}
\end{lemma}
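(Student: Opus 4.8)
The plan is to reduce both assertions to the case $\sigma = \id$ already handled in Lemma~\ref{lem:property}, by exploiting the fact that, according to the definition \eqref{eq:Wsigma}, $W^{\sigma}_{\lambda}$ differs from $W^{\id}_{\sigma^{-1}\lambda}$ only by the relabeling $z_j \mapsto z_{\sigma^{-1}(j)}$ of the framing parameters. The entire content of the lemma is then a matter of checking that this relabeling is compatible with the restriction-to-fixed-point operation $(-)|_{\mu}$, after which triangularity and the degree bound transport verbatim from Lemma~\ref{lem:property}.

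The heart of the argument is the identity
\[
W^{\sigma}_{\lambda}\big|_{\mu} = \bigl(W^{\id}_{\sigma^{-1}\lambda}\big|_{\sigma^{-1}\mu}\bigr)\big|_{z_j \mapsto z_{\sigma^{-1}(j)}},
\]
valid for any $\lambda, \mu \in S(\ell_{\sigma(1)}, \ldots, \ell_{\sigma(w)})$, which I would establish by comparing the two Chern-root substitutions directly. On the left, restricting to $\mu = (\mu_1, \ldots, \mu_w) \in S(\ell_{\sigma(1)}, \ldots, \ell_{\sigma(w)})$ replaces the $y$-variables of column $i$ by the roots $z_i - \ell_{\sigma(i)}\hbar, z_i - (\ell_{\sigma(i)}-2)\hbar, \ldots$, of which there are $\mu_i$, according to \eqref{eq:fix point wts} with $\ell_i$ replaced by $\ell_{\sigma(i)}$. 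On the right, restricting $W^{\id}_{\sigma^{-1}\lambda}$ to $\sigma^{-1}\mu = (\mu_{\sigma^{-1}(1)}, \ldots, \mu_{\sigma^{-1}(w)}) \in S(\ell_1, \ldots, \ell_w)$ substitutes in column $j$ the roots $z_j - \ell_j\hbar, z_j - (\ell_j-2)\hbar, \ldots$, of which there are $\mu_{\sigma^{-1}(j)}$; after applying $z_j \mapsto z_{\sigma^{-1}(j)}$ and re-indexing by $j = \sigma(i)$, column $j$ corresponds to the variable $z_i$, to the bound $\ell_{\sigma(i)}$, and to $\mu_{\sigma^{-1}(\sigma(i))} = \mu_i$ roots, reproducing exactly the multiset on the left. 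Because $W^{\id}$ is symmetric in $y_1, \ldots, y_v$, the restriction depends only on this multiset of values, so the two sides agree.

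With the identity in hand both statements are immediate. For (1), $\lambda <_{\sigma} \mu$ means precisely $\sigma^{-1}\lambda < \sigma^{-1}\mu$ in $S(\ell_1, \ldots, \ell_w)$, whence $W^{\id}_{\sigma^{-1}\lambda}|_{\sigma^{-1}\mu} = 0$ by Lemma~\ref{lem:property}(1); a permutation of the $z$-variables preserves vanishing, so $W^{\sigma}_{\lambda}|_{\mu} = 0$. For (2), the substitution $z_j \mapsto z_{\sigma^{-1}(j)}$ permutes the degree-one generators and hence preserves total degree (recall $\deg(z_i) = 1$, $\deg(\hbar) = 0$), so $\deg(W^{\sigma}_{\lambda}|_{\mu}) = \deg(W^{\id}_{\sigma^{-1}\lambda}|_{\sigma^{-1}\mu})$ and likewise with $\mu = \lambda$. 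For $\lambda \geq_{\sigma} \mu$ we have $\sigma^{-1}\lambda \geq \sigma^{-1}\mu$, and Lemma~\ref{lem:property}(4) then gives $\deg(W^{\sigma}_{\lambda}|_{\lambda}) \geq \deg(W^{\sigma}_{\lambda}|_{\mu})$.

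The only genuinely delicate point is the bookkeeping in the displayed identity: one must confirm that $\sigma$ acts consistently on the three interlocking pieces of data — the fixed point $\mu$, the column bounds $\ell_i$, and the parameters $z_i$ — so that the Chern-root substitution on the $\sigma$-side matches that on the $\id$-side after relabeling. Once this compatibility is pinned down by the re-indexing $j = \sigma(i)$ above, the remainder is a formal consequence of Lemma~\ref{lem:property} and requires no further computation.
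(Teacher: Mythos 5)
Your proposal is correct and follows essentially the same route as the paper: both reduce to Lemma~\ref{lem:property} via the identity $W^{\sigma}_{\lambda}|_{\mu}=W^{\id}_{\sigma^{-1}\lambda}(y_1,\ldots,y_v;z_{\sigma^{-1}(1)},\ldots,z_{\sigma^{-1}(w)};\hbar)|_{\sigma^{-1}\mu}$, which the paper asserts directly from the definition \eqref{eq:Wsigma} and you verify by matching the Chern-root substitutions. Your explicit bookkeeping of the compatibility between $\mu$, the bounds $\ell_{\sigma(i)}$, and the relabeled $z$-variables is a harmless (and slightly more careful) elaboration of the same argument, including the observation that the relabeling preserves both vanishing and degree.
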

\begin{proof}
By definition we have
\begin{align*}
W^{\sigma}_{\lambda}(y_1, \ldots, y_v; z_1, \cdots, z_w; \hbar)|_{\mu}=W^{\id}_{\sigma^{-1}\lambda}(y_1, \ldots, y_v; z_{\sigma^{-1}(1)}, \cdots, z_{\sigma^{-1}(w)}; \hbar)|_{\sigma^{-1}\mu}.\end{align*}
By Lemma \ref{lem:property}, the above is zero if $ \sigma^{-1}(\lambda)<\sigma^{-1}(\mu)$, which is equivalent to $\lambda<_{\sigma}\mu$. This shows (1). (2) follows from Lemma \ref{lem:property} (4). 
\end{proof}

\section{Computation of $R$-matrices}
\label{ComputeR}
\subsection{Case $w=1$}
Fix the dimension vector $(v, \vec{1}_w)$ of $\tilde{Q}$. 
For $w=1$, there is only one fixed point $\lambda=v\in \N$, and the multiplication $\star_{\lambda}$ is the identity map. 
Therefore, 
\[
W^{\id}_{v}=1. 
\]
\subsection{Case $w=2$}
Fix $(\ell_1, \ell_2)\in \N^2$. 
In this case, there are two variables $z_1, z_2$. Let $(21)\in \mathcal{S}_2$ be the non-trivial element. 
Write the fixed points as a pair $(v_1, v_2)$, such that $v_1+v_2=v$, and $0\leq v_1\leq \ell_1$,  $0\leq v_2\leq \ell_2$.

We have
\begin{align*}
&W^{\id}_{(v_1, v_2)}=\sum_{(I_1, I_2)\vdash [1, v], |I_1|=v_1, |I_2|=v_2} \Big(\frac{y_{I_2}-y_{I_1}-2\hbar}{y_{I_1}-y_{I_2}}\Big)
(z_2-y_{I_1}+\ell_2\hbar)
(y_{I_2}-z_1+\ell_1\hbar)\in \C[\hbar][y_1, \cdots, y_v]^{\mathcal{S}_v}\otimes \C[z_1, z_2],\\
&
W^{(21)}_{(v_2, v_1)}=\sum_{(I_1, I_2)\vdash [1, v], |I_1|=v_1, |I_2|=v_2} \Big(\frac{y_{I_2}-y_{I_1}-2\hbar}{y_{I_1}-y_{I_2}}\Big)
(z_1-y_{I_1}+\ell_1\hbar)
(y_{I_2}-z_2+\ell_2\hbar)\in \C[\hbar][y_1, \cdots, y_v]^{\mathcal{S}_v}\otimes \C[z_1, z_2].
\end{align*}

\subsubsection{}
For $v=1, w=2$, there are $2$ fixed points: 
\begin{itemize}
\item
$(v_1, v_2)=(1, 0)$ which gives the partition $(I_1, I_2)=(\{1\}, \emptyset)$. 
\item
$(v_1, v_2)=(0, 1)$ which gives the partition $(I_1, I_2)=(\emptyset, \{1\})$. 
\end{itemize}
Using the formula \eqref{W:v=1}, the weight functions are then
\begin{align*}
W^{\id}_{(1, 0)}=& 
z_2-y+\ell_2\hbar, 
\,\  
W^{\id}_{(0, 1)}=
y-z_1+\ell_1\hbar. 
\end{align*}
and
\begin{align*}
W^{(21)}_{(1, 0)}=&
y-z_2+\ell_2\hbar, \,\ 
W^{(21)}_{(0, 1)}=
z_1-y+\ell_1\hbar. 
\end{align*}
At the fixed point $(1, 0)$, the weight of $V$ is $y=z_1-\ell_1\hbar$, and at the fixed point $(0, 1)$, the weight of $V$ is $y=z_2-\ell_2\hbar$. 
Therefore, 
\begin{align*}
W^{\id}_{(1, 0)}|_{(1, 0)}=& 
z_2-(z_1-\ell_1\hbar)+\ell_2\hbar =-z+(\ell_1+\ell_2)\hbar, 
\\
W^{\id}_{(1, 0)}|_{(0, 1)}=& 
z_2-(z_2-\ell_2\hbar)+\ell_2\hbar =2\ell_2\hbar
\\
W^{\id}_{(0, 1)}|_{(1, 0)}=& 
(z_1-\ell_1\hbar)-z_1+\ell_1\hbar=0,\\
W^{\id}_{(0, 1)}|_{(0, 1)}=& 
(z_2-\ell_2\hbar)-z_1+\ell_1\hbar=-z+(\ell_1-\ell_2)\hbar.
\end{align*}

Similarly, we have the restrictions
\begin{align*}
W^{(21)}_{(1, 0)}|_{(1, 0)}=& (z_1-\ell_1\hbar)-z_2+\ell_2\hbar=z-(\ell_1-\ell_2)\hbar, 
\\
W^{(21)}_{(1, 0)}|_{(0, 1)}=&(z_2-\ell_2\hbar)-z_2+\ell_2\hbar=0, 
\\
W^{(21)}_{(0, 1)}|_{(1, 0)}=&z_1-(z_1-\ell_1\hbar)+\ell_1\hbar=2 \ell_1\hbar,\\
W^{(21)}_{(0, 1)}|_{(0, 1)}=& z_1-( z_2-\ell_2\hbar)+\ell_1\hbar=z+(\ell_1+\ell_2)\hbar. 
\end{align*}
Therefore, we have
\begin{align*}
R(z)=&[W^{\id}_{\lambda}|_{\mu}]^{-1} [W^{(21)}_{\lambda}|_{\mu}] \\
=&\begin{bmatrix}
-z+(\ell_1+\ell_2)\hbar & 0\\
2\ell_2\hbar& -z+(\ell_1-\ell_2)\hbar
\end{bmatrix}^{-1}
\begin{bmatrix}
z-(\ell_1-\ell_2)\hbar & 2 \ell_1\hbar \\
0& z+(\ell_1+\ell_2)\hbar
\end{bmatrix}
\\
=&\frac{1}{(z-(\ell_1+\ell_2)\hbar)(z-(\ell_1-\ell_2)\hbar)}\begin{bmatrix}
-z+(\ell_1-\ell_2)\hbar& 0\\
-2\ell_2\hbar& -z+(\ell_1+\ell_2)\hbar
\end{bmatrix}\begin{bmatrix}
z-(\ell_1-\ell_2)\hbar & 2 \ell_1\hbar \\
0& z+(\ell_1+\ell_2)\hbar
\end{bmatrix}\\
=&\frac{1}{(z-(\ell_1+\ell_2)\hbar)(z-(\ell_1-\ell_2)\hbar)}
\begin{bmatrix}
-(z-(\ell_1-\ell_2)\hbar)^2& -2 \ell_1\hbar(z-(\ell_1-\ell_2)\hbar)\\
-2 \ell_2\hbar(z-(\ell_1-\ell_2)\hbar)&
-z^2+(\ell_1-\ell_2)^2\hbar^2
\end{bmatrix}\\
=&\frac{1}{z-(\ell_1+\ell_2)\hbar}
\begin{bmatrix}
-z+(\ell_1-\ell_2)\hbar & -2\ell_1\hbar\\
-2\ell_2\hbar & -z-(\ell_1-\ell_2)\hbar)
\end{bmatrix}
\end{align*}

In the pure spin case, when $\ell_1=\ell_2=\ell$, we have
\begin{align*}
&W^{(21)}_{(1, 0)}=\frac{-z}{z-2\ell\hbar}W^{\id}_{(1, 0)}+\frac{-2\ell\hbar}{z-2\ell\hbar} W^{\id}_{(0, 1)}
, \\
&W^{(21)}_{(0, 1)}=\frac{-2\ell\hbar}{z-2\ell\hbar}W^{\id}_{(1, 0)}+\frac{-z}{z-2\ell\hbar}W^{\id}_{(0, 1)}
\end{align*}
Therefore, when $\ell_1=\ell_2=\ell$, the transition matrix between the two bases is
\[
 \frac{1}{z-2\ell\hbar}\begin{bmatrix}
 -z& -2\ell\hbar\\
 -2\ell\hbar& -z
\end{bmatrix}=\frac{-z\Id-2\ell\hbar P}{z-2\ell\hbar}, \text{where $\Id=\begin{bmatrix}
 1& 0\\
 0& 1
\end{bmatrix}$ and $P=\begin{bmatrix}
 0& 1\\
 1& 0
\end{bmatrix}$.} 
\]
This is the well-known formula for the $R$-matrix of the Yangian $Y(\mathfrak{sl}_2)$. 
\subsubsection{}
For $v=2, w=2$, fix $(\ell_1, \ell_2)\in \N^2$, such that $\ell_1\geq 2$ and $\ell_2\geq 2$,  there are $3$ fixed points: 
\begin{itemize}
\item
$(v_1, v_2)=(2, 0)$ which gives the partition $(I_1, I_2)=(\{1, 2\}, \emptyset)$. 
\item
$(v_1, v_2)=(1, 1)$ which gives the partitions $(I_1, I_2)=(\{1\}, \{2\})$ and 
$(I_1, I_2)=(\{2\}, \{1\})$. 
\item
$(v_1, v_2)=(0, 2)$ which gives the partition $(I_1, I_2)=(\emptyset, \{1, 2\})$. 
\end{itemize}

The weight functions are then
\begin{align*}
W^{\id}_{(2, 0)}=& 
(z_2-y_{1}+\ell_2\hbar)(z_2-y_{2}+\ell_2\hbar)
\\
W^{\id}_{(1, 1)}=&
\Big(\frac{y_{2}-y_{1}-2\hbar}{y_{1}-y_{2}}\Big)
(z_2-y_{1}+\ell_2\hbar)
(y_{2}-z_1+\ell_1\hbar)
\\
&+\Big(\frac{y_{1}-y_{2}-2\hbar}{y_{2}-y_{1}}\Big)
(z_2-y_{2}+\ell_2\hbar)
(y_{1}-z_1+\ell_1\hbar)\\
W^{\id}_{(0, 2)}=&
(y_{1}-z_1+\ell_1\hbar)
(y_{2}-z_1+\ell_1\hbar)
\\
W^{(21)}_{(2, 0)}=&
(y_{1}-z_2+\ell_2\hbar)
(y_{2}-z_2+\ell_2\hbar)\\
W^{(21)}_{(1, 1)}=&
\Big(\frac{y_{2}-y_{1}-2\hbar}{y_{1}-y_{2}}\Big)
(z_1-y_{1}+\ell_1\hbar)
(y_{2}-z_2+\ell_2\hbar)
\\
&+\Big(\frac{y_{1}-y_{2}-2\hbar}{y_{2}-y_{1}}\Big)
(z_1-y_{2}+\ell_1\hbar)
(y_{1}-z_2+\ell_2\hbar)\\
W^{(21)}_{(0, 2)}=& 
(z_1-y_{1}+\ell_1\hbar)(z_1-y_{2}+\ell_1\hbar)
. 
\end{align*}

\begin{itemize}
\item
At the fixed point $(2, 0)$, the weights of $V$ are 
\[
y_1=z_1-\ell_1\hbar, y_2=z_1-(\ell_1-2)\hbar. 
\]
\item
At the fixed point $(1, 1)$, the weights of $V$ are 
\[
y_1=z_1-\ell_1\hbar, y_2=z_2-\ell_2\hbar. 
\]
\item
At the fixed point $(0, 2)$, the weights of $V$ are 
\[
y_1=z_2-\ell_2\hbar, y_2=z_2-(\ell_2-2)\hbar. 
\]
\end{itemize}

Let $z:=z_1-z_2$. 


The restrictions of the $W_\lambda^{\id}$ to the fixed points can be encoded into a matrix
\[
[W^{\id}_{\lambda}|_{\mu}]=\begin{bmatrix}
\begin{smallmatrix}
(z-(\ell_1+\ell_2)\hbar)(z-(\ell_1+\ell_2-2)\hbar & 0  & 0
\\
-(z-(\ell_1+\ell_2)\hbar)2\ell_2\hbar &-(z-(\ell_1-\ell_2-2)\hbar) (z-(\ell_1+\ell_2)\hbar) & 0 \\
2\ell_2 (2\ell_2-2)\hbar^2& 2(2\ell_2-2)\hbar (z-(\ell_1-\ell_2)\hbar)& (z-(\ell_1-\ell_2)\hbar) (z-(\ell_1-\ell_2+2)\hbar)
\end{smallmatrix}
\end{bmatrix}
\]
where the column is $\lambda$ and the row $\mu$, with the ordering $\{(2,0),(1,1),(0,2)\}$.

Similarly,


\[
[W^{(21)}_{\lambda}|_{\mu}]=\begin{bmatrix}
\begin{smallmatrix}
(z-(\ell_1-\ell_2)\hbar)(z-(\ell_1-\ell_2-2)\hbar)
&-2 (2\ell_1-2)\hbar (z-(\ell_1-\ell_2)\hbar)
&2\ell_1(2\ell_1-2)\hbar^2\\
0
&-(z-(\ell_1-\ell_2+2)\hbar)(z+(\ell_1+\ell_2)\hbar)
& 2\ell_1\hbar (z+(\ell_1+\ell_2)\hbar)\\
0
&0
&(z+(\ell_1+\ell_2)\hbar) (z+(\ell_1+\ell_2-2)\hbar)
\end{smallmatrix}
\end{bmatrix}
\]


Therefore, we have
\begin{align*}
R(z)=[W^{\id}_{\lambda}|_{\mu}]^{-1} [W^{(21))}_{\lambda}|_{\mu}]=&
\begin{bmatrix}
\frac{(z-(\ell_1-\ell_2)\hbar)(z-(\ell_1-\ell_2-2)\hbar)}{(z-(\ell_1+\ell_2)\hbar)(z-(\ell_1+\ell_2-2)\hbar)} &
-\frac{2(2\ell_1-2)\hbar(z-(\ell_1-\ell_2)\hbar)}{(z-(\ell_1+\ell_2)\hbar)(z-(\ell_1+\ell_2-2)\hbar)} 
&
\frac{2\ell_1 (2\ell_1-2)\hbar^2 }{(z-(\ell_1+\ell_2)\hbar)(z-(\ell_1+\ell_2-2)\hbar)} 
\\
&&\\
-\frac{2\ell_2\hbar(z-(\ell_1-\ell_2)\hbar)}{(z-(\ell_1+\ell_2)\hbar) (z-(\ell_1+\ell_2-2)\hbar)}
&
\frac{
z^2-2z\hbar -(2 \ell_1+\ell_1^2+2\ell_2-6\ell_1\ell_2+\ell_2^2)\hbar^2}{(z+(\ell_1+\ell_2)\hbar) (z+(\ell_1+\ell_2-2)\hbar)}
& 
-\frac{2\ell_1\hbar(z+(\ell_1-\ell_2)\hbar) }{(z-(\ell_1+\ell_2)\hbar) (z-(\ell_1+\ell_2-2)\hbar)}
\\
&&\\
\frac{2\ell_2 (2\ell_2-2)\hbar^2}{(z-(\ell_1+\ell_2)\hbar) (z-(\ell_1+\ell_2-2)\hbar) }& -\frac{2(2\ell_2-2)\hbar (z+(\ell_1-\ell_2)\hbar)}{(z-(\ell_1+\ell_2)\hbar) (z-(\ell_1+\ell_2-2)\hbar)}& \frac{(z+(\ell_1-\ell_2)\hbar) (z+(\ell_1-\ell_2+2)\hbar)}{(z-(\ell_1+\ell_2)\hbar) (z-(\ell_1+\ell_2-2)\hbar)}
\end{bmatrix}
\end{align*}

The poles of the entries of $R(z)$ are at 
$z=(\ell_1+\ell_2)\hbar$ and $z=(\ell_1+\ell_2-2)\hbar$. 
In the case when $\ell_1=\ell_2=\ell=2$,  and $\epsilon=-2\ell\hbar, \phi=2\hbar$, 
this gives the spin 1 $R$-matrix in \cite[Page 17]{BZJ20}.

\subsection{The case when $w=2$ and any $v\in \N$}

In this section, let $w=2$ and consider any $v\in \N$ and any $(\ell_1,\ell_2)\in \N^2$, such that $\ell_1\geq v$ and $\ell_2\geq v$.  

For each fixed point $(v_1, v_2)\in S(\ell_1, \ell_2)$. Consider all possible partitions $(I_1, I_2)$ of the set $[1, v]$, such that $|I_1|=v_1$ and $|I_2|=v_2$. Denote by $I^c=[1, v]\setminus I$ the complement of the set $I$ in $[1, v]$. We then have $I_2=I_1^c$. 

In this case, the formula \eqref{Wformula} gives the following 
\begin{align*}
 W^{\id}_{(v_1, v_2)}
=&\sum_{
I\subset [1, v], |I|=v_1} \Big(
\prod_{i\in I, j\in I^c}
\frac{y_{j}-y_{i}-2\hbar}{y_{i}-y_{j}}
\prod_{i\in I}
(z_2-y_{i}+\ell_2\hbar)
\prod_{j\in I^c}
(y_{j}-z_1+\ell_1\hbar\Big). 
\end{align*}

For any fixed point $(\mu_1, \mu_2)\in S(\ell_1, \ell_2)$, by Lemma \ref{lem:property}, the restriction $W^{\id}_{(v_1, v_2)}|_{(\mu_1, \mu_2)}=0$ is zero if $(v_1, v_2)<(\mu_1, \mu_2)$. 
So we consider $(\mu_1, \mu_2)\in S(\ell_1, \ell_2)$, such that $v_1\geq \mu_1$ and $v_2\leq \mu_2$.

Recall $ W^{\id}_{(v_1, v_2)}$, as an element in $\C[\hbar][y_1, \ldots, y_v]^{\mathcal{S}_v}\otimes \C[z_1, z_2]$, is symmetric in the $y$ variables. When computing the restriction $ W^{\id}_{(v_1, v_2)}|_{(\mu_1, \mu_2)}$, the ordering the substitution of $y_1, \cdots y_v$ according to \eqref{eq:fix point wts} does not matter. For convenience, we set
\begin{align*}
&y_1=z_1-\ell_1\hbar,\ y_2=z_1-(\ell_1-2)\hbar,\   
\ldots,\ y_{\mu_1}=z_1-(\ell_1-2\mu_1+2)\hbar,  \\
&
y_{\mu_1+1}=z_2-\ell_2\hbar,\ y_{\mu_1+2}=z_2-(\ell_2-2)\hbar,\ \ldots,\ 
y_{v}=z_2-(\ell_2-2\mu_2+2)\hbar. 
\end{align*}
Under such choice, the summand $W^{\id}_{(v_1, v_2), (I_1, I_2)}|_{(\mu_1, \mu_2)}$ is non-zero, we must have $(I_1, I_2)=(I_1^{st}, I_2^{st})$, where
\begin{align*}
&I_2^{st}=\left\{\mu_1+1, \mu_1+2, \ldots, \mu_1+v_2\right\}=\left[\mu_1+1, \mu_1+v_2\right], \\
&I_1^{st}=\left\{1, 2, \ldots, \mu_1, \mu_2+v_2, \ldots, v\right\}=\left[1, \mu_1\right]\sqcup \left[\mu_2+v_2, v\right]. 
\end{align*}

We now relabel the indices. For $j\in I_2^{st}$, we write the substitution as
\begin{align*}
y_j=z_2-(\ell_2-2b)\hbar, \text{with $j=\mu_1+1+b$}, \quad 0\leq b\leq v_2-1.  
\end{align*}
For $i\in I_1^{st}$, we write the substitution as
\begin{align*}
y_i=
\begin{cases}
z_1-(\ell_1-2a)\hbar, \text{with $i=1+a$}, \quad 0\leq a\leq \mu_1-1, &\text{if $i\in [1, \mu_1]$, }\\
z_2-(\ell_2-2c)\hbar, \text{with $i=\mu_1+1+c$}, \quad v_2\leq c\leq \mu_2-1,  &\text{if $i\in [\mu_2+v_2, v]$ }.  
\end{cases}
\end{align*}
We then have
\begin{align*}
W^{\id}_{(v_1, v_2)}|_{(\mu_1, \mu_2)}=&
\prod_{i\in I_1^{st}, j\in I_2^{st}}
\frac{y_{j}-y_{i}-2\hbar}{y_{i}-y_{j}}
\prod_{i\in I_1^{st}}
(z_2-y_{i}+\ell_2\hbar)
\prod_{j\in I_2^{st}}
(y_{j}-z_1+\ell_1\hbar)\Big)|_{(\mu_1, \mu_2)}\\
=&\prod_{b=0}^{v_2-1}\prod_{a=0}^{\mu_1-1}(-1)\frac{z+(\ell_2-\ell_1+2(a-b+1))\hbar}{z+(\ell_2-\ell_1+2(a-b)\hbar} 
\prod_{b=0}^{v_2-1}\prod_{c=v_2}^{\mu_2-1} (-1)\frac{c-b+1}{c-b}
\\
&\times\prod_{a=0}^{\mu_1-1} 
(-z+(\ell_1+\ell_2-2a)\hbar)\prod_{c=v_2}^{\mu_2-1}(2(\ell_2-c)\hbar )\prod_{b=0}^{v_2-1}(-z-(\ell_2-\ell_1-2b)\hbar)\\
=& (-1)^{\mu_1 v_2} \prod_{b=0}^{v_2-1}
\frac{z+(\ell_2-\ell_1+2(\mu_1-b))\hbar}{z+(\ell_2-\ell_1-2b)\hbar}(-1)^{v_2(\mu_2-v_2)}
\frac{\mu_2 !}{v_2! (\mu_2-v_2)!}\\&
\times (-1)^{\mu_1} \prod_{a=0}^{\mu_1-1}(z-(\ell_1+\ell_2-2a)\hbar) 
\frac{(\ell_2-v_2)!}{(\ell_2-\mu_2)!}
(2\hbar)^{\mu_2-v_2} (-1)^{v_2} \prod_{b=0}^{v_2-1}(z+(\ell_2-\ell_1-2b)\hbar)\\
=&(-1)^{v_1v_2+\mu_1+v_2}
{\mu_2 \choose v_2} \frac{(\ell_2-v_2)!}{(\ell_2-\mu_2)!}
(2\hbar)^{\mu_2-v_2}
\prod_{b=0}^{v_2-1}
(z+(\ell_2-\ell_1+2(\mu_1-b))\hbar) \prod_{a=0}^{\mu_1-1}(z-(\ell_1+\ell_2-2a)\hbar) 
\end{align*}
where we set $z=z_1-z_2$.

For any $\sigma\in \mathcal{S}_2$, denote by $A^{\sigma}$ the matrix whose $(i,j)$-entry is $(A^{\sigma})_{ij}=W^{\sigma}_{(v-j, j)}|_{(v-i, i)}$, for $0\leq i, j\leq v$. The matrix $A^{\id}$ can be easily inverted (see \cite[Appendix B]{BZJ20} for the inverse matrix in the pure spin case): 
\[
    (A^{\id})^{-1}_{ji} = [i\le j] (-1)^{(j+1)v+j}
{j \choose i} \frac{(\ell_2-i)!}{(\ell_2-j)!}
(2\hbar)^{j-i}
\frac{z+(\ell_2-\ell_1+2(v-2i))\hbar}
{
\prod_{b=0}^{j}
(z+(\ell_2-\ell_1+2(v-i-b))\hbar)
\prod_{a=0}^{v-i-1}(z-(\ell_1+\ell_2-2a)\hbar)
}
,\]
where the notation $[i\leq j]$ means $[\text{true}]=1$, $[\text{false}]=0$.

For $(21)\in \mathcal{S}_2$, the restriction $W^{(21)}_{(v_1, v_2)}|_{(\mu_1, \mu_2)}$ can be obtained from
$W^{\id}_{(v_1, v_2)}|_{(\mu_1, \mu_2)}$ by substituting $z_1\leftrightarrow z_2$, $v_1\leftrightarrow v_2$, $\ell_1\leftrightarrow \ell_2$, $\mu_1\leftrightarrow \mu_2$,
cf \eqref{eq:Wsigma}; therefore,
\[
(A^{(21)})_{ij}=[i\le j](-1)^{j(v+1)}
{v-i \choose v-j} \frac{(\ell_1-v+j)!}{(\ell_1-v+i)!}
(2\hbar)^{j-i}
\prod_{b=0}^{v-j-1}
(z+(\ell_2-\ell_1-2(i-b))\hbar) \prod_{a=0}^{i-1}(z+(\ell_1+\ell_2-2a)\hbar). 
\]

Finally, the $R$-matrix, which expresses the change of basis from the weight function at $\sigma=\id$ to the weight function at $\sigma=(21)$, is given by
\begin{align*}
R_{jj'}(z) &= \sum_{i=0}^v (A^{\id})^{-1}_{ji} (A^{(21)})_{ij'} \\
&= 
(-1)^{v+(j-j')(v+1)}\sum_{i=0}^{\min(j,j')} {j\choose i}{v-i\choose v-j'}\frac{(\ell_2-i)!(\ell_1-v+j')!}{(\ell_2-j)!(\ell_1-v+i)!}
(2\hbar)^{j'+j-2i}\\
&\times
\frac{
(z+(\ell_2-\ell_1+2(v-2i))\hbar)
\prod_{b=0}^{v-j'-1}
(z+(\ell_2-\ell_1-2(i-b))\hbar) \prod_{a=0}^{i-1}(z+(\ell_1+\ell_2-2a)\hbar) 
}
{
\prod_{b=0}^{j}
(z+(\ell_2-\ell_1+2(v-i-b))\hbar)
\prod_{a=0}^{v-i-1}(z-(\ell_1+\ell_2-2a)\hbar)
}
\end{align*}


\Omit{Comparing with the notations in \cite{BZJ20}:
\begin{align*}
&v=k, \\
& w=n, \\
&l=k, \\
& \epsilon=\Phi, \\
& a^*=Q, \\
& a=Q^*
\end{align*}
The fixed points in  \cite[Eqn (60)]{BZJ20} are labelled by $p_{(k-i, i)}$, and the stable basis are labelled by $\mathcal{S}_{j}=\mathcal{S}_{k-j, j}$ so that 
\begin{align*}
&v_1=k-i, \\
&v_2=i, \\
& \mu_1=k-j, \\
&\mu_2=j,
\end{align*}

The above formula now becomes
\begin{align*}
W^{\id}_{(v_1, v_2)}|_{(\mu_1, \mu_2)}=&
(-1)^{v_1v_2+\mu_1+v_2}
{\mu_2 \choose v_2} \frac{(l-v_2)!}{(l-\mu_2)!}
t_1^{\mu_2-v_2}
\prod_{b=0}^{v_2-1}
(z+(\mu_1-b)t_1) \prod_{a=0}^{\mu_1-1}(z-(l-a)t_1) \\
=&(-1)^{ki+k-j} {j \choose i} \frac{(k-i)!}{(k-j)!} t_1^{i-j}
\prod_{b=0}^{i-1}
(z+(k-j-b)t_1) \prod_{a=0}^{k-j-1}(z-(k-a)t_1)\\
=&(-1)^{ki+k-j} {j \choose i}
\prod_{r=k-j+1}^{k-i}
\big(rt_1\big)
\prod_{b=k-j-i-1}^{k-j}
(z+bt_1) \prod_{a=0}^{k-j-1}(z+(a-k)t_1)\\
=&(-1)^{ki+k-j} {j \choose i}
\prod_{r=-j+1}^{-i}
\big((r-k)t_1\big)
\prod_{b=k-j-i-1}^{k-j}
(z+bt_1) \prod_{a=0}^{k-j-1}(z+(a-k)t_1)\\
=&(-1)^{ki+k-j} {j \choose i}
\prod_{r=i}^{j-1}
\big((-r-k)t_1\big)
\prod_{b=k-j-i-1}^{k-j}
(z+bt_1) \prod_{a=0}^{k-j-1}(z+(a-k)t_1)\\
\end{align*}
Plugging in $\epsilon=-\ell\varphi=-k \varphi$, the formula \cite[(60)]{BZJ20} is the following
\begin{align*}
&{j\choose i}\prod_{r=i}^{j-1} (r\varphi+\epsilon)
\prod_{r=0}^{k-j-1} (r \varphi+\epsilon+z) 
\prod_{r=k+1-j-i}^{k-j} (r \varphi+z) 
\\
=&{j\choose i}\prod_{r=i}^{j-1} ((r-k)\varphi)
\prod_{r=0}^{k-j-1} (r-k) \varphi+z) 
\prod_{r=k+1-j-i}^{k-j} (r \varphi+z) 
\\
\end{align*}
Let $\varphi=t_1$ and the above two formulas are differ by a sign.
}




\begin{remark}
    This $R$-matrix satisfies the Yang--Baxter equation, as follows from general arguments. Assume $w=3$ and compare
    $W^{321}_{\lambda}$ and
    $W^{\id}_{\lambda}$. From the definition \eqref{eq:defR} we have the identity
    \[
    R_{\id,321} = R_{\id,213}R_{213,231}R_{231,321}=R_{\id,132}R_{132,312}R_{312,321}
    \]
    which expresses
    the braid relation $321=(12)(23)(12)=(23)(12)(23)$. In particular, the $R$-matrices in the r.h.s.\ can easily be identified with the $w=2$ $R$-matrix computed above, with appropriate substitution of parameters.

    Alternatively, the Yang--Baxter equation follows from the lattice model formulation of the next section.
\end{remark}


\tikzset{gbline/.style={line width=0.8mm,draw=black!35!green}}
\tikzset{gline/.style={line width=0.3mm,draw=black!35!green}}
\section{The lattice model}\label{sec:lm}
As in the rest of the paper, fix the data of two nonnegative integers $w$ and $v$, of a $w$-tuple of positive integers $(\ell_1,\ldots,\ell_w)$, and let $(v_1,\ldots,v_w)$ vary over the set $S(\ell_1, \ldots, \ell_w)$ (cf~\eqref{eq:fixed points}). 

Consider the following {\em lattice model}. 
Given a  $v\times w$ rectangular grid (i.e., a rectangular planar graph with external edges included), number its columns (resp.\ rows) $1,\ldots,w$ from left to right (resp.\ $1,\ldots,v$ from top to bottom). The space of states $\mathcal C_{(v_1,\ldots,v_w)}$ is the set of labellings of the edges of the grid, in such a way that
\begin{enumerate}[(i)]
    \item Labels of horizontal edges are in $\{0,1\}$;
    \item Labels of vertical edges in column $j$ are in $\{0,1,\ldots,\ell_j\}$;
    \item The labels on external edges are fixed: they are $0$ on the South and East sides, $1$ on the West side, and $v_j$ on the North side in column $j$;
    \item Around each vertex, the sum of labels of left and bottom adjacent edges is equal to the sum of labels of right and top adjacent edges.
\end{enumerate}
See Figure~\ref{fig:lm} for an example.
For pictorial purposes, we think of the labels as ``occupations numbers'' of each edge, the condition (iv) ensuring that they can be represented as continuous paths.
In particular label $0$ is drawn as an empty edge, and label $1$ as a single path crossing the edge, and we shall often omit such labels when they can be inferred from the context.

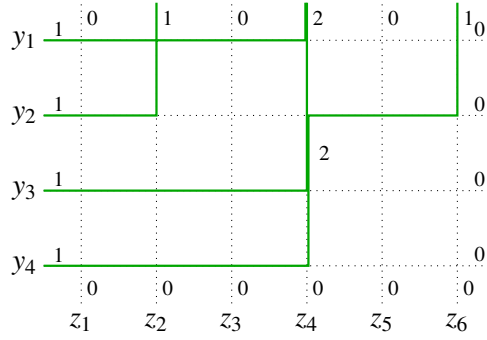
\begin{figure}
\begin{tikzpicture}
    \draw[dotted] (.5,.5) grid (6.5,4.5);
    \draw[gline] (.5,4) -- (2,4) -- (2,4.5);
    \draw[gline] (.5,3) -- (2,3) -- (2,4) -- (3.98,4) -- (3.98,4.5);
    \draw[gline] (.5,2) -- (1,2) -- (4,2) -- (4,4.5);
    \draw[gline] (.5,1) -- (4.02,1) -- node[right,pos=0.75] {$\ss 2$} (4.02,3) -- (6,3) -- (6,4.5);
    \foreach \x/\m in {1/0,2/1,3/0,4/2,5/0,6/1} {
    \node at (\x,.25) {$z_\x$};
    \node at (\x+.15,.7) {$\ss 0$};
    \node at (\x+.15,4.3) {$\ss \m$};
    }
    \foreach \y in {1,2,3,4} {
    \node at (.25,5-\y) {$y_\y$};
    \node at (.7,\y+.15) {$\ss 1$};
    \node at (6.3,\y+.15) {$\ss 0$};
    }
\end{tikzpicture}
    \caption{An example of state of a lattice model with $v=4$, $w=6$, and $(v_1,\ldots,v_w)=(0,1,0,2,0,1)$.}\label{fig:lm}
\end{figure}

We assign the variable $y_i$ to the horizontal line $i$ and $z_j$ to the vertical line $j$, as on Figure~\ref{fig:lm}. 
To each state $c$ in $\mathcal C_{(v_1,\ldots,v_w)}$ we associate its ``Boltzmann weight'' $F(c)$ of the form
\[
F(c)=\prod_{i=1}^v \prod_{j=1}^w f^{(\ell_j)}(c(i,j),y_i-z_j)\ \in \C[\hbar,y_1,\ldots,y_v,z_1,\ldots,z_w]
\]
where $c(i,j)$ symbolically denotes the configuration of labels of the edges around vertex $(i,j)$ in $c$, 
and the function $f$ is given by
\begin{equation}\label{eq:Lmat}
\begin{tikzpicture}[baseline=(lab)]
    \matrix[column sep={2.5cm,between origins},row sep=0.25cm]{
    \node[align=left] {configuration $c$\\ around a vertex $(i, j)$};
    &
    \draw[dotted] (-.5,0) -- (.5,0);
\draw[gbline] (0,-0.5) node[right] {$\ss m$} -- (0,0.5) node[right] {$\ss m$};
& 
\draw[gbline] (0,-0.5) node[right] {$\ss m$} -- (0,0.5) node[right] {$\ss m$};
\draw[gline] (-0.5,0) -- (0.5,0);
& 
    \draw[dotted] (-.5,0) -- (0,0);
\draw[gbline] (0,-0.5) node[right] {$\ss m$} -- (0,0.5) node[right] {$\ss m-1$};
\draw[gline] (0.05,-0.5) -- (0.05,0) -- (0.5,0);
& 
    \draw[dotted] (0,0) -- (.5,0);
\draw[gbline] (0,-0.5) node[right] {$\ss m$} -- (0,0.5) node[right] {$\ss m+1$};
\draw[gline] (-0.05,0.5) -- (-0.05,0) -- (-0.5,0);
\\
\node (lab) {$f^{(l)}(c(i, j),u)$};
&
\node{$u-(l-2m)\hbar$}; & \node{$u+(l-2m)\hbar$}; & \node{$2m\hbar$}; & \node{$2(l-m)\hbar$};
\\
};
\end{tikzpicture}
\end{equation}

Finally, we write
\begin{equation}\label{eq:defWt}
\tilde W_{(v_1,\ldots,v_w)} = \sum_{c\in C_{(v_1,\ldots,v_w)}} F(c). 
\end{equation}

\begin{example}
    For $v=w=2$, and $(v_1,v_2)=(1,1)$ here are the two states of the lattice model
    and their Boltzmann weights:
    \[
    \begin{tikzpicture}
    \matrix[column sep={7.5cm,between origins},row sep=0.2cm]{
    \draw[dotted] (0.5,0.5) grid (2.5,2.5);
    \draw[gline] (0.5,2) -- (1,2) -- (1,2.5);
    \draw[gline] (0.5,1) -- (2,1) -- (2,2.5);
    &
    \draw[dotted] (0.5,0.5) grid (2.5,2.5);
    \draw[gline] (0.5,2) -- (1,2) -- (1,2.5);
    \draw[gline] (0.5,1) -- (1,1) -- (1,2) -- (2,2) -- (2,2.5);
    \\
    \node at (1.5,0) {$4\ell_1\ell_2\hbar^2 (y_1-z_2-(\ell_2-2)\hbar)(y_2-z_1+\ell_1\hbar)$};
    &
    \node at (1.5,0) {$4\ell_1\ell_2\hbar^2 (y_1-z_1+(\ell_1-2)\hbar)(y_2-z_2-\ell_2\hbar)$};
    \\
        };
    \end{tikzpicture}
    \]
    Summing them up leads to
    \begin{align*}
    \tilde W_{(1,1)}=4\ell_1\ell_2\hbar^2 &
    \Big(
    2y_{1}y_{2}-\left(z_{1}+z_{2}-\ell_{1}\hbar+\ell_{2}\hbar\right)y_{1}-\left(z_{1}+z_{2}-\ell_{1}\hbar+\ell_{2}\hbar\right)y_{2}\\
    &+2\left(z_{1}z_{2}+\left(\ell_{2}-1\right)\hbar z_{1}-\left(\ell_{1}-1\right)\hbar z_{2}-\left(\ell_{1}\ell_{2}-\ell_{1}-\ell_{2}\right)\hbar^2\right)
    \Big)\\
    =4\ell_1\ell_2\hbar^2&
\Big(\frac{y_{2}-y_{1}-2\hbar}{y_{1}-y_{2}}\Big)
(z_2-y_{1}+\ell_2\hbar)
(y_{2}-z_1+\ell_1\hbar)
\\
&+4\ell_1\ell_2\hbar^2\Big(\frac{y_{1}-y_{2}-2\hbar}{y_{2}-y_{1}}\Big)
(z_2-y_{2}+\ell_2\hbar)
(y_{1}-z_1+\ell_1\hbar)\\
=4\ell_1\ell_2\hbar^2 &W^{\id}_{(1, 1)}. 
    \end{align*}
\end{example}

\begin{theorem}\label{thm:tildeW}
One has
\[
\tilde W_{(v_1,\ldots,v_w)} = \prod_{j=1}^w (-1)^{v_j(v+w-j-\sum_{s=1}^j v_s)}(2\hbar)^{v_j} \frac{\ell_j!}{(\ell_j-v_j)!}\ W^{\id}_{(v_1,\ldots,v_w)}. 
\]
\end{theorem}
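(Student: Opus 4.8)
The plan is to prove the identity as an equality of polynomials in $\C[\hbar][y_1,\ldots,y_v]^{\mathcal{S}_v}\otimes\C[z_1,\ldots,z_w]$, rather than by comparing restrictions to the fixed points $\mu$: the worked example preceding the theorem already shows that the individual lattice states and the shuffle summands $W^{\id}_{\lambda,I}$ of \eqref{summand} do \emph{not} match one configuration at a time (the states are polynomial in $y$, while each $W^{\id}_{\lambda,I}$ carries poles at $y_i=y_j$), so what is needed is a genuine reorganization of the sum \eqref{eq:defWt}. First I would record two structural facts about $\tilde W_{(v_1,\ldots,v_w)}$: it is visibly a polynomial, since each Boltzmann weight $F(c)$ is, and it is symmetric in the variables $y_i$, which follows from the integrability of the model, i.e. the intertwining of adjacent horizontal lines by the $6$-vertex $R$-matrix. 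This is exactly the input packaged by Lemma~\ref{lem:lm}.

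The heart of the argument is then the $F$-basis of Lemma~\ref{lem:lm}. I would use it to perform a triangular change of basis on the horizontal (quantum) space so that the unordered lattice sum over $\mathcal C_{(v_1,\ldots,v_w)}$ is re-expressed, after the diagrammatic $R$-matrix manipulation, as a sum indexed by the set partitions $I=(I_1,\ldots,I_w)\vdash[1,v]$ with $|I_j|=v_j$, in which each summand is fully factorized and carries precisely the poles of the shuffle summand. The point is that the ``jogs'' of the lattice paths (the type-$3$ vertices of \eqref{eq:Lmat} that let a path leave one column and re-enter another, making column occupations non-monotone) are resolved in the $F$-basis, so that the $I$-th summand becomes a product of: the vertical weights attached to an excitation $y_i$ for each column $a$, the turn-up weights accumulated inside each column, and the $R$-matrix exchange ratios recording the relative order of excitations sitting in distinct columns. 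I would then match these against \eqref{summand}: the vertical weights produce the linear factors $(y_i-z_a+\ell_a\hbar)$ for $a<s$ (a type-$2$ weight at occupation $0$) and $(z_a-y_i+\ell_a\hbar)$ for $s<a$ (a type-$1$ weight at occupation $0$, up to sign), for $i\in I_s$, while the exchange ratios produce $\frac{y_{I_s}-y_{I_t}-2\hbar}{y_{I_t}-y_{I_s}}$ for $t<s$. This is exactly $W^{\id}_{\lambda,I}$, and summing over $I$ and invoking \eqref{Wformula} reproduces $W^{\id}_{(v_1,\ldots,v_w)}$.

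It remains to extract the scalar prefactor, which I would do column by column. Inside column $j$ the vertical occupation rises from $0$ on the South edge to $v_j$ on the North edge, so in the sorted $F$-basis form the $v_j$ turn-up vertices occur at occupations $0,1,\ldots,v_j-1$ and contribute $\prod_{m=0}^{v_j-1}2(\ell_j-m)\hbar=(2\hbar)^{v_j}\frac{\ell_j!}{(\ell_j-v_j)!}$, the analytic part of the prefactor. The sign $\prod_{j=1}^w(-1)^{v_j(v+w-j-\sum_{s=1}^j v_s)}$ I would obtain as a product of two bookkeeping contributions: first, re-orienting the vertical factor of each of the $v_j$ excitations of column $j$ from the lattice normalization to the shuffle normalization $(z_a-y_i+\ell_a\hbar)$ across the $w-j$ columns to its right gives $(-1)^{\,v_j(w-j)}$; second, orienting the denominators $\prod(y_i-y_j)$ of the exchange ratios, equivalently commuting the $v_j$ excitations of column $j$ past the $\sum_{s>j}v_s=v-\sum_{s\le j}v_s$ excitations lying to its right, gives $(-1)^{\,v_j\sum_{s>j}v_s}$. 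Adding the exponents yields $v_j\big((w-j)+(v-\sum_{s\le j}v_s)\big)=v_j\big(v+w-j-\sum_{s\le j}v_s\big)$, as required; the case $(v_1,v_2)=(1,1)$ computed before the theorem, where both signs are trivial and the prefactor is $4\ell_1\ell_2\hbar^2$, serves as a consistency check.

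The main obstacle will be the $F$-basis factorization of Lemma~\ref{lem:lm} itself: because the lattice sum contains configurations with arbitrarily intricate jogs and non-monotone column occupations, the clean per-column extraction of $(2\hbar)^{v_j}\ell_j!/(\ell_j-v_j)!$ is simply false configuration by configuration and only emerges after the $6$-vertex $R$-matrix / diagrammatic argument that defines the $F$-basis. Thus essentially all of the difficulty is concentrated in Lemma~\ref{lem:lm}, and the theorem above reduces to translating its factorized output into the shuffle factors \eqref{summand} together with the sign and scalar bookkeeping described here.
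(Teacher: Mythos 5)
Your proposal is correct and takes essentially the same route as the paper's proof: the paper rewrites $\tilde W_{(v_1,\ldots,v_w)}$ as the transfer-matrix entry \eqref{eq:W1}, inserts resolutions of the identity in the $F$-basis of Lemma~\ref{lem:lm} between consecutive column operators, and observes that the triangularity of Lemma~\ref{lem:lm}(iii) collapses the sum over intermediate states to a sum over disjoint subsets $I_1,\ldots,I_w$ with $|I_j|=v_j$, each summand matching \eqref{summand} up to the per-column factor $(2\hbar)^{v_j}\ell_j!/(\ell_j-v_j)!$ and sign. Your sign bookkeeping $v_j\bigl((w-j)+\sum_{s>j}v_s\bigr)$ agrees exactly with the paper's per-pair signs $(-1)^{v_k+v_kv_j}$ for $k<j$, and your observation that all the real difficulty is concentrated in Lemma~\ref{lem:lm} matches the paper, which defers that lemma to Appendix~\ref{app:lmlemma}.
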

\begin{proof}
    Consider the vector space $V=(K^2)^{\otimes v}$, where $K=\C(\hbar,y_1,\ldots,y_v,z_1,\ldots,z_w)$, endowed with its standard basis $\{\ket{a_1,\ldots,a_v}\mid a_i\in \{0,1\}, i=1, \dots, v\}$ and dual basis $\{\bra{a_1,\ldots,a_v}\mid a_i\in \{0,1\}, i=1, \dots, v\}$ of $V^{\vee}$.
    We interpret each column of a state of our lattice model as a linear operator acting on $V$, in such a way that concatenating columns corresponds to the natural linear operator product; explicitly, 
    let $T^{(l)}_{m,n}(x)\in \End(V)=V^{\vee}\otimes V$ be the linear operator on $V$ pictorially represented as
    \[
    T^{(l)}_{m,n}(x)=\begin{tikzpicture}[baseline=(current  bounding  box.center)]
        \draw[very thick] (1,.5) node[below] {$x$} -- (1,3.5);
        \draw (.5,1) node[left] {$y_v$} -- (1.5,1);
        \draw (.5,2) node[left] {$\vdots$} -- (1.5,2);
        \draw (.5,3) node[left] {$y_1$} -- (1.5,3);
        \node at (1.15,.7) {$\ss n$};
        \node at (1.15,3.3) {$\ss m$};
        \node at (1.2,1.5) {$\ss\le l$};
        \node at (1.2,2) {$\ss\vdots$};
        \node at (1.2,2.5) {$\ss\le l$};
    \end{tikzpicture}
    \]
    that is, such that the coefficient $\bra{b_1,\ldots,b_v} T^{(l)}_{m,n}(x) \ket{a_1,\ldots,a_v}$ is zero if there are no configuration of labels of a $v\times 1$ grid, where the external labels are fixed to be $m,a_1,\ldots,a_v,n,b_v,\ldots,b_1$ clockwise, and the internal edges are in $\{0,\ldots,l\}$, satisfying the conditions above, and the product $\prod_{i=1}^v f^{(l)}(c(i),y_i-x)$ if there exists a (necessarily unique) configuration $c$ that does. Here we omit the label of the column as the vertex only depends on $i\in [1, v]$. 

We then have
    \begin{equation}\label{eq:W1}
    \tilde W_{(v_1,\ldots,v_w)} = \bra{1,\ldots,1} T^{(\ell_1)}_{v_1,0}(z_1) \ldots T^{(\ell_w)}_{v_w,0}(z_w)\ket{0,\ldots,0}. 
    \end{equation}
    
    \begin{lemma}\label{lem:lm}
        There exists a basis $\{\widetilde{\ket{a_1,\ldots,a_v}}\mid a_{i}\in \{0, 1\}, i=1, \ldots, v\}$ 
        of $V$ (with dual basis $\{\widetilde{\bra{a_1,\ldots,a_v}}\mid a_{i}\in \{0, 1\}, i=1, \ldots, v\}$
        ) such that
        \begin{enumerate}[(i)]
            \item $\widetilde{\ket{0,\ldots,0}}=\ket{0,\ldots,0}$.
            \item $\widetilde{\bra{1,\ldots,1}}=\bra{1,\ldots,1}$.
        \item For all $0\le m\le l$ and any $x\in K$, the matrix of $T_{m,0}^{(l)}(x)$ is given by
        \begin{align*}
      &\widetilde{\bra{b_1,\ldots,b_v}} T^{(l)}_{m,0}(x) \widetilde{\ket{a_1,\ldots,a_v}}\\
        =&\delta_{\sum_{i=1}^v (b_i-a_i),m}
        (2\hbar)^m \frac{l!}{(l-m)!}
      \prod_{\substack{i,j=1\\b_i=1,\,a_i=0\\b_j=a_j=0}}^v \frac{y_i-y_j-2\hbar}{y_i-y_j}
         \ 
        \prod_{i=1}^v
        \begin{cases}
            y_i-x-l\hbar & \text{for $b_i=a_i=0$}
            \\
            y_i-x+l\hbar & \text{for $b_i=a_i=1$}
            \\
            1
            & \text{for $b_i=1,\ a_i=0$}
            \\
            0 & \text{for $b_i=0,\ a_i=1$}
        \end{cases}
        \end{align*}
    \end{enumerate}
    \end{lemma}
    The proof of this lemma requires more lattice model technology than the rest of this paper, and we defer it to Appendix~\ref{app:lmlemma}.


 We now use Lemma \ref{lem:lm} to prove Theorem \ref{thm:tildeW}. 
    For each $j=1, \ldots, w-1$, write \[\Id=\sum_{\{(a^{(j)}_1,\ldots,a^{(j)}_v)\in \{0, 1\}^{v} )\} } \widetilde{\ket{a^{(j)}_1,\ldots,a^{(j)}_v}}\widetilde{\bra{a^{(j)}_1,\ldots,a^{(j)}_v}}\in \End(V). 
    \]
    
    Then, by \eqref{eq:W1}, we have
    \begin{multline*}
    \tilde W_{(v_1,\ldots,v_w)} = \sum_{\left\{a^{(1)}_1,\ldots,a^{(w-1)}_v\mid \begin{smallmatrix}a^{(j)}_i\in \{0, 1\},\\ i\in [1, v], j\in [1, w-1]\}
    \end{smallmatrix}
    \right\}} 
    \bra{1,\ldots,1}
    T^{(\ell_1)}_{v_1,0}(z_1)\widetilde{\ket{a^{(1)}_1,\ldots,a^{(1)}_v}}\widetilde{\bra{a^{(1)}_1,\ldots,a^{(1)}_v}} T^{(\ell_2)}_{v_2,0}(z_2) \ldots
    \\\ldots T^{(\ell_{w-1})}_{v_{w-1},0}(z_{w-1})\widetilde{\ket{a^{(w-1)}_1,\ldots,a^{(w-1)}_v}}\widetilde{\bra{a^{(w-1)}_1,\ldots,a^{(w-1)}_v}} T^{(\ell_w)}_{v_w,0}(z_w)
    \ket{0,\ldots,0}. 
    \end{multline*}    
Use the convention that $a_i^{(0)}=1$ and $a_i^{(w)}=0$ for all $i=1, \ldots, v$. 
    According to Lemma~\ref{lem:lm}, for fixed $i\in [1, v]$, the value of $a_i^{(j)}$, $j\in [0, w]$, can only decrease, thus jumping from $1$ to $0$ at exactly one value of $j$. Furthermore, for fixed $j$, because of the factor $\delta_{\sum_{i=1}^v (b_i-a_i),v_j}$, exactly $v_j$ jumps occur.
    One can therefore replace the summation over the intermediate basis elements with a summation over subsets $I_j=\big\{i\,:\,a_i^{(j-1)}=1,\ a_i^{(j)}=0\big\}$, $j=1,\ldots, w$:
    \begin{align*}
    &\tilde W_{(v_1,\ldots,v_w)} \\
    =& \sum_{\substack{I_1,\ldots,I_w\subseteq\{1,\ldots,v\}\\ |I_j|=v_j\\ I_1,\ldots,I_w\text{ disjoint}}} \prod_{j=1}^w (2\hbar)^{v_j} \frac{\ell_j!}{(\ell_j-v_j)!} \prod_{1\leq k<j\leq w} (y_{I_k}-z_j-\ell_j\hbar) \frac{y_{I_k}-y_{I_j}+2\hbar}{y_{I_k}-y_{I_j}}\prod_{w\geq k>j\geq 1}(y_{I_k}-z_j+\ell_j\hbar)\\
  =&\left(\prod_{j=1}^w (2\hbar)^{v_j} \frac{\ell_j!}{(\ell_j-v_j)!}\right)
  \sum_{\substack{I_1,\ldots,I_w\subseteq\{1,\ldots,v\}\\ |I_j|=v_j\\ I_1,\ldots,I_w\text{ disjoint}}}  \prod_{1\leq k<j\leq w} (-1)^{v_k+v_kv_j}(z_j-y_{I_k}+\ell_j\hbar)\frac{y_{I_j}-y_{I_k}-2\hbar}{y_{I_k}-y_{I_j}}
  \prod_{w\geq k>j\geq 1}(y_{I_k}-z_j+\ell_j\hbar)\\
   =&\left(\prod_{j=1}^w (-1)^{v_j(v+w-j-\sum_{s=1}^j v_s)}(2\hbar)^{v_j} \frac{\ell_j!}{(\ell_j-v_j)!}\right) W^{\id}_{(v_1, \dots, v_w)}. 
    \end{align*}
\end{proof}



\section{Relation to the stable envelope}
Let $p_\lambda\in S(\ell_1, \ldots, \ell_w)$ be a torus fixed point. 
We have the embeddings
\[
p_{\lambda} \xrightarrow{i_{\lambda}} \tilde{X}(v, \vec{1}_w)^{st} \xrightarrow{j}  \tilde{X}(v, \vec{1}_w),  
\]
where $i_{\lambda}$ is a closed embedding and $j$ is an open embedding. Pushforward along $i_{\lambda}$ and pullback along $j$ gives the following commutative diagram. 
\[
\xymatrix{
H^*_{T_w\times \C^*}
(p_{\lambda}) \ar[r]^(0.3){i_{\lambda*}} \ar@{=}[d]& 
H^*_{c, T_w\times G_v \times \C^*}
(\tilde{X}(v, \vec{1}_w)^{st}, \varphi_\bold{w})^{\vee} \ar[d]^{s_*}& H^*_{c, T_w\times G_v \times \C^*}
(\tilde{X}(v, \vec{1}_w), \varphi_\bold{w})^{\vee} \ar[l]_{j^*}\ar[d]^{s_*}\\
H^*_{T_w\times \C^*}
(p_{\lambda}) \ar[r]^(0.3){i_{\lambda*}} & 
H^*_{c, T_w\times G_v \times \C^*}
(\tilde{X}(v, \vec{1}_w)^{st}, \C)^{\vee} & H^*_{c, T_w\times G_v \times \C^*}
(\tilde{X}(v, \vec{1}_w), \C)^{\vee} \ar[l]_{j^*}
}
\]
By the dimension reduction in Proposition \ref{prop:dimred}, we have the isomorphism 
\[
H^*_{c,\C^*\times T_w}(\tilde{X}(v, \vec{1}_w)^{st}/G_V, \varphi_{\bf{w}})^{\vee}
\cong H^{*}_{c,\C^*\times T_w}(\mu_{v, w}^{-1}(0)^{st}\times_{G_v} \C^{wv}, \C)^{\vee}
\cong H^{*-wv}_{c,\C^*\times T_w}(\mathfrak{M}^+(v, \vec{1}_w), \C)^{\vee}.  
\]
The vertical maps $s_*$ are pushforward along the closed embeddings $s: \mu_{v, w}^{-1}(0)^{st}\times_{G_v} \C^{wv}\inj  \tilde{X}(v, \vec{1}_w)^{st}$ and $\mu_{v, w}^{-1}(0)\times_{G_v} \C^{wv}\inj  \tilde{X}(v, \vec{1}_w)$. 

We have the following tangent complexes (see \cite[Section 3.1]{CZ23} in detail for the derived structure of the critical loci and the derived tangent complexes)
\begin{align*}
&[T(\mathfrak{M}^+(v, \vec{1}_w))]
=[-\End(\calV)+(\calV^\vee\otimes \calV\otimes \C_{\epsilon}+ \sum_i\calW_i^{\vee}\otimes \calV\otimes \C_{a^*_i})-\sum_i\calW_i^{\vee}\otimes V\otimes \C_{\epsilon^{\ell_i} a^*_i}], \\
&[T\big((\mu_{v, w}^{-1}(0)^{st}\times_{G_v} \C^{wv})/G_v\big)]
=[-\End(\calV)+(\calV^\vee\otimes \calV\otimes \C_{\epsilon}+ \sum_i\calW_i^{\vee}\otimes \calV\otimes \C_{a^*_i}+\sum_i\calV^{\vee}\otimes \calW_i\otimes \C_{a_i})\\&\phantom{1234567890}-\sum_i\calW_i^{\vee}\otimes V\otimes \C_{\epsilon^{\ell_i} a^*_i}], \\
&[T\tilde{X}(v, \vec{1}_w)]
=[-\End(\calV)+(\calV^\vee\otimes \calV\otimes \C_{\epsilon}+ \sum_i\calW_i^{\vee}\otimes \calV\otimes \C_{a^*_i}+\sum_i\calV^{\vee}\otimes \calW_i\otimes \C_{a_i})]. 
\end{align*}
Therefore, $s^*s_*$ is given by the multiplication of the Euler class of the bundle $\sum_i\calW_i^{\vee}\otimes V\otimes \C_{\epsilon^{\ell_i} a^*_i}$: 
\begin{equation}\label{eq:ss}
s^*s_*=\prod_{i=1}^{w} \prod_{j=1}^{v}(-z_i+y_j-\ell_i\hbar)=(-1)^{vw}\prod_{i=1}^{w} \prod_{j=1}^{v}(z_i-y_j+\ell_i\hbar). 
\end{equation}
It is also the same as the Euler class of the bundle $\Hom(\calW, \calV)$ coming from the arrows $a_i, i\in [1, w]$, up to the sign $(-1)^{vw}$. 

Similar as the discussion in \cite[\S 2.5]{RTV15}, for the torus $T_w$, let $\mathfrak{t}_{w}:=\text{Lie} (T_w)$ be the Lie algebra of $T_w$. We then have the real vector space embedding
\[
(\mathfrak{t}_{w})_{\mathbb{R}}=\text{Cochar}(T_w)\otimes_{\Z}\mathbb{R}\subset \mathfrak{t}_{w}. 
\]
Let $z_1, \ldots, z_w$ be the standard basis of the dual space $\mathfrak{t}_{w}^{\vee}$. 
We have the root hyperplanes $z_i-z_j=0$ of $(\mathfrak{t}_{w})_{\mathbb{R}}$, for any $1\leq i\neq j \leq w$. This gives the open chambers $\mathfrak{C}_{\sigma}=\{x\in (\mathfrak{t}_{w})_{\mathbb{R}}\mid z_{\sigma(1)}(x)> \cdots > z_{\sigma(w)}(x)\}$, such that
\[
(\mathfrak{t}_{w})_{\mathbb{R}}\setminus \bigcup_{1\leq i\neq j \leq w}\{z_i-z_j=0\}=\bigsqcup_{\sigma\in \mathcal{S}_w} \mathfrak{C}_{\sigma}. 
\]

As conjectured in \cite{BZJ20}, the stable envelope $\{{S}_{\lambda}^{\sigma}\mid \lambda\in S(\ell_1, \ldots, \ell_w)\}$, where $\sigma\in \calS_{w}$ is the labeling of the chamber $\mathfrak{C}_{\sigma}$, should be unique classes in $H^*_{c, T_w\times G_v \times \C^*}(\tilde{X}(v, \vec{1}_w)^{st}, \varphi_\bold{w})^{\vee}\cong H^{*-wv}_{c,\C^*\times T_w}(\mathfrak{M}^+(v, \vec{1}_w), \C)^{\vee}$ satisfying the characterisation in \cite[Conjecture 2]{BZJ20}. 

The weight functions $\{W_{\lambda}^{\sigma}\mid \lambda\in S(\ell_1, \ldots, \ell_w)\}$ are elements in $H^*_{c, T_w\times G_v \times \C^*}
(\tilde{X}(v, \vec{1}_w), \C)^{\vee}$. Pushforward along $s$ of the stable envelope, it is expected that  
\begin{equation}\label{eq:SW}
s_*({S}_{\lambda}^{\sigma})=j^*(W_{\lambda}^{\sigma}). 
\end{equation}
Formally, by \eqref{eq:ss}, we have
\[
(-1)^{vw}\prod_{i=1}^{w} \prod_{j=1}^{v}(z_i-y_j+\ell_i\hbar)({S}_{\lambda}^{\sigma})=
s^*s_*({S}_{\lambda}^{\sigma})=s^*j^*(W_{\lambda}^{\sigma}). 
\] Therefore, after inverting the Euler class $\prod_{i}\prod_{j}(z_i-y_j+\ell_i\hbar)$ and by \eqref{eq:SW}, it suggests that the class
${S}_{\lambda}^{\sigma}$ satisfies the equality 
\begin{equation}\label{eq:Stable}
{S}_{\lambda}^{\sigma}=\frac{s^*j^*(W_{\lambda}^{\sigma})}{(-1)^{vw}\prod_{i=1}^{w} \prod_{j=1}^{v}(z_i-y_j+\ell_i\hbar)}  
\end{equation}
in the localised ring $H^*_{c, T_w\times G_v \times \C^*}(\tilde{X}(v, \vec{1}_w)^{st}, \C)^{\vee}\otimes_{H_{T_w\times G_v\times \C^*}(\pt)}H_{T_w\times G_v\times \C^*}(\pt)[\prod_{i=1}^{w} \prod_{j=1}^{v}\frac{1}{z_i-y_j+\ell_i\hbar}]$. We remark that the formula \eqref{eq:Stable} is not enough for the support condition in \cite[Conjecture 2]{BZJ20}. We will investigate this in the future \cite{CYZJ}.


\Omit{
Denote the image of $W_{\lambda}^{\sigma}$ under $j^*$ by $\bold W_{\lambda}^{\sigma}$. 
Let $\mathbf{R}:=H_{\C^*\times T_w}(\pt)$ be the base ring, and let $\mathbf{K}$ be the fractional field of $\mathbf{R}$. 
Passing to the torus fixed points, by equivariant localization, we have an isomorphism 
\[ 
H^*_{c,\C^*\times T_w}(\mathfrak{M}^+(v, \vec{1}_w))^\vee\otimes_{\mathbf{R}}\mathbf{K}\cong
H^*_{c,\C^*\times T_w}(\mathfrak{M}^+(v, \vec{1}_w)^{\C^*\times T_w})^\vee\otimes_{\mathbf{R}}\mathbf{K}\cong \bigoplus_{
 \lambda\in S(\ell_1, \ldots, \ell_w) } \mathbf{K}. 
\]
The set $\{\bold W_{\lambda}^{\sigma}\mid \lambda\in S(\ell_1, \ldots, \ell_w) \}$ forms a basis of $H^*_{c,\C^*\times T_w}(\mathfrak{M}^+(v, \vec{1}_w))^\vee\otimes_{\mathbf{R}}\mathbf{K}$ over $\mathbf{K}$. 

Let $A=T_w$ be the whole torus. Then, the fixed points of $\mathfrak{M}^+(v, \vec{1}_w)$ under $A$ is 
\[
\mathfrak{M}^+(v, \vec{1}_w)^{A}\cong 
\coprod_{v_1+v_2\cdots+v_w=v}\mathfrak{M}^+(v_1, 1)\otimes \mathfrak{M}^+(v_2, 1)\otimes  \cdots \otimes \mathfrak{M}^+(v_w, 1)\cong \coprod_{v_1+v_2\cdots+v_w=v}\pt. 
\]
Taking the $\C^*\times T_w$-equivariant cohomology, we have the following map
\begin{align}
 H^*_{c,\C^*\times T_w}(\mathfrak{M}^+(v, \vec{1}_w)^A)^\vee=\bigoplus_{v_1+v_2\cdots+v_w=v} \SH_{v_i, 1}&\to H^*_{c,\C^*\times T_w}(\mathfrak{M}^+(v, \vec{1}_w))^\vee,\\
1_{(v_1, \cdots, v_w)}&\mapsto \textbf{W}_{(v_1, \cdots, v_w)}^{\sigma}.\notag 
\end{align}
When $\sigma=\id$, the above map corresponding to $\id$ is the shuffle multiplication \eqref{star1}. 
Passing to the localization $\otimes_{\mathbf{R}}\mathbf{K}$, we have an isomorphism
\begin{equation}\label{eq:fix to stab}
\bigoplus_{\sum_iv_i=v} \SH_{v_i, 1} \otimes_{\mathbf{R}}\mathbf{K}\xrightarrow{\cong} H^*_{c,\C^*\times T_w}(\mathfrak{M}^+(v, \vec{1}_w)\otimes_{\mathbf{R}}\mathbf{K}\cong \bigoplus_{v_1+v_2\cdots+v_w=v} \SH_{v_i, 1} \otimes_{\mathbf{R}}\mathbf{K}, 
\end{equation}
where the fixed point basis $\{1_{\lambda}\mid \lambda\in S(\ell_1, \ldots, \ell_w) \}$ maps to the basis $\{\textbf{W}_{\lambda}^{\sigma} \mid \lambda\in S(\ell_1, \ldots, \ell_w) \} $. 
\pzjrem{technically, not the stable basis cause of normalisation issues.}

The action of $Y_{\hbar}(\mathfrak{sl}_2)$ on the cohomology $H^*_{c,\C^*\times T_w}(\mathfrak{M}^+(v, \vec{1}_w))\otimes_{R}K$ in Theorem \ref{thm:action} uses the fixed point basis. One can conjugate the action in Theorem \ref{thm:action} by the transition matrix \eqref{eq:fix to stab} to obtain the action $Y_{\hbar}(\mathfrak{sl}_2)$, which is compatible with the action coming from the $R$-matrix. \pzjrem{RTT presentation}

Let $w_1+w_2=w$. 
Let $\vec{w_1}=(1, \ldots, 1, 0, \ldots, 0)\in \N^w$, where there are $w_1$ number of $1$s. Let $\vec{w_2}=(0, \ldots, 0, 1, \ldots, 1)\in \N^w$, where there are $w_2$ number of $1$s. We then have $\vec{w_1}+\vec{w_2}=\vec{1}_w$. 
The shuffle multiplication (see \eqref{eq:shuffle}) $\SH_{*, \vec{w_1}}\otimes \SH_{*, \vec{w_2}}\to \SH_{*, \vec{1}_w}$ induces an isomorphism \pzjrem{more duals missing}
\[
H^*_{c,\C^*\times T_{w_1}}(\mathfrak{M}^+(\vec{w_1}))\otimes_R H^*_{c,\C^*\times T_{{n}_2}}(\mathfrak{M}^+(\vec{w_2}))\otimes_{\mathbf{R}}\mathbf{K}
\xrightarrow{\cong} H^*_{c,\C^*\times T_n}(\mathfrak{M}^+(\vec{1}_w))\otimes_{\mathbf{R}}\mathbf{K}
\]
Denote by $\phi$ its inverse. We have the following commutative diagram
\[
\xymatrix{
{\begin{matrix}
Y_{\hbar}(\mathfrak{sl}_2)\\
\bigotimes \\
H^*_{c,\C^*\times T_w}(\mathfrak{M}^+(\vec{1}_w)))\otimes_{\mathbf{R}}\mathbf{K}
\end{matrix}}
\ar[r]^(0.3){\Delta\otimes \phi}\ar[d]&
{
\begin{matrix} 
Y_{\hbar}(\mathfrak{sl}_2)\hat{\otimes}
Y_{\hbar}(\mathfrak{sl}_2)\\
\bigotimes \\
\Big(H^*_{c,\C^*\times T_{w_1}}(\mathfrak{M}^+(\vec{w_1}))\otimes_{\mathbf{R}}\mathbf{K}\Big)\otimes \Big(H^*_{c,\C^*\times T_{{n}_2}}(\mathfrak{M}^+(\vec{w_2}))\otimes_{\mathbf{R}}\mathbf{K}\Big) 
\end{matrix}}\ar[d]
\\
H^*_{c,\C^*\times T_w}(\mathfrak{M}^+(\vec{1}_w)))\otimes_{\mathbf{R}}\mathbf{K}\ar[r]^(0.3){\phi} & \Big(H^*_{c,\C^*\times T_{w_1}}(\mathfrak{M}^+(\vec{w_1}))\otimes_{\mathbf{R}}\mathbf{K}\Big)\otimes \Big(H^*_{c,\C^*\times T_{{n}_2}}(\mathfrak{M}^+(\vec{w_2}))\otimes_{\mathbf{R}}\mathbf{K}\Big)
}
\]
Here $\Delta: Y_{\hbar}(\mathfrak{sl}_2)\to Y_{\hbar}(\mathfrak{sl}_2)\hat{\otimes}
Y_{\hbar}(\mathfrak{sl}_2)$ is the coproduct of $Y_{\hbar}(\mathfrak{sl}_2)$ and the two vertical maps are the Yangian actions obtained from the ones in Theorem \ref{thm:action} conjugated by the transition matrix \eqref{eq:fix to stab}. }

\appendix
\section{Proof of Lemma~\ref{lem:lm}}\label{app:lmlemma}

In this appendix we use more of the diagrammatic calculus that was introduced in
\S\ref{sec:lm}. In what follows, when we draw a diagram, we view it as a linear operator
where the source is the tensor product of vector spaces associated to the incoming
lines (with out conventions, lines coming from the right; or from the top for vertical lines)
and the target is associated to the outgoing lines (going to the left; or going down for vertical lines). In particular, the labels of internal edges are implicitly summed over,
corresponding to natural operator product. Labelling the external edges corresponds to picking a particular entry of those linear operators.
Pictorially, the following two diagrams mean some linear operator from $V_1\otimes V_2$ to $V_3\otimes V_4$. 
\[
\begin{tikzpicture}
    \matrix[column sep={2.5cm,between origins},row sep=0.25cm,cells={scale=.7}]{
                \draw[->] (0,1) -- (0,0.5);
        \draw[->] (0,0.5) -- (0, -.7);
        \draw (0,-.7) -- (0,-1);
        \draw[->] (1,0) -- (0.5,0);
   \draw[->] (0.5,0) -- (-.7,0);
   \draw (-.7,0) -- (-1,0);
        \node at (0, 1.4) {$V_1$};
         \node at (1.4, 0) {$V_2$};
            \node at (-1.4, 0) {$V_3$};
         \node at (0, -1.4) {$V_4$};
         &\,\ & 
                \draw[->] (1,1) -- (0.5,0.5);
        \draw[->] (0.5,0.5) -- (-.7,-.7);
        \draw (-.7,-.7) -- (-1,-1);
        \draw[<-] (-.7,.7) -- (0.5,-0.5);
        \draw (-.7,.7) -- (-1,1);
        \draw[->] (1,-1) -- (0.5,-0.5);
        \node at (1.5,.7) {$V_1$};
         \node at (1.5,-.7) {$V_2$};
            \node at (-1.5,.7) {$V_3$};
         \node at (-1.5,-.7) {$V_4$};
   \\ };
\end{tikzpicture}\]

\subsection{The six-vertex $R$-matrix}
Let $K$ be the base field. Given $u\in K$, define the six-vertex $R$-matrix $\check R(u)$ to be the following linear operator on $(K^2)^{\otimes 2}$. Denote the matrix entries by $\bra{b_1b_2} \check R(u)\ket{a_1a_2}$, where $b_1, b_2, a_1, a_2\in \{0, 1\}$. The following 6 entries of $\check R(u)$ are best described diagrammatically as
\begin{equation}\label{eq:6v}
\begin{tikzpicture}
    \matrix[column sep={4.5cm,between origins},row sep=0.25cm,cells={scale=.7}]{
        \draw[gline] (-1,-1) -- (1,1) (1,-1) -- (-1,1);
        &
        \draw[dotted] (-1,-1) -- (1,1) (1,-1) -- (-1,1);
        &
        \draw[dotted] (-1,-1) -- (1,1); \draw[gline] (1,-1) -- (-1,1);
        \\
           \node{$\bra{11} \check R(u)\ket{11}=1$};
        &
        \node{$\bra{00} \check R(u)\ket{00}=1$};
        &
        \node{$\bra{10} \check R(u)\ket{01}=\beta(u)$};\\
        \draw[gline] (-1,-1) -- (1,1); \draw[dotted] (1,-1) -- (-1,1);
        &
        \draw[dotted] (-1,-1) -- (0,0) -- (1,-1); \draw[gline] (1,1) -- (0,0) -- (-1,1);
        &
        \draw[gline] (-1,-1) -- (0,0) -- (1,-1); \draw[dotted] (1,1) -- (0,0) -- (-1,1);
        \\   
        \node{$\bra{01} \check R(u)\ket{10}=\beta(u)$};
        &
        \node{$\bra{10} \check R(u)\ket{10}=\gamma(u)$};
        &
        \node{$\bra{01} \check R(u)\ket{01}=\gamma(u)$};
        \\
    };
\end{tikzpicture}
\end{equation}
where
\begin{align*}
    \beta(u)&=\frac{u}{u-2\hbar},
    \\
    \gamma(u)&=\frac{2\hbar}{u-2\hbar}.
\end{align*}
All other entries of $\check R(u)$ are zero. 

When we want to talk about the linear operator itself without specifying a particular matrix entry, we'll write
\[
\check R(u_1-u_2)=
\begin{tikzpicture}[scale=.7,baseline=(current  bounding  box.center)]
        \draw (-1,-1) node[left] {$u_2$} -- (1,1) (1,-1) -- (-1,1) node[left] {$u_1$};
\end{tikzpicture}
\]
where a black line means a line whose label is unspecified. Furthermore, the
parameter $u$ is always chosen to be the difference of two parameters each attached to each of the two lines crossing.

The $R$-matrix satisfies several key identities. Firstly,
the Yang--Baxter equation is an identity in $\text{End}((K^2)^{\otimes 3})$ 
that holds for all $u_1,u_2,u_3\in K$:
\begin{equation}\label{eq:ybe6v}
\begin{tikzpicture}[baseline=-3pt,y=2cm,x=.8cm,rotate=90]
\draw[rounded corners=4mm] (-0.5,0.5) node[left] {$u_3$} -- (0.75,0) -- (1.5,-0.5) (0.5,0.5) node[left] {$u_2$} -- (0,0) -- (0.5,-0.5)  (1.5,0.5) node[left] {$u_1$} -- (0.75,0) -- (-0.5,-0.5);
\end{tikzpicture}
=
\begin{tikzpicture}[baseline=-3pt,y=2cm,x=.8cm,rotate=90]
\draw[rounded corners=4mm] (-0.5,0.5) node[left] {$u_3$} -- (0.25,0) -- (1.5,-0.5) (0.5,0.5)  node[left] {$u_2$} -- (1,0) -- (0.5,-0.5) (1.5,0.5) node[left] {$u_1$} -- (0.25,0) -- (-0.5,-0.5);
\end{tikzpicture}
\end{equation}
Secondly, the unitarity equation holds as an identity in $\text{End}((K^2)^{\otimes 2})$:
\begin{equation}\label{eq:unit6v}
\begin{tikzpicture}[baseline=-3pt,rotate=90,scale=1.1]
\draw[rounded corners=4mm] (-0.5,1) node[left] {$u_2$} -- (0.5,0) -- (-0.5,-1) (0.5,1) node[left] {$u_1$} -- (-0.5,0) -- (0.5,-1);
\end{tikzpicture}
=
\begin{tikzpicture}[baseline=-3pt,rotate=90,scale=1.1]
\draw (-0.5,1) node[left] {$u_2$} -- (-0.5,-1) (0.5,1) node[left] {$u_1$} -- (0.5,-1);
\end{tikzpicture}
\end{equation}
where the right hand side is nothing but the identity operator.

The last relation involves both the six-vertex $R$-matrix introduced above, and the so-called $L$-matrix implicitly defined in \S\ref{sec:lm} as an operator from $K^{l+1}\otimes K^2$ to $K^2\otimes K^{l+1}$:
\[
\check L^{(k)}(u_1-u_2) = \begin{tikzpicture}[baseline=-3pt]
    \draw[very thick] (0,-1) node[below] {$u_2$} -- (0,1);
    \draw (-1,0) node[left] {$u_1$} -- (1,0);
\end{tikzpicture}
\]
where the nonzero entries of the operator are given by \eqref{eq:Lmat}. These weights depend on the extra parameter $l$, which is the upper bound on the labels of the thick line; this parameter will not be mentioned on the picture for the sake of simplicity.

The $RLL$ relation, which is another form of the Yang--Baxter equation, then states that
\begin{equation}\label{eq:RLL}
\begin{tikzpicture}[baseline=-3pt,scale=.6]
\draw[rounded corners=4mm] (-1,-1) node[left] {$u_2$} -- (1,1) -- (3,1);
\draw[rounded corners=4mm] (-1,1) node[left] {$u_1$} -- (1,-1) -- (3,-1);
\draw[very thick] (2,-2) node[below] {$u_3$} -- (2,2);
\end{tikzpicture}
=
\begin{tikzpicture}[baseline=-3pt,scale=.6]
\draw[rounded corners=4mm] (-1,-1) node[left] {$u_2$} -- (1,-1) -- (3,1);
\draw[rounded corners=4mm] (-1,1) node[left] {$u_1$} -- (1,1) -- (3,-1);
\draw[very thick] (0,-2) node[below] {$u_3$} -- (0,2);
\end{tikzpicture}
\end{equation}

All these identities can be checked by direct computation, and we leave it to the
interested reader.

Finally, given a permutation $\sigma \in \mathcal S_v$, 
pick any wiring diagram of it and consider it as a product of $R$-matrices; e.g., if $\sigma=231$,
\[
\begin{tikzpicture}[xscale=2]
    \draw (0,1) node[left] {$u_3$} -- (1,2);
    \draw (0,2) node[left] {$u_2$} -- (1,3);
    \draw (0,3) node[left] {$u_1$} -- (1,1);    
\end{tikzpicture}
\]
Thanks to equations \eqref{eq:ybe6v}--\eqref{eq:unit6v}, the resulting element of $\End((K^2)^{\otimes v})$ is independent of the choice of wiring diagram (reduced or not).
We denote it symbolically by
\[
\check R_\sigma(u_1,\ldots,u_v) = 
\begin{tikzpicture}[scale=.7,baseline=(current  bounding  box.center)]
    \draw (0,0) rectangle (2,4); \node at (1,2) {$\sigma$};
    \draw (2,1) -- (3,1) node[right] {$u_{\sigma(v)}$}
    (2,2) -- (3,2) node[right] {$\vdots$} (2,3) -- (3,3) node[right] {$u_{\sigma(1)}$};
    \foreach\y/\s in {3/{u_1},2/{\vdots},1/{u_v}} \draw (0,\y) -- (-1,\y) node[left] {$\s$};
\end{tikzpicture}
\]
In the example above,
$\check R_{(231)}(u_1, u_2, u_3):=\check R_{(12)}(u_1-u_2)\check R_{(23)}(u_1-u_3)\in \End((K^2)^{\otimes 3})$.

We comment here that the summation over all the labels in the interior of a wiring diagram is equivalent to a summation over {\em subwords}\/ of the corresponding word of $\sigma$. We shall not need to spell out this statement more explicitly, except for the following important consequence: consider binary strings $(a_1,\ldots,a_v)$ and $(b_1,\ldots,b_v)$ with the same number of 0s and 1s, and the smallest permutation (in Bruhat order) $\rho$ in $\mathcal S_v$ such that $a_i=b_{\rho(i)}$ (which is easily shown to exist). Then
$\bra{b_1,\ldots,b_v} \check R_\sigma \ket{a_1,\ldots,a_v}$ is zero unless $\sigma$ is greater or equal to $\rho$ in Bruhat order. We shall use this property repeatedly in what follows.

As a simple consequence of the formalism above, we have the following result:
\begin{lemma}
The function $\tilde W_{(v_1,\ldots,v_w)}$ defined in \eqref{eq:defWt} is a {\em symmetric}\/ polynomial in $y_1,\ldots,y_v$.
\end{lemma}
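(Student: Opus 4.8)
The plan is to deduce the symmetry from the transfer-matrix expression \eqref{eq:W1} together with the six-vertex apparatus \eqref{eq:6v}--\eqref{eq:RLL}. First I would note that $\tilde W_{(v_1,\ldots,v_w)}$ is automatically a polynomial in $y_1,\ldots,y_v$: by \eqref{eq:defWt} it is a finite sum over states $c$ of the weights $F(c)=\prod_{i,j} f^{(\ell_j)}(c(i,j),y_i-z_j)$, and each factor is affine-linear in $y_i-z_j$ by \eqref{eq:Lmat}. Hence it remains only to prove invariance under interchanging two adjacent parameters $y_i$ and $y_{i+1}$, since the adjacent transpositions $s_i=(i,i+1)$ generate $\mathcal{S}_v$.

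Next I would write $\check R_{i,i+1}(u)$ for the six-vertex $R$-matrix acting on the $i$-th and $(i+1)$-th tensor factors of $V=(K^2)^{\otimes v}$. Reading off \eqref{eq:6v}, the only nonzero entry of $\check R(u)$ out of $\ket{00}$ is $\bra{00}\check R(u)\ket{00}=1$, and the only nonzero entry into $\bra{11}$ is $\bra{11}\check R(u)\ket{11}=1$, so the two frozen boundaries are $R$-eigenvectors:
\[
\check R_{i,i+1}(u)\ket{0,\ldots,0}=\ket{0,\ldots,0}, \qquad \bra{1,\ldots,1}\check R_{i,i+1}(u)=\bra{1,\ldots,1},
\]
for every $u\in K$. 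Consequently, inserting $\check R_{i,i+1}(y_i-y_{i+1})$ immediately to the right of $\bra{1,\ldots,1}$ in \eqref{eq:W1} leaves $\tilde W_{(v_1,\ldots,v_w)}$ unchanged.

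I would then transport this $R$-matrix rightward through the product $T^{(\ell_1)}_{v_1,0}(z_1)\cdots T^{(\ell_w)}_{v_w,0}(z_w)$. Each column $T^{(\ell_j)}_{v_j,0}(z_j)$ is a vertical stack of $L$-operators $\check L^{(\ell_j)}(y_k-z_j)$, and the $RLL$ relation \eqref{eq:RLL}, applied with horizontal parameters $y_i,y_{i+1}$ and vertical parameter $z_j$, lets $\check R_{i,i+1}(y_i-y_{i+1})$ cross each column at the cost of interchanging $y_i$ and $y_{i+1}$ on lines $i$ and $i+1$. After it has crossed all $w$ columns I reach $\bra{1,\ldots,1}$ applied to the same product of transfer matrices, now evaluated at $s_i\cdot(y_1,\ldots,y_v)$, followed by $\check R_{i,i+1}(y_i-y_{i+1})\ket{0,\ldots,0}=\ket{0,\ldots,0}$; comparing with \eqref{eq:W1} yields $\tilde W_{(v_1,\ldots,v_w)}(y_1,\ldots,y_v)=\tilde W_{(v_1,\ldots,v_w)}(\ldots,y_{i+1},y_i,\ldots)$, and symmetry follows. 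The only delicate point is the diagrammatic bookkeeping: I must confirm that crossing a column via \eqref{eq:RLL} genuinely transposes the two spectral parameters (rather than conjugating by a fixed permutation that fails to act trivially on the boundary), but this is exactly the content of \eqref{eq:RLL} once the incoming/outgoing line conventions of \S\ref{sec:lm} are respected.
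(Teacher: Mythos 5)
Your proof is correct and takes essentially the same route as the paper's own argument in Appendix~\ref{app:lmlemma}: there, too, a six-vertex $\check R$-matrix is inserted between rows $i$ and $i+1$ at the West boundary, where the frozen labels $1,1$ give it weight $1$ by \eqref{eq:6v}, then pushed across all $w$ columns via the RLL relation \eqref{eq:RLL}, and finally absorbed at the East boundary against $\ket{0,\ldots,0}$, producing the same partition function with $y_i$ and $y_{i+1}$ exchanged. Your operator-language phrasing of \eqref{eq:W1}, the explicit polynomiality remark, and the reduction to adjacent transpositions are only cosmetic differences from the paper's diagrammatic presentation.
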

\begin{proof}
    With our updated graphical conventions, one can rewrite \eqref{eq:defWt} explicitly as
    \[
    \tilde W_{(v_1,\ldots,v_w)} = \begin{tikzpicture}[baseline=(current  bounding  box.center),scale=.8]
    \foreach\x/\s/\t in {1/{v_1}/{z_1},2/{\cdots}/{\cdots},3//,4//,5/{v_w}/{z_w}} {
    \draw[very thick] (\x,1) -- (\x,4);
    \draw[dotted] (\x,.5) node[below] {$\t$} -- (\x,1);
    \draw[gbline] (\x,4) -- node[right] {$\ss\s$} (\x,4.5);
    }
    \foreach\y/\s in {1/{y_v},2/,3/{\vdots},4/{y_1}} {
    \draw (1,\y) -- (5,\y);
    \draw[dotted] (5,\y) -- (5.5,\y);
    \draw[gline] (.5,\y) node[left] {$\s$} -- (1,\y);
    }
    \end{tikzpicture}
    \]
    where once again the dependence on $\ell_1,\ldots,\ell_w$ is suppressed on the picture.

    Now pick $i\in\{1,\ldots,v-1\}$ and consider the following diagram
    \[
    \begin{tikzpicture}[baseline=(current  bounding  box.center),scale=.8]
    \foreach\x/\s/\t in {1/{v_1}/{z_1},2/{\cdots}/{\cdots},3//,4//,5/{v_w}/{z_w}} {
    \draw[very thick] (\x,1) -- (\x,4);
    \draw[dotted] (\x,.5) node[below] {$\t$} -- (\x,1);
    \draw[gbline] (\x,4) -- node[right] {$\ss\s$} (\x,4.5);
    }
    \foreach\y/\s in {1/{y_v},4/{y_1}} {
    \draw (1,\y) -- (5,\y);
    \draw[dotted] (5,\y) -- (5.5,\y);
    \draw[gline] (.5,\y) node[left] {$\s$} -- (1,\y);
    }
    \draw[dotted] (5,2) -- (5.5,2);
    \draw[dotted] (5,3) -- (5.5,3);
    \draw (5,2) -- (1,2) .. controls (.75,2) .. (.25,2.5);
    \draw (5,3) -- (1,3) .. controls (.75,3) .. (.25,2.5);
    \draw[gline] (.25,2.5) -- (-.25,3) node[left] {$y_{i+1}$} (.25,2.5) -- (-.25,2) node[left] {$y_{i}$};
    \node at (.15,3.6) {$\vdots$};
    \node at (.15,1.6) {$\vdots$};
    \end{tikzpicture}
    \]
    which differs from the preceding one by the addition of an extra $R$-matrix to the left between rows $i$ and $i+1$. According to \eqref{eq:6v}, the only possible labels on the right side of this $R$-matrix (i.e., that produce a nonzero weight) are $1$s, and the corresponding weight is $1$. Therefore this diagram is also equal to  $\tilde W_{(v_1,\ldots,v_w)}$.
    
    On the other hand, apply repeatedly the RLL relation \eqref{eq:RLL} to it: one obtains
    \[
    \tilde W_{(v_1,\ldots,v_w)} =
    \begin{tikzpicture}[baseline=(current  bounding  box.center),scale=.8]
    \foreach\x/\s/\t in {1/{v_1}/{z_1},2/{\cdots}/{\cdots},3//,4//,5/{v_w}/{z_w}} {
    \draw[very thick] (\x,1) -- (\x,4);
    \draw[dotted] (\x,.5) node[below] {$\t$} -- (\x,1);
    \draw[gbline] (\x,4) -- node[right] {$\ss\s$} (\x,4.5);
    }
    \foreach\y/\s in {1/{y_v},4/{y_1}} {
    \draw (1,\y) -- (5,\y);
    \draw[dotted] (5,\y) -- (5.5,\y);
    \draw[gline] (.5,\y) node[left] {$\s$} -- (1,\y);
    }
    \draw (1,2) -- (5,2) .. controls (5.25,2) .. (5.75,2.5);
    \draw (1,3) -- (5,3) .. controls (5.25,3) .. (5.75,2.5);
    \draw[dotted] (5.75,2.5) -- (6.25,3) (5.75,2.5) -- (6.25,2);
    \node at (.15,3.6) {$\vdots$};
    \node at (.15,1.6) {$\vdots$};
    \draw[gline] (.5,2) node[left] {$y_{i}$} -- (1,2);
    \draw[gline] (.5,3) node[left] {$y_{i+1}$} -- (1,3);
    \end{tikzpicture}
    \]
    By the same argument, the only possible labels on the left side of the $R$-matrix are $0$s, with corresponding weight $1$. We then recognize the same diagram that we started with,
    but with $y_i$ and $y_{i+1}$ switched.
\end{proof}

\subsection{Definition of the $F$-basis}
We define the tilded basis of Lemma~\ref{lem:lm}, usually known as $F$-basis.\footnote{Note that the $F$-basis has the simple geometric
meaning of fixed point basis, so that the computation of \S\ref{ssec:appcalc} is a thinly disguised equivariant localisation calculation, similar in spirit to those of \S\ref{sec:proof} and \S\ref{sec:shuffle}.}

In this section we abbreviate binary strings $(a_1,\ldots,a_v)$, $a_i\in \{0, 1\}$, with the notation $a$. 
Given such an $a=(a_1,\ldots,a_v)$ with say $v'$ $0$s and $v''$ $1$s, where $v'+v''=v$, 
we define
\begin{align*}
\text{sort}(a)&=(\underbrace{0,\ldots,0}_{v'},\underbrace{1,\ldots,1}_{v''})
\\
\text{rsort}(a)&=(\underbrace{1,\ldots,1}_{v''},\underbrace{0,\ldots,0}_{v'})
\end{align*}
Now consider any permutation $\sigma\in \mathcal S_v$ such that $\text{sort}(a)_{i}=a_{\sigma(i)}$ for all $i$, and then define
\[
\widetilde{\ket{a}} = \check R_\sigma(y_1,\ldots,y_v) \ket{\text{sort}(a)} =
\begin{tikzpicture}[scale=.7,baseline=(current  bounding  box.center)]
    \draw (0,0) rectangle (2,5); \node at (1,2.5) {$\sigma$};
    \draw[gline] (2,1) -- (3,1) node[right] {$y_{\sigma(v)}$} (2,2) -- (3,2);
    \draw[dotted] (2,3) -- (3,3) node[right] {$\vdots$} (2,4) -- (3,4) node[right] {$y_{\sigma(1)}$};
    \foreach\y/\s in {4/{y_1},3/{\vdots},2/,1/{y_v}} \draw (0,\y) -- (-1,\y) node[left] {$\s$};
    \draw[decorate,decoration={brace}] (4.3,2) -- node[right] {$v''$} (4.3,1);
    \draw[decorate,decoration={brace}] (4.3,4) -- node[right] {$v'$} (4.3,3);
\end{tikzpicture}
\]
This definition is independent of the choice of representative $\sigma$ in the left coset space $\mathcal S_v/(\mathcal S_{v'}\times \mathcal S_{v''})$ because varying $\sigma$ within the coset would amount to inserting/removing extra crossings on the right of the picture between two lines with equal occupancy,
and the first two weights of \eqref{eq:6v} are $1$.

In particular, note that if $a=\text{sort}(a)$, $\widetilde{\ket{a}}=\ket{a}$,
proving part \textit{(i)} of Lemma~\ref{lem:lm}.

Similarly, consider any permutation $\tau$ such that $a_{i} = \text{rsort}(a)_{\tau(i)}$ for all $i$, and then define
\[
\widetilde{\bra{a}} =\kappa_a^{-1} \bra{\text{rsort}(a)} \check R_\tau(y_{\bar\tau(1)},\ldots,y_{\bar\tau(v)})  = \kappa_a^{-1}
\begin{tikzpicture}[scale=.7,baseline=(current  bounding  box.center)]
    \draw (0,0) rectangle (2,5); \node at (1,2.5) {$\tau$};
    \draw[dotted] (0,1) -- (-1,1) node[left] {$y_{\bar\tau(v)}$} (0,2) -- (-1,2);
    \draw[gline] (0,3) -- (-1,3) node[left] {$\vdots$} (0,4) -- (-1,4) node[left] {$y_{\bar\tau(1)}$};
    \foreach\y/\s in {4/{y_1},3/{\vdots},2/,1/{y_v}} \draw (2,\y) -- (3,\y) node[right] {$\s$};
    \draw[decorate,decoration={brace}] (-2.3,1) -- node[left] {$v'$} (-2.3,2);
    \draw[decorate,decoration={brace}] (-2.3,3) -- node[left] {$v''$} (-2.3,4);
\end{tikzpicture}
\]
where $\bar\tau=\tau^{-1}$, and $\kappa_a\in K$ is a constant that
we choose of the form
\[
\kappa_a= \prod_{i,j:\,a_i=1,\, a_j=0} \beta(y_i-y_j)
\]
for reasons that will become clear right below (Lemma \ref{lem:Fduality}). 
Once again, this definition is independent of the choice of representative $\tau$ in the right coset space $(\mathcal S_{v''}\times \mathcal S_{v'})\backslash\mathcal S_v$.

\begin{example}\label{ex:tildeex}
    Consider $a=(0,0,1,0)$, $\text{sort}(a)=(0,0,0,1)$, so we can choose $\sigma=(34)$. Then
\[
\widetilde{\ket{0,0,1,0}}
=
\begin{tikzpicture}[scale=.7,baseline=(current  bounding  box.center)]
    \draw[gline] (2,1) -- (3,1) node[right] {$y_3$};
    \draw[dotted] (2,2) -- (3,2) (2,3) -- (3,3) node[right] {$\vdots$} (2,4) -- (3,4) node[right] {$y_1$};
    \foreach\y/\s in {4/{y_1},3/{\vdots},2/,1/{y_4}} \draw (0,\y) -- (-1,\y) node[left] {$\s$};
    \draw (0,1) .. controls (1,1) and (1,2) .. (2,2) (0,2) .. controls (1,2) and (1,1) .. (2,1) (0,3) -- (2,3) (0,4) -- (2,4); 
\end{tikzpicture}
=\beta(y_3-y_4)\ket{0,0,1,0}+\gamma(y_3-y_4)\ket{0,0,0,1}
\]
    If $a'=(0,1,0,0)$, with same sort as $a$, $\sigma=1342=(23)(34)$, so
    \[
\widetilde{\ket{0,1,0,0}}=
\begin{tikzpicture}[scale=.7,baseline=(current  bounding  box.center)]
    \draw[gline] (2,1) -- (3,1) node[right] {$y_2$};
    \draw[dotted] (2,2) -- (3,2) (2,3) -- (3,3) node[right] {$\vdots$} (2,4) -- (3,4) node[right] {$y_1$};
    \foreach\y/\s in {4/{y_1},3/{\vdots},2/,1/{y_4}} \draw (0,\y) -- (-1,\y) node[left] {$\s$};
    \draw (0,1) .. controls (1,1) and (1,2) .. (2,2) (0,2) .. controls (1,2) and (1,3) .. (2,3) (0,3) .. controls (1,3) and (1,1) .. (2,1) (0,4) -- (2,4); 
\end{tikzpicture}
=\begin{matrix}\beta(y_2-y_3)\beta(y_2-y_4)\ket{0,1,0,0}
+\gamma(y_2-y_3)\beta(y_2-y_4)\ket{0,0,1,0}\\
+\gamma(y_2-y_4)\ket{0,0,0,1}
\end{matrix}
    \]
    Similarly, if $b=(1,1,0,1)$, $\text{rsort}(b)=(1,1,1,0)$, $\sigma=(34)$, we have
    \[    \widetilde{\bra{1,1,0,1}}=\kappa_b^{-1}\Big(\beta(y_4-y_3)\bra{1,1,0,1}+\gamma(y_4-y_3)\bra{1,1,1,0}\Big)
    \] 
\end{example}

As a warm-up for the main calculation of this appendix, let us show the following\def\ba#1#2{\widetilde{\bra{#1\,}}\!\widetilde{\left.\,#2\right>}}
\begin{lemma}\label{lem:Fduality}
Given binary strings $a=(a_1,\ldots,a_v)$ and $b=(b_1,\ldots,b_v)$, one has the duality statement
\[
\ba{b}{a}=\delta_{b,a}.
\]
\end{lemma}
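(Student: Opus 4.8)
The plan is to prove the identity in two stages, first disposing of the case where $a$ and $b$ lie in different ``particle number'' sectors, and then collapsing the remaining diagram with the six-vertex relations. First I would observe that every entry of $\check R(u)$ in \eqref{eq:6v} conserves the total number of $1$-labels on the two lines it joins; hence $\check R_\sigma$ and $\check R_\tau$ preserve the number of $1$s, so $\widetilde{\ket a}$ is a linear combination of standard basis vectors with the same number of $1$s as $a$, and dually for $\widetilde{\bra b}$. Therefore $\widetilde{\bra b}\,\widetilde{\ket a}=0$ whenever $a$ and $b$ have different numbers of $1$s, and it remains to treat a common sector with $v'$ zeros and $v''$ ones. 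Unfolding the definitions, $\widetilde{\bra b}\,\widetilde{\ket a}=\kappa_b^{-1}\,\bra{\mathrm{rsort}(b)}\,\check R_\tau\,\check R_\sigma\,\ket{\mathrm{sort}(a)}$, which I read as a single planar diagram: the composite wiring of $\tau$ followed by $\sigma$, closed on the right by the sorted state and on the left by the reverse-sorted state, with the parameters $y_1,\dots,y_v$ carried consistently along the horizontal lines.

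The core of the argument is to evaluate this diagram by simplifying the operator product $\check R_\tau\,\check R_\sigma$. Using the Yang--Baxter moves \eqref{eq:ybe6v} I would bring the two wiring diagrams into a convenient common form, and then apply the unitarity relation \eqref{eq:unit6v} to cancel every pair of opposing crossings that the two diagrams share. When $a=b$ the crossings produced by $\sigma$ are undone by those of $\tau$ up to the routing that carries the bottom $v''$ ones of $\ket{\mathrm{sort}(a)}$ to the top $v''$ ones of $\bra{\mathrm{rsort}(b)}$, and the surviving configuration is the unique fully transmitted one: its weight is the product of transmission factors $\beta(y_i-y_j)$ over all pairs with $a_i=1$, $a_j=0$, which is exactly $\kappa_a$. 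When $a\ne b$, the same reduction leaves a diagram in which the bra and ket boundary occupations no longer match along the collapsed lines, so the contraction pairs orthogonal standard basis vectors (equivalently, a forced vertex of zero weight appears, since a $1$ is never allowed to cross a $0$ in the missing direction of \eqref{eq:6v}); the explicit cancellation $\gamma(u)\beta(-u)+\beta(u)\gamma(-u)=0$ visible already for $v=2$ is the prototype of this phenomenon. The diagonal weights $\bra{00}\check R\ket{00}=\bra{11}\check R\ket{11}=1$ of \eqref{eq:6v} keep this bookkeeping clean, since lines of equal occupancy may be freely crossed at no cost.

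Finally I would assemble the pieces: the prefactor $\kappa_b^{-1}=\kappa_a^{-1}$ cancels the transmission weight $\kappa_a$ on the diagonal to give $1$, while the off-diagonal entries vanish, yielding $\widetilde{\bra b}\,\widetilde{\ket a}=\delta_{b,a}$. The step I expect to be the main obstacle is the off-diagonal vanishing in full generality: one must show that for every pair $a\ne b$ in the same sector the Yang--Baxter/unitarity reduction genuinely exposes an incompatible crossing, rather than merely producing a triangular system. This is where the interaction between the two independently chosen coset representatives $\sigma\in\mathcal S_v/(\mathcal S_{v'}\times\mathcal S_{v''})$ and $\tau\in(\mathcal S_{v''}\times\mathcal S_{v'})\backslash\mathcal S_v$ must be controlled; I would handle it by choosing minimal-length representatives, tracking the $v''$ occupied paths through the reduced diagram, and invoking the Bruhat-order vanishing property recorded after \eqref{eq:RLL} to rule out all but the matching routing.
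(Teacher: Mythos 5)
Your outline tracks the paper's proof for most of its length: occupation-number conservation disposes of entries between different sectors, the pairing unfolds into a single composite diagram $\kappa_b^{-1}\bra{\mathrm{rsort}(b)}\check R_{\tau\sigma}\ket{\mathrm{sort}(a)}$, and on the diagonal the unique surviving configuration is the fully transmitted one with weight $\prod_{i,j:\,a_i=1,\,a_j=0}\beta(y_i-y_j)=\kappa_a$, cancelling the normalisation. (Your detour through unitarity for the diagonal case is unnecessary --- one may simply choose representatives so that the composite wiring is already a reduced word for the block swap --- but it is harmless, since the definitions are coset-independent.) The genuine gap is at the step you yourself flag as the main obstacle: the off-diagonal vanishing within a sector. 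The cancellation mechanism you lead with does not apply here. Since $\sigma$ and $\tau$ are chosen independently, only up to cosets in $\mathcal S_v/(\mathcal S_{v'}\times\mathcal S_{v''})$ and $(\mathcal S_{v''}\times\mathcal S_{v'})\backslash\mathcal S_v$ respectively, the composite $\check R_\tau\check R_\sigma$ contains no canonical inverse pairs of crossings to cancel via \eqref{eq:unit6v}; and in the paper the vanishing is not a cancellation of the type $\gamma(u)\beta(-u)+\beta(u)\gamma(-u)=0$ at all, but a configuration-free statement: with the sorted/reverse-sorted boundary conditions there is simply no allowed lattice state unless $\tau\sigma$ is Bruhat-large enough.

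The missing idea --- which your final sentence gestures at without supplying --- is the double-coset argument that converts Bruhat vanishing into $b=a$. Bruhat vanishing only gives the inequality $\tau\sigma\ge\rho$, where $\rho$ is the block swap $\rho(i)=i+v''$ for $1\le i\le v'$ and $\rho(i)=i-v'$ for $v'<i\le v$; this inequality by itself does not exclude $a\ne b$, since many permutations above $\rho$ connect $\mathrm{sort}(a)$ to $\mathrm{rsort}(b)$ for mismatched $a,b$. The paper closes the argument with two facts: each double coset in $(\mathcal S_{v''}\times\mathcal S_{v'})\backslash\mathcal S_v/(\mathcal S_{v'}\times\mathcal S_{v''})$ has a unique \emph{minimal bigrassmannian} representative, and the particular $\rho$ above is the \emph{greatest} such representative. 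Hence $\tau\sigma\ge\rho$ forces $\tau\sigma=\rho$ as a double coset, and then unwinding $b_i=\mathrm{rsort}(b)_{\tau(i)}$ and $a_i=\mathrm{sort}(a)_{\sigma^{-1}(i)}$ (the stabilizer factors act trivially on the sorted strings) yields $b_i=a_i$ for all $i$. Without this maximality observation, ``tracking the occupied paths through minimal-length representatives'' leaves you with a family of admissible routings rather than the single matching one, so you should add this coset-theoretic step to complete the proof.
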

In what follows we use the following well-known fact about double cosets in $(\mathcal S_{\bar v''}\times \mathcal S_{\bar v'})\backslash\mathcal S_v/(\mathcal S_{v'}\times \mathcal S_{v''})$: they possess a unique minimal element which is a {\em bigrassmannian}\/ permutation, namely
a permutation $\sigma$ such that the descent set $\{i\in [1, v-1]\mid \sigma(i)>\sigma(i+1)\}$ of $\sigma$ (resp.\ $\sigma^{-1}$) is contained in $\{v'\}$ (resp.\ $\{\bar v''\}$), and such bigrassmannian permutations
are in bijection with cosets. 
\begin{proof}
    Because $R$-matrices preserve the occupation number, we may assume without loss of generality that $a$ and $b$ have the same number of $0$s and $1$s, otherwise the l.h.s.\ of the equality is trivially zero.
    Choosing $\sigma$ so that $\text{sort}(a)_i=a_{\sigma(i)}$ and $\tau$ such that $b_i = \text{rsort}(b)_{\tau(i)}$, one has
    \[
    \kappa_b\ba{b}{a}=
\begin{tikzpicture}[scale=.7,baseline=(current  bounding  box.center)]
    \draw (0,0) rectangle (2,5); \node at (1,2.5) {$\tau\sigma$};
    \draw[gline] (2,1) -- (3,1) node[right] {$y_{\sigma(v)}$} (2,2) -- (3,2);
    \draw[dotted] (2,3) -- (3,3) node[right] {$\vdots$} (2,4) -- (3,4) node[right] {$y_{\sigma(1)}$};
    \draw[dotted] (0,1) -- (-1,1) node[left] {$y_{\bar\tau(v)}$} (0,2) -- (-1,2);
    \draw[gline] (0,3) -- (-1,3) node[left] {$\vdots$} (0,4) -- (-1,4) node[left] {$y_{\bar\tau(1)}$};
    \draw[decorate,decoration={brace}] (-2.3,1) -- node[left] {$v'$} (-2.3,2);
    \draw[decorate,decoration={brace}] (-2.3,3) -- node[left] {$v''$} (-2.3,4);
    \draw[decorate,decoration={brace}] (4.3,2) -- node[right] {$v''$} (4.3,1);
    \draw[decorate,decoration={brace}] (4.3,4) -- node[right] {$v'$} (4.3,3);
\end{tikzpicture}
    \]
    For this to be nonzero, $\tau\sigma$ has to be greater or equal in Bruhat order to the bigrassmannian permutation $\rho$ of the form $\rho(i)=i+v''$ for $1\le i\le v'$, $\rho(i)=i-v'$ for $v'+1\le i\le v$. But this is the greatest bigrassmannian permutation among all $(\mathcal S_{v''}\times \mathcal S_{v'})\backslash\mathcal S_v/(\mathcal S_{v'}\times \mathcal S_{v''})$ double coset representatives, so as a double coset, $\tau\sigma=\rho$. This immediately implies
    $$b_i=\text{rsort}(b)_{\tau(i)} = \text{rsort}(b)_{\rho(\bar\sigma(i))} =\text{sort}(b)_{\bar\sigma(i)}= \text{sort}(a)_{\bar\sigma(i)}=a_i$$ where we have used the fact that $a$ and $b$ have same content. Finally, we compute
    \[
    \kappa_a\ba{a}{a}=
\begin{tikzpicture}[scale=.7,baseline=(current  bounding  box.center)]
    \draw[gline,rounded corners] (-1,3) node[left] {$\vdots$} -- (0,3) -- (2,1) -- (3,1) node[right] {$y_{\sigma(v)}$} (-1,4) node[left] {$y_{\bar\tau(1)}$} -- (0,4) -- (2,2) -- (3,2);
    \draw[dotted,rounded corners] (-1,1) node[left] {$y_{\bar\tau(v)}$} -- (0,1) -- (2,3) -- (3,3) node[right] {$\vdots$} (-1,2) -- (0,2) -- (2,4) -- (3,4) node[right] {$y_{\sigma(1)}$};
    \draw[decorate,decoration={brace}] (-2.3,1) -- node[left] {$v'$} (-2.3,2);
    \draw[decorate,decoration={brace}] (-2.3,3) -- node[left] {$v''$} (-2.3,4);
    \draw[decorate,decoration={brace}] (4.3,2) -- node[right] {$v''$} (4.3,1);
    \draw[decorate,decoration={brace}] (4.3,4) -- node[right] {$v'$} (4.3,3);
\end{tikzpicture}
 = \prod_{i,j:\,a_i=1,\, a_j=0} \beta(y_i-y_j). 
    \]
\end{proof}

In particular, note that if $a=\text{rsort}(a)$, $\widetilde{\bra{a}}=\kappa_a^{-1}\bra{a}$,
proving part \textit{(ii)} of Lemma~\ref{lem:lm}.

\subsection{Matrix entries of $T^{(l)}_{m,0}(x)$}\label{ssec:appcalc}
We now come to part \textit{(iii)} of Lemma~\ref{lem:lm}. The setup is similar to that of the proof of Lemma~\ref{lem:Fduality}, except now the contents of $a$ and $b$ must differ:
\[
\kappa_b\widetilde{\bra{b}} T^{(l)}_{m,0}(x) \widetilde{\ket{a}} = 
\begin{tikzpicture}[scale=.7,baseline=(current  bounding  box.center)]
    \draw (0,0) rectangle (2,7); \node at (1,3.5) {$\sigma$};
    \draw[gline] (2,1) -- (3,1) node[right] {$y_{\sigma(v)}$} (2,2) -- (3,2);
    \draw[dotted] (2,3) -- (3,3) (2,4) -- (3,4) (2,5) -- (3,5) node[right] {$\vdots$} (2,6) -- (3,6) node[right] {$y_{\sigma(1)}$};
    \foreach\y in {1,...,6} \draw (0,\y) -- (-2,\y);
    \draw[decorate,decoration={brace}] (4.3,2) -- node[right] {$v''$} (4.3,1);
    \draw[decorate,decoration={brace}] (4.3,6) -- node[right] {$v'$} (4.3,3);
    \begin{scope}[xshift=-4cm]
\draw (0,0) rectangle (2,7); \node at (1,3.5) {$\tau$};
    \draw[dotted] (0,1) -- (-1,1) node[left] {$y_{\bar\tau(v)}$} (0,2) -- (-1,2);
    \draw[gline] (0,3) -- (-1,3) (0,4) -- (-1,4) (0,5) -- (-1,5) node[left] {$\vdots$} (0,6) -- (-1,6) node[left] {$y_{\bar\tau(1)}$};
    \draw[decorate,decoration={brace}] (-2.3,1) -- node[left] {$v'-m$} (-2.3,2);
    \draw[decorate,decoration={brace}] (-2.3,3) -- node[left] {$v''+m$} (-2.3,6);
    \end{scope}
    \draw[very thick] (-1,1) -- (-1,6); \draw[dotted] (-1,0) node[below] {$x$}  -- node[right,pos=0.3] {$\ss0$} (-1,1); \draw[gbline] (-1,6) -- node[right,pos=.7] {$\ss m$} (-1,7);
    \end{tikzpicture}
\]
where $a$ has $v'$ $0$s and $v''$ $1$s, whereas $b$ has $v'-m$ $0$s and $v''+m$ $1$s. We recall that the dependence on $l$ lies in the fact that labels on the thick line are in $\{0,\ldots,l\}$, and in the weights \eqref{eq:Lmat}.

By repeated application of the RLL relation \eqref{eq:RLL}, we can move the vertical line to the right:
\[
\kappa_b\widetilde{\bra{b}} T^{(l)}_{m,0}(x) \widetilde{\ket{a}} =
\begin{tikzpicture}[scale=.7,baseline=(current  bounding  box.center)]
    \draw (0,0) rectangle (2,7); \node at (1,3.5) {$\tau\sigma$};
    \draw[gline] (2,1) -- (4,1) node[right] {$y_{\sigma(v)}$} (2,2) -- (4,2);
    \draw[dotted] (3,3) -- (4,3) (3,4) -- (4,4) (3,5) -- (4,5) node[right] {$\vdots$} (3,6) -- (4,6) node[right] {$y_{\sigma(1)}$};
    \draw[dotted] (0,1) -- (-1,1) node[left] {$y_{\bar\tau(v)}$} (0,2) -- (-1,2);
    \draw[gline] (0,3) -- (-1,3) (0,4) -- (-1,4) (0,5) -- (-1,5) node[left] {$\vdots$} (0,6) -- (-1,6) node[left] {$y_{\bar\tau(1)}$};
    \draw[decorate,decoration={brace}] (-2.3,1) -- node[left] {$v'-m$} (-2.3,2);
    \draw[decorate,decoration={brace}] (-2.3,3) -- node[left] {$v''+m$} (-2.3,6);
    \draw[decorate,decoration={brace}] (5.3,2) -- node[right] {$v''$} (5.3,1);
    \draw[decorate,decoration={brace}] (5.3,6) -- node[right] {$v'$} (5.3,3);
    \draw[very thick] (3,3) -- (3,6); \draw[dotted] (3,0) node[below] {$x$}  -- node[right,pos=0.3] {$\ss0$} (3,1) -- (3,3); \draw[gbline] (3,6) -- node[right,pos=.7] {$\ss m$} (3,7);
    \foreach\y in{3,...,6} \draw (2,\y) -- (3,\y);
\end{tikzpicture}
\]
We have indicated on the bottom right of the picture that some labels are ``forced'' by the defining condition (iv) for a lattice model state.

Once again, by direct inspection, the only bigrassmannian permutation $\pi$ that is large enough in Bruhat order for the r.h.s.\ to be nonzero
is given by $\pi(i)=i$ for $1\le i\le m$, $\pi(i)=i+v''$ for $m+1\le i\le v'$,
and $\pi(i)=i-v'+m$ for $v'+1\le i\le v$. Therefore, as a double coset, $\tau\sigma=\pi$.
Now compare
\begin{align*}
  a_i &= \text{sort}(a)_{\bar\sigma(i)}
  \\
      &= (\underbrace{0,\ldots,0}_{v'},\underbrace{1,\ldots,1}_{v''})_{\bar\pi(\tau(i))}
  \\
      &= (\underbrace{0,\ldots,0}_m,\underbrace{1,\ldots,1}_{v''},\underbrace{0,\ldots,0}_{v'-m})_{\tau(i)}
        \intertext{with}
  \\
  b_i &= (\underbrace{1,\ldots,1}_{v''+m},\underbrace{0,\ldots,0}_{v'-m})_{\tau(i)}. 
\end{align*}
We see that $a_i=b_i$ except if $i\in I=\bar\tau(\{1,\ldots,m\})$, in which case $a_i=0$, $b_i=1$. In
particular, $b_i\ge a_i$, which is consistent with part \textit{(iii)} of Lemma~\ref{lem:lm}.

We now compute
\begin{align*}
\kappa_b\widetilde{\bra{b}} T^{(l)}_{m,0}(x) \widetilde{\ket{a}} &=
\begin{tikzpicture}[scale=.7,baseline=(current  bounding  box.center)]
    \draw[dotted] (3,5) -- (4,5) node[right] {$\vdots$} (3,6) -- (4,6) node[right] {$y_{\sigma(1)}$};
    \draw[dotted,rounded corners] (4,3) -- (2,3) -- (0,1) -- (-1,1) node[left] {$y_{\bar\tau(v)}$} (4,4) -- (2,4) -- (0,2) -- (-1,2);
    \draw[gline,rounded corners] (4,1) node[right] {$y_{\sigma(v)}$} -- (2,1) -- (0,3) -- (-1,3)  (4,2) -- (2,2) -- (0,4) -- (-1,4) (3,5) -- (-1,5) node[left] {$\vdots$} (3.02,6) -- (-1,6) node[left] {$y_{\bar\tau(1)}$};
    \draw[decorate,decoration={brace}] (-2.3,1) -- node[left] {$v'-m$} (-2.3,2);
    \draw[decorate,decoration={brace}] (-2.3,3) -- node[left] {$v''$} (-2.3,4);
    \draw[decorate,decoration={brace}] (-2.3,5) -- node[left] {$m$} (-2.3,6);
    \draw[decorate,decoration={brace}] (5.3,2) -- node[right] {$v''$} (5.3,1);
    \draw[decorate,decoration={brace}] (5.3,6) -- node[right] {$v'$} (5.3,3);
    \draw[gline] (3.02,5) -- node[right,pos=.4] {$\ss1$} (3.02,6) node[right,yshift=1.5mm] {$\ss\vdots$}; \draw[dotted] (3,0) node[below] {$x$}  -- node[right,pos=0.3] {$\ss0$} (3,1) -- node[right,pos=.9] {$\ss0$} (3,5); \draw[gbline] (3,6) -- node[right,pos=.7] {$\ss m$} (3,7);
\end{tikzpicture}
\\&=
(2\hbar)^m \frac{l!}{(l-m)!}
\prod_{i:\, b_i=a_i=0} (y_i-x-l\hbar)
\prod_{i:\, b_i=a_i=1} (y_i-x+l\hbar)
\prod_{\substack{i:\,a_i=1\\j: b_j=0}} \beta(y_i-y_j), 
\end{align*}
where the last equality follows from the weights in \eqref{eq:Lmat} and \eqref{eq:6v}.

Finally we take care of the product of $\beta$s:
\begin{equation*}
\kappa_b^{-1}
\prod_{\substack{i:\,a_i=1\\j:\, b_j=0}} \beta(y_i-y_j)
=
\frac{
\displaystyle
\prod_{\substack{i:\,a_i=1\\j:\, b_j=0}}
\beta(y_i-y_j)
}
{
\displaystyle
\prod_{\substack{i:\,b_i=1\\j:\, b_j=0}}
\beta(y_i-y_j)
}
\\
=\frac{1}{\displaystyle
\prod_{\substack{i:\,b_i=1,\,a_i=0\\j:\, b_j=a_j=0}}\beta(y_i-y_j)
} 
\end{equation*}
where we have used the fact that $b_i\ge a_i$ for all $i$.

\begin{example}
    The meaning of Lemma~\ref{lem:lm} is that the entries of $T^{(l)}_{m,0}(x)$ are significantly simpler in the $F$-basis than in the standard basis. For example, using the same bras and kets as in Example~\ref{ex:tildeex}, one has configurations of the form
\[
\begin{tikzpicture}[scale=.7,baseline=(current  bounding  box.center)]
\draw[dotted] (1,0) -- (1,1) -- (1,2) (0,2) -- (1,2) -- (1,3) -- (2,3) (1,4) -- (2,4);
\draw[gline] (0,1) -- (1,1) -- (1,2) -- (2,2) (0,3) -- (1,3) -- (1,4) (0,4) -- (1,4);
\draw[gbline] (1,4) -- (1,5) node[right] {$\ss 2$};
\end{tikzpicture}
\]
\end{example}
which means $\bra{b}T^{(l)}_{2,0}(x)\ket{a}\ne0$.
However, according to Lemma~\ref{lem:lm},
$\widetilde{\bra{b}}T^{(l)}_{2,0}(x)\widetilde{\ket{a}}=0$, and indeed,
\begin{align*}
\kappa_b\widetilde{\bra{b}}T^{(l)}_{m,0}(x)\widetilde{\ket{a}}
&=\beta(y_4-y_3)\beta(y_3-y_4)\begin{tikzpicture}[scale=.7,baseline=(current  bounding  box.center)]
\draw[dotted] (1,0) -- (1,1) -- (1,2) (0,2) -- (1,2) -- (1,3) -- (2,3) (1,4) -- (2,4);
\draw[gline] (0,1) -- (1,1) -- (1,2) -- (2,2) (0,3) -- (1,3) -- (1,4) (0,4) -- (1,4);
\draw[gbline] (1,4) -- node[right] {$\ss 2$} (1,5) ;
\end{tikzpicture}
+\beta(y_4-y_3)\gamma(y_3-y_4)\begin{tikzpicture}[scale=.7,baseline=(current  bounding  box.center)]
\draw[dotted] (1,0) -- (1,3) -- (2,3) (0,2) -- (2,2) (1,4) -- (2,4);
\draw[gline] (0,1) -- (2,1) (0,3) -- (1,3) -- (1,4) (0,4) -- (1,4);)
\draw[gbline] (1,4) -- node[right] {$\ss 2$} (1,5);
\end{tikzpicture}
+\gamma(y_4-y_3)\beta(y_3-y_4)\begin{tikzpicture}[scale=.7,baseline=(current  bounding  box.center)]
\draw[dotted] (1,0) -- (1,3) -- (2,3) (0,1) -- (2,1) (1,4) -- (2,4);
\draw[gline] (0,2) -- (2,2) (0,3) -- (1,3) -- (1,4) (0,4) -- (1,4);
\draw[gbline] (1,4) -- node[right] {$\ss 2$} (1,5);
\end{tikzpicture}
\\
&=\frac{(2l\hbar)(2(l-1)\hbar)}{(y_4-y_3-2\hbar)(y_3-y_4-2\hbar)}
\times
\\
&\left(
-(y_3-y_4)^2\hbar(2\hbar)(2\ell\hbar)
+(y_4-y_3)(2\hbar)(y_3-x-l\hbar)(y_4-x+l\hbar)
+(2\hbar)(y_3-y_4)(y_3-x+l\hbar)(y_4-x-l\hbar)
\right)
\\
&=0
\end{align*}
(we have skipped the $\gamma(y_4-y_3)\gamma(y_3-y_4)$ term
because it corresponds to an invalid configuration of the lattice model).

On the other hand, we leave it as an exercise to the reader to check that by summing the 5 diagrams contributing to
$\widetilde{\bra{b}}T^{(l)}_{2,0}(x)\widetilde{\ket{a'}}$,
one finds
\[
\widetilde{\bra{b}}T^{(l)}_{2,0}(x)\widetilde{\ket{a'}}
=
(2\hbar)^2 l(l-1)
\frac{(y_1-y_3-2\hbar)(y_4-y_3-2\hbar)}{(y_1-y_3)(y_4-y_3)}
(y_2-x+l\hbar)(y_3-x-l\hbar)
\]
in agreement with Lemma~\ref{lem:lm}.

\subsection{The $R$-matrix}
\begin{remark}
    In view of Theorem~\ref{thm:tildeW}, the definition of the $R$-matrix \eqref{eq:defRgeneral} for an elementary transposition $\sigma=(j\,j+1)$ can be traced back to the following pictorial identity:
    \[
    \begin{tikzpicture}[baseline=(current  bounding  box.center),scale=.8]
    \foreach\x/\s/\t in {1/{v_1}/{z_1},2/{\cdots}/{\cdots},5/{v_w}/{z_w}} {
    \draw[very thick] (\x,1) -- (\x,4);
    \draw[dotted] (\x,.5) node[below] {$\t$} -- (\x,1);
    \draw[gbline] (\x,4) -- node[right] {$\ss\s$} (\x,5);
    }
    \draw[gbline] (3.5,4.6)--node[right] {$\ss v_{j+1}$} (4,5);\draw[very thick] (3,1) -- (3,4) .. controls (3,4.25) .. (3.5,4.6); 
    \draw[dotted] (3,.5) node[below] {$z_{j}$}  -- (3,1);
     \draw[gbline] (3.5,4.6)--node[left] {$\ss v_j$} (3,5);\draw[very thick] (4,1) -- (4,4) .. controls (4,4.25) .. (3.5,4.6);
    \draw[dotted] (4,.5) node[below] {$z_{j+1}$} -- (4,1);
    
    \foreach\y/\s in {1/{y_v},2/,3/{\vdots},4/{y_1}} {
    \draw (1,\y) -- (5,\y);
    \draw[dotted] (5,\y) -- (5.5,\y);
    \draw[gline] (.5,\y) node[left] {$\s$} -- (1,\y);
    }
    \end{tikzpicture}
    =
    \begin{tikzpicture}[baseline=(current  bounding  box.center),scale=.8]
    \path (4,5);
    \foreach\x/\s/\t in {1/{v_1}/{z_1},2/{\cdots}/{\cdots},3/{v_j}/{z_{j+1}},4/{\!v_{j+1}}/{z_j},5/{v_w}/{z_w}} {
    \draw[very thick] (\x,1) -- (\x,4);
    \draw[dotted] (\x,.5) node[below] {$\t$} -- (\x,1);
    \draw[gbline] (\x,4) -- node[right] {$\ss\s$} (\x,4.5);
    }
    \foreach\y/\s in {1/{y_v},2/,3/{\vdots},4/{y_1}} {
    \draw (1,\y) -- (5,\y);
    \draw[dotted] (5,\y) -- (5.5,\y);
    \draw[gline] (.5,\y) node[left] {$\s$} -- (1,\y);
    }
    \end{tikzpicture}
    \qquad \begin{tikzpicture}[baseline=(current  bounding  box.center),scale=.5] \draw[very thick] (-1,-1) -- (1,1) (-1,1) -- (1,-1); \end{tikzpicture}= P D R(z_j-z_{j+1}) D^{-1}
    \]
    where $D$ is a diagonal matrix that cares of the prefactors occurring in Theorem~\ref{thm:tildeW}, namely $D\ket{v_j,v_{j+1}}=(-1)^{v_jv_{j+1}}{\ell_j!\choose (\ell_j-v_j)!}{\ell_{j+1}!\choose(\ell_{j+1}-v_{j+1})!}\ket{v_j,v_{j+1}}$, and $P\ket{v_j,v_{j+1}}=\ket{v_{j+1},v_j}$ is some conventional reordering of the tensor product.
    \yaping{I do not know how to get the sign $(-1)^{v_j+v_{j+1}+v_jv_{j+1}}$ either...}
    We note that this equation is automatically satisfied on condition that yet another version of the Yang--Baxter equation holds, namely
    \begin{equation}\label{eq:YAYBE}
    \begin{tikzpicture}[baseline=(current  bounding  box.center),scale=.6,rotate=-90]
\draw[rounded corners=4mm,very thick] (-1,-1)  -- (1,1) -- (3,1);
\draw[rounded corners=4mm,very thick] (-1,1)  -- (1,-1) -- (3,-1);
\draw[] (2,-2) -- (2,2);
\end{tikzpicture}
=
\begin{tikzpicture}[baseline=(current  bounding  box.center),scale=.6,rotate=-90]
\draw[very thick,rounded corners=4mm] (-1,-1)  -- (1,-1) -- (3,1);
\draw[very thick,rounded corners=4mm] (-1,1)  -- (1,1) -- (3,-1);
\draw[] (0,-2) -- (0,2);
\end{tikzpicture}
    \end{equation}
    and the fact that $\bra{0,0}R(z) = \bra{0,0}$ (which typically follows from conservation of occupation numbers and normalisation).
    Equation \eqref{eq:YAYBE} is indeed known to be satisfied.
\end{remark}

\newcommand{\arxiv}[1]
{\texttt{\href{http://arxiv.org/abs/#1}{arXiv:#1}}}
\newcommand{\doi}[1]
{\texttt{\href{http://dx.doi.org/#1}{doi:#1}}}
\renewcommand{\MR}[1]
{\href{http://www.ams.org/mathscinet-getitem?mr=#1}{MR#1}}

\end{document}